\newcommand{\MS}{{\medskip}}
\newcommand{\BS}{{\bigskip}}
\newcommand{\NI}{{\noindent}}
\newcommand{\N}{\mathbb{N}}
\newcommand{\Z}{\mathbb{Z}}
\newcommand{\Q}{\mathbb{Q}}
\newcommand{\R}{\mathbb{R}}
\newcommand{\C}{\mathbb{C}}
\newcommand{\F}{\mathbb{F}}
\newcommand{\Mm}{\mathcal{M}}
\newcommand{\Nn}{\mathcal{N}}
\newcommand{\Oo}{\mathcal{O}}
\newcommand{\Ii}{\mathcal{I}}
\newcommand{\jj}{\mathcal{J}}
\newcommand{\om}{\omega}
\newcommand{\omui}{\omega_{\mu}^{i}}
\newcommand{\si}{\sigma}
\newcommand{\tG}{\widetilde G}
\newcommand{\tT}{\widetilde T}
\newcommand{\jjui}{\jj_\mu^{i}}
\newcommand{\jjuo}{\jj_\mu^{0}}
\newcommand{\tjj}{\widetilde{\jj}}
\newcommand{\tjjuco}{\tjj^0_{\mu,c}}
\newcommand{\tjjuci}{\tjj^i_{\mu,c}}
\newcommand{\tIi}{\widetilde{\Ii}}
\newcommand{\ccrit}{c_{\text{crit}}}
\newcommand{\tR}{\widetilde{R}}
\newcommand{\tpsi}{\widetilde{\psi}}
\newcommand{\tpsis}{\widetilde{\psi}^{*}}
\newcommand{\id}{\mathrm{id}}
\newcommand{\Tor}{\operatorname{Tor}}
\newcommand{\SFr}{\mathrm{SFr}}
\newcommand{\rk}{\operatorname{rk}}
\newcommand{\hocolim}{\operatorname{hocolim}}
\newcommand{\GL}{\mathrm{GL}}
\newcommand{\U}{\mathrm{U}}
\newcommand{\SO}{\mathrm{SO}}
\newcommand{\SU}{\mathrm{SU}}
\newcommand{\PSL}{\mathrm{PSL}}
\newcommand{\BSO}{\mathrm{BSO}}
\newcommand{\BSU}{\mathrm{BSU}}
\newcommand{\Symp}{\mathrm{Symp}}
\newcommand{\Diff}{\mathrm{Diff}}
\newcommand{\BSymp}{\mathrm{BSymp}}
\newcommand{\Iso}{\mathrm{Iso}}
\newcommand{\Hol}{\mathrm{Hol}}
\newcommand{\Map}{\mathrm{Map}}
\DeclareMathOperator{\sym}{sym}
\newcommand{\FDiff}{\mathrm{FDiff}}
\newcommand{\BFDiff}{\mathrm{BFDiff}}
\newcommand{\Gui}{G_{\mu}^{i}}
\newcommand{\Guo}{G_{\mu}^{0}}
\newcommand{\Gul}{G_{\mu}^{1}}
\newcommand{\tGuc}{\widetilde{G}_{\mu,c}}
\newcommand{\tGuco}{\widetilde{G}^{0}_{\mu,c}}
\newcommand{\tGucl}{\widetilde{G}_{\mu,c}^1}
\newcommand{\tGuci}{\widetilde{G}_{\mu,c}^i}
\newcommand{\BK}{BK}
\newcommand{\BGu}{BG_{\mu}}
\newcommand{\BGui}{BG_{\mu}^{i}}
\newcommand{\tBG}{B\widetilde{G}}
\newcommand{\tBGuc}{B\widetilde{G}_{\mu,c}}
\newcommand{\tBGuco}{B\widetilde{G}^{0}_{\mu,c}}
\newcommand{\Emb}{\mathrm{Emb}}
\newcommand{\IEmb}{\Im\mathrm{Emb}}
\newcommand{\Embuci}{\Emb_{\om}^{i}(c,\mu)}
\newcommand{\IEmbuci}{\IEmb_{\om}^{i}(c,\mu)}
\newcommand{\IEmbuco}{\IEmb_{\om}^{0}(c,\mu)}
\newcommand{\IEmbucl}{\IEmb_{\om}^{1}(c,\mu)}
\newcommand{\IEmbuc}{\IEmb_{\om}(c,\mu)}
\newcommand{\CP}{\mathbb{C}P}
\newcommand{\STS}{S^2\times S^2}
\newcommand{\PbP}{\CP^2\#\,\overline{\CP}\,\!^2}
\newcommand{\NTB}{S^{2}\widetilde{\times} S^{2}}
\newcommand{\tJ}{\widetilde{J}}
\newcommand{\tM}{\widetilde{M}}
\newcommand{\tMuc}{\tM_{\mu,c}}
\newcommand{\Muo}{M^0_{\mu}}
\newcommand{\Mul}{M^1_{\mu}}
\newcommand{\Mui}{M^i_{\mu}}
\newcommand{\tMuco}{\widetilde{M}^0_{\mu,c}}
\newcommand{\tMuci}{\widetilde{M}^i_{\mu,c}}
\newcommand{\from}{\leftarrow}
\newcommand{\into}{\hookrightarrow}
\theoremstyle{plain}
\newtheorem{thm}{Theorem}[section]
\newtheorem*{thm*}{Theorem}
\newtheorem{prop}[thm]{Proposition}
\newtheorem*{prop*}{Proposition}
\newtheorem{lemma}[thm]{Lemma}
\newtheorem*{lemma*}{Lemma}
\newtheorem{cor}[thm]{Corollary}
\newtheorem*{cor*}{Corollary}
\newtheorem*{conj*}{Conjecture}
\theoremstyle{definition}
\newtheorem*{defn*}{Definition}
\theoremstyle{remark}
\newtheorem{remark}[thm]{Remark}
\newtheorem*{remark*}{Remark}
\newtheorem*{remarks*}{Remarks}
\definecolor{vert}{rgb}{0,0.5,0}
\begin{document}

\title[Space of symplectic balls]{The homotopy type of the space of symplectic balls in rational ruled $4$-manifolds}

\author[S. Anjos]{S\'ilvia Anjos}
\address{Centro de Análise Matemática, Geometria e Sistemas Dinâmicos \\ Departamento de Matemática \\  Instituto Superior T\'ecnico \\  Lisboa \\ Portugal}
\email{sanjos@math.ist.utl.pt}
\thanks{Partially supported by FCT/POCTI/FEDER and by project POCTI/2004/MAT/57888}

\author[F. Lalonde]{Fran\c{c}ois Lalonde}
\address{Universit\'e de Montr\'eal \\ Montr\'eal \\ Canada}
\email{lalonde@dms.umontreal.ca}
\thanks{Partially supported by NSERC grant OGP 0092913 and FQRNT grant  ER-1199.}

\author[M. Pinsonnault]{Martin Pinsonnault}
\address{The University of Western Ontario \\ London \\ Canada}
%\curraddr{CRM, Universit\'e de Montr\'eal \\ Montr\'eal \\ Canada}
\email{mpinson@uwo.ca}

\date{October 2008}
\keywords{Rational homotopy type, symplectic embeddings of balls, rational symplectic $4$-manifolds, groups of symplectic diffeomorphisms}
\subjclass[2000]{53D35, 57R17, 55R20, 57S05} 

%: Abstract
\begin{abstract}
Let $M:=(M^{4},\om)$ be a $4$-dimensional rational ruled symplectic manifold and denote by $w_{M}$ its Gromov width. Let $\Emb_{\omega}(B^{4}(c),M)$ be the space of symplectic embeddings of the standard ball of radius $r$, $B^4(c) \subset \R^4$ (parametrized by its capacity $c:= \pi r^2$), into $(M,\om)$. By the work of Lalonde and Pinsonnault \cite{LP}, we know that there exists a critical capacity $\ccrit\in (0,w_{M}]$ such that, for all $c\in(0,\ccrit)$, the embedding space $\Emb_{\omega}(B^{4}(c),M)$ is homotopy equivalent to the space of symplectic frames $\SFr(M)$. We also know that the homotopy type of $\Emb_{\omega}(B^{4}(c),M)$ changes when $c$ reaches $\ccrit$ and that it remains constant for all $c\in[\ccrit,w_{M})$. In this paper, we compute the rational  homotopy type, the minimal model, and the cohomology with rational coefficients  of $\Emb_{\omega}(B^{4}(c),M)$ in the remaining case $c\in[\ccrit,w_{M})$. In particular, we show that it does not have the homotopy type of a finite CW-complex. Some of the key points in the argument are the calculation of the rational homotopy type of the classifying space of the symplectomorphism group of the blow up of $M$, its comparison with the group corresponding to $M$, and the proof that the space of compatible integrable complex structures on the blow up is weakly contractible. 
\end{abstract}

\maketitle

%%%%%%%%%%%%%%%%%%%%%%%%%%%%%%%%%%%%%%%%%%%%%%%%%%%%%%%%%%%%%%%%%%%%%%%%%%%%%%%%
\section{Introduction}
%%%%%%%%%%%%%%%%%%%%%%%%%%%%%%%%%%%%%%%%%%%%%%%%%%%%%%%%%%%%%%%%%%%%%%%%%%%%%%%%
 
We compute in this paper the rational homotopy type, the minimal model, and the cohomology with rational coefficients of the space of  embedded symplectic balls of capacity $c$  in any closed rational ruled $4$-manifold. We consider only minimal ruled manifolds in the sense that they are not blow-ups of ruled manifolds. By the classification theorem for rational ruled $4$-manifolds \cite{LM}, any such manifold is symplectomorphic, after rescalling, to either
\begin{itemize}
\item the topologically trivial $S^2$-bundle over $S^2$,  $M^0_\mu=(S^2 \times S^2, \omega^0_\mu)$, where $\omega^0_\mu$ is the split symplectic form $\om (\mu) \oplus \om(1)$  with area $\mu \geq 1$ for the first $S^2$-factor, and with area $1$ for the second factor; or 
\item the topologically non-trivial $S^2$-bundle over $S^2$, $M^1_\mu=(S^2 \widetilde{\times} S^2, \omega^1_\mu)$, diffeomorphic to \\ $\PbP$ equi\-pped with the standard K\"ahler form $\omega^1_\mu$ where the symplectic area of the exceptional divisor is $\mu >0$ and the area of a projective line is $\mu+1$ (this implies that the area of the fiber is $1$).  
\end{itemize} 
Note that the second bundle is, topologically, the only non-trivial $S^2$-bundle over $S^2$. Let $B^4(c)\subset\R^4$ be the closed standard ball of radius $r$ and capacity $c=\pi r^2$ equipped with the restriction of the symplectic structure $\om_{st}=dx_1\wedge dy_1+dx_2\wedge dy_2$ of $\R^4$. Let $\Embuci$ be the space, endowed with the $C^{\infty}$-topology, of all symplectic embeddings of $B^4(c)$ in $M^i_{\mu}$. Finally, let $\IEmbuci$ be the space of subsets of $M^i_\mu$ that are images of maps belonging to $\Embuci$ defined as the topological quotient
%: FibrationEmb-IEmb
\begin{equation} \label{FibrationEmb-IEmb}
\Symp(B^4 (c))\hookrightarrow \Embuci
\longrightarrow\IEmbuci
\end{equation}
where $\Symp(B^4(c))$ is the group, endowed with the $C^{\infty}$-topology, of
symplectic diffeomorphisms of the closed ball, with no restrictions on the
behavior on the boundary (thus each such map extends to a symplectic diffeomorphism of a neighborhood of $B^4(c)$ that sends $B^4(c)$ to itself). We may view $\IEmbuci$ as the space of all unparametrized balls of capacity $c$ in $M^i_\mu$.

\subsection{Preliminary results}

Recall that the Non-squeezing theorem implies that $\Embuci$ is empty for $c \geq 1$; it is then easy to see that the Gromov width of all spaces $M^i_\mu$ is equal to $1$ and that, actually, the space $\Embuci$ is non-empty if and only if $c \in (0,1)$.  It was proved in~\cite{LP} Corollary~1.2 and in~\cite{Pi} Corollary~1.9 that the homotopy type of $\Embuci$ can be completely understood for some special values of $\mu$, namely
%: prop:RetractionExceptionalCases

\begin{prop}\label{prop:RetractionExceptionalCases}
Let $\phi:\Embuci\to\SFr(\Mui)$ be the map that associates to an embedding $\iota:B^{4}(c)\into\Mui$ the symplectic frame at the origin.
\begin{enumerate}
\item For $\mu=1$ and $i=0$, that is, for $S^{2}\times S^{2}$ with factors of equal area, the map $\phi$ is a homotopy equivalence for all values $c\in(0,1)$. Consequently, the space of unparametrized balls $\IEmbuco$ is homotopy equivalent to $S^{2}\times S^{2}$.
\item In the twisted case, for any $\mu$ in the range $(0,1]$, the map $\phi$ is a homotopy equivalence for all values $c\in(0,1)$. Hence, the space $\IEmbucl$ is homotopy equivalent to $M^1_\mu$ for these values of $\mu$. 
\end{enumerate}
\end{prop}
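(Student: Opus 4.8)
The plan is to reduce the statement, in each case, to determining the homotopy type of the symplectomorphism group of the one-point symplectic blow-up of $\Mui$, which in these ranges of $\mu$ is controlled by the standard analysis of exceptional $J$-holomorphic spheres.

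\emph{Step 1: reduction to unparametrized balls.} Composing $\phi$ with the projection $\SFr(\Mui)\to\Mui$ sends an embedding $\iota$ to $\iota(0)$; hence $\phi$ is a map of fibre bundles over $\Mui$, from the bundle $\Embuci\to\Mui$, $\iota\mapsto\iota(0)$, to the symplectic frame bundle, and over a point $x_{0}$ it is the ``$1$-jet at the origin'' map $\{\iota:\iota(0)=x_{0}\}\to\SFr(\Mui)_{x_{0}}\simeq\U(2)$, $\iota\mapsto d\iota_{0}$. Thus $\phi$ is a homotopy equivalence precisely when this $1$-jet map is, i.e.\ precisely when the space of symplectic embeddings of $B^{4}(c)$ into $\Mui$ with prescribed $1$-jet at the origin is weakly contractible. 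Now Gromov's theorem $\Symp(B^{4}(c))\simeq\U(2)$ lets one reduce the structure group of the principal $\Symp(B^{4}(c))$-bundle \eqref{FibrationEmb-IEmb}, and unwinding the definitions shows that $\phi$ is a homotopy equivalence if and only if the induced map from the tautological ``ball with a marked interior point'' bundle $\widehat{E}\to\IEmbuci$ to $\Mui$ is one. Since $\widehat{E}\to\IEmbuci$ has contractible fibres, this is the precise form of the assertion $\IEmbuci\simeq\Mui$ --- the marked point being needed only to make the equivalence canonical --- and it implies the statement about $\phi$. So it is enough to prove this form of the assertion.

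\emph{Step 2: the blow-up fibration.} By McDuff's results on symplectic balls in dimension four, $\Symp(\Mui)$ acts transitively on $\widehat{E}$; the stabilizer of a pair $(B,x_{0})$, with $B$ a standard ball of capacity $c$ and $x_{0}\in\mathrm{int}\,B$, is homotopy equivalent to the setwise stabilizer $\Symp(\Mui,B)$ of $B$ (since $\Symp(\Mui,B)$ acts transitively on $\mathrm{int}\,B$, the marked point contributes nothing). The symplectic blow-up/blow-down correspondence then identifies $\Symp(\Mui,B)$, up to homotopy, with the subgroup $\Symp(\tMuci,\Sigma_{c})$ of $\Symp(\tMuci)$ preserving the exceptional divisor $\Sigma_{c}$, where $\tMuci$ is the one-point blow-up of $\Mui$ of exceptional area $c$. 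Feeding this into the evaluation fibration $\Symp(\Mui,x_{0})\to\Symp(\Mui)\to\Mui$, the problem becomes: show that the natural inclusion $\Symp(\tMuci,\Sigma_{c})\into\Symp(\Mui,x_{0})$ is a weak homotopy equivalence for every $c\in(0,1)$ in the two cases; the induced map of $\Symp(\Mui)$-quotients is then exactly $\widehat{E}\to\Mui$.

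\emph{Step 3: the blow-up computation, and the main difficulty.} In both cases $\tMuci$ is $\PbP$ blown up once, i.e.\ $\CP^{2}$ blown up at two points, with a symplectic form whose class depends on $\mu$ and $c$. One invokes the known homotopy type of $\Symp(\tMuci)$ for these forms, together with that of the space $\Ee_{c}$ of symplectically embedded spheres in the class of $\Sigma_{c}$ --- on which $\Symp(\tMuci)$ acts transitively with stabilizer $\Symp(\tMuci,\Sigma_{c})$ --- and plugs both into the fibration $\Symp(\tMuci,\Sigma_{c})\to\Symp(\tMuci)\to\Ee_{c}$. Comparing the result with $\Symp(\Muo)\simeq(\SO(3)\times\SO(3))\rtimes\Z/2$ for $\mu=1$ (Gromov) and with the known homotopy type of $\Symp(\Mul)$ for $0<\mu\le 1$ gives $\Symp(\tMuci,\Sigma_{c})\simeq\Symp(\Mui,x_{0})$ for the whole range $c\in(0,1)$; this uniformity in $c$ is precisely the statement that $\ccrit=w_{M}=1$ here. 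The hard part is this step: it rests on the $J$-holomorphic curve machinery --- positivity of intersections, automatic transversality for exceptional and fibre classes, and the stratification of the space of $\om$-compatible almost complex structures on $\tMuci$ according to which embedded rational curves they carry --- used to control $\Symp(\tMuci)$ and $\Ee_{c}$ uniformly in $c$ and to confirm that, for these two families of $\mu$ only, no new stratum enters to enlarge $\Symp(\tMuci,\Sigma_{c})$ beyond $\Symp(\Mui,x_{0})$. This is the analysis carried out in \cite{LP} and \cite{Pi}.
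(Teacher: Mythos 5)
Your proposal is correct, but note that the paper itself does not prove Proposition~\ref{prop:RetractionExceptionalCases}: it cites it directly from \cite{LP} Corollary~1.2 and \cite{Pi} Corollary~1.9 and gives no independent argument, so there is no ``paper's own proof'' to compare against line by line. Your outline is nonetheless a faithful account consistent with the framework set up in Section~1 (the fibration~(\ref{FibrationEmb-IEmb}), the identification of the ball stabilizer with $\Symp(\tMuci)$, and the quotient~(\ref{FibrationPrincipale})), and the ``ball with a marked interior point'' bundle $\widehat{E}$ is a clean device for turning the assertion about the frame map $\phi$ into the assertion that the inclusion of isotropy groups $\Symp(\Mui,B,x_0)\into\Symp(\Mui,x_0)$ is a weak equivalence. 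The one place to be a little more careful is Step~3: what is needed is not merely that $\Symp(\tMuci,\Sigma_c)$ and $\Symp(\Mui,x_0)$ have the same abstract homotopy type, but that the specific composite $\Symp(\tMuci,\Sigma_c)\simeq\Symp(\Mui,B,x_0)\subset\Symp(\Mui,x_0)$ is a weak equivalence; this requires tracking a common maximal torus through the blow-down, not just comparing homotopy types of the two ends. That is precisely what the $J$-holomorphic stratification in \cite{LP,Pi} delivers: the ranges $\mu=1$, $i=0$ and $0<\mu\leq 1$, $i=1$ are exactly those for which $\tjjuci$ has a single stratum for every $c\in(0,1)$, so both groups retract onto a common $2$-torus (with the extra $\Z/2$ swapping factors when $\mu=1$, $i=0$, which cancels in the quotient), making the inclusion an equivalence and forcing $\ccrit=w_M=1$. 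Deferring that analysis to \cite{LP,Pi} is exactly what the paper does.
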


 We will therefore assume in this article that $\mu > 1$. Denote by $\ell$ the ``low integral part'' of $\mu$, i.e the largest integer strictly smaller than $\mu$. Using an inflation argument, it was shown in Lalonde-Pinsonnault \cite{LP} Theorem 1.7 and in Pinsonnault \cite{Pi} Theorem 1.7 that:
%: prop:RetractionSmallCase

\begin{prop} \label{prop:RetractionSmallCase}
The space $\Embuci$ is homotopy equivalent to the space of symplectic frames of $M^i_\mu$ for all values $c\in (0, \mu - \ell)$. Hence, in this range of $c$'s, the space $\IEmbuci$ is homotopy equivalent to the manifold $M^i_\mu$ itself. Moreover, the homotopy type of $\IEmbuci$ changes when $c$ reaches the critical capacity $\mu - \ell$ and remains constant for all $c\in[\mu - \ell,1)$. 
\end{prop}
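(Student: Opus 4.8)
The plan is to establish the homotopy equivalence $\phi$ (equivalently $\IEmbuci\simeq\Mui$) for $c\in(0,\mu-\ell)$ by reducing it, through a chain of fibrations, to a statement about symplectomorphism groups of the one‑point symplectic blow‑up of $\Mui$, and then to settle that statement by an inflation/wall‑crossing argument; the change at $c=\mu-\ell$ and the constancy on $[\mu-\ell,1)$ will fall out of the same analysis. Concretely: $\Symp(B^{4}(c))$ deformation retracts onto $\U(2)$ (Gromov's argument for the ball), $\Embuci$ is connected (McDuff's isotopy theorem for symplectic balls in rational ruled $4$‑manifolds; see~\cite{LP}), and hence by the symplectic isotopy‑extension theorem $\Symp(\Mui)$ acts transitively on $\Embuci$. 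Fixing a reference embedding $\iota_{0}$ with centre $p_{0}$ one obtains a fibration
\[
\Symp(\Mui;\iota_{0}(B^{4}(c)))\longrightarrow\Symp(\Mui)\longrightarrow\Embuci,
\]
whose fibre is the subgroup acting as the identity on the embedded ball, together with a parallel fibration $\Symp^{(1)}(\Mui,p_{0})\to\Symp(\Mui)\to\SFr(\Mui)$ whose fibre fixes $p_{0}$ with identity derivative there. The two total spaces coincide, the map on total spaces is the identity, the map on bases is $\phi$, and the map on fibres is the inclusion $\Symp(\Mui;\iota_{0}(B^{4}(c)))\hookrightarrow\Symp^{(1)}(\Mui,p_{0})$; by the five lemma (and, on the unparametrised side, the analogous comparison with the frame bundle $\U(2)\to\SFr(\Mui)\to\Mui$) it suffices to prove this inclusion is a weak equivalence.

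Next I would pass to the symplectic blow‑up. By McDuff's blow‑up/blow‑down correspondence, $\Symp(\Mui;\iota_{0}(B^{4}(c)))$ is homotopy equivalent to the group $\Symp_{c}(\tMuci\setminus\nu\Sigma_{c})$ of compactly supported symplectomorphisms of the complement of a neighbourhood $\nu\Sigma_{c}$ of the exceptional divisor $\Sigma_{c}$ (of symplectic area $c$) in the one‑point blow‑up $\tMuci$. Dually, a jet‑straightening (Moser) argument identifies $\Symp^{(1)}(\Mui,p_{0})$ with the compactly supported group $\Symp_{c}(\Mui\setminus\{p_{0}\})$, which is the colimit, as the capacity $c'\to0^{+}$, of the groups $\Symp_{c}(\Mui\setminus\iota_{0}(B^{4}(c')))$. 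Thus the claim reduces to showing that the natural ``shrink the ball'' maps $\Symp_{c}(\tMuci\setminus\nu\Sigma_{c})\to\Symp_{c}(\widetilde{M}^{i}_{\mu,c'}\setminus\nu\Sigma_{c'})$ are weak equivalences whenever $0<c'<c<\mu-\ell$, and that the colimit is attained within this range.

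To do this I would run the inflation and wall‑crossing machinery of \cite{LP,Pi}. Up to homotopy the group $\Symp_{c}(\tMuci\setminus\nu\Sigma_{c})$ is controlled by $\Symp(\tMuci)$ together with the space $\Ee_{c}$ of unparametrised embedded symplectic spheres in the class $E=[\Sigma_{c}]$ and the (explicitly computable) group of symplectic germs along $\Sigma_{c}$; and by Taubes--Seiberg--Witten theory together with Gromov compactness, the homotopy types of all of these are governed by the stratification of the space $\jj$ of compatible almost complex structures according to which homology classes carry embedded $J$‑holomorphic spheres. This stratification is locally constant in $c$ and changes only at ``walls'', i.e. at the values of $c$ for which the set $\Dd_{c}$ of classes $D\in H_{2}(\tMuci;\Z)$ with $D\cdot D<0$ represented by an embedded symplectic sphere changes; among the finitely many candidate classes, membership in $\Dd_{c}$ is cut out by the positivity conditions $\tom(D)>0$. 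A direct enumeration --- writing $B,F,E$ for the section, fibre and exceptional classes in the untwisted case and the analogous classes in the twisted case --- shows that the members of $\Dd_{c}$ whose area varies with $c$ and can first vanish as $c$ grows from $0$ are the classes $B-kF-E$, $0\le k\le\ell$, of area $\mu-k-c$; the smallest positive root is $c=\mu-\ell$, attained by $D_{0}=B-\ell F-E$. Hence $\Dd_{c}$, and with it the homotopy type of all the spaces above, is constant on $(0,\mu-\ell)$; the inflation lemma then supplies, for $c'<c$ in this interval, a path of symplectic forms staying in this single chamber along which the groups $\Symp_{c}(\tMuci\setminus\nu\Sigma_{c})$ are carried onto one another, giving the required equivalences, and letting $c'\to0^{+}$ identifies the common homotopy type with $\Symp_{c}(\Mui\setminus\{p_{0}\})\simeq\Symp^{(1)}(\Mui,p_{0})$. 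The same enumeration shows $\Dd_{c}$ is constant again on $[\mu-\ell,1)$, the next root $c=\mu-\ell+1$ exceeding the Gromov width $1$; this gives the stated constancy, whereas the disappearance of the stratum indexed by $D_{0}$ at $c=\mu-\ell$ forces a genuine change of homotopy type, confirmed in \cite{LP,Pi} by computing its effect on cohomology.

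The main obstacle is the third step. One must (a) identify $\Dd_{c}$ precisely and prove that the induced $J$‑holomorphic stratification of $\jj$ has exactly the codimensions and incidence pattern needed to read off the homotopy type of $\Symp(\tMuci)$ --- this requires the full Taubes--Seiberg--Witten input together with careful gluing and excision of pseudo‑holomorphic curves in the blow‑up; (b) establish the inflation lemma in the precise form used, namely that a path of cohomology classes inside a single chamber is realised by a path of symplectic forms along which the symplectomorphism groups vary by an ambient isotopy --- this is where one genuinely inflates along embedded symplectic surfaces of non‑negative self‑intersection; and (c) control the degeneration $c\to0^{+}$, i.e. show that blowing down a vanishingly small exceptional divisor induces a homotopy equivalence on these compactly supported groups, and justify the jet‑straightening identification $\Symp^{(1)}(\Mui,p_{0})\simeq\Symp_{c}(\Mui\setminus\{p_{0}\})$. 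Steps (a) and (b) are where essentially all the work lies.
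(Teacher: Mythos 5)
The paper does not actually give a proof of this proposition: it is quoted directly from Lalonde--Pinsonnault~\cite{LP} Theorem~1.7 (untwisted case) and Pinsonnault~\cite{Pi} Theorem~1.7 (twisted case), so there is no in-text argument against which to check line by line. What you have written is a reconstruction of the strategy used in those references, and in broad outline it is the right one: the comparison of the fibration $\Symp(\Mui;\iota_{0}(B^{4}(c)))\to\Symp(\Mui)\to\Embuci$ with the jet/frame fibration $\Symp^{(1)}(\Mui,p_{0})\to\Symp(\Mui)\to\SFr(\Mui)$ and the five-lemma reduction to the fibre inclusion; the identification of the fibre with compactly supported symplectomorphisms on the blow-up via McDuff's blow-up/blow-down correspondence; and the wall-and-chamber analysis governed by the $J$-holomorphic stratification of $\tjjuci$, with inflation carrying the homotopy type across a single chamber. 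Your enumeration of the one reachable wall is also correct: among the negative-square classes $E$, $F-E$, $B-kF$, $B-kF-E$, only $B-\ell F-E$ (the class $D_{2\ell}$ in the paper's later notation) has area $\mu-\ell-c$ vanishing at a $c$ in $(0,1)$, namely $c=\mu-\ell=\ccrit$, and the next candidate $\mu-\ell+1$ is beyond the Gromov width, which gives both the constancy on $(0,\ccrit)$ and on $[\ccrit,1)$.

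Two caveats are worth flagging. First, as you say yourself, the argument as written is a program rather than a proof: essentially all of the content is in steps (a) and (b), namely the Taubes--Seiberg--Witten/Gromov-compactness input needed to establish the stratification of $\tjjuci$ with the right codimensions and incidences, and the precise form of the inflation lemma that lets one move within a chamber by an ambient isotopy of symplectic forms. Those are exactly the technical heart of~\cite{LP,Pi}, and merely citing them here is appropriate for a reconstruction but should not be mistaken for a self-contained proof. Second, the assertion that the homotopy type \emph{actually} changes at $c=\ccrit$ does not follow from the wall-crossing picture alone --- a stratum could disappear without affecting the homotopy type of the homotopy colimit. You acknowledge this and defer to the cohomology computation in~\cite{LP,Pi}; that is the honest thing to do, but it means the ``Moreover'' clause of the proposition rests on an explicit calculation (e.g., the rank of $\pi_{1}$ or $H^{1}$ of $\Symp(\tMuci)$ jumping at $\ccrit$, equivalently the change in $m$ in Proposition~\ref{StructureStratificationBlowUp}), not on the chamber structure per se.
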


  Define the {\em critical capacity} $\ccrit\in (0,1]$ by setting $\ccrit:=\mu-\ell$.  In this paper, we will therefore restrict our attention to the remaining cases, namely to the values $\mu > 1$ and $c\geq \ccrit$ in both the split and non-split bundles.   

\subsection{The general framework}

Let $\Mui$ be a normalized rational ruled $4$-manifold with $\mu > 1$ and consider $c \in [\ccrit,1)$. The main results of this paper are:

\begin{itemize}
\item Theorem~\ref{thm:homotopy-type} that gives the rational homotopy type of $\Im \Embuci$, expressed as a non-trivial fibration whose base and fiber are explicitely given,
\item Theorem~\ref{thm:MinimalModelIEmb} that computes the minimal model of $\Im \Embuci$, and 
\item Corollary~\ref{cor:RationalCohomologyRingIEmb}
 that computes the rational cohomology ring of $\Im \Embuci$.
\end{itemize}
 
 In particular, these results imply that if $c \in [\ccrit,1)$, then the topological space $\Im \Embuci$ does not have the homotopy type of a finite dimensional $CW$--complex. 

In order to obtain the previous results we need two fundamental calculations, namely:

\begin{itemize}
\item the computation of the rational homotopy type of  $B\Symp(\tMuci)$, the classifying space of the symplectomorphism group of the blow-up of $M^i_\mu$ at a ball of capacity $c$  (Theorem~\ref{thm:HomotopyPushoutBlowUp}), as well as its rational cohomology (Theorem~\ref{thm:RationalCohomologyAlgebra}), and
\item the calculation of the structure of the space of compatible integrable complex structures on the blow-up of $M^i_\mu$, and in particular the fact that this space is weakly contractible (Appendix ~\ref{se:appendixA}).
\end{itemize}
\MS

    Here is a brief description of the approach to prove these results.
McDuff showed in \cite{MD:Isotopie} that the space $\IEmbuci$ is path-connected. By extension of Hamiltonian isotopies, one sees immediatly that the group of Hamiltonian diffeomorphisms of $\Mui$ acts transitively on $\IEmbuci$. Note that under the restriction $\mu > 1$, the group of Hamiltonian diffeomorphisms is equal to the full group of symplectic diffeomorphisms. On the other hand, using $J$-holomorphic techniques, it was proved  in \cite{LP} that the stabilizer of this action, i.e. the subgroup of symplectic diffeomorphisms of  $M^i_\mu$ that preserve (not necessarily pointwise) a symplectically embedded ball  $B_c \subset \Mui$, can be identified, up to homotopy, with the group of all symplectomorphisms of $\tMuci$, the blow-up of $\Mui$ at a ball of capacity $c$ in $\Mui$. We therefore have the following fibration
%: FibrationPrincipale
\begin{equation}\label{FibrationPrincipale}
\Symp(\widetilde M^i_{\mu, c}) \hookrightarrow\Symp(M^i_{\mu})
\longrightarrow\IEmbuci
\end{equation}
that naturally expresses $\IEmb^i_{\om}(c,\mu)$ as an homogeneous space, namely 
\[
\IEmb^i_{\om}(c,\mu)\simeq\Symp(M^i_{\mu})/\Symp(\widetilde M^i_{\mu, c}). 
\]
Consequently, the homotopy-theoretic study of $\IEmbuci$ breaks down into three steps:
    
\begin{trivlist}
\item[\textit{Step 1.}] The computation of the homotopy type and cohomology algebra of $\Symp(M^i_{\mu})$ (as well as those of $B\Symp(M^i_{\mu})$). This step was carried out by a number of authors: Abreu \cite{Ab}, Abreu-McDuff \cite{AM}, Anjos \cite{An}, Anjos-Granja \cite{AG}, and Abreu-Granja-Kitchloo \cite{AGK}.

\vspace{5pt}
\item[\textit{Step 2.}] The computation of the homotopy type and cohomology algebra of $\Symp(\tMuci)$ (as well as those of $B\Symp(\tMuci)$). The rational cohomology modules $H^{*}(\Symp(\tMuci);\Q)$ and $H^{*}(\BSymp(\tMuci);\Q)$ were computed in \cite{LP} and \cite{Pi}. In the present paper, we will carry these calculations further and describe the full homotopy type of these spaces as well as the rational cohomology ring structure by adapting the arguments of~\cite{AGK}.

\vspace{5pt}
\item[\textit{Step 3.}] The most interesting step is understanding how $\Symp(\widetilde M^i_{\mu, c})$ \emph{sits inside} $\Symp(\Mui)$ so that we could compute the quotient. This step has been carried out in some special cases in~\cite{AL,LP,Pi}. In this article, we take a systematic approach to compute the rational homotopy type of the quotient. See Theorems~B.8  and B.9 in this paper showing that, even with the most natural choice of generators, the way in which $\Symp(\widetilde M^i_{\mu, c})$ sits inside $\Symp(\Mui)$ is not straighforward.

\end{trivlist}
\vspace{5pt}

Note that, in view of the fibration~(\ref{FibrationEmb-IEmb}) above, and since the reparametrization group of the standard ball $B^{4}\subset\R^{4}$ retracts to $U(2)$, the  computations for $\Im \Embuci$ carry easily to $\Embuci$. We get, in this way, similar theorems for the parametrized space of embeddings.

\subsection{The duality between $\Emb(\Muo)$ and $\Emb(\Mul)$} 
We now explain the duality introduced in \cite{Pi} that enables us to reduce the twisted case to the split one. 

   Denote by $B^0$ and $F^0$ in $H_2(M^0, \Z)$  the classes of the first and second factor respectively. Denote by $F^1$ the fiber of the fibration $M^1 (= \PbP)  \to \CP^1$ and by $B^1$ the section of self-interection $-1$ of that fibration. Denote by $E^i\in H_2( {\widetilde M}^i, \Z), i=0,1,$ the class of the exceptional divisor that one gets by blowing up the standard symplectic ball of capacity $c$ in $M^i$.  Let's denote by the same symbols $B^i, F^i$ the obvious lifts (proper transforms) of these classes to the blow-up ${\widetilde M}^i$.   Now let's recall the duality\footnote{That duality also exists on ruled symplectic 4-manifolds over surfaces of any genus and was exploited in \cite{La} to prove that the Non-Squeezing Theorem does not hold when the base of the trivial symplectic  fibration $\Sigma_g \times S^2$ is a real surface of genus greater than $0$.}  according to which  blowing up $M^0=S^2 \times S^2$ or $M^1=\PbP$ leads to diffeomorphic smooth manifolds ${\widetilde M}^0 \simeq{\widetilde M}^1 $. As explained in \cite{Pi}, the blow--down of an exceptional curve in ${\widetilde M}^0$ in class $F^0-E^0$ yields a manifold diffeomorphic to $\PbP$. The induced diffeomorphism between ${\widetilde M}^0$ and  ${\widetilde M}^1$  relates the basis $\{ B^0, F^0, E^0\}$ and  $\{ B^1, F^1, E^1\}$ as follows:
$$\begin{array}{ccc}
B^1 &\longleftrightarrow & B^0 -E^0 \\
E^1 &\longleftrightarrow & F^0 -E^0 \\
F^1 &\longleftrightarrow & F^0  
\end{array}$$
When one considers this birational equivalence in the symplectic category, the uniqueness of symplectic blow-ups implies that the blow-up of $M_\mu^0$ at a ball of capacity $0<c< \mu$ is symplectomorphic to the blow of  $M_{\mu-c}^1$ at a ball of capacity $1-c$. Conversely, the blow-up of  $M_\mu^1$ with capacity $0<c<1$ is symplectomorphic to the blow-up of 
$M_{\mu+1-c}^0$ with capacity $1-c$. In other words, we have a complete symplectic duality between the blow-up of ``large'' balls in $M^i$ and the blow-up of ``small'' balls in $M^{1-i}$. For this reason, we will state our results for both ruled surfaces $M^0$ and $M^1$ but we will often give the complete proof for the split case $M^0$ only, leaving to the reader its relatively easy adaptation (using the above equivalence) to the twisted case $M^1$.

\subsection{Plan of the paper} Here is an overview of the content of the paper. In Section~\ref{se:decompostion}, we briefly recall the geometric facts that lead to the homotopy decomposition of the groups of symplectomorphisms. The actual computations for the groups $\Symp(\tMuci)$ are carried in the Appendices, following the method introduced in Abreu-Granja-Kitchloo~\cite{AGK}. In Section~\ref{se:fibration}, we express rationally the space $\IEmbuci$ as a fibration whose base and fiber are computed. In Section~\ref{se:minimal-models} we compute the minimal model  of $\IEmbuci$, showing in particular that the latter space does not retract to a finite CW-complex for $\mu >1$ and $c \geq \ccrit$.  Finally, in Section~\ref{se:cohomology}, we compute the cohomology of $\IEmbuci$ with rational coefficients.

{\bf Acknowledgements.}
The authors would like to thank Gustavo Granja for useful conversations, Octav Cornea for discussions on some aspects of the theory of minimal models, and  V. Apostolov and A. Broer for conversations on complex algebraic geometry. But above all, the authors are grateful to the referee for reading the paper carefully and giving very pertinent suggestions, in particular for giving a way to correct the computation of the differential of $h$ in the minimal model of $ \Im \Emb_\omega(c,\mu)$.

%%%%%%%%%%%%%%%%%%%%%%%%%%%%%%%%%%%%%%%%%%%%%%%%%%%%%%%%%%%%%%%%%%%%%%%%%%%%%%%%
\section{Homotopy decomposition of the symplectic groups} \label{se:decompostion}
%%%%%%%%%%%%%%%%%%%%%%%%%%%%%%%%%%%%%%%%%%%%%%%%%%%%%%%%%%%%%%%%%%%%%%%%%%%%%%%%

This section is devoted to the homotopy decomposition of the groups $ \Symp(M^i_{\mu})$ and $\Symp(\widetilde M^i_{\mu, c})$. For the convenience of the reader, we first briefly review the geometric arguments that lead to the description of these symplectomorphism groups (and of their classifying spaces) as iterated homotopy pushouts. The references for this are the papers \cite{AGK, AM, LP, MD-Haefliger, MD-Stratification, Pi} and the two Appendices of the present paper in which we carry out the computations for the groups $\Symp(\tMuci)$.

To simplify the notations, we will write $\Gui$ and $\tGuci$ for the group $\Symp(M_\mu^i)$ and $\Symp(\tMuci)$.

\subsection{The limits $\lim_{\mu\to\infty}\Gui$ and $\lim_{\mu\to\infty}\tGuci$}\label{SubSectionInflation}
Let us first recall that the homotopy-theoretic understanding of the groups $\Gui$ and $\tGuci$ heavily relies on the generalization, due to McDuff, of the Lalonde-McDuff inflation technique. These ideas are used in McDuff~\cite{MD-Haefliger} to prove the following fundamental results. In the following two theorems, $\ell$ is the largest integer strictly smaller than $\mu$, i.e $\ell < \mu \le \ell + 1$.

\begin{thm}[See~\cite{MD-Haefliger, AGK}]\label{Inflation}
For any $\mu\geq 1$ and $\epsilon, \delta >0$, there is a natural diagram
\[
\xymatrix{
G_{\mu}^i \ar[r] \ar[dr] & G_{\mu+\epsilon}^i \ar[d] \\
 & G_{\mu+\epsilon+\delta}^i}
\]
well defined up to homotopy. Altogether, these maps define a homotopy coherent system whose homotopy limit is $\FDiff(M^i)$, the group of fibered $C^{\infty}$-diffeomorphisms, that is, those diffeomorphisms that are lifts to $M^i$ of diffeomorphisms of the base $S^2$ of the fibration $S^2 \hookrightarrow M^i \to S^2$. Moreover, 
\begin{enumerate}
\item the homotopy type of $\Gui$ remains constant as $\mu$ varies in the interval $(\ell,\ell+1]$.
\item The map $\Gui\to G_{\mu+\epsilon}^{i}$ is $(4\ell+2i-1)$-connected. In particular, when $\mu>1$, it induces an isomorphism of fundamental groups.
\item These maps induce surjections $H^{*}(BG_{\mu+\epsilon}^{i})\to H^{*}(BG_{\mu}^{i})$ for all coefficients. Consequently, the map $B\Gui\to\BFDiff$ induces a surjection in cohomology.
\end{enumerate}
\end{thm}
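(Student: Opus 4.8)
The plan is to prove Theorem~\ref{Inflation} by combining the inflation technique of Lalonde--McDuff in the form generalized by McDuff~\cite{MD-Haefliger} with standard $J$-holomorphic curve theory on rational ruled surfaces. First I would recall the basic setup: for each $\mu$, let $\jj_\mu^i$ denote the space of $\omega^i_\mu$-compatible almost complex structures on $M^i$, and let $\Gui$ act on it; the quotient-type arguments of Abreu--McDuff express $B\Gui$ in terms of a stratification of $\jj_\mu^i$ according to which negative-self-intersection classes are represented by embedded $J$-holomorphic spheres. The key geometric input is that an $\omega_\mu$-tame $J$ on $M^i$ can be ``inflated'' along an embedded $J$-holomorphic sphere in a class of nonnegative self-intersection to produce a family of symplectic forms $\omega_t$, $t\in[0,\epsilon]$, with $\omega_0=\omega_\mu$ and $[\omega_t]$ moving linearly in the direction dual to that class; pulling back by a Moser isotopy then gives a well-defined-up-to-homotopy map $\Gui\to G^i_{\mu+\epsilon}$. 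The coherence (that the triangle in the statement commutes up to homotopy, and more generally that one gets a homotopy coherent diagram) follows because the inflation construction depends on auxiliary choices only up to a contractible space of such choices, so one gets a map of the relevant homotopy-colimit diagrams; this is exactly the framework set up in~\cite{AGK}.

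Next I would establish the four numbered assertions. For the identification of the homotopy limit with $\FDiff(M^i)$: as $\mu\to\infty$ the symplectic form becomes very stretched in the base direction, and the only embedded $J$-sphere with negative self-intersection that survives is the fiber class $F^i$ (in the twisted case together with the section); in the limit every compatible $J$ makes the fibration $S^2\hookrightarrow M^i\to S^2$ pseudo-holomorphic, so $\jj_\mu^i$ becomes, up to homotopy, the space of fibered almost complex structures, whose associated diffeomorphism group is $\FDiff(M^i)$. I would make this precise using the Gromov-type compactness and automatic-transversality results for spheres in $4$-manifolds that force the fibration structure, together with the contractibility of the space of almost complex structures fibered over a fixed base complex structure. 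Assertion (1), constancy of the homotopy type of $\Gui$ on $(\ell,\ell+1]$, comes from the observation that the stratification of $\jj_\mu^i$ by which classes $k F^i - j B^i$ (or their analogues) have embedded representatives changes only when $\mu$ crosses an integer, i.e.\ when a new negative class becomes symplectically embeddable; within a half-open interval $(\ell,\ell+1]$ the set of such classes is fixed, and a Moser-type argument over the parameter $\mu$ gives a homotopy equivalence $\Gui\simeq G^i_{\mu'}$ for $\mu,\mu'$ in the same interval. For assertion (2), the connectivity estimate: the map $\Gui\to G^i_{\mu+\epsilon}$ is, up to homotopy, the inclusion of the open dense stratum (the stratum where the maximal number of negative classes degenerate away) and the complementary strata have codimension governed by the index of the relevant moduli spaces; a dimension count of the Chern numbers of the classes $kF^i - jB^i$ that appear shows the first nontrivial stratum has real codimension $4\ell+2i$, so the inclusion is $(4\ell+2i-1)$-connected, which in particular is at least $2$-connected when $\mu>1$, giving the $\pi_1$-isomorphism.

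Finally, assertion (3) about surjectivity in cohomology: from (2) and the long exact sequences (or the Leray--Hirsch/Serre spectral sequence for the relevant fibrations), the maps $\Gui\to G^i_{\mu+\epsilon}$ are highly connected and the transition maps in the inverse system stabilize in each fixed degree, so for the classifying spaces $H^*(BG^i_{\mu+\epsilon})\to H^*(BG^i_\mu)$ is surjective in all degrees; taking the homotopy limit and using that $\BFDiff$ is the homotopy limit of the $B\Gui$, one gets that $B\Gui\to\BFDiff$ is surjective on cohomology. The main obstacle, and the step requiring the most care, is the verification of the \emph{homotopy coherence} of the whole inflation system --- not just that individual triangles commute, but that one has a genuine homotopy coherent diagram whose homotopy limit can be identified with $\FDiff(M^i)$; this is where the machinery of~\cite{AGK} (rigidifying the choices in the inflation/Moser construction, and controlling the space of compatible $J$ making the fibration holomorphic) does the real work, and I would lean on it rather than redoing it from scratch. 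A secondary technical point is getting the connectivity constant exactly right, $4\ell+2i-1$ rather than something weaker, which requires the precise index computation for the exceptional strata and, in the twisted case, correctly accounting for the extra $2i$ coming from the section class of square $-1$.
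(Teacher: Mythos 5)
Your overall strategy correctly identifies the two ingredients the cited references use: the Lalonde--McDuff inflation/Moser construction to produce the maps $\Gui\to G_{\mu+\epsilon}^{i}$ coherently, and the Abreu--McDuff stratification of $\jjui$ by degeneracy type to analyze the homotopy type of the classifying spaces. Parts (1) and (2) are sketched in essentially the right spirit, even though the exact codimension bookkeeping you invoke is loose (for instance the fiber class $F^i$ has self-intersection $0$, not negative, and the codimension of the critical stratum should be read off from the pushout in Theorem~\ref{thm:HomotopyPushout}, where the normal sphere bundle is $S^{4\ell+2i-3}$, not from an ad hoc $4\ell+2i$ count).

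However, your argument for assertion (3) has a genuine gap. You claim that high connectivity of $\Gui\to G_{\mu+\epsilon}^{i}$ implies that $H^{*}(BG_{\mu+\epsilon}^{i})\to H^{*}(BG_{\mu}^{i})$ is surjective \emph{in all degrees}, but a $k$-connected map only controls cohomology through degree $k$; it says nothing about higher degrees, and these classifying spaces have cohomology in arbitrarily high degrees. The surjectivity in (3) is an unconditional statement, valid even for $\mu$ close to $1$ where the connectivity bound is small, so it cannot follow from (2). The correct mechanism is different: one either uses the direct inflation argument (given a cycle in $BG_{\mu}^{i}$, inflate the associated family of symplectic forms to the class $[\omega_{\mu+\epsilon}]$, giving a preimage in $BG_{\mu+\epsilon}^{i}$), or one reads surjectivity off the homotopy pushout of Theorem~\ref{thm:HomotopyPushout}: because the equivariant Euler class of the normal representation is a nonzero divisor, the map $H^{*}(BK(2\ell+i))\to H^{*}\bigl((S^{4\ell+2i-3})_{hK(2\ell+i)}\bigr)$ is surjective, so the Mayer--Vietoris sequence of the pushout splits into short exact sequences, one of which is precisely $0\to\langle e\rangle\to H^{*}(BG_{\mu}^{i})\to H^{*}(BG_{\mu-1}^{i})\to 0$. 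This is the argument the paper actually carries out in Appendix~\ref{se:appendixB} (Theorem~\ref{CohomologyModuleBlowUp}) for the blow-up case, and it is the one you should adapt here rather than appealing to connectivity.
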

The same arguments can be adapted to the case of $\tGuci$ and yield

\begin{thm}[\cite{Pi}]\label{InflationBlowUp}
Given $c\in(0,1)$, there is a homotopy coherent system of maps 
\[
\xymatrix{
\tGuci \ar[r] \ar[dr] & \tG_{\mu+\epsilon,c}^{i} \ar[d] \\
 & \tG_{\mu+\epsilon+\delta,c}^{i}}
\]
defined for all $\mu\geq 1$ and all $\epsilon, \delta >0$, whose homotopy limit is $\FDiff_{*}(M^i)$, the group of fibered $C^{\infty}$-diffeomorphisms of $M^{i}$ that fix a point $p\in M^{i}$. Moreover, 
\begin{enumerate}
\item the homotopy type of $\tGuci$ remains constant as $\mu$ varies in either  $(\ell,\ell + c)$ or $[\ell + c, \ell+1]$.
\item The map $\tGuci\to\tG_{\mu+\epsilon,c}^{i}$ is $(4\ell+2i-3)$-connected if $c\geq\ccrit$, and $(4\ell+2i-1)$-connected if $c<\ccrit$. In particular,  when $\mu>1$, it induces an isomorphism of fundamental groups.
\item These maps induce surjections $H^{*}(B\tG_{\mu+\epsilon,c}^{i})\to H^{*}(B\tG_{\mu,c}^{i})$ for all coefficients. Consequently, the map $B\tGuci\to\BFDiff_{*}$ induces a surjection in cohomology.
\end{enumerate}
\end{thm}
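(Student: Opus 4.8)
The plan is to transplant to the blow-ups $\tMuci$ the inflation-and-stratification machinery behind Theorem~\ref{Inflation}, following~\cite{MD-Haefliger, AGK}, while keeping track of the extra homology class $E$ of the exceptional divisor. To produce the maps $\tGuci\to\tG_{\mu+\epsilon,c}^{i}$, I would invoke the Lalonde--McDuff inflation lemma in McDuff's form: given any $\tom_{\mu,c}$-compatible $J$ on $\tMuci$ and any embedded $J$-holomorphic representative of the fibre class $F^{i}$ — such a representative exists and varies smoothly with $J$, since $(F^{i})^{2}=F^{i}\cdot E=0$ and positivity of intersections forces $F^{i}$ to be represented by embedded spheres — there is a family of symplectic forms $\om_{s}$, $s\in[0,\infty)$, all taming $J$, with $[\om_{s}]=[\tom_{\mu,c}]+s\,\PD[F^{i}]$. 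Since $F^{i}\cdot F^{i}=F^{i}\cdot E=0$, this family keeps the areas of $F^{i}$ and of $E$ fixed and increases the area of $B^{i}$ linearly, so $[\om_{\epsilon}]=[\tom_{\mu+\epsilon,c}]$; Moser's theorem, together with the contractibility of the spaces of auxiliary choices (compatible $J$'s, inflation data, Moser paths), then produces a map $\tGuci\to\tG_{\mu+\epsilon,c}^{i}$ well defined up to homotopy, and the same contractibility yields the coherence homotopies. Its homotopy colimit is identified with $\FDiff_{*}(M^{i})$ exactly as in the closed case: as $\mu\to\infty$ the fibre area stays equal to $1$ while the base area diverges, so in the limit a symplectomorphism is forced to respect the ruling and to preserve the small exceptional divisor, hence to descend to a fibred diffeomorphism of $M^{i}$ fixing a point, and matching the stratified spaces of compatible almost complex structures pins down the colimit.

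Items (1) and (2) are controlled by the stratification of the space $\jj$ of $\tom_{\mu,c}$-compatible almost complex structures according to the homology classes of embedded $J$-holomorphic curves of negative self-intersection; on $\tMuci$ the relevant classes are $E$, $F^{i}-E$, $B^{i}-kF^{i}$ and $B^{i}-kF^{i}-E$. An elementary computation of symplectic areas shows that the set of those classes with a positive area — hence potentially represented — is constant precisely on the two intervals $\mu\in(\ell,\ell+c)$ and $\mu\in[\ell+c,\ell+1]$, the only wall inside $(\ell,\ell+1)$ being $\mu=\ell+c$, i.e.\ $c=\ccrit$, where $\lfloor\mu-c\rfloor$ jumps. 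Within each such chamber, inflation can be run both forward along $F^{i}$ and backward by deflation, so the comparison maps are homotopy equivalences, which gives (1). For (2), the inflation picture identifies $\jj$ at parameter $\mu$ with the complement, inside $\jj$ at parameter $\mu+\epsilon$, of the strata carried by the classes that become representable only at the larger parameter, and the induced map of classifying spaces — which are homotopy quotients of the respective $\jj$'s — is then highly connected, the connectivity being the real codimension of the lowest such stratum minus a fixed shift. An index computation for the relevant moduli spaces of embedded $J$-holomorphic spheres, using their automatic regularity in dimension four, then produces the connectivities $4\ell+2i-1$ and $4\ell+2i-3$, the case split reflecting whether the controlling curve class has the form $B^{i}-kF^{i}$ or is one whose representatives are forced to meet $E$. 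Since $\mu>1$ these connectivities are positive in every case, which yields the stated control on $\pi_{1}$.

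Item (3) follows from the structure of $\BFDiff_{*}$, which carries the integral — hence rational — cohomology of a space built from the classifying space of the structure group of the ruling and that of a point stabilizer; these classes restrict to nonzero classes in every $H^{*}(\tBGuci)$. Combining this with the connectivity just established and with the homotopy-pushout descriptions of the classifying spaces obtained in the Appendices by the method of~\cite{AGK}, a Mayer--Vietoris argument shows that each map $H^{*}(\tBG_{\mu+\epsilon,c}^{i})\to H^{*}(\tBGuci)$ is onto, and hence so is $H^{*}(\BFDiff_{*})\to H^{*}(\tBGuci)$.

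The main obstacle — as in~\cite{MD-Haefliger, MD-Stratification, AGK} — is the $J$-holomorphic analysis: one must show that the moduli spaces of the relevant embedded $J$-holomorphic curves on $\tMuci$, and of the cuspidal configurations into which they degenerate, are smooth of the expected dimension and stratify $\jj$ in a locally trivial way, so that both the inflation constructions and the codimension count above are legitimate. The genuinely new point, compared with the closed surfaces $\STS$ and $\PbP$, is the need to control bubbling off of, and onto, the exceptional divisor $E$; it is exactly this analysis that produces the dichotomy between $c<\ccrit$ and $c\geq\ccrit$. These transversality and gluing statements for the blow-ups are carried out in detail in the two Appendices.
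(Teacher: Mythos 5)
The paper itself does not prove this theorem; it simply cites Theorem~1.3 and Proposition~3.6 of~\cite{Pi}, so there is no argument in the text to compare your proposal against line by line. Your overall strategy — produce the comparison maps by inflation along the fibre class, read off the chamber structure (1) and the connectivity (2) from the codimension of the strata of $\tjjuci$, and get surjectivity (3) from a Mayer--Vietoris computation on the homotopy pushouts of classifying spaces — is the right one, and is in line with the method of~\cite{MD-Haefliger,AGK,Pi}.

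There is, however, a genuine gap at the very first step. You assert that the fibre class $F^{i}$ admits an embedded $J$-holomorphic representative for every compatible $J$ on $\tMuci$, \emph{and varying smoothly with $J$}, because $(F^{i})^{2}=F^{i}\cdot E=0$ and positivity of intersections force embeddedness. This justification only shows that an \emph{irreducible} $J$-holomorphic $F^{i}$-curve, if it exists, is embedded and disjoint from the $E$-curve; it does not show that an irreducible representative exists. In fact, on the blow-up $F^{i}$ is \emph{not} symplectically indecomposable: for every compatible $J$ the exceptional classes $E$ and $F^{i}-E$ are represented by embedded $J$-holomorphic spheres meeting in one point (since $E\cdot(F^{i}-E)=1$), so $F^{i}$ always admits a reducible representative, and an irreducible one through a prescribed point need not exist or vary smoothly with $J$. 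The inflation along $F^{i}$ can still be carried out — using the decomposition $\PD[F^{i}]=\PD[F^{i}-E]+\PD[E]$ and a closed $2$-form supported near the nodal configuration, i.e.\ McDuff's inflation along reducible curves, or by running the inflation along the always-present classes $F^{i}-E$ and $E$ rather than along an assumed irreducible $F^{i}$-sphere — but your stated reason is a non sequitur, and making this precise is exactly the technical content that~\cite{Pi} supplies. Separately, the final sentence claims that the transversality and gluing analysis for $\tjjuci$ is done in this paper's Appendices; it is not — Appendix~A treats the integrable locus and its contractibility, and Appendix~B is purely algebraic — so that supporting citation should instead point to~\cite{Pi} and~\cite{MD-Stratification}.
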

\begin{proof}
See Theorem~1.3 and Proposition~3.6 in~\cite{Pi}.
\end{proof}

\subsection{The action of $\Symp(\Mui)$ on compatible almost complex structures} 

Let $\jjui$ be the space of all $C^{\infty}$-almost complex structures compatible with the symplectic form $\omui$ on $\Mui$. This is an infinite dimensional Fr\'echet manifold on which $\Gui$ acts by conjugation, that is,  $ \phi \cdot J \equiv (d\phi) J (d\phi)^{-1}$  where $\phi \in\Gui$  and $ J \in \jjui$. Observe that because $\jjui$ is contractible, the associated homotopy orbit space (i.e. the Borel construction)
\[
\big(\jjui\big)_{h\Gui}:=E\Gui \times_{\Gui} \jjui
\]
is homotopy equivalent to the classifying space $\BGui$. It is a standard fact that the projections yield an equivariant diagram
\[
\xymatrix{
E\Gui \times \jjui   \ar[r] \ar[d] & \jjui \ar[d] \\
\BGui \ar[r]^-{\phi} &  \jjui/\Gui
}
\]
such that the preimage $\phi^{-1}[J]$ is naturally identified with the classiying space $BK_{J}$ of the stabilizer subgroup $K_{J}$ of $J$. In our case, this isotropy subgroup is the group of isometries of the almost Hermitian structure associated to the pair $(\omui,J)$ and, hence, is a compact Lie group. Moreover, as we will explain below, the orbit category associated to the action of $\Gui$ on $\jjui$ is essentially finite and can be understood by combining $J$-holomorphic techniques with standard results from the theory of deformation of complex structures. This leads to a description of $\BGui$ in terms of  classifying spaces $\BK_{J}$ of finitely many compact Lie subgroups $K_{J}\subset\Gui$. 
 
\subsection{The stratification of $\jjui$ as an orbit decomposition}

The space $\jjui$ is naturally partitioned in $(\ell+1)$ strata indexed by even integers in the split case $i=0$ and by odd integers in the twisted case $i=1$:
\[
\jj_{\mu}^{i}=
\jj_{\mu,i}^{i}\sqcup\jj_{\mu,2+i}^{i}\sqcup\cdots\sqcup \jj_{\mu,2\ell+i}^{i}
\]
where as usual $\ell$ is the largest integer strictly smaller than $\mu$. The stratum $\jj_{\mu,2k+i}^{i}$ is made of those almost complex structures $J$ for which the class $B^i - k F^i$ can be represented by an embedded $J$-holomorphic $2$-sphere. Note that this is indeed a partition: by positivity of intersection, a $J$-structure cannot belong to more than one such stratum, and any $J \in \jjui$ must belong to at least one stratum since the GW-invariant associated to the class $B^{i}$ does not vanish (use then the Gromov compactness theorem to conclude). Each stratum is a smooth co-oriented Fr\'echet submanifold of finite codimension: the stratum $\jj_{\mu,i}^{i}$ is an open and dense subset of $\jjui$ while for $j=2k+i\geq 2$ the stratum $\jj_{\mu,j}^{i}$ is of codimension $2j-2$. The reader will find in \cite{AM} the proofs of the results regarding the stratification of $\jjui$ and further references.

Each stratum corresponds to a toric structure on $\Mui$, unique up to equivariant symplectomorphisms. In particular, $\jj_{\mu,j}^{i}$ contains a Hirzebruch complex structure $J_{j}$, unique up to diffeomorphisms, coming from an identification of $(\Mui,J_{j})$ with the Hirzebruch surface $\F_{j}:=\mathbb{P}(\Oo(-j)\oplus\C)$ (hence our choice of indices). The stabilizer subgroup $K(j)$ of $J_{j}$ is given, up to isomorphism, by: 
\[
K(j)\simeq
\begin{cases}
\SO(3)\times\SO(3) & \text{if $j=0$,}\\
S^{1}\times\SO(3) & \text{if $j=2k$, $k\geq 1$,}\\
\U(2) & \text{if $j=2k+1$, $k\geq 0$.}
\end{cases}
\]
The closure of $\jj_{\mu,j}^{i}$ in $\jjui$ is the union of all strata of index $n\geq j$ 
\[
\overline{\jj}_{\mu,j}^{i}:=\jj_{\mu,j}^{i}\sqcup\cdots\sqcup\jj_{\mu,2\ell+i}^{i}\]
In fact, using $J$-holomorphic gluing techniques, one can show that the partition is a genuine stratification: each $\jj_{\mu,j}$ has a neighborhood $\Nn_{j} \subset\jj_{\mu}$ which, once given the induced stratification, has the structure of a locally trivial fiber bundle whose typical fiber is a cone over a finite dimensional stratified space.

Most importantly, the action of $\Gui$ preserves each stratum and, although the action restricted to a stratum cannot be transitive (because, for instance, each stratum contains both integrable and non-integrable structures), the inclusion 
\[
\Gui/K(j)\into \jj_{\mu,j}^{i}
\]
of the symplectic orbit of $J_{j}$ in $\jj_{\mu,j}^{i}$ is a weak homotopy equivalence. 

Let us consider the particular case $\Muo=(S^{2}\times S^{2},\mu\sigma\oplus\sigma)$ with 
$1<\mu\leq 2$ more closely. For $\mu$ in that range, %$\ell=\mu-1$ and 
the space $\jjuo$ is made of an open stratum $\jj_{\mu,0}^{0}\simeq G_{\mu}^{0}/K_{0}$ and a codimension $2$ stratum $\jj_{\mu,2}^{0}\simeq G_{\mu}^{0}/K(2)$. The isotropy groups intersect along a common $\SO(3)$ which is the $\SO(3)$ factor in $K(2)=S^{1}\times\SO(3)$ and the diagonal $\SO(3)$ in $K(0)=\SO(3)\times\SO(3)$. The action of the $S^{1}$ factor of $K(2)$ on a fiber of the normal bundle of $\jj_{\mu,2}^{0}$ is isomorphic to the standard linear action of $S^{1}$ on $\R^{2}$. In particular, $K(2)$ acts transitively on the unit normal bundle over $J_{2}$ with stabilizer $\SO(3)$. Now assume that there exists a $G_{\mu}^{0}$-invariant tubular neighborhood $\Nn:=\Nn(\jj_{\mu,2}^{0})$ of $\jj_{\mu,2}^{0}$ in $\jj_{\mu}^{0}$ isomorphic to a tube $G_{\mu}^{0}\times_{K(2)}D^{2}$. Then we could write the contractible space $\jj_{\mu}^{0}$ as an equivariant homotopy pushout
\begin{equation}
\label{PushoutDiagramSimpleCase-1}
\xymatrix{
\Nn(\jj_{\mu,2}^{0}) - \jj_{\mu,2}^{0}  \ar[r] \ar[d] & \jj_{\mu,2}^{0} \simeq G_{\mu}^{0}/ K(2) \ar[d] \\
\jj^{0}_{\mu,0}\simeq G_{\mu}^{0}/K(0) \ar[r] &  \jj_\mu^{0}
}
\end{equation}
and, by applying the Borel construction $EG_{\mu}^{0} \times_{G_{\mu}^{0}}$, we would get another pushout diagram
\begin{equation}
\label{PushoutDiagramSimpleCase-2}
\xymatrix{
B\SO(3)  \ar[r] \ar[d] & B(S^{1}\times\SO(3)) \ar[d] \\
B(\SO(3)\times\SO(3)) \ar[r] &  BG_{\mu}^{0}   
}
\end{equation}
that would decompose (the homotopy type of) $B\Guo$ along conjugacy classes of maximal compact subgroups. The only problem with this argument is that it may be impossible to construct such an invariant tubular neighborhood $\Nn$. Nevertheless, as explained in~\cite{AGK} Appendix D, a slice theorem for the action of $\Gui$ on $\jjui$ allows one to make the previous argument completely rigorous\,\footnote{See also~\cite{AG} for a different, more algebraic, approach.} by defining, for any indices $i$ and $j$, an $A_{\infty}$-action of $\Gui$ near $\jj_{\mu,j}^{i}$ which is essentially equivalent to the left action of $\Gui$ on a tube $\Gui\times_{K(j)}D^{2j-2}$. 

In the general case $\mu>1$, $i\in\{0,1\}$, one may decompose $\jjui$ as the union
\[
\big(\jj_{\mu,i}^{i}\sqcup\cdots\sqcup\jj_{\mu,2\ell+i-2}^{i}\big)\cup\Nn(\jj_{2\ell+i}^{i})
\]
To apply the previous ideas to this decomposition, one has to understand the action of $K(2\ell+i)$ on the normal bundle $\Nn(\jj_{2\ell+i}^{i})$ of the last stratum and one must compute the homotopy orbit space 
\[\big(\jj_{\mu,i}^{i}\sqcup\cdots\sqcup\jj_{\mu,2\ell+i-2}^{i}\big)_{h\Gui}.\] 
In principle, this can be done using $J$-holomorphic gluing techniques but, as explained in~\cite{MD-Stratification}, the computations quickly become intractable as $\mu$ increases. A solution to this problem, found by Abreu-Granja-Kitchloo in~\cite{AGK}, is to look at the restriction of the action $\Gui\times\jjui\to\jjui$ to the subspace $\Ii_{\mu}^{i}\subset\jjui$ of compatible \emph{integrable} complex structures. As they explained, the point is that for K\"ahler $4$-manifolds satisfying some analytical conditions, the action of the symplectomorphism group on the space of compatible integrable complex structures can be understood using complex deformation theory. In the special case of rational ruled surfaces $\Mui$, they showed that
\begin{enumerate}
\item $\Ii_{\mu}^{i}$ is a submanifold of $\jjui$ and the inclusion $\Ii_{\mu}^{i}\subset\jjui$ is transverse to each stratum $\jj_{\mu,j}^{i}$.
\item The stratum $ \Ii_{\mu,j}^{i}:=\Ii_{\mu}^{i}\cap\jj_{\mu,j}^{i}$ is homotopy equivalent to the symplectic orbit of $J_{j}$ in $\Ii_{\mu,j}^{i}$.
\item For any $J\in\Ii_{\mu}^{i}$,
the tangent space of $\Ii_{\om}^i$ at $J$ is naturally identified with   
$T_{J}((\Diff(M) \cdot J)\cap \Ii_{\om}^i)\oplus H^{0,1}_{J}(T\Mui)$, where $T\Mui$ denotes the sheaf of germs of holomorphic vector fields. Here the (infinite dimensional) first summand is the tangent space to the stratum $\Ii_{\mu,j}^{i}$ to which $J$ belongs, while the (finite dimensional) second summand is naturally identified with the  fiber of the normal bundle of $\Ii_{\mu,j}^{i}\subset \Ii_{\mu}^{i}$ at $J$.
 Moreover, this identification is equivariant with respect to the action of the isometry group $K(j)$.
\end{enumerate}
It follows that $\Ii_{\mu}^{i}$ is itself a stratified space, that the inclusion $\Ii_{\mu,j}^{i}\into\jj_{\mu,j}^{i}$ is a homotopy equivalence, that the equivariant diffeomorphism type of a normal neighborhood of the $j^{\text{th}}$ stratum is the same in both stratifications, and that this neighborhood does not depend on the parameter~$\mu$ as long as $\mu> (j-i)/2$. These facts, together with the results of Appendix D in~\cite{AGK}, imply that the action of $\Gui$ on the normal bundle $\Nn(\jj_{2\ell+i}^{i})$ is homotopically equivalent to the left action of $\Gui$ on the tube $\Gui\times_{K(2\ell+i)} H^{0,1}_{J_{2\ell+i}}(T\Mui)$, and that the homotopy orbit space
\[\big(\jj_{\mu,i}^{i}\sqcup\cdots\sqcup\jj_{\mu,2\ell+i-2}^{i}\big)_{h\Gui}\] 
can be understood iteratively. Finally, because $\dim_{\C} H^{0,1}_{J_{j}}(T\Mui) = 2j-2$, we get the following homotopy decomposition of $B\Gui$:
\begin{thm}[\cite{AGK}, Theorem 5.5]\label{thm:HomotopyPushout}  There is a homotopy pushout diagram
$$
\xymatrix{
\big( S^{4\ell+2i-3}\big)_{hK(2\ell+i)} \ar[d]^{j_{\ell}} \ar[r]^-{\pi} & ~~BK(2\ell+i)~~  \ar[d]^{i_{\ell}} \\
BG_{\mu-1}^{i} \ar[r]^{} & BG_{\mu}^{i}  
}
$$
where $ \ell < \mu \leq \ell +1$,  $\pi$ is the bundle associated to the representation of $K(2\ell+i)$ on $H^{0,1}_{J_{2\ell+i}}(T\Mui)$,  $i_{\ell}$ is induced by the inclusion $K(2\ell+i)\into \Gui$, and where the map $BG_{\mu -1}^{i}\to BG_{\mu}^{i}$ coincides, up to homotopy, with the one given by inflation  described in Theorem~\ref{Inflation}.
\end{thm}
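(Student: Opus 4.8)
The plan is to realize the contractible Fr\'echet manifold $\jjui$ as a $\Gui$-equivariant homotopy pushout obtained by splitting off the deepest stratum of its stratification, and then to apply the Borel construction $(-)_{h\Gui}:=E\Gui\times_{\Gui}(-)$, which, being a homotopy colimit, carries homotopy pushouts to homotopy pushouts.

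First I would set up the equivariant pushout. Write $\jjui=\Uu_{0}\cup\Uu_{1}$, where $\Uu_{1}=\Nn(\jj_{2\ell+i}^{i})$ is the distinguished $\Gui$-invariant open neighborhood of the deepest stratum $\jj_{\mu,2\ell+i}^{i}$ --- which, since no stratum lies below it, is a disk bundle over $\jj_{\mu,2\ell+i}^{i}$ with fiber $D^{4\ell+2i-2}$ --- and $\Uu_{0}=\jjui\setminus\jj_{\mu,2\ell+i}^{i}$, equipped with the induced stratification. Then $\Uu_{0}\cap\Uu_{1}$ retracts equivariantly onto the unit normal sphere bundle of $\jj_{\mu,2\ell+i}^{i}$, so $\jjui$ is the $\Gui$-equivariant homotopy pushout of $\jj_{\mu,2\ell+i}^{i}$ and $\Uu_{0}$ along that sphere bundle. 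Applying $(-)_{h\Gui}$ and using that $\jjui$ is contractible gives a homotopy pushout whose bottom-right corner is $B\Gui$. The other three corners are identified as follows. Because the orbit map $\Gui/K(2\ell+i)\into\jj_{\mu,2\ell+i}^{i}$ is a weak equivalence, $(\jj_{\mu,2\ell+i}^{i})_{h\Gui}\simeq BK(2\ell+i)$. Because, equivariantly, $\Nn(\jj_{2\ell+i}^{i})\cong\Gui\times_{K(2\ell+i)}D\big(H^{0,1}_{J_{2\ell+i}}(T\Mui)\big)$, its sphere bundle is $\Gui\times_{K(2\ell+i)}S^{4\ell+2i-3}$, with homotopy orbit space $(S^{4\ell+2i-3})_{hK(2\ell+i)}$, and the induced map to $BK(2\ell+i)$ is the projection $\pi$ of the sphere bundle associated to the $K(2\ell+i)$-representation $H^{0,1}_{J_{2\ell+i}}(T\Mui)$. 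The map $BK(2\ell+i)\to B\Gui$ is then $i_{\ell}$, induced by the inclusion $K(2\ell+i)\into\Gui$.

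It remains to identify $(\Uu_{0})_{h\Gui}$ with $BG_{\mu-1}^{i}$, to check that the induced map to $B\Gui$ is homotopic to the inflation map of Theorem~\ref{Inflation}, and to pin down $j_{\ell}$. Here I would argue by induction on $\ell$, the base case being handled directly (compare the pushout~(\ref{PushoutDiagramSimpleCase-2})). The key observation is that $\Uu_{0}=\jjui\setminus\jj_{\mu,2\ell+i}^{i}$, with its induced stratification, carries exactly the same stratified data as $\jj_{\mu-1}^{i}$ (which has one fewer stratum, since $\ell-1<\mu-1\le\ell$): the orbit types $\Gui/K(j)$ for $j<2\ell+i$, and in particular the $K(j)$-equivariant normal models $H^{0,1}_{J_{j}}(T\Mui)$ of the lower strata, are independent of $\mu$ as soon as $\mu>(j-i)/2$. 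Iterating the pushout construction therefore presents $(\Uu_{0})_{h\Gui}$ by the very homotopy colimit that presents $BG_{\mu-1}^{i}$; and since the inclusions $K(j)\into\Gui$ factor compatibly through $G_{\mu-1}^{i}$ in a way respecting the inflation maps of Theorem~\ref{Inflation}, the resulting map $(\Uu_{0})_{h\Gui}\to B\Gui$ is the inflation map $BG_{\mu-1}^{i}\to B\Gui$, while $j_{\ell}$ is the composite of the inclusion of the sphere bundle into $\Uu_{0}$ with this identification.

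The real obstacle --- precisely the one flagged just before the statement --- is that the argument above is not literally valid: a genuine $\Gui$-invariant tubular neighborhood $\Nn(\jj_{2\ell+i}^{i})$ need not exist, so one cannot simply ``push an equivariant pushout through the Borel construction''. This is where Appendix~D of~\cite{AGK} enters: it supplies an $A_{\infty}$-action of $\Gui$ near each stratum that becomes, after the Borel construction, equivalent to the left action on the honest tube $\Gui\times_{K(j)}D^{2j-2}$, and so produces the homotopy pushout of classifying spaces written above. To obtain a \emph{finite-dimensional} normal model one restricts the action to the subspace $\Ii_{\mu}^{i}\subset\jjui$ of compatible integrable complex structures: by complex deformation theory (the facts (1)--(3) recalled above), $\Ii_{\mu}^{i}$ is stratified, $\Ii_{\mu,j}^{i}\into\jj_{\mu,j}^{i}$ is a homotopy equivalence, $\Ii_{\mu}^{i}\into\jjui$ is transverse to every stratum, and the normal fiber of $\Ii_{\mu,j}^{i}$ is $K(j)$-equivariantly $H^{0,1}_{J_{j}}(T\Mui)$ --- which both identifies the normal representation appearing in the pushout and exhibits it as the $\mu$-independent object required for the induction. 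Verifying that the Borel construction of this $A_{\infty}$-equivariant decomposition really is the asserted homotopy pushout, and that the identifications are natural enough to transport the inflation maps, is the technical core, carried out in detail in~\cite{AGK}.
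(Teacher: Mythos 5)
Your proposal is correct and follows essentially the same line of argument that the paper sketches in the discussion preceding the theorem statement: split off the deepest stratum of $\jjui$, handle the absence of a genuine $\Gui$-invariant tube via the $A_\infty$-slice machinery of Appendix D in \cite{AGK}, obtain a finite-dimensional equivariant normal model from the restriction to integrable structures $\Ii_\mu^i$ and deformation theory, apply the Borel construction, and identify the remaining corner iteratively with $BG_{\mu-1}^i$ together with the inflation map. Note that the paper itself does not give a self-contained proof of this theorem but cites \cite{AGK}, Theorem 5.5, exactly as you defer to \cite{AGK} for the technical core.
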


\subsection{Homotopy decomposition of $B\tGuci$} 

There is  a similar pushout decomposition for the classifying space of $\tGuci$. 
Note that because we can identify symplectically $\tMuco$ with $\widetilde M^1_{\mu-c,1-c}$, there is no loss in generality to restrict ourselves to the case $i=0$, that is, to symplectic blow-ups of the trivial bundle $\Muo$ only. 

All of Abreu-Granja-Kitchloo arguments on $M^0_{\mu}$ apply as well for the group of symplectomorphisms of the blow-up $\tMuco$ if one has in mind the following geometric facts and observations. When passing to the blow-up, the spaces of compatible (almost) complex structures $\tjjuco$ and $\tIi_{\mu,c}^{0}$ are now partitioned according to the degeneracy type of exceptional curves in class $B-E$ (using the notation introduced in Section~1.3). Indeed, recall that there are exactly three exceptional classes in $H_{2}(\tMuco)$, namely $E$, $F-E$, and $B-E$. For generic $J$, they are all represented by embedded $J$-holomorphic spheres. However, when $\mu>1$, the class $B-E$ has strictly larger area than $E$ and $F-E$ and it follows that 
\begin{enumerate}
\item the exceptional classes $E$ and $F-E$ are symplectically indecomposable and, given any $J\in\tjjuco$ , are always represented by embedded $J$-holomorphic spheres. 
\item A $J$-holomorphic representative of the exceptional class $D_0 := B-E$ can only degenerate, as $J$ varies, to a cusp-curve containing a unique embedded representative of either $D_{2k-1}:=B-kF$ or $D_{2k}:=B-kF-E$ for some $1\leq k\leq \ell$. 
\end{enumerate}
Because the intersection $D_{i}\cdot D_{j}$ is always negative, this defines a partition of spaces of compatible (almost) complex structures in which the $j$\textsuperscript{th} stratum $\widetilde{\jj}_{\mu,c,j}^{0}$ consists of those complex structures $J$ for which the class $D_{j}$ admits an embedded $J$-holomorphic representative. The set of strata is in bijection with the set of equivalence classes of toric actions on $\tMuco$ and incorporates strata coming, after blow-up, from both $\jjuo$ and $\jj_{\mu-c}^{1}$. It is easy to see that the strata are now indexed by all integers between $0$ and $m$, where $m=2\ell$ if $c<\ccrit$ or $m=2\ell-1$ if $c\ge \ccrit$. In particular, when $c$ belongs to the range $[\ccrit, 1)$, which is the case considered in this paper, this stratification starts at the dense stratum associated to $D_0$ and ends at the $(2\ell-1)$\textsuperscript{th} stratum associated to the class $D_{2\ell - 1} = B-\ell F$. Again, the symplectomorphism group $\tGuco$ acts on $\tjjuco$ preserving the stratification. 
%: StructureStratificationBlowUp
\begin{prop}[see \cite{LP} \S4, and~\cite{MD-Stratification}]\label{StructureStratificationBlowUp} Given $\mu\geq1$ and $c\in(0,1)$, recall that $\ell$ is such that $\ell < \mu \le \ell+1$. Let $m$ be the index of the last stratum in $\tjjuco$, namely
\[
m:=
\begin{cases}
2\ell   &\text{~if $c<\ccrit$}\\
2\ell-1 &\text{~if $c \geq \ccrit$}
\end{cases}
\]
Then, given $1\leq j\leq m$, we have
\begin{enumerate}
\item The subspace $\widetilde{\jj}_{\mu,c,j}^{0}$ is a smooth, co-oriented, codimension $2j$  submanifold whose closure is the union $\bigsqcup_{j\leq s\leq m}\widetilde{\jj}_{\mu,c,s}$.
\item The stratum $\widetilde{\jj}_{\mu,c,j}$ contains a complex structure $\widetilde{J}_{j}$, unique up to diffeomorphisms, coming from the blow-up of the Hirzebruch surface $\F_{j}$ at a point $p$ belonging to the zero section. Equivalently, one can obtain $\widetilde{J}_{j}$ by blowing up the even Hirzebruch surface $\F_{2k}$ on the zero section if $j=2k$, or on the section at infinity if $j=2k-1$.
\item The group $\tGuco$ acts smoothly on $\widetilde{\jj}_{\mu,c,j}^{0}$. The stabilizer of $\widetilde{J}_{j}$ is the $2$-torus $\widetilde{T}(j)$ generated by the lifts of the K\"ahlerian isometries of $\F_{j}$ fixing the center $p$ of the blow-up. This identifies $\widetilde{T}(j)$ with a maximal torus of $K(j)$.
\item The inclusion of the symplectic orbit $\tGuco\cdot\widetilde{J}_{j}\simeq\tGuco/\widetilde{T}(j)\into\widetilde{\jj}_{\mu,c,j}^{0}$ is a homotopy equivalence.
\end{enumerate}
\end{prop}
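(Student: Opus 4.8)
We only indicate the strategy; the argument is an adaptation to the blow-up $\tMuco$ of McDuff's analysis of the stratification of $\jjui$ (\cite{MD-Stratification}, \cite{AM}), carried out in the present setting in \cite{LP}~\S4, with the reducible classes $B^i-kF^i$ replaced by the exceptional-type classes $D_j$. The three ingredients are a classification of the $J$-holomorphic degenerations of $D_0=B-E$, the Fredholm/gluing package that promotes a class-by-class partition of $\tjjuco$ to a genuine co-oriented stratification, and the explicit K\"ahler models $\widetilde{J}_j$ together with their automorphism groups.

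First I would fix the combinatorics of $H_2(\tMuco;\Z)$. The classes of square $-1$ are exactly $E$, $F-E$ and $D_0$; when $\mu>1$ the first two have strictly smaller $\om$-area than any other $\om$-positive exceptional class, hence are symplectically indecomposable and, by positivity of intersection together with the non-vanishing of the relevant Gromov--Witten invariants, are represented by embedded $J$-spheres for every $J\in\tjjuco$. A direct computation gives $D_i\cdot D_j<0$ for $i\neq j$, so for a given $J$ at most one $D_j$ with $j\geq 1$ carries an embedded $J$-holomorphic representative; and an arithmetic-genus computation shows that the only $\om$-effective decompositions of $D_0$ into curve classes are $D_{2k-1}+(k-1)F+(F-E)$ and $D_{2k}+kF$. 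Imposing $\om(D_j)>0$ and using $\ell<\mu\leq\ell+1$ then pins down the admissible range of $k$, hence the value of $m$: one gets $m=2\ell$ when $c<\ccrit$ and $m=2\ell-1$ when $c\geq\ccrit$. Combined with Gromov compactness this yields the partition described before the statement, and shows that the closure of $\widetilde{\jj}_{\mu,c,j}$ is $\bigsqcup_{j\leq s\leq m}\widetilde{\jj}_{\mu,c,s}$, since a sequence of curves in class $D_j$ can only degenerate to a cusp curve whose embedded component lies in some $D_s$ with $s\geq j$.

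For part (1) I would run the standard infinite-dimensional transversality argument: the universal moduli space of embedded $J$-holomorphic spheres in class $D_j$ is a smooth Banach manifold, its projection to $\tjjuco$ is Fredholm, and its image is a submanifold of real codimension $2(-D_j\cdot D_j-1)$, which equals $2j$ in both parities since $D_{2k-1}^2=-2k$ and $D_{2k}^2=-2k-1$; the co-orientation comes from the complex structure on the normal bundle, identified with the obstruction space ($H^1$ of the deformation complex of the curve), and the local cone-bundle structure of a normal neighbourhood follows from the $J$-holomorphic gluing analysis of \cite{MD-Stratification} applied to the configuration consisting of the embedded $D_j$-curve together with the chain of fibres and $(-1)$-curves recorded above. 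For part (2) the model $\widetilde{J}_j$ is obtained by blowing up the Hirzebruch surface $\F_j$ at a point $p$ on its zero section --- equivalently, by blowing up $\F_{2k}$ on its zero section if $j=2k$ and on its section at infinity if $j=2k-1$; the proper transform of that section is an embedded holomorphic sphere in class $D_j$, so $\widetilde{J}_j\in\widetilde{\jj}_{\mu,c,j}^{0}$, and uniqueness up to diffeomorphism follows from the classification of complex structures on these rational surfaces together with the weak contractibility of the space of compatible integrable complex structures proved in Appendix~\ref{se:appendixA}. For part (3), the stabilizer of $\widetilde{J}_j$ under the conjugation action consists of the $\widetilde{J}_j$-holomorphic symplectomorphisms, i.e.\ of the isometries of the K\"ahler structure $(\om,\widetilde{J}_j)$; such an isometry lifts an isometry of $\F_j$ (resp.\ $\F_{2k}$) fixing $p$, and taking $p$ to be a fixed point of the maximal torus of $K(j)$ one checks, using the structure of $\Aut(\F_j)$, that the isometries of $\F_j$ fixing $p$ form precisely that maximal torus; hence the stabilizer is the connected $2$-torus $\widetilde{T}(j)$, identified with a maximal torus of $K(j)$.

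Finally, part (4) is the blow-up analogue of the weak equivalence $\Gui/K(j)\into\jj_{\mu,j}^{i}$ recalled in Section~\ref{se:decompostion} (property (2) of \cite{AGK}). Writing $\widetilde{\Ii}_{\mu,c,j}^{0}:=\widetilde{\jj}_{\mu,c,j}^{0}\cap\widetilde{\Ii}_{\mu,c}^{0}$ for the integrable locus inside the stratum, one has: (a) $\widetilde{\Ii}_{\mu,c}^{0}$ is transverse to every stratum, so the inclusion $\widetilde{\Ii}_{\mu,c,j}^{0}\into\widetilde{\jj}_{\mu,c,j}^{0}$ is a homotopy equivalence (the analogue for the blow-up of the properties listed before Theorem~\ref{thm:HomotopyPushout}, established in Appendix~\ref{se:appendixA}); and (b) by complex deformation theory, all integrable structures in this stratum have underlying surface the blow-up of $\F_j$ at a point, so the inclusion of the symplectic orbit $\tGuco\cdot\widetilde{J}_j\simeq\tGuco/\widetilde{T}(j)$ into $\widetilde{\Ii}_{\mu,c,j}^{0}$ is a weak homotopy equivalence. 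Composing (a) and (b) proves (4). The step I expect to require the most care is the cone-bundle assertion in part (1): the degenerate limit is now a reducible configuration containing both a sphere of self-intersection $<-1$ and multiply-covered fibre components, so one must verify that the gluing estimates of \cite{MD-Stratification} --- and the analytic hypotheses underlying the Abreu--Granja--Kitchloo slice machinery recalled above --- survive the blow-up; everything else is either a finite computation in $H_2(\tMuco;\Z)$ or a transcription of \cite{LP}~\S4.
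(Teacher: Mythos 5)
The paper does not actually prove this proposition; it states it with citations to \cite{LP}~\S4 and \cite{MD-Stratification}. For parts (1)--(3) your sketch reconstructs the argument in the spirit of those references: the combinatorics of $D_j$ and its degenerations, the Fredholm codimension count $2(-D_j\cdot D_j-1)=2j$ (which is correct in both parities), and the toric/Teichm\"uller identification of $\widetilde{J}_j$ with stabilizer $\widetilde{T}(j)$. That is consistent with the cited sources.

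Your argument for part (4), however, is circular. You propose to compose (a) $\widetilde{\Ii}_{\mu,c,j}^{0}\into\widetilde{\jj}_{\mu,c,j}^{0}$ is a weak equivalence, with (b) $\tGuco\cdot\widetilde{J}_j\into\widetilde{\Ii}_{\mu,c,j}^{0}$ is a weak equivalence. Step (b) is indeed what Appendix~\ref{se:appendixA} establishes via deformation theory, the Teichm\"uller lemma, and the $\Iso\into\Hol$ equivalence. But step (a) does \emph{not} follow from transversality of $\widetilde{\Ii}_{\mu,c}^{0}$ to the stratification: transversality only controls the codimension of $\widetilde{\Ii}_{\mu,c,j}^{0}$ inside $\widetilde{\Ii}_{\mu,c}^{0}$, not the homotopy type of the inclusion into the ambient stratum of $\tjjuco$. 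In the Abreu--Granja--Kitchloo framework you invoke, the equivalence $\Ii_{\mu,j}\into\jj_{\mu,j}$ is \emph{deduced} by composing the deformation-theoretic orbit equivalence on the $\Ii$ side with McDuff's $J$-holomorphic orbit equivalence $\Gui/K(j)\into\jj_{\mu,j}$ on the $\jj$ side; the latter is a separate input from \cite{AM}, \cite{MD-Stratification}, and is precisely the blow-up statement (4) you are trying to prove. So your step (a) already presupposes (4). Likewise, Corollary~\ref{cor:Contractibility} on the weak contractibility of $\widetilde{\Ii}_{\mu,c}^{i}$ is downstream of the same input and cannot be used to bootstrap it. The direct route, as in \cite{LP}~\S4 and \cite{MD-Stratification}, is to fiber $\widetilde{\jj}_{\mu,c,j}^{0}$ over the space of unparametrized embedded configurations representing $D_j$ (together with the forced exceptional and fiber components), show the fiber is contractible, and identify the base with the orbit $\tGuco/\widetilde{T}(j)$ via transitivity of the action on such configurations. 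A minor further point: the cone-bundle structure of a normal neighbourhood, which you flag as the most delicate step, is not actually part of assertion (1); that item only claims smoothness, codimension, co-orientation, and the description of the closure. The cone-bundle structure is needed later for the pushout decomposition of $B\tGuco$, not here.
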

As we explain in Appendix A, the action of the symplectomorphism group on $\tjjuco$ is homotopically equivalent to its restriction to the subset of compatible integrable complex structures $\tIi_{\mu,c}^{0}$. Since the last stratum has real codimension $2m$, this yields the following description of $B\tGuco$:
\begin{thm}\label{thm:HomotopyPushoutBlowUp}
If $\ell < \mu \leq \ell+1 $ and $c \in (0,1)$, there is a homotopy pushout diagram
$$
\xymatrix{
S^{2m-1}_{h\widetilde{T}(m)} \ar[r] \ar[d]^{j_{m}} & B\widetilde{T}(m) \ar[d]^{i_{m}} \\
B{\widetilde G}_{\mu',c}^0 \ar[r] & B\tGuc^0}
$$
where $m$ is the index of the last stratum of $\tjjuco$, and where
\[
\mu'=
\begin{cases}
\ell +c & \text{if $c<\ccrit$}\\
\ell   & \text{if $c \geq \ccrit$}
\end{cases}
\]
so that $\tG_{\mu',c}^{0}$ is the group associated with a stratification having one stratum less than the stratification associated with $\tG_{\mu,c}^{0}$. The upper horizontal map  is the universal bundle map associated to the representation of $\widetilde{T}(m)$ on $H^{0,1}_{J_{m}}(T\tMuci)$, $i_{m}$ is induced by the inclusion $\widetilde{T}(m)\into \tGuc^0$, and the map $B\tG_{\mu',c}^{0}\to B\tGuc^0$ coincides, up to homotopy, with the one given by inflation  described in Theorem~\ref{InflationBlowUp}.
\end{thm}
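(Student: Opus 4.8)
The plan is to adapt verbatim the Abreu--Granja--Kitchloo strategy that proves Theorem~\ref{thm:HomotopyPushout}, with $\jjui$ replaced by the space $\tjjuco$ of $\om^0_\mu$-compatible almost complex structures on the blow-up $\tMuco$, and with one new external input: the reduction carried out in Appendix~\ref{se:appendixA} showing that the $\tGuco$-action on $\tjjuco$ is homotopically equivalent to its restriction to the subspace $\tIi^0_{\mu,c}$ of compatible \emph{integrable} complex structures (this is where the weak contractibility of $\tIi^0_{\mu,c}$ and its transversality to every stratum are used). Granting that reduction, everything becomes a finite-dimensional equivariant deformation-theory computation near the deepest stratum, exactly as in \cite{AGK}.

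First I would split the contractible space $\tjjuco$ along its last stratum. By Proposition~\ref{StructureStratificationBlowUp} the strata $\tjj^0_{\mu,c,0},\dots,\tjj^0_{\mu,c,m}$ are smooth co-oriented $\tGuco$-invariant submanifolds, the last one closed of codimension $2m$. Setting $X:=\tjj^0_{\mu,c,0}\sqcup\cdots\sqcup\tjj^0_{\mu,c,m-1}$ and letting $\Nn$ be a $\tGuco$-invariant tubular neighborhood of $\tjj^0_{\mu,c,m}$, one has $\tjjuco=X\cup\Nn$ with $X\cap\Nn=\Nn\setminus\tjj^0_{\mu,c,m}$, and $\Nn\setminus\tjj^0_{\mu,c,m}$ equivariantly deformation-retracts onto the unit sphere bundle of the normal bundle of the last stratum. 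This presents $\tjjuco$ as an equivariant homotopy pushout with corners $\Nn\setminus\tjj^0_{\mu,c,m}$, $\tjj^0_{\mu,c,m}$, $X$, and $\tjjuco$. Applying the Borel construction $(-)_{h\tGuco}$ and using that $\tjjuco$ is contractible, so $(\tjjuco)_{h\tGuco}\simeq B\tGuco$, turns this into the claimed square once the three remaining corners are identified.

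The identifications are: by Proposition~\ref{StructureStratificationBlowUp}(4) the orbit inclusion $\tGuco/\tT(m)\into\tjj^0_{\mu,c,m}$ is a homotopy equivalence, so $(\tjj^0_{\mu,c,m})_{h\tGuco}\simeq B\tT(m)$, giving $i_m$ from $\tT(m)\into\tGuco$. For the deleted neighborhood, the slice/$A_\infty$-action machinery of \cite{AGK} Appendix~D — applicable here precisely because of the reduction to $\tIi^0_{\mu,c}$ in Appendix~\ref{se:appendixA} — identifies the normal bundle of the last stratum equivariantly with the tube $\tGuco\times_{\tT(m)}H^{0,1}_{J_m}(T\tMuco)$; since $\dim_\C H^{0,1}_{J_m}(T\tMuco)=m$ its unit sphere bundle is $\tGuco\times_{\tT(m)}S^{2m-1}$, whose homotopy orbit space is $S^{2m-1}_{h\tT(m)}$, with induced map to $B\tT(m)$ the universal bundle map of the representation. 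Finally, truncating the stratification of $\tjjuco$ at the $(m-1)$-st stratum yields a stratified $\tGuco$-space whose strata and equivariant normal neighborhoods are independent of $\mu$ (for $\mu$ large enough) and hence coincide with those governing $B\widetilde{G}^0_{\mu',c}$ for the value $\mu'=\ell+c$ if $c<\ccrit$ and $\mu'=\ell$ if $c\ge\ccrit$ (this is exactly the value for which the associated stratification has one fewer stratum); the inductive structure of the construction, together with the invariance-and-inflation statement of Theorem~\ref{InflationBlowUp}, then gives $X_{h\tGuco}\simeq B\widetilde{G}^0_{\mu',c}$ and shows that the bottom map $B\widetilde{G}^0_{\mu',c}\to B\tGuco$ is the inflation map.

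The main obstacle is the Appendix~\ref{se:appendixA} input: one must check that the entire AGK apparatus — the construction of an $A_\infty$-slice near each stratum, the description of the equivariant normal data via $H^{0,1}_{J_j}(T\tMuco)$, and the $\mu$-independence of these neighborhoods — genuinely transfers to the blow-up, and this ultimately depends on the structure (and weak contractibility) of the space of compatible integrable complex structures on $\tMuco$. A secondary, but still delicate, point is the bookkeeping needed to match the truncated stratification with the one attached to $\tG^0_{\mu',c}$ for the correct $\mu'$ — this is where the dichotomy $c<\ccrit$ versus $c\ge\ccrit$, i.e. $m=2\ell$ versus $m=2\ell-1$, enters — and to verify that the resulting bottom map is the inflation map of Theorem~\ref{InflationBlowUp} rather than merely some map of classifying spaces.
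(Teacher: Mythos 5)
Your proposal is correct and takes essentially the same approach as the paper: the paper derives Theorem~\ref{thm:HomotopyPushoutBlowUp} from the stratification structure of Proposition~\ref{StructureStratificationBlowUp}, the reduction to compatible \emph{integrable} structures established in Appendix~\ref{se:appendixA} (Corollary~A.9), and the AGK Appendix~D slice machinery, exactly as you describe, with the same dimension count $\dim_\C H^{0,1}_{J_m}(T\tMuco)=m$ and the same $\mu'$-bookkeeping separating the cases $c<\ccrit$ and $c\geq\ccrit$.
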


%%%%%%%%%%%%%%%%%%%%%%%%%%%%%%%%%%%%%%%%%%%%%%%%%%%%%%%%%%%%%%%%%%%%%%%%%%%%%%%%
\section{Homotopy type of the space of embedded symplectic balls}\label{se:fibration}
%%%%%%%%%%%%%%%%%%%%%%%%%%%%%%%%%%%%%%%%%%%%%%%%%%%%%%%%%%%%%%%%%%%%%%%%%%%%%%%%

In this section we describe the rational homotopy type of the space
\[
\IEmb(B_{c},\Mui)\simeq \Gui/\tGuci
\]
as the total space of a fibration whose base and fiber are explicitely computed. We will prove the following theorem:

%: thm:homotopy_type
\begin{thm}\label{thm:homotopy-type} 
If $c \ge \ccrit$, the topological space $\IEmb_{\om}^{i}(c,\mu)$ has the rational  homotopy type of the total space of a fibration 
$$ F^i \to  G_\mu^i /\tGuc^i \to M_{\mu}^i,$$
where $F^0= S^{4\ell-1} \times \Omega S^{4\ell+1}$ and $F^1=S^{4\ell+1} \times \Omega S^{4\ell+3}$ as topological spaces. The projection map is homotopy equivalent to the push-forward, through the quotient map
$\Emb_{\om}^{i}(c,\mu) \to \IEmb_{\om}^{i}(c,\mu)$, of the evaluation map at the center of the ball $ev_{center}: \Emb_{\om}^{i}(c,\mu) \to  M_{\mu}^i$. \end{thm}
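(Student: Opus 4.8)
The plan is to exhibit $\IEmbuci\simeq\Gui/\tGuci$ (fibration~(\ref{FibrationPrincipale})) as the total space of a fibration over $\Mui$ by comparing the $\Gui$--action on unparametrized balls with the (transitive, since $\mu>1$) $\Gui$--action on $\Mui$, and then to compute the fiber rationally from the homotopy pushout decompositions of Theorems~\ref{thm:HomotopyPushout} and~\ref{thm:HomotopyPushoutBlowUp}.

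\emph{Step 1: the fibration and the evaluation map.} Fix a symplectically embedded ball $B_{c}\subset\Mui$ with center $p$, and write $\Gui\langle B_{c}\rangle$, $\Gui\langle p\rangle$ and $\Gui\langle B_{c},p\rangle$ for the subgroups of $\Gui$ preserving, respectively, $B_{c}$ setwise, the point $p$, and both. Evaluation at $p$ makes $\Gui\langle B_{c},p\rangle\to\Gui\langle B_{c}\rangle\to B_{c}^{\circ}$ into a fibration over the (contractible) open ball, so $\Gui\langle B_{c},p\rangle\hookrightarrow\Gui\langle B_{c}\rangle$ is a weak homotopy equivalence; likewise $\Gui\langle p\rangle\hookrightarrow\Gui\to\Mui$ exhibits $\Mui=\Gui/\Gui\langle p\rangle$. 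Since $\Gui\langle B_{c}\rangle\simeq\tGuci$ by~\cite{LP}, applying the classifying space functor to the inclusions $\Gui\langle B_{c},p\rangle\subset\Gui\langle p\rangle\subset\Gui$ and taking homotopy fibers over $B\Gui$ produces the sought fibration
\[
F^{i}\;:=\;\Gui\langle p\rangle/\Gui\langle B_{c},p\rangle\;\longrightarrow\;\IEmbuci\;\longrightarrow\;\Mui ,
\]
with $\Gui$--equivariant projection. Under the identification of $\IEmbuci$ with the orbit $\Gui\cdot B_{c}$ this projection sends a ball $\phi(B_{c})$ to its center $\phi(p)$; since the tautological ``bundle of balls'' over $\IEmbuci$ has contractible fibers it admits an essentially unique section, and pulling this back along $\Embuci\to\IEmbuci$ recovers exactly $ev_{center}$. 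This establishes the last assertion of the theorem.

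\emph{Step 2: identifying $F^{i}$ rationally.} By construction $F^{i}$ is the homotopy fiber of $B\tGuci\simeq B\Gui\langle B_{c},p\rangle\to B\Gui\langle p\rangle\simeq E\Gui\times_{\Gui}\Mui$; equivalently, it is the space of symplectic balls of capacity $c$ centered at $p$. Hence it suffices to determine rationally the spaces $B\Gui$ (equivalently the Borel construction $E\Gui\times_{\Gui}\Mui$), $B\tGuci$, and the comparison map between them. All of this is controlled by the two homotopy pushout squares of Theorems~\ref{thm:HomotopyPushout} and~\ref{thm:HomotopyPushoutBlowUp}: their corners are classifying spaces of the compact Lie groups $K(2\ell+i),K(2\ell+i-2),\dots$ and of the $2$--tori $\tT(m),\dots$, their gluing maps are the sphere bundles of the isotropy representations on $H^{0,1}_{J}$ computed in the Appendices, and the two towers are compatible through the blow-down map. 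Feeding the resulting minimal models into the homotopy-fiber computation — or, dually, combining Mayer--Vietoris with the Eilenberg--Moore spectral sequence — one identifies $F^{0}$ rationally with $S^{4\ell-1}\times\Omega S^{4\ell+1}$; the twisted case $i=1$ then follows from $i=0$ via the duality $\tMucl\cong\widetilde M^{0}_{\mu-c,\,1-c}$ explained in the introduction, once one checks that the exponents transform as stated, yielding $F^{1}\simeq_{\Q}S^{4\ell+1}\times\Omega S^{4\ell+3}$.

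\emph{Main obstacle.} The heart of the matter is the rational homotopy-fiber computation of Step~2. What forces $F^{i}$, and hence $\IEmbuci$, not to be a finite $CW$--complex is that $B\tGuci\to E\Gui\times_{\Gui}\Mui$ fails to be surjective on rational cohomology in a certain odd degree; the associated $\Tor$-term is a polynomial algebra $\Q[y_{4\ell}]=H^{*}(\Omega S^{4\ell+1};\Q)$, while a surviving odd generator accounts for the $S^{4\ell-1}$ factor. Determining which class is killed — equivalently, understanding precisely how $\tGuci$ sits inside $\Gui$, which is the content of Theorems~B.8 and~B.9 — requires matching the two pushout towers carefully, including the isotropy representations and the inflation maps of Theorems~\ref{Inflation} and~\ref{InflationBlowUp}, and is where essentially all the work lies.
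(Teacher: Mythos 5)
Your Step~1 takes a genuinely different and arguably cleaner route than the paper. The paper sets up the fibration by introducing $\FDiff$ and $\FDiff_{*}$ as the homotopy colimits (via inflation) of the groups $\Gui$ and $\tGuci$, and extracting a map of fibrations over $B\Guo\to\BFDiff$; the projection's identification with $ev_{\text{center}}$ is then left implicit. You instead work entirely at finite $\mu$, using the chain of subgroups $\Gui\langle B_{c},p\rangle\subset\Gui\langle p\rangle\subset\Gui$ together with the observations that $\Gui\langle B_{c},p\rangle\hookrightarrow\Gui\langle B_{c}\rangle\simeq\tGuci$ is a weak equivalence and $\Mui\simeq\Gui/\Gui\langle p\rangle$. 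This produces the fibration directly and makes the statement about $ev_{\text{center}}$ transparent, which is a genuine gain.

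Your Step~2, however, is only a sketch where the paper gives a concrete argument. The paper uses Theorem~\ref{thm:RationalCohomologyAlgebra} and the known cohomology of $B\Guo$ to show (via the minimal model for the map to $B(S^{1})^{3}$, resp.\ $B(\SO(3)^{2}\times S^{1})$) that the homotopy fibers are rationally $S^{4\ell-1}$ and $S^{4\ell+1}$; it then observes that the induced map $j\colon S^{4\ell-1}\to S^{4\ell+1}$ between them is null-homotopic for degree reasons, so $F^{0}=S^{4\ell-1}\times\Omega S^{4\ell+1}$. Your final paragraph points at roughly the right phenomenon, but the framing in terms of failure of surjectivity of $B\tGuci\to E\Gui\times_{\Gui}\Mui$ in an odd degree and a polynomial $\Tor$-term is not what drives the paper's computation, and as stated it would still need the precise sphere fibers and the nullity of $j$ to be turned into a proof. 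You do flag this as the place where the work lies, so it is an acknowledged gap rather than an error, but it is the step your proposal does not actually carry out.
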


To prove the theorem~\ref{thm:homotopy-type}, it is convenient to consider the untwisted case and the twisted case separately.
%The proof of this theorem will occupy all of this section \ref{se:fibration}. 

% Untwisted case
\subsection{The untwisted case}\label{subsection:untwisted} %We first consider the untwisted case, that is, the space of symplectic embeddings of a ball of capacity $c\in(\ccrit,1)$ in $\Muo=(S^{2}\times S^{2},\om_{\mu})$. 
Let $\FDiff$ be the group of fiber preserving diffeomorphisms of $S^{2}\times S^{2}$, and $\FDiff_{*}\subset\FDiff$ be the stabilizer of a point. Since $\FDiff$ acts transitively on $S^{2}\times S^{2}$, there is a fibration
\[
S^{2}\times S^{2}\to\BFDiff_{*}\to\BFDiff 
\]
and because $\FDiff\simeq\hocolim_{\mu\to\infty}\Guo$ and $\FDiff_{*}\simeq\hocolim_{\mu\to\infty}\tGuco$, there is a homotopy commuting diagram of fibrations

\begin{equation}\label{ExtendedSquare}
\xymatrix{
F_{\eta}  \ar[r] \ar[d] &  G_{\mu}^0/\tG_{\mu,c}^0  \ar[r]^{\eta} \ar[d] &  S^{2}\times S^{2} \ar[d] \\ 
\widetilde{F}_{\tpsi_{\mu,c}} \ar[r] \ar[d]  & \tBG_{\mu,c}^0 \ar[d]  \ar[r]^{\tpsi_{\mu,c}} & \BFDiff_{*} \ar[d] \\ 
F_{\psi_{\mu}} \ar[r]  & B\Guo \ar[r]^{\psi_{\mu}} & \BFDiff
}
\end{equation}
in which the spaces in the leftmost column are defined as the homotopy fibers of the horizontal maps. 
Over the rationals, this diagram simplifies enough to allow explicit computations. For instance, the topological group $\FDiff$ is homotopy equivalent to the semi-direct product $\SO(3)\ltimes\Map(S^2,\SO(3))$ where $\SO(3)$ acts on $\Map(S^{2},\SO(3))$ by precomposition. In fact, the principal fibrations 
\[\Map(S^{2},\SO(3))\to\FDiff\to\SO(3)\]
and
\[\Omega^{2}\SO(3)\to\Map(S^{2},\SO(3))\to\SO(3)\]
both admit sections so that, as a space, $\FDiff\simeq\Omega^{2}\SO(3)\times\SO(3)\times\SO(3)$.
At the classifying space level, we have fibrations with natural sections
\[\Map(S^{2},\BSO(3))\to\BFDiff\to\BSO(3)\]
and
\[\Omega\SO(3)\to\Map(S^{2},\BSO(3))\to\BSO(3).\]
Because the rational cohomologies of $\BSO(3)$ and $\Omega\SO(3)$ are concentrated in even degrees, the corresponding rational spectral sequences collapse at the second stage, and since
\[
H^{*}(\Omega\SO(3)\times\BSO(3)\times\BSO(3);\Q) =H^{*}(K(\Q,2)\times K(\Q,4)\times K(\Q,4)),
\] 
it follows that there are rational homotopy equivalences
\[
\BFDiff\to K(\Q,2)\times K(\Q,4)\times K(\Q,4)\from\Omega\SO(3)\times\BSO(3)\times\BSO(3)
\] 
In fact, since $\Omega\SO(3)\simeq_{\Q}BS^{1}$, there is a natural map
\[
BS^{1}\vee\BSO(3)\vee\BSO(3)\to\BFDiff
\]
that, rationally, extends to a homotopy equivalence
\[
BS^{1}\times\BSO(3)\times\BSO(3)\to\BFDiff.
\]
Note that the same arguments as above show that the classifying space of the stabilizer subgroup $\FDiff_{*}$ is rationally equivalent to $BS^{1}\times BS^{1}\times BS^{1}\simeq K(\Q,2)\times K(\Q,2)\times K(\Q,2)$.

\begin{lemma} Over the rationals, $BG_{\mu}^{0}$ fibers over $B(\SO(3)\times\SO(3) \times S^1)$ with fiber $S^{4\ell+1}$.
\end{lemma}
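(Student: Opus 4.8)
\emph{Plan.} The plan is to identify, over $\Q$, the homotopy fiber $F_{\psi_{\mu}}$ of the map $\psi_{\mu}\colon B\Guo\to\BFDiff$ from the bottom row of diagram~(\ref{ExtendedSquare}), and to show it is rationally an odd sphere. Since the discussion above already produces a rational equivalence $\BFDiff\simeq_{\Q}B(\SO(3)\times\SO(3)\times S^{1})$, with $H^{*}(\BFDiff;\Q)=\Q[a,b,c]$, $|a|=2$, $|b|=|c|=4$, it suffices to prove $F_{\psi_{\mu}}\simeq_{\Q}S^{4\ell+1}$: then $\psi_{\mu}$ itself exhibits $B\Guo$, rationally, as a fibration over $B(\SO(3)\times\SO(3)\times S^{1})$ with fiber $S^{4\ell+1}$. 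I will use two facts from Step~1: that for $\mu>1$ with $\ell<\mu\le\ell+1$ the rational cohomology ring $H^{*}(B\Guo;\Q)$ is $\Q[a,b,c]/(R)$, the polynomial ring on the same three generators modulo a single nonzero relation $R$ of degree $4\ell+2$ (Abreu~\cite{Ab}, Abreu--McDuff~\cite{AM}); and that $B\Guo$ and $\BFDiff$ are simply connected, since $\pi_{0}(\Guo)=0$ for $\mu>1$ and $\pi_{0}(\FDiff)=\pi_{0}(\SO(3))^{2}\times\pi_{2}(\SO(3))=0$.

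\emph{Step 1: identify $\psi_{\mu}^{*}$.} First I would observe that by Theorem~\ref{Inflation}(3) the map $\psi_{\mu}^{*}\colon\Q[a,b,c]\to H^{*}(B\Guo;\Q)$ is surjective, so $H^{*}(B\Guo;\Q)=\Q[a,b,c]/I$ with $I=\ker\psi_{\mu}^{*}$. Comparing Hilbert series with the known ring $\Q[a,b,c]/(R)$ shows that $I$ vanishes in degrees $<4\ell+2$ and is one-dimensional in degree $4\ell+2$; a generator of $I_{4\ell+2}$ is then a nonzero element of the domain $\Q[a,b,c]$, hence a nonzerodivisor, and a further Hilbert-series comparison forces $I$ to equal the principal ideal it generates. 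After adjusting the polynomial generators of $H^{*}(\BFDiff;\Q)$ accordingly, $\psi_{\mu}^{*}$ becomes the quotient map $\Q[a,b,c]\twoheadrightarrow\Q[a,b,c]/(R)$, with $R$ a regular element of degree $4\ell+2$.

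\emph{Step 2: compute the fiber.} Since $\BFDiff$ is simply connected, the Eilenberg--Moore spectral sequence of $F_{\psi_{\mu}}\to B\Guo\xrightarrow{\psi_{\mu}}\BFDiff$ converges, with $E_{2}=\Tor_{\Q[a,b,c]}\big(\Q[a,b,c]/(R),\Q\big)$. As $R$ is a nonzerodivisor, $\Q[a,b,c]/(R)$ has the length-one Koszul resolution $0\to\Q[a,b,c]\xrightarrow{\cdot R}\Q[a,b,c]\to\Q[a,b,c]/(R)\to 0$, so $\Tor_{0}=\Q$ in degree $0$, $\Tor_{1}=\Q$ in degree $4\ell+2$, and $\Tor_{s}=0$ otherwise; these contribute to $H^{0}$ and $H^{4\ell+1}$ of $F_{\psi_{\mu}}$, and for bidegree reasons there is no room for a differential. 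Hence $H^{*}(F_{\psi_{\mu}};\Q)\cong H^{*}(S^{4\ell+1};\Q)$. To promote this to a rational equivalence I would check that $F_{\psi_{\mu}}$ is simply connected: from the homotopy exact sequence of the fibration $\pi_{1}(F_{\psi_{\mu}})$ is the cokernel of $\pi_{2}(B\Guo)\to\pi_{2}(\BFDiff)$, i.e.\ of $\pi_{1}(\Guo)\to\pi_{1}(\FDiff)$; but $\FDiff\simeq\hocolim_{\mu\to\infty}\Guo$ and, by Theorem~\ref{Inflation}(2), each transition map is a $\pi_{1}$-isomorphism for $\mu>1$, so this map is an isomorphism and the cokernel vanishes. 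A simply connected space with the rational cohomology of the odd sphere $S^{4\ell+1}$ is rationally $S^{4\ell+1}$, which completes the argument. (This also shows the minimal model of $B\Guo$ is $(\Q[a,b,c]\otimes\Lambda(h),\,dh=R)$ with $|h|=4\ell+1$, displaying the asserted fibration directly.)

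\emph{Main difficulty.} The delicate point is Step~1: the conclusion rests on knowing the \emph{precise} shape of $H^{*}(B\Guo;\Q)$ --- three polynomial generators of degrees $2,4,4$ and a \emph{single} relation of degree exactly $4\ell+2$ --- for only then is $\ker\psi_{\mu}^{*}$ principal on a nonzerodivisor and the $\Tor$ computation reduces to that of a sphere; this is exactly where the Step~1 calculations of \cite{Ab,AM} are indispensable. Everything else is a formal matter of the Eilenberg--Moore and homotopy exact sequences together with the inflation theorem. (Alternatively one could induct on $\ell$ using the homotopy pushout of Theorem~\ref{thm:HomotopyPushout}, but tracking the maps to $\BFDiff$ through the pushout is less transparent.)
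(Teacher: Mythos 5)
Your proof is correct and follows essentially the same route as the paper's: both arguments start from the Abreu--Granja--Kitchloo computation $H^{*}(B\Guo;\Q)\cong\Q[T,X,Y]/\langle f\rangle$ with $\deg f=4\ell+2$, use the surjectivity of $\psi_{\mu}^{*}$ (Theorem~\ref{Inflation}(3)) to identify it with the quotient map from the polynomial ring, and then conclude the homotopy fiber is rationally an odd sphere by computing $\Tor$ of a one-relation complete intersection. The only difference is cosmetic: the paper invokes the relative minimal model of the fibration where you invoke the Eilenberg--Moore spectral sequence, but these compute the same $\Tor_{\Q[a,b,c]}(\Q[a,b,c]/(R),\Q)$; your extra checks (Hilbert-series argument pinning down the kernel, simple-connectivity of the fiber via Theorem~\ref{Inflation}(2)) are careful spellings-out of what the paper leaves implicit.
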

\begin{proof} 
We know from Abreu-Granja-Kitchloo~\cite{AGK} that the rational cohomology ring of $BG_{\mu}^{0}$ is isomorphic to 
$$\Q[T,X,Y]/\langle f\rangle $$
where the generators are of even degrees $|T|=2$, $|X|=4$, and $|Y|=4$, and where $f$ is an homogeneous polynomial of degree $4\ell+2$. The theory of minimal models (see, for instance, the discussions in the begining of sections \S4 and \S5) implies that, rationaly, the cohomology ring of the homotopy fiber of the map
$$ BG_{\mu}^{0}\to K(\Q,2)\times K(\Q,4)\times K(\Q,4)$$
is isomorphic to an exterior algebra with a single generator of degree $4\ell+1$. Therefore, the homotopy fiber is rationaly equivalent to $K(\Q,4\ell+1)\simeq S^{4\ell+1}$. 
\end{proof}

Similarly, the description of the rational cohomology ring of $B\tG_{\mu,c}^0$ given by theorem~\ref{thm:RationalCohomologyAlgebra} in the Appendix yields
\begin{lemma} Over the rationals, the space $B\tG_{\mu,c}^0$ fibers over $B(S^1 \times S^1 \times S^1)$ with fiber $S^{4\ell - 1}$.
\end{lemma}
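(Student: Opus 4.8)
The plan is to repeat verbatim the argument of the preceding lemma, with $BG_{\mu}^{0}$ replaced by $\tBGuco$. The one new ingredient is the description of the rational cohomology ring of $\tBGuco$ supplied by Theorem~\ref{thm:RationalCohomologyAlgebra} in the Appendix: it is a quotient $\Q[A_{1},A_{2},A_{3}]/\langle g\rangle$ of a polynomial ring on three generators of degree $2$ by a single homogeneous relation $g$, which for $c\ge\ccrit$ has degree $4\ell$. (The drop from the degree $4\ell+2$ occurring for $BG_{\mu}^{0}$ reflects that, for $c\ge\ccrit$, the stratification of $\tjjuco$ has one stratum fewer, ending at the class $D_{2\ell-1}=B-\ell F$; compare Theorem~\ref{thm:HomotopyPushoutBlowUp} with $m=2\ell-1$.)

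First I would pin down the base. By Theorem~\ref{InflationBlowUp}(3) the natural map $\tBGuco\to\BFDiff_{*}$ is surjective in cohomology with arbitrary coefficients, hence an isomorphism on $H^{2}(-;\Q)$ since both groups have dimension $3$; recall from the discussion preceding this lemma that $\BFDiff_{*}\simeq_{\Q}BS^{1}\times BS^{1}\times BS^{1}$. Composing with a rational equivalence $\BFDiff_{*}\to B(S^{1}\times S^{1}\times S^{1})$ yields the map whose rational homotopy fibre $F$ must be identified, and by construction the three degree-$2$ classes $A_{1},A_{2},A_{3}$ are pulled back from the base. Now $H^{*}(B(S^{1})^{3};\Q)=\Q[A_{1},A_{2},A_{3}]$ is a free graded-commutative algebra, while $H^{*}(\tBGuco;\Q)$ is that algebra modulo a single nonzero relation of degree $4\ell$; the theory of minimal models (equivalently, the Eilenberg--Moore spectral sequence, whose $\Tor$-groups here are concentrated in homological degrees $0$ and $1$, so that no multiplicative extension occurs) then forces $H^{*}(F;\Q)$ to be an exterior algebra on a single generator of degree $4\ell-1$. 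Therefore $F\simeq_{\Q}K(\Q,4\ell-1)\simeq_{\Q}S^{4\ell-1}$, which proves the lemma.

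The only step requiring genuine attention is the quoted shape of $H^{*}(\tBGuco;\Q)$: that it has exactly three polynomial generators, all in degree $2$, and that the single relation sits in degree $4\ell$ precisely when $c\ge\ccrit$. This is the content of Theorem~\ref{thm:RationalCohomologyAlgebra}, itself obtained by iterating the homotopy pushout of Theorem~\ref{thm:HomotopyPushoutBlowUp} together with the inflation maps of Theorem~\ref{InflationBlowUp}; once it is in hand, the rest of the argument is purely formal, and no zero-divisor subtlety can intervene since $g$ lives in a polynomial ring.
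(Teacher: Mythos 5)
Your argument is correct and follows essentially the same route the paper intends: the paper dispatches this lemma with "Similarly..." after proving the analogous statement for $BG_{\mu}^{0}$, and you have simply made that analogy explicit. The only ingredient that changes is the degree of the single relation, which for $c\ge\ccrit$ drops to $4\ell$ by Theorem~\ref{thm:RationalCohomologyAlgebra}, and your bookkeeping there (matching $m=2\ell-1$ strata in Theorem~\ref{thm:HomotopyPushoutBlowUp}) is correct.
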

The previous two lemmas implies that the diagram~(\ref{ExtendedSquare}) is homotopy equivalent, over the rationals, to the following commutative diagram in which $\IEmb_\omega^0(c,\mu)\simeq G_\mu^0 /\tGuc^0 $ appears, as desired, as the total space of a fibration whose base and fiber are known: 
\begin{equation}\label{FullDiagramRational}
\xymatrix{
F^0\ar[r] \ar[d] & G_\mu^0 /\tGuc^0 \ar[r] \ar[d] & S^2 \times S^2 \ar[d] \\
S^{4\ell-1}\ar[r] \ar[d]^j & B\tGuc^0 \ar[r] \ar[d] & B(S^1 \times S^1 \times S^1)\ar[d] \\ 
 S^{4\ell+1}\ar[r]& BG_\mu^0  \ar[r] & B(\SO(3) \times \SO(3) \times S^1)}
\end{equation}
Notice that $F^0$ is the fiber of the map $j$ from $S^{4\ell-1}$ to $S^{4\ell+1}$. Any map between such spheres is null homotopic, so $F^0= S^{4\ell-1} \times \Omega S^{4\ell+1}$ as topological spaces. This proves theorem~\ref{thm:homotopy-type} in the untwisted case.

% 1 < \mu \leq 2 
\subsubsection{The particular case $1 < \mu \leq 2$} \label{se:ParticularCase}
When $1 < \mu \leq 2$, one can strengthen theorem~\ref{thm:homotopy-type} by computing the full homotopy type of the embedding space $\Im\Emb_\omega^0(c,\mu)$. This range of $\mu$ corresponds to the first step of the induction process that gives the homotopy type of $BG_\mu^0 $ and $B\widetilde G_{\mu,c}^0 $ as pushout squares.  In this case the Borel construction $S^{4\ell-3}_{hK(2\ell)}=EK(2\ell) \times_{K(2\ell)}S^{4\ell-3}$ gives $S^1_{hK(2)} \simeq B\SO(3)$. Therefore, it is easy to see that there are maps $\psi_0$ and $\psi_1$ that make the following diagram commutative. 
$$
\xymatrix{
B\SO(3) \ar[d]^-{\Delta} \ar[r]^-{\pi} & ~~B(\SO(3) \times S^1) ~~  \ar[d]^{i_1} \ar[ddr]^{\psi_1} &\\
B(\SO(3) \times \SO(3))  \ar[r]^-{i_0} \ar[drr]^{\psi_0} & BG_\mu^0   \ar@{-->}[dr]  & \\
  &  & B(\SO(3) \times \SO(3)\times S^1)}
$$
where $\Delta$ is the diagonal map, $\pi$ is the inclusion of the first factor, $i_0$  and $i_1$ are the inclusions of the classifying spaces of the isotropy subgroups. Note that this diagram holds not only over the rationals %, as in diagram \ref{ExtendedDiagram}, 
but also  over the integers.

Similarly, if $0<\mu-1 \leq c <1$, then the homotopy orbit $S^1_{hT(1)}$ is equivalent to  $BS^1$ and we get the following commutative diagram that also holds over the integers. 
$$
\xymatrix{
BS^1 \ar[d]^{\Delta} \ar[r]^-{\pi} & ~~B(S^1 \times S^1) ~~  \ar[d]^{i_1} \ar[ddr]^{\widetilde{\psi}_1} &\\
B(S^1 \times S^1)  \ar[r]^-{i_0} \ar[drr]^{\widetilde{\psi}_0} & B\widetilde G_{\mu,c}^0   \ar@{-->}[dr]  & \\
  &  & B(S^1 \times S^1\times S^1)}
$$
The fibers of the maps $BG_\mu^0 \rightarrow B(\SO(3) \times \SO(3) \times S^1)$ and $B\widetilde{G}_{\mu,c}^0 \rightarrow B(S^1 \times S^1 \times S^1)$ are given by $\Sigma^2 \SO(3)$ and $S^3$, respectively. Using these fibrations, we can then construct a commutative diagram, as in \eqref{FullDiagramRational}, that now gives the full homotopy type of the space of embedded balls.

\begin{thm}\label{thm:homotopy-type-mu<2}
If $0<\mu-1 \leq c < 1$, the topological space $\Im\Emb_\omega^0(c,\mu)$ has the full homotopy type of the total space of a fibration
\begin{equation}
\label{eq:homotopy-type-mu<2}
 \Omega \Sigma^2 \SO(3)/ \Omega S^3 \to \Im\Emb_\omega^0(c,\mu) \to S^2\times S^2.
\end{equation} 
where the inclusion $\Omega S^3 \subset \Omega \Sigma^2 \SO(3)$ is understood by identifying $\Omega S^3$ with $\Omega \Sigma^2 S^1$ and taking the standard inclusion of $S^1$ in $SO(3)$.

 Moreover, $S^2\times S^2$ is a retract of the space of embedded balls.
\end{thm}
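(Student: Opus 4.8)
The plan is to derive the fibration by assembling, over the integers, the two commutative diagrams displayed above --- the refined maps $\psi\colon B\Guo\to B(\SO(3)\times\SO(3)\times S^{1})$ and $\widetilde{\psi}\colon\tBGuco\to B(S^{1}\times S^{1}\times S^{1})$, with homotopy fibers $\Sigma^{2}\SO(3)$ and $S^{3}$ --- into a single prism, and then to build the retraction by hand. First I would record that $\widetilde{\psi}$ lies over $\psi$ through the map $\tBGuco\to B\Guo$ classified by $\tGuco\subset\Guo$ and the map $q\colon B(S^{1})^{3}\to B(\SO(3)^{2}\times S^{1})$ induced by the maximal torus inclusion, whose homotopy fiber is $\STS$. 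Forming the homotopy pullback $P:=B\Guo\times_{B(\SO(3)^{2}\times S^{1})}B(S^{1})^{3}$ and factoring the square as $\tBGuco\to P$, I would compute $\mathrm{hofib}(\tBGuco\to P)$ in two ways from the fiber sequence of a composite: over $B\Guo$ it equals $\mathrm{hofib}(\IEmbuco\to\STS)$ (since $\Guo/\tGuco\simeq\IEmbuco$ and $P\to B\Guo$ is the pullback of $q$), and over $B(S^{1})^{3}$ it equals $\mathrm{hofib}(S^{3}\xrightarrow{h}\Sigma^{2}\SO(3))$, where $h$ is the map on homotopy fibers induced by $\tBGuco\to B\Guo$. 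This prism argument produces the fibration $\mathrm{hofib}(h)\to\IEmbuco\to\STS$, whose projection is, by Theorem~\ref{thm:homotopy-type}, the push-forward of $\ev_{\mathrm{center}}$.

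The crux is to identify $h$. Here I would use Mather's cube theorem --- homotopy pullback preserves homotopy pushouts of spaces --- so that the homotopy fibers of $\psi$ and $\widetilde{\psi}$ over the three corners of the pushout squares of Theorems~\ref{thm:HomotopyPushout} and~\ref{thm:HomotopyPushoutBlowUp} themselves form homotopy pushouts. As those corner fibers are genuine homogeneous spaces of $\SO(3)^{2}\times S^{1}$ and $(S^{1})^{3}$, a direct computation exhibits $\mathrm{hofib}(\psi)$ as the join $S^{1}\ast\SO(3)$ arising from the two projections $\SO(3)\times S^{1}\to\SO(3)$, $\SO(3)\times S^{1}\to S^{1}$, and $\mathrm{hofib}(\widetilde{\psi})$ as $S^{1}\ast S^{1}$; via $S^{1}\ast Y\cong\Sigma^{2}Y$ this recovers $\Sigma^{2}\SO(3)$ and $S^{3}$. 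Since every comparison map of corner fibers induced by $\tBGuco\to B\Guo$ comes from a maximal torus inclusion --- the identity on the $S^{1}$ factors and the standard inclusion $\iota\colon S^{1}\hookrightarrow\SO(3)$ on the $\SO(3)$ factor --- $h$ is $\id_{S^{1}}\ast\iota=\Sigma^{2}\iota\colon S^{3}=\Sigma^{2}S^{1}\to\Sigma^{2}\SO(3)$. Finally, as $S^{3}$ and $\Sigma^{2}\SO(3)$ are simply connected, $\Sigma^{2}\iota$ is up to homotopy the classifying map of a homomorphism $\Omega S^{3}\to\Omega\Sigma^{2}\SO(3)$, so $\mathrm{hofib}(h)$ is the homotopy quotient $\Omega\Sigma^{2}\SO(3)/\Omega S^{3}$, with $\Omega S^{3}=\Omega\Sigma^{2}S^{1}$ included via $\Omega\Sigma^{2}\iota$; this is the fiber appearing in~\eqref{eq:homotopy-type-mu<2}.

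For the retraction I would argue directly with the toric picture. Give $\Muo=(\STS,\om^{0}_{\mu})$ its standard Hamiltonian $T^{2}$-action, with moment polytope the rectangle $[0,\mu]\times[0,1]$, and let $x_{0}$ be the fixed point over the vertex $(0,0)$. Since $\mu>1$ and $c<1$, the corner triangle $\{s\ge 0,\ t\ge 0,\ s+t\le c\}$ lies in the polytope, and its preimage $B_{c}\subset\Muo$ is an embedded closed symplectic ball of capacity $c$, invariant as a subset under $T^{2}$ and centered at $x_{0}$. The isometry group $\SO(3)\times\SO(3)$ of the split K\"{a}hler metric is contained in $\Guo$, acts transitively on $\Muo$, and has $\mathrm{Stab}(x_{0})=T^{2}$; because $B_{c}$ is $T^{2}$-invariant, the rule $g\,T^{2}\mapsto g\cdot B_{c}$ gives a well-defined continuous map $s\colon\STS=(\SO(3)\times\SO(3))/T^{2}\to\IEmbuco$, and composing it with the center evaluation sends $g\,T^{2}$ to $g\cdot x_{0}=g\,T^{2}$. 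Hence $s$ is a section of the fibration and $\STS$ is a retract of $\IEmbuco$.

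The step I expect to be the main obstacle is the identification $h\simeq\Sigma^{2}\iota$: it forces one to choose the refinements $\psi_{j},\widetilde{\psi}_{j}$ compatibly with both the gluing maps $\Delta,\pi$ of the two pushout presentations and the maximal torus inclusions relating them, and then to track the induced maps of corner fibers with care. This is the local shadow of Step~3 of the general framework --- understanding precisely how $\tGuco$ sits inside $\Guo$ --- and it is the same subtlety that resurfaces as the differential of $h$ in the minimal model computation of Section~\ref{se:minimal-models}.
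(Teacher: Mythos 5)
Your proposal is correct, and the underlying geometry is exactly the paper's: the refined maps $\psi\colon B\Guo\to B(\SO(3)^2\times S^1)$ and $\widetilde\psi\colon\tBGuco\to B(S^1)^3$ built from the integral pushout presentations, the resulting $3\times 3$ diagram of fibrations, and a section of $\ev_{\mathrm{center}}$ coming from the subgroup inclusions $\SO(3)\times\SO(3)\subset\Guo$ and $S^1\times S^1\subset\tGuco$. The difference is one of rigor rather than route: the paper simply \emph{asserts} that the horizontal fibers are $\Sigma^2\SO(3)$ and $S^3$ and that the inclusion is $\Sigma^2\iota$, and disposes of the retraction in one sentence, whereas you justify the fiber identifications via Mather's cube theorem applied to the two pushout squares (producing the joins $S^1*\SO(3)$ and $S^1*S^1$), pin down the comparison map $h$ as $\Sigma^2\iota$ by chasing the corner fibers through the maximal-torus inclusions, and exhibit explicitly the $T^2$-invariant toric ball that makes $S^1\times S^1$ a subgroup of $\tGuco$. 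In short, you prove what the paper only states, and you correctly identify the single delicate point — the identification of $h$ — as the local form of Step 3 of the general framework.
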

\proof{}
Since $S^2\times S^2$ may be identified with the homogeneous space $(\SO(3) \times \SO(3)) / (S^1 \times S^1)$ where these two groups are subgroups of $G_\mu^0$ and $\widetilde{G}_{\mu,c}^0$, respectively, the fibration has a section. This proves the second statement in the theorem. 
\qed

% Twisted case
\subsection{The twisted case} There is a whole similar picture for the twisted bundle $\Mul=(\NTB,\om_{\mu})$. Let us write $\FDiff$ for the group of fiber preserving diffeomorphisms of $\Mul$, and $\FDiff_{*}\subset\FDiff$ for the stabilizer of a point. Rationally, we have homotopy equivalences
\begin{gather}
\BFDiff\simeq \BSU(2)\times\BSU(2)\times BS^{1}\\
\BFDiff_{*}\simeq BS^{1}\times BS^{1}\times BS^{1}
\end{gather}
Over the rationals, this yields a commutative diagram that expresses the homotopy type of $\IEmb_{\om}^{1}(c,\mu)\simeq\Gul/\tGucl$ as the total space of a fibration:
\[
\xymatrix{
S^{4\ell+1} \times \Omega S^{4\ell+3}\ar[r] \ar[d] & G_\mu^1 /\tGuc^1 \ar[r] \ar[d] & \NTB \ar[d] \\
S^{4\ell+1} \ar[r] \ar[d]^j & B\tGuc^1 \ar[r] \ar[d] & B(S^1 \times S^1 \times S^1)\ar[d] \\ 
S^{4\ell+3} \ar[r]& BG_\mu^1  \ar[r] & B(SU(2) \times SU(2) \times S^1)}
\]

   This concludes the proof of Theorem~\ref{thm:homotopy-type}.
   
%%%%%%%%%%%%%%%%%%%%%%%%%%%%%%%%%%%%%%%%%%%%%%%%%%%%%%%%%%%%%%%%%%%%%%%%%%%%%%%%
\section{The minimal models of $\Symp(M_{\mu}^i)$ and of $\Symp(\tMuci)$}\label{sc:mm1}
%%%%%%%%%%%%%%%%%%%%%%%%%%%%%%%%%%%%%%%%%%%%%%%%%%%%%%%%%%%%%%%%%%%%%%%%%%%%%%%%

First recall that in order to be applicable to some given topological space, the  theory of minimal models does not require that the space be simply connected. We simply need that the space  has a nilpotent homotopy system, which means that $\pi_1$ is nilpotent and $\pi_n$ is a nilpotent $\pi_1$--module for $n> 1$. Since the groups of symplectomorphisms $\Symp(M_{\mu}^i$) and  $\Symp(\tMuci)$ are $H$--spaces, it follows that they have a nilpotent homotopy system, because for a $H$--space, $\pi_1$ is abelian and is therefore nilpotent, and moreover  $\pi_1$ acts trivially on all $\pi_n$'s. On the other hand, $\Im \Emb^i$ is simply connected since we know that the generators of $\pi_1(\Symp(M_{\mu}^i))$ lift to the generators of $\pi_1(\Symp({\widetilde{M}}_{\mu,c}^i))$.  Therefore the theory of minimal models is applicable to all spaces under consideration.

   Recall that a model for a space $X$ is a graded differential algebra that provides a complete rational homotopy invariant of the space. Its cohomology is the rational cohomology of the space. The model can be constructed from the rational homotopy groups of $X$. In this case, it is always minimal, which implies that there is no linear term in the differential of the model, i.e the first term is quadratic. When there are no higher order term (i.e all terms are quadratic), then Sullivan's duality can be expressed in the following way:
$$
d b_k =  \sum_{i,j} \langle b_k,[b_i, b_j]\rangle    b_i b_j.
$$
where the $\langle a,b\rangle$ denotes the $a$-coefficient in the expression of $b$, and where  the brackets denote the Whitehead product.  Finally, when $X$ is an H-space, as it is the case of both $\Symp(M_{\mu}^i)$
 and $\Symp(\tMuci)$, all Whitehead products vanish as well as the differential.

   From these considerations and the computations of the rational homotopy groups of both $\Symp(M_{\mu}^i)$ and of $\Symp(\tMuci)$ in \cite{AM,Pi}, we have:

\begin{itemize}

\item The minimal model of  $\Symp({\widetilde{M}}_{\mu,c}^i)$  is $\Lambda ({\widetilde t},{\widetilde x},{\widetilde y}, {\widetilde w})$, the free graded algebra generated by the elements ${\widetilde t},{\widetilde x},{\widetilde y},{\widetilde w}$ with degrees $\deg {\widetilde t} = \deg {\widetilde x} = \deg {\widetilde y} = 1 $ and $ \deg {\widetilde w} =4\ell+2i-2$. 

\item The minimal model of $\Symp(M_{\mu}^i)$ is $\Lambda (t,x,y, w)$, the free graded algebra generated by the elements $t,x,y,w$ with degrees $\deg t = 1$, $\deg x = \deg y = 3$ and $ \deg w =4\ell+2i$.

\end{itemize}

 Let's now explain what these generators are. 

First recall that  $F$ (or more precisely $F^i$) denotes  the homology class of the fiber of $M^i_\mu$,  and  $E$ (or more precisely $E^i$) $\in H_2(\tMuci,\Z)$ is the class of the exceptional divisor that one gets by blowing up the standard symplectic ball of capacity $c$ in  $M^i_\mu$. 

\medskip
We first briefly recall the definition of the Hirzebruch surfaces.
    For any $\nu > 0$ and any integer $k \ge 0$ satisfying $\nu - \frac
k2 > 0$,
let $\CP^1 \times
\CP^2$ be endowed with the K\"ahler form $(\nu - \frac k2) \tau_1 +
\tau_2$ where
$\tau_{\ell}$ is the Fubini-Study form on $\CP^{\ell}$ normalized so
that the area of
the linear $\CP^1$'s be equal to $1$. Let
$\F_k$ be the corresponding Hirzebruch surface, i.e. the K\"ahler surface
defined by
$$
\F_k = \{([z_0,z_1],[w_0,w_1,w_2]) \in \CP^1 \times \CP^2 \ | \ z_0^k
w_1 = z_1^k w_0 \}
$$
It is well-known that the restriction of the projection $\pi_1:
(\CP^1 \times \CP^2, (\nu - \frac k2) \tau_1 +
\tau_2)  \to \CP^1$ to $\F_k$ endows  $\F_k$ with the structure of a
K\"ahler
$\CP^1$-bundle over $\CP^1$ that corresponds topologically to the trivial $S^2
\times S^2$ bundle if $k$ is even
and  to the  non-trivial one $S^2 \times_{\tau} S^2 = \CP^2 \#
\bar{\CP}^2$ if $k$ is odd. In this
correspondence, the fibers, of area $1$, are preserved and the section at infinity of
this bundle
$$
s_{\infty} = \{([z_0,z_1],[0,0,1])\}
$$
of area $\nu - \frac k2$  in $\F_{k}$ corresponds to the section  of self-intersection $-k$ that lives in $S^2
\times S^2$ if $k$ is even
and in the non-trivial $S^2$-bundle  if $k$ is
odd.   Thus it represents the class $\si_{0}-(k/2)F$ (resp.
$\si_{-1} -(\frac{k-1}{2})F$ in the non-trivial case) where $\si_{j}$ is the section  of self-intersection $j$. Therefore, the form $(\nu - \frac k2) \tau_1 +
\tau_2$ gives area $1$ to each $\CP^1$-fiber and area $\nu$ to the section of self-intersection $0$ (i.e to $B= \si_{0}$ in the trivial case, and  to $(\si_{-1} + \si_{1})/2$ in the odd case). However, our conventions for $M_{\mu}^{i}$ gives area $\mu$ to the zero section when $i=0$ and gives area $\mu$ to the section $\si_{-1}$ when $i=1$.  This means that $\nu$ must be identified with $\mu$ when $k$ is even and with $\mu + \frac 12 $ when $k$ is odd. By the
classification theorem of ruled symplectic $4$-manifolds,  this
correspondence
establishes a symplectomorphism between
$(\F_k, (\mu - \frac k2) \tau_1 +
\tau_2) $ and  $M_{\mu}^{0}$ for all even $k's$ strictly smaller than $2\mu$; similarly, there is a symplectomorphism between $(\F_{k}, (\mu + \frac 12 - \frac k2) \tau_1 +
\tau_2) $ and  $M_{\mu}^{1}$ for all odd $k's$ strictly smaller than $2\mu$.  Each such symplectomorphism endows $M_{\mu}^{i}$ with a different integrable compatible complex structure indexed by $0 \leq k < \mu$  having the section of self-intersection $-k$ holomorphically represented.  

 The element $t$  in $\pi_1(\Symp(M_\mu^i))$ is the rotation in the fibers of the Hirzebruch surface $\F_{2+i}$, once identified with $M_\mu^i$, $i=0,1$; for $i=0$, it is therefore the rotation in the fibers of $M_\mu^0 = B \times F$ round the two fixed symplectic surfaces in classes $B-F, B+F$ represented by the graph of the $\pm$ identity map from the base $B$ to the fiber $F$. Similar comments apply to $i=1$.

In the case $i=0$, the element $\widetilde t$ is the blow-up of $t$ at the point $([1,0], [0,0,1]) \in \F_2$, kept fixed under the action of $t$, identified with the center $\iota_c(0) \in S^2 \times S^2$ of the standard ball $B_c$. In the untwisted case the element $x$ is the $3$-dimensional sphere generating $\pi_3(\SO(3))$ where $\SO(3)$ is considered as acting on the first factor in the obvious way, the element $y$ corresponds to the case when $\SO(3)$ acts  on the second factor; the elements ${\widetilde x},{\widetilde y}$ are the blow-up of the $S^1$ part of that action that leaves the point  $\iota_c(0) \in S^2 \times S^2$ invariant. In the twisted case $x$ and $y$ are 3--spheres that generate the $U(2)$--K\"ahler actions on $\F_k$ when $k$ is odd. Finally, both $w$ and ${\widetilde w}$ are symplectic elements that do not correspond to K\"ahlerian actions (i.e a symplectic action preserving  an integrable complex structure compatible with the symplectic form). In the split case, recall that $\ell$ is the largest integer strictly smaller than $\mu$: if $\ell=1$ the generator $w$ is the  Samelson product of $t$ and $x$, while ${\widetilde w}$ is the Samelson product of $\widetilde t$ and ${\widetilde x}$; and if $\ell =2$, then both $w$ and ${\widetilde w}$ are higher order Samelson products. More precisely, as explained in \cite[Section 6]{AM} if  $\ell =2$ one can find commuting representatives of $t$ and $x$, so the Samelson product $[t,x]$ vanishes. Hence there is a 5--disk that bounds $[t,x]$, and the new 8--dimensional generator $w$ is a second order Samelson product made from this new disk and $x$. In general, if $\ell < \mu \leq \ell+1$, the Samelson product $[t,x,\hdots,x]$ of order $\ell-1$ vanishes, so $w$, in degree $4\ell$, is a higher order product made from a $(4\ell-3)$--disk and $x$. For the generator $\widetilde{w}$ in the blow--up manifold, there is a similar description, that is, if $\ell < \mu \leq \ell+1$ and $c \geq \mu -\ell$, the Samelson product $[\widetilde{t},\widetilde{x},\hdots,\widetilde{x}]$ of order $2\ell-2$ vanishes, so the generator $\widetilde{w}$ in degree $4\ell-2$ is a higher product  made from a $(4\ell-3)$--disk and $\widetilde{x}$. Notice that the dimension of $\widetilde{w}$ jumps by two every time $\mu$ passes an integer or $c$ passes the critical value $\ccrit=\mu -\ell$. 

There is a corresponding description for the twisted case, however, instead of considering the Samelson product of $t$ and $x$ one should consider the product of the generators $x$ and $y$ and their higher order Samelson products. 

%%%%%%%%%%%%%%%%%%%%%%%%%%%%%%%%%%%%%%%%%%%%%%%%%%%%%%%%%%%%%%%%%%%%%%%%%%%%%%%%
\section{The minimal model of $\IEmbuci$}\label{se:minimal-models} 
%%%%%%%%%%%%%%%%%%%%%%%%%%%%%%%%%%%%%%%%%%%%%%%%%%%%%%%%%%%%%%%%%%%%%%%%%%%%%%%%
    The goal of this section is to prove the following theorem:

%: thm:MinimalModelIEmb
\begin{thm}\label{thm:MinimalModelIEmb}
If $c \ge \ccrit$ (i.e $0 <\mu-\ell \leq c< 1$), the minimal model of $\Im \Embuci$ is 
$$\Lambda(\Im \Embuci)=(\Lambda (a,b,e,f,g,h),d_U)=\Lambda(S^2 \times S^2) \otimes \Lambda(c,g)$$ with generators in degrees $2,2,3,3,4\ell+2i-1,4\ell+2i$ and with differential
$$
d_U e = a^2, \; d_U f = b^2, \;  d_U g = d_U a = d_U b = 0, \; d_U h=q bg,
$$ 
where $\Lambda(S^2 \times S^2)$ is the minimal model for $S^2 \times S^2$ and $q$ is a non zero rational number. 
Thus the rational cohomology ring of  $\Im \Embuci$ is equal to the algebra 
$$ H^*(\Im \Embuci;\Q)=\Lambda(a,b,g,gh, \hdots, gh^n, \hdots,bh, \hdots, bh^n, \hdots)/\langle a^2,b^2,bg \rangle $$
where $n\in \N$ (see the computation of this cohomology ring in corollary \ref{cor:RationalCohomologyRingIEmb}).
 It is therefore not homotopy equivalent to a finite-dimensional CW-complex. 
\end{thm}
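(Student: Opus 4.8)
I would prove Theorem~\ref{thm:MinimalModelIEmb} by exploiting the rational fibration
$$F^i\longrightarrow \IEmbuci\longrightarrow M_\mu^i$$
produced in Theorem~\ref{thm:homotopy-type}, together with the known minimal models of $\Symp(M_\mu^i)$ and $\Symp(\tMuci)$ recorded in Section~\ref{sc:mm1}, and the description of $\IEmbuci$ as the homogeneous space $\Gui/\tGuci$ coming from fibration~(\ref{FibrationPrincipale}). Concretely: first build a relative minimal Sullivan model for the fibration $\tGuci\into\Gui\to\IEmbuci$. Since $\Gui$ has minimal model $\Lambda(t,x,y,w)$ with $|t|=1$, $|x|=|y|=3$, $|w|=4\ell+2i$, and $\tGuci$ has minimal model $\Lambda(\tilde t,\tilde x,\tilde y,\tilde w)$ with $|\tilde t|=|\tilde x|=|\tilde y|=1$, $|\tilde w|=4\ell+2i-2$, the model of the base $\IEmbuci$ is built from the generators of $\Gui$ together with ``desuspended'' generators accounting for the extra cohomology of $\tGuci$. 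By counting degrees, the base must have generators in degrees $2,2,3,3,4\ell+2i-1,4\ell+2i$: the two degree-$2$ classes $a,b$ and one degree-$3$ class are the transgressions/contributions of $\tilde t,\tilde x,\tilde y$ against $t,x,y$ (giving $S^2\times S^2$, consistent with the base of the fibration being $M_\mu^i$ and with Proposition~\ref{prop:RetractionExceptionalCases}), the remaining degree-$3$ class $f$ and the degree-$(4\ell+2i-1)$ class $g$ desuspend $x$ (or $y$) and $w$, and $h$ in degree $4\ell+2i$ is forced by $\tilde w$.

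**Key steps, in order.** (1) Identify the generators and the ``obvious'' part of the differential: from the $S^2\times S^2$ factor we get $d_U a=d_U b=0$, $d_U e=a^2$, $d_U f=b^2$; from the fact that $w$ is an $H$-space generator with no differential in $\Gui$ and $\tilde w$ likewise, and from the long exact sequence of rational homotopy groups of the fibration, one extracts $d_U g=0$ and that $d_U h$ lies in the degree-$(4\ell+2i)$ part of $\Lambda(a,b,e,f,g)$, which by degree reasons is spanned by $a g$ and $b g$ (the monomials in $a,b$ alone cannot reach degree $4\ell+2i$ when $\ell\ge 1$, and $e,f$ have even degree $4$ so products like $e\cdot(\text{deg }4\ell+2i-4)$ must be checked and ruled out as closed/decomposable in cohomology, not contributing a genuine new relation). (2) Pin down $d_U h=q\,bg$ with $q\ne 0$: the vanishing of $q$ would force an extra free polynomial generator in $H^*(\IEmbuci;\Q)$ and would contradict the rational cohomology \emph{modules} $H^*(\BSymp(\tMuci);\Q)$ computed in \cite{LP,Pi} (equivalently Theorem~\ref{thm:RationalCohomologyAlgebra}), via the Serre spectral sequence of $\IEmbuci\to BG_\mu^i$ compared with that of $B\tGuci\to BG_\mu^i$; the coefficient $q$ being attached to $bg$ rather than $ag$ reflects the asymmetry in how $\tGuci$ sits inside $\Gui$ (the blown-up ball breaks the symmetry between the two $\SO(3)$-factors), which is exactly the phenomenon flagged in Step~3 of the introduction and in Theorems~B.8--B.9. (3) Compute the cohomology of $(\Lambda(a,b,e,f,g,h),d_U)$: $a,b,g$ survive; $e,f$ kill $a^2,b^2$; and since $d_U h=q\,bg$ with $q\ne0$, the class $bg$ dies while $h$ is not closed, but $h^2, h^3,\dots$ and the products $g h^n$, $b h^n$ survive (one checks $d_U(h^{n+1})=q(n+1)bg\,h^n$, so $b h^{n+1}$ and $g h^{n+1}$ are closed for all $n$, and none is a boundary because boundaries of the relevant degree all contain a factor $bg$), giving precisely
$$H^*(\IEmbuci;\Q)=\Lambda(a,b,g,gh,\dots,gh^n,\dots,bh,\dots,bh^n,\dots)/\langle a^2,b^2,bg\rangle.$$
(4) Conclude non-finiteness: this ring has nonzero classes $g h^n$ in arbitrarily high degree $n(4\ell+2i)+(4\ell+2i-1)\to\infty$, so $H^*(\IEmbuci;\Q)$ is nonzero in infinitely many degrees; hence $\IEmbuci$ cannot have the rational homotopy type — a fortiori not the homotopy type — of a finite-dimensional $CW$-complex.

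**Main obstacle.** The delicate point is Step~(2), namely showing $q\ne 0$ and that $d_U h$ really is a multiple of $bg$ alone (no $ag$, no $ef$-type term). A priori the relative model could have $d_U h=q_1 ag+q_2 bg$ plus possibly a decomposable correction involving $e$ or $f$; ruling out a genuine $ag$-contribution and the decomposable corrections requires matching the model against the \emph{ring} structure of $H^*(\BSymp(\tMuci);\Q)$ from Theorem~\ref{thm:RationalCohomologyAlgebra} and understanding the induced map $\BSymp(\tMuci)\to\BSymp(M_\mu^i)$, i.e.\ precisely how $\tGuci\hookrightarrow\Gui$ acts on cohomology. Equivalently, one must trace through the homotopy pushout squares of Theorems~\ref{thm:HomotopyPushout} and~\ref{thm:HomotopyPushoutBlowUp} to see that the generator $\tilde w$ of $\tGuci$ maps, under the inclusion, to a class whose ``defect'' relative to $w$ is detected by the class $b$ (the area class of the $\SO(3)$-factor that is \emph{not} involved in the toric action used to build $\tilde w$) rather than by $a$. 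This is exactly the non-straightforwardness of Step~3 advertised in the introduction; once the correct coefficient is identified — and as noted in the Acknowledgements this is where the referee's correction to the differential of $h$ enters — the remaining homological algebra in Steps~(1), (3), (4) is routine.
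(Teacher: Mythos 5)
Your overall framework — express $\IEmbuci$ as the base of the fibration $\tGuci\to\Gui\to\IEmbuci$, build a relative Sullivan model, and read off the differential on the degree-$2$ and degree-$3$ generators from the $S^2\times S^2$ factor — matches the paper's strategy, and Steps (1), (3), (4) are essentially routine given Lemma~\ref{le:DifferentialPartialResults} and Theorem~\ref{thm:homotopy-type-mu<2}. However, your Step~(2) — the only genuinely delicate point, as you correctly flag — relies on a method that the paper explicitly states cannot work. You propose to detect $q\neq 0$ by comparing the Serre (Leray) spectral sequences of $\IEmbuci\to B\tGuci\to BG_\mu^i$, but the paper records in a footnote to Lemma~\ref{le:dh=qbg} that the Leray spectral sequence of this fibration is unable to distinguish between the cases $q=0$ and $q\neq 0$, and that the referee's contribution was precisely to point out that one must instead use the Eilenberg--Moore spectral sequence, whose $E_2$-page is $\Tor_{H^*(BG_\mu)}(\Q,H^*(B\tGuc))$. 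The actual proof then runs an explicit Koszul resolution over $H^*(BG_\mu;\Q)$, viewed as a subring of $H^*(B\tGuc;\Q)$ via the map of Theorem~\ref{thm:MapCohomologyAlgebras}, and shows by direct inspection that there are no Tor classes in total degree $4\ell$ when $\ell>1$ and exactly one (generated by $xy$) when $\ell=1$; this rank computation is what forces $dh\neq 0$. Your appeal to ``an extra free polynomial generator in $H^*$'' does not close the gap: $H^*(\IEmbuci;\Q)$ is infinite-dimensional in either scenario (note the classes $gh^n$, $bh^n$), so the mere presence of infinitely many classes does not pin down $q$, and the Serre $E_2$-page is identical in both cases.

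A second, smaller gap: you assert that the coefficient lands on $bg$ rather than $ag$ because of ``the asymmetry in how $\tGuci$ sits inside $\Gui$,'' but this is an observation, not an argument. The paper's Lemma~\ref{le:dh=qbg} gives a concrete proof that the Whitehead product $[a,g]$ vanishes: the element $2a$ is the projection to $\IEmbuci$ of a $2$-disc $D_{\widetilde x}$ that sits inside the disc $D$ projecting to $2g$, so $a\subset g$ forces $[a,g]=0$ because $\pi_{4\ell}(S^{4\ell-1})\otimes\Q=0$ (resp.\ $\pi_4(S^3)\otimes\Q=0$ when $\ell=1$). Without this geometric construction of the lifts of $a$ and $g$ as compatible discs, the $ag$-coefficient is not ruled out. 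Finally, a minor error in your Step~(3): $bh^{n+1}$ is \emph{not} closed, since $d_U(bh^{n+1})=q(n+1)b^2gh^n$, which is exact but nonzero; the actual cocycle representative is $bh^{n+1}-q(n+1)fgh^n$, and the paper sidesteps this by computing $H^*(\IEmbuci;\Q)$ via the Serre spectral sequence of the fibration in Theorem~\ref{thm:homotopy-type} rather than by exhibiting explicit cocycles in the Sullivan model.
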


  \begin{proof} Any fibration $V \hookrightarrow P \to U$ for which the theory of minimal models applies (i.e. each space has a nilpotent homotopy system and the $\pi_1$ of the base acts trivially on the higher homotopy groups of the fiber) gives rise to a sequence
$$
( \Lambda(U), d_U) \longrightarrow  (\Lambda(U) \otimes \Lambda(V), d) \longrightarrow (\Lambda(V), d_V).
$$
where the differential algebra in the middle is a model for the total space. 
Let $d_{|U},d_{|V}$ represent the restriction of the differential $d$ to $U$ and $V$ respectively. The theory of minimal models implies that
$$ d_{|U} = d_U $$
$$ d_{|V} = d_V + d'$$
where $d'$ is a perturbation with image not in $\Lambda (V)$

Given the fibration 
$$ \Symp({\widetilde{M}}_{\mu,c}^i) \longrightarrow \Symp(M_{\mu}^i)  \longrightarrow \Im \Embuci \, , $$

\NI
we whish to find the model for $\Im \Embuci$. We will treat the case $i=0$ in complete detail. The case $i=1$ is completely analogous to this one. To avoid unnecessary repetitions, in the latter case we will just state the relevant propositions, leaving the proofs as exercises to the interested reader. Since the rest  of the section is mainly devoted to the case $i=0$, we will assume throughout that $i=0$, unless noted otherwise, and omit the superscript $^0$ to simplify notation.

The algebra of the minimal model of $\IEmbuc$ for $c \geq \ccrit$ follows from the computation, in Pinsonnault \cite{Pi}, of its rational homotopy groups:
$$ 
\pi_1 =0, \pi_2=\Q^2 ,  \pi_3= \Q^2 ,  \pi_{4\ell-1}=\Q, \pi_{4\ell}=\Q , \; \mbox{ and $\pi_n = 0$ for all other $n.$}
$$ 
Therefore the algebra is $\Lambda (a,b,e,f,g,h)$ where $ \deg a =\deg b = 2$, $\deg e = \deg f = 3$, $\deg g= 4\ell-1$  and $\deg h=4\ell$.
Thus we get the following fibration
$$
\Lambda (a,b,e,f,g,h), d_U \longrightarrow \Lambda (a,b,e,f,g,h) \otimes \Lambda (\widetilde t,{\widetilde x},{\widetilde y}, {\widetilde w}),d \longrightarrow \Lambda (\widetilde t,{\widetilde x},{\widetilde y}, {\widetilde w}), d_V
$$
The differential $d$ satisfies $d_{|U} = d_U $ and $ d_{|V} = d_V+ d'=d'$. So in order to find the differential $d_U$ for $\Lambda (a,b,e,f,g,h)$ it is sufficient to compute the differential $d$ for the model  $\Lambda (a,b,e,f,g,h) \otimes \Lambda (\widetilde t,{\widetilde x},{\widetilde y}, {\widetilde w})$. We need to compare this model with the minimal model  $\Lambda (t,x,y, w)$ of $\Symp(M_{\mu}^i)$ given in the last section.

\medskip
\noindent
{\it \bf Computation of the differential d}.

Let us first apply the simplest method of dimension counting. That method yields easily the following partial results:
%: le:DifferentialPartialResults
\begin{lemma}\label{le:DifferentialPartialResults}
Without loss of generality, one may assume that the differential d satisfies:
$$
d \widetilde  t = 0, \ \ d  {\widetilde x} =a , \ \ d  {\widetilde y}=b , \ \ d  {\widetilde w} = g
$$
(and therefore the differentials of $a,b,g$ vanish). Moreover $d e$ and $d f$  must be quadratic, equal to (perhaps vanishing) linear combinations of $a^2, b^2, ab$. 
\end{lemma}

\begin{proof}
Since the middle term computes the rational cohomology of $\Symp(M_{\mu})$, we need exactly one generator of degree  1. There is no loss of generality in assuming that it is $\widetilde t$: $d \, \widetilde  t=0$.
It follows that $d \, {\widetilde x}$ and $d \, {\widetilde y}$ must be different from 0 otherwise we would have too many generators in cohomology in dimension  1. By the theory of models for fibrations,  the perturbation $d'$ has image not  in $\Lambda (\widetilde t,{\widetilde x},{\widetilde y}, {\widetilde w})$.
Therefore, without loss of generality, we may set:
$$d \, {\widetilde x} =a , \ \ d \, {\widetilde y}=b $$ which implies that $d \, a=d \, b =0$.

Now lets compute $d \, {\widetilde w}$. It does not vanish because there is no generator in the cohomology of $\Symp(M_{\mu})$  in dimension $4\ell-2$. The theory of rational models for fibrations implies that the perturbation $d'$ is dual to the boundary operator $\partial: \pi_*(B) \otimes \Q \to \pi_*(F) \otimes \Q$. Since $\partial g = {\widetilde w}$, we conclude that $d'{\widetilde w} = g$, which means that $d{\widetilde w}=g$ and implies that $dg=0$.
\end{proof}

Let us now compute the values of $d \, e $ and $d \, f$. 
As we will see below,  these will follow easily from the  computation of the Whitehead products $[a,a],$ $ [a,b], [b,b]$ in the rational homotopy of $\IEmbuc$. If the total space of the fibration $\Symp({\tMuc}) \to \Symp(M_{\mu})  \to  \Im \Emb_{\om}(c, \mu)$ were contractible, computing such  products would boil down to computing the Samelson product of corresponding elements of the fiber. But our total space is not contractible, and we have to take also into account an horizontal part in the Whitehead product.
 
   Let us briefly describe the generators of  $ \Lambda(U)$, i.e. the generators of the rational homotopy groups of $\IEmbuc$. The group $\Symp(M_{\mu})$ acts on $\IEmbuc$ by $\phi \cdot A = \mbox{image} \, (\phi |_A)$ with stabilizer equal to $\Symp(M_{\mu},B_c)$, the subgroup of symplectic diffeomorphisms which preserve (not necessarily pointwise) $B_c$, the image of the standard embedding of the ball of capacity $c$ of $\R^{2n}$ in $M_{\mu}$. This leads to the following homotopy fibration:
%: FibrationSymp-IEmb

\begin{eqnarray}\label{FibrationSymp-IEmb}
\Symp({\tMuc}) \to \Symp(M_{\mu}) \to \IEmbuc.  
\end{eqnarray}

The elements $e,f$ and $h$ are the images by the action of $\Symp(M_{\mu})$ on $\Im \Emb_{\om}(\mu,c)$ of the elements $x,y$ and $w$ of $\pi_*(\Symp(M_{\mu})) \otimes \Q$. The elements $a,b$  are uniquely defined as those spheres in the base of that fibration whose lifts to the total space $\Symp(M_{\mu})$ are discs with boundary on the fiber equal to ${\widetilde x}$ and ${\widetilde y}$ respectively. These lifts are unique because $\pi_2(\Symp(M_{\mu})) \otimes \Q$ vanishes.  The element $g$ is defined in the following way. When $\ell >1$ it is uniquely defined as the sphere in the base of that fibration whose  lift to the total space is a disc with boundary on the fiber  equal to ${\widetilde w}$. Such a lift is unique since $\pi_{4\ell-1}(\Symp(M_{\mu})) \otimes \Q$ vanishes in this case. However, if $\ell=1$, the lift of $g$ to the total space is a class  in $ \pi_{3}(\Symp(M_{\mu}), \Symp({\tMuc})) \otimes \Q$  ($= \pi_{3}(\Im \Emb_{\om}(c, \mu)) \otimes \Q )$ which is not uniquely defined.  To make it unique, we define it by first taking the $2$-disc $D_{\widetilde x} \subset SO(3) = x \subset \Symp(M_{\mu})$ whose boundary is equal to $2 {\widetilde x}$, and then taking the commutator of $t$ and $D_{\widetilde x}$. This yields a $3$-disc $D$ lying inside $[t,x]_S = w$, whose boundary is the Samelson product $2[\widetilde t,{\widetilde x}] = 2{\widetilde w}$. Set $g = D/2 \in \pi_3(\Symp(M_{\mu}), \Symp({\tMuc})) \otimes \Q$.

\begin{lemma} The Whitehead product $[a,b]$ vanishes, and 
\[
[a,a] = e \quad [b,b] = f.
\]
\end{lemma}

   \proof{}
   
   Assume that $X,Y$ are the generators of $\pi_*(S^2) \otimes \Q$ of degrees 2 and 3, respectively, and $X_j,Y_j$ their images in the $j^{th}$ factor of $S^2 \times S^2$. We have $[X,X] = Y$ in the rational homotopy of $S^2$ and $[X_1,X_2] = 0$ 
because $[X_1,X_2]$ is the obstruction to extend the inclusion map $(S^2 \times \{ pt \}) \cup (\{ pt \} \times S^2) \to S^2 \times S^2$ to a map defined on $S^2 \times S^2$.    

Recall, from Theorem ~\ref{thm:homotopy-type-mu<2}, that $S^2 \times S^2$ is a retract of the space of embedded balls, that is, there is a a section $\sigma:S^2 \times S^2 \to \IEmb_\omega^0(c,\mu)$ of  fibration \eqref{eq:homotopy-type-mu<2}. It follows that $\sigma_*(X_1)=a,\sigma_*(X_2)=b, \sigma_*(Y_1)=e $ and $ \sigma_*(Y_2)=f$ and therefore $[a,a]=e,[b,b]=f$ and $[a,b]=0$.\qed

\bigskip
Recall that Sullivan's duality implies:
$$
d b_k =  \sum_{i,j} <b_k,[b_i, b_j]>   b_i b_j.
$$
Therefore, the last lemma implies that:
$$
d_U(e) = a^2  \quad  \mbox{and} \quad d_U(f) = b^2.
$$

 It remains to compute $dh$. 

%: {le:dh=qbg}
\begin{lemma}\label{le:dh=qbg}
The differential d satisfies $dh=q bg$ where $q$ is a non zero rational number. 
\end{lemma}
\proof{}
Notice that $dh \neq 0$ if and only if
\[
\rk H^{4\ell}(\Im \Emb_{\om}(c, \mu); \Q) =
\begin{cases}
1 &\text{for $\ell=1$}\\
0 &\text{for $\ell\geq 2$}
\end{cases}  
\]
Indeed, if $dh$ did not vanish, there would be no element remaining in degree $4\ell$ when $ \ell > 1$ and there would be only one element remaining in degree 4, namely $ab$. Hence, in that case,  we would have $dh=c\tau$ where   $\tau$ is a non-zero linear combination of $a$ and $b$, since there are no closed classes  in degree $4\ell +1$ except  $c\tau$. Moreover, there is a constant $q\neq 0$ such that $dh=q bg$, because the Whitehead product $[a,g]$ vanishes. Indeed, recall that if $\ell =1$ the element $2a$ is the projection on the base $\Im \Emb_{\om}(c, \mu)$ of the $2$--disc $D_{\widetilde x} \subset D $ defined above, while $2g$ is the projection of the $3$--disc $D\subset [t,x]_S=w$. Therefore $a \subset g $ and their Whitehead product must vanish since $\pi_4(S^3) \otimes \Q=0$. If $\ell >1$ notice that the Samelson product of order $2\ell-2$, $[t,{\widetilde x},\hdots,{\widetilde x}]$, vanishes so the $(4\ell-1)$--disc 
$$ D_{\widetilde w} \equiv [t,\underbrace{{\widetilde x},\hdots,{\widetilde x}}_{2\ell-2},D_{\widetilde x}]$$ 
is well defined. Its boundary is the higher order Samelson product $2[t,{\widetilde x},\hdots,{\widetilde x}]= 2{\widetilde w}$. So $2g$ is the projection on the base of this disc.  Since $D_{\widetilde x} \subset  D_{\widetilde w} $ it follows again that $a \subset g$ and their Whitehead product must vanish because $\pi_{4\ell}(S^{4\ell-1})\otimes \Q=0$.

 We will show that $H^{4\ell}(\Im \Emb_{\om}(c, \mu); \Q)= H^{4\ell}(G_\mu/\tGuc; \Q)$ is zero-dimensional if $\ell \ge 2$ and one-dimensional if $\ell=1$, by an argument that uses the Eilenberg--Moore spectral sequence \footnote{It is obvious that $dh$ either vanishes or is equal to a non-zero multiple of $bg$. Unfortunately, one can prove that the Leray spectral sequence cannot distinguish between these two cases. We thank the referee for pointing out that the Eilengerg-Moore sequence does.}
 applied to the fibration $G_\mu/\tGuc \to B\tGuc \to BG_\mu$. This spectral sequence, which is a second quadrant spectral sequence, converges to $ H^*(G_\mu/\tGuc; \Q)$.  Its $E_2$--term is given by 
$$ E_2^{i,j}=\Tor_{H^*(BG_\mu)}^{-i,j} (\Q\, , H^*(B\tGuc)).$$ 
We follow Paul Baum's paper \cite[Section 2]{Ba} to calculate these Tor groups. Let $\Lambda$ be a graded $\Q$--algebra and,  $M$ and $N$ be $\Lambda$--modules. Then  $\Tor_\Lambda(M,N)$ is the bigraded $\Q$--module obtained as follows. Consider  a projective resolution $R$ of $M$ over $\Lambda$ given by 
$$ R = \{ \xymatrix@+.5pc{ \hdots \ \ar[r] &  \ R^{(-2)} \ \ar[r]^{f^{(-2)}} &  \ R^{(-1)} \ \ar[r]^{f^{(-1)}} & \ R^{(0)} \ \ar[r]^{f^{(0)}} & \ M \ \ar[r] & \ 0 } \}. $$
Let $L$ be the bigraded differential $\Q$--module defined by $L^{p,q}=(R^{(p)} \otimes_\Lambda N )^q$ with $d: L^{p,q} \rightarrow  L^{p+1,q} $ given by $f^{(p)}\otimes_\Lambda 1_N$. $\Tor_\Lambda(M,N)$ is the homology of $L$, that is $\Tor_\Lambda^{p,q}(M,N)=H^{p,q}(L).$

In our example we have $ \Lambda=H^*(BG_\mu;\Q)$, $M=\Q$ and $N=H^*(B\tGuc;\Q)$. The cohomology ring of $BG_\mu$ was computed by Abreu, Granja and Kitchloo in \cite{AGK}; it is given by 
$$ H^*(BG_\mu;\Q)=\Q[T,X,Y]/\langle T\prod_{i=1}^\ell(T^2+i^4X-i^2Y) \rangle \ \ \ \mbox{\rm where} \ \ \ |T|=2  \ \ \ \mbox{\rm and}  \ \ \ |X|=|Y|=4.$$
The same methods can be applied to compute the cohomology ring of $B\tGuc$. The proofs of the following two theorems are given in Appendix B.

\begin{thm}[See Theorem~\ref{thm:RationalCohomologyAlgebra}]
Let $\ell<\mu\leq\ell+1$. Then the cohomology ring of $B\tGuc$ is isomorphic to
\[
\frac{\Q[z,x,y]}{{\widetilde R}_{\mu,c}}
\]
where $z,x,y$ have degree $2$, and where the ideal ${\widetilde R}_{\mu,c}$ is given by
\[{\widetilde R}_{\mu,c}=
\begin{cases}
\langle z(z-x+y)(z-x-y)\cdots(z-\ell^{2}x+\ell y) \rangle & \text{in the case $c\geq \ccrit$,}\\
\langle z(z-x+y)(z-x-y)\cdots(z-\ell^{2}x+\ell y)(z-\ell^{2}x-\ell y) \rangle & \text{in the case $c<\ccrit$.}
\end{cases}
\]
\end{thm}

The map $B\tGuc \to BG_\mu$ induces a map in cohomology. 
\begin{thm}[See Theorem~\ref{thm:MapCohomologyAlgebras}]
The map $H^*(BG_\mu;\Q) \to H^*(B\tGuc;\Q)$ is given by
$$ \begin{array}{cll}
 T & \mapsto & z \\ 
 X & \mapsto & x^2\\ 
 Y & \mapsto & y^2+2xz.
\end{array} $$
\end{thm}
Under this map, the cohomology of $BG_\mu$ can be identified with the subring $$ H^*(BG_\mu;\Q)=\Q[z,x^2,y^2+2xz]/\langle z \prod_{i=1}^{\ell}((z-i^2x)^2-i^2y^2) \rangle.$$

We need to construct a projective resolution for $\Q$ as a $H^*(BG_\mu)$--module. We can achieve this with the augmentation of $\Lambda$, $\varepsilon: \Q[z,x^2,y^2+2xz]/\langle z \prod_{i=1}^{\ell}((z-i^2x)^2-i^2y^2) \rangle\rightarrow \Q \simeq \Lambda^0 $. Therefore we may calculate these Tor groups using the following resolution (called the Koszul resolution)  
$$\Lambda (\alpha, \beta, \gamma, \delta) \otimes \Q[z,x^2,y^2+2xz]/\langle z \prod_{i=1}^{\ell}((z-i^2x)^2-i^2y^2) \rangle,$$
with differentials given by 
%: eq:DifferentialKozsul
\begin{eqnarray}\label{eq:DifferentialKozsul}
d(\alpha)=z, \ d(\beta)=x^2, \ d(\gamma)=y^2+2xz, \ d(\delta)= \alpha \prod_{i=1}^{\ell}((z-i^2x)^2-i^2y^2).
\end{eqnarray}
Here $\Lambda(\alpha, \beta, \gamma, \delta)$ denotes the free (bi)graded algebra on elements $\alpha, \beta, \gamma$ and $\delta$ in bidegrees $(-1,2)$, $(-1,4)$, $(-1,4)$ and $(-2,4\ell+2)$ respectively. The above complex is a module over $\Q[z,x^2,y^2+2xz]/\langle z \prod_{i=1}^{\ell}((z-i^2x)^2-i^2y^2) \rangle $, graded in external degree zero, i.e. it lies in grading $(0,*)$. It follows that the Tor groups of interest are the cohomology of the complex 
$$\Lambda (\alpha, \beta, \gamma, \delta) \otimes \Q[z,x,y]/\langle  z (z-\ell^2 x+\ell y)\prod_{i=1}^{\ell-1}((z-i^2x)^2-i^2y^2)\rangle.$$
Here we use the identification of $H^*(BG_\mu;\Q)$ as a subring of $ H^*(B\tGuc;\Q)$ and under this identification the differential of the complex above 
  satisfies the equalities (\ref{eq:DifferentialKozsul}) and $d(\eta \otimes m)= d\eta \otimes m $ with $\eta \in \Lambda (\alpha, \beta, \gamma, \delta)$ and  $m\in \Q[z,x,y]/\langle  z(z-\ell^2 x+\ell y)\prod_{i=1}^{\ell-1}((z-i^2x)^2-i^2y^2)\rangle $.
 Any class in total degree $4\ell$, which is in negative external degree, may be written as $x_{4\ell}=c_1\delta+\alpha\beta \,h_1(x,y,z)+\alpha\gamma \,h_2(x,y,z)$, where $c_1$ is a constant and $h_1$ and $h_2$ are linear combinations of classes of the type $x^{n_x}y^{n_y}z^{n_z}$ where $n_x,n_y,n_z \in \N$ such that $n_x+n_y+n_z=2\ell-2$ and therefore $|x^{n_x}y^{n_y}z^{n_z}|=4\ell -4$. For it to be closed we need 
$$c_1\alpha\prod_{i=1}^{\ell}((z-i^2x)^2-i^2y^2) +z\beta h_1+z\gamma h_2 -\alpha x^2 h_1-\alpha (y^2+2zx) h_2 =0$$
which can happen only if $c_1=0$ and all the coefficients in the linear combinations $h_1,h_2$ vanish. Hence the only closed classes are in external degree zero. Clearly, all the classes of the type $z\, x^{n_x}y^{n_y}z^{n_z}$, where $n_x,n_y,n_z \in \N$ and $n_x+n_y+n_z=2\ell-1$, are in the image of the differential $d$ because $d (\alpha \,x^{n_x}y^{n_y}z^{n_z})=z\,x^{n_x}y^{n_y}z^{n_z}$. It remains to check that all the classes of the type $x^ky^{2\ell-k}$ are also in the image of $d$ where $ 0\leq k \leq 2 \ell$, except the class $xy$ if $\ell =1$ $(k=1)$.  Note that if $k \geq 2$ then 
$$x^ky^{2\ell-k}= x^2 \,x^{k-2}y^{2\ell-k}=d (\beta \, x^{k-2}y^{2\ell-k}).$$ 
If $k=1$ and $\ell \geq 2$ then 
$$xy^{2\ell-1}=x y^{2\ell-3}\, y^2= \frac{1}{2}d(xy^{2\ell-3}(\gamma-2\alpha \, x)).$$
This shows that there are no classes in Tor in total degree $4\ell$ if $\ell>1$ and there is only one, generated by $xy$, if $\ell=1$. 
\qed
\bigskip

\begin{remark}
There is a completely analogous story for the twisted case. The methods are exactly the same and they show that only the dimension of the generators $g$ and $h$ changes.
 In this case the cohomology ring of $\BGu^1$ was  computed in \cite{AGK} where the authors showed that
 \begin{equation}\label{CohomologyAlgebraTwisted}
H^*(BG_\mu^1;\Q)=\Q[X,Y,T]/\langle \prod_{i=0}^\ell((2i+1)^2(\frac{i(i+1)}{2}(X+Y)-Y)-\frac{i^2(i+1)^2}{2}T^2) \rangle.
\end{equation}
The diffeomorphism  $\tBGuc^1 \simeq \tBG_{\mu+1-c,1-c}^0$  yields easily the cohomology ring of the blow up when  $c \geq \ccrit$:
\[
H^*(\tBGuc^1;\Q)=\Q[x,y,z]/\langle z\prod_{i=1}^\ell(z-i^2x+iy)(z-i^2x-iy) \rangle.
\]
Moreover, the map $i^{*}:H^{*}(\BGu^1;\Q)\to H^{*}(\tBGuc^1;\Q)$ is given by (see Theorem \ref{thm:MapCohomologyAlgebrasTwisted})
\[
\begin{array}{l}
X\mapsto y(y-x)+ \frac{z}{2}(7y+7z-3x)\\
Y\mapsto \frac{z}{2}(y-x+z)\\
T\mapsto 4z+2y-x.
\end{array}
\]
Note that under this map, the relation in \eqref{CohomologyAlgebraTwisted} is mapped to the product 
$$\left(-\frac12\right)^{\ell+1}z(z-(\ell+1)^2x+(\ell+1)y)\prod_{i=1}^\ell(z-i^2x+iy)(z-i^2x-iy) $$
which is a multiple of the relation in the cohomology ring of $\tBGuc^1$. 
\end{remark}

\bigskip \medskip  
\noindent
This completes the proof of Theorem~\ref{thm:MinimalModelIEmb} \end{proof}

%%%%%%%%%%%%%%%%%%%%%%%%%%%%%%%%%%%%%%%%%%%%%%%%%%%%%%%%%%%%%%%%%%%%%%%%%%%%%%%%
\section{The minimal model of $\Embuci$} \label{se:rathomtypeb}
%%%%%%%%%%%%%%%%%%%%%%%%%%%%%%%%%%%%%%%%%%%%%%%%%%%%%%%%%%%%%%%%%%%%%%%%%%%%%%%%

  In this section, we compute the minimal model of the space $\Embuci$ of parametrised symplectic balls. Unless noted otherwise, we assume that $i=0$ throughout and again  omit the superscript $^0$. Consider the fibration $U(2) \to \Emb_{\om}(c, \mu) \to \Im \Emb_{\om}(c, \mu)$.  First observe that this fibration is the restriction to $B_c$ of the fibration $\Symp({\tMuc}) \to \Symp(M_{\mu}) \to \Im \Emb_{\om}(c, \mu)$. This can be expressed by the following commutative diagram:

\[
\xymatrix{
 \Symp^{\id, B_c}(M_{\mu})  \ar@{^{(}->}[r]  \ar@{=}[d] & \Symp^{U(2)}(M_{\mu}, B_c) \ar[r]^{restr} \ar[d]   &  U(2) \ar[d]  \\
\Symp^{\id, B_c}(M_{\mu})   \ar@{^{(}->}[r] \ar[d]  &  \Symp(M_{\mu})   \ar[r]^{restr} \ar[d] &  \Emb_{\om}(c, \mu)  \ar[d] \\
\{B_c\}  \ar@{^{(}->}[r]  & \Im \Emb_{\om}(c, \mu)  \ar@{=}[r] & \Im \Emb_{\om}(c, \mu) 
}
\]
where $restr$ is the restriction to the standard embedded ball $B_c \subset M_{\mu}$, $\Symp^{U(2)}(M_{\mu}, B_c)$ is the subgroup of $\Symp(M_{\mu})$ formed of diffeomorphisms that preserve the ball $B_c$ and act in a $U(2)$ linear way on it, and $\Symp^{\id, B_c}(M_{\mu})$ is the subgroup of $\Symp(M_{\mu})$ formed of the elements that fix the ball $B_c$ pointwise. Recall that there is a natural homotopy equivalence between $\Symp^{U(2)}(M_{\mu}, B_c)$  and $\Symp({\tMuc})$, so the vertical fibration in the middle is equivalent to the fibration (2) of \S~1, namely $\Symp({\tMuc}) \to \Symp(M_{\mu}) \to \Im \Emb_{\om}(c, \mu)$. 

We also have the commutative diagram:

\[
\xymatrix{
U(2)  \ar[d]  & U(2)   \ar[d]  \\
\Emb_{\om}(c, \mu) \ar[d] \ar[r]^{j}  & UFr(M) \ar[d]  \\
\Im \Emb_{\om}(c, \mu) \ar[r]^{l}  &  M
}
\]
where $UFr(M)$ is the space of unitary frames of $M$, $j$ is the 1-jet map evaluated at the origin (followed by the Gram-Schmidt process assigning a unitary frame to each symplectic one), and where the last horizontal map assigns to each unparametrised ball its center (well-defined up to homotopy).  

    The minimal model for $U(2)$ is $\Lambda(u_0,v_0)$ where $deg (u_0) = 1$ and $deg(v_0) = 3$. We first show that the elements $e,f,g,h \in \pi_*(\Im \Emb_{\om}(c, \mu)) \otimes \Q$ lift to $\pi_*(\Emb_{\om}(c, \mu)) \otimes \Q$, but not $a,b$. However the difference $a-b$ does lift. On the other hand only the element $v_0$ injects in $\pi_*(\Emb_{\om}(c, \mu)) \otimes \Q$, the element $u_0$ is killed.

\begin{prop} The rational homotopy of $\Emb_{\om}(c,\mu)$ is generated, as module over $\Q$, by a single element $\widetilde h$ in dimension $4\ell$, by one element $\widetilde g$ in dimension $4\ell-1$, by three elements $v,\widetilde e,  \widetilde f$ in dimension $3$, and by a single element $\widetilde d_{a,b}$ in dimension $2$. The elements $\widetilde h, \widetilde e, \widetilde f$ are the images by the restriction map of the elements $w, x,y$ respectively. The element $v$ is the image of $v_0$, $\widetilde d_{a,b}$ is the unique lift of the difference $d_{a,b}:=a-b$ and $\widetilde g$ is the unique lift of $g$ if $\ell >1$. If $\ell =1$ the element $\widetilde g$ is well-defined up to a multiple of $v$. 
\end{prop}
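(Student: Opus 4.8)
The plan is to read off all the rational homotopy groups of $\Embuc$ from the long exact sequence of the fibration $U(2)\to\Embuc\to\IEmbuc$. The two inputs are the rational homotopy of $U(2)$, namely $\pi_*(U(2))\otimes\Q$ is generated by $u_0$ in degree $1$ and $v_0$ in degree $3$ and is zero otherwise, together with the rational homotopy groups of $\IEmbuc$ computed in \cite{Pi} and recalled in Section~\ref{se:minimal-models}: $\pi_2\otimes\Q=\Q\langle a,b\rangle$, two degree-$3$ classes $e,f$, a class $g$ in degree $4\ell-1$, a class $h$ in degree $4\ell$, and nothing else (for $\ell=1$ the classes $e,f,g$ all sit in degree $3$). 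Since $\pi_{n-1}(U(2))\otimes\Q=0$ unless $n\in\{2,4\}$, the connecting homomorphisms $\partial_n\colon\pi_n(\IEmbuc)\otimes\Q\to\pi_{n-1}(U(2))\otimes\Q$ vanish except possibly for $n=2$ and $n=4$, so the whole computation reduces to understanding $\partial_2$ and $\partial_4$.

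First I would dispose of $\partial_4$ and identify the generators coming from $\Symp(M_\mu)$. The classes $e,f,h$ are the images of the generators $x,y,w$ of $\pi_*(\Symp(M_\mu))\otimes\Q$ under the projection of fibration~(\ref{FibrationSymp-IEmb}); since that projection factors as $\Symp(M_\mu)\to\Embuc\to\IEmbuc$ through the restriction map $\phi\mapsto\phi|_{B^4(c)}$, the classes $e,f,h$ lift to $\Embuc$, and we define $\widetilde e,\widetilde f,\widetilde h$ to be these lifts. In particular $\partial_4(h)=0$, hence $\partial_4=0$ (automatic for $\ell\geq2$ since then $\pi_4(\IEmbuc)\otimes\Q=0$, and using the lift of $w$ when $\ell=1$, where $\deg h=4$). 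The class $g$ satisfies $\partial(g)\in\pi_2(U(2))\otimes\Q=0$, so it also lifts; its lift $\widetilde g$ is unique for $\ell>1$ because $\pi_{4\ell-1}(U(2))\otimes\Q=0$, and is well defined only up to a multiple of $v:=i_*(v_0)$ for $\ell=1$, since then $4\ell-1=3$ and $\pi_3(U(2))\otimes\Q=\Q\langle v_0\rangle$. Because $\partial_4=0$, the fibre inclusion $i\colon U(2)\to\Embuc$ is injective on $\pi_3\otimes\Q$, so $v\neq0$.

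The heart of the argument is to show that $\partial_2\colon\pi_2(\IEmbuc)\otimes\Q\to\pi_1(U(2))\otimes\Q\cong\Q\langle u_0\rangle$ is surjective with kernel $\Q\langle a-b\rangle$. Restriction to $B_c$ gives the map of fibrations from $\Symp(\tMuc)\to\Symp(M_\mu)\to\IEmbuc$ to $U(2)\to\Embuc\to\IEmbuc$ displayed in the first commutative diagram of Section~\ref{se:rathomtypeb} (using the equivalence $\Symp(\tMuc)\simeq\Symp^{U(2)}(M_\mu,B_c)$), and naturality of connecting maps yields $\partial_2=r_*\circ\partial_2'$, where $\partial_2'\colon\pi_2(\IEmbuc)\otimes\Q\to\pi_1(\Symp(\tMuc))\otimes\Q$ is the connecting map of the first fibration and $r_*$ is induced by restriction $\Symp^{U(2)}(M_\mu,B_c)\to U(2)$. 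By the very definition of $a$ and $b$ in Section~\ref{se:minimal-models}, $\partial_2'(a)=[\widetilde x]$ and $\partial_2'(b)=[\widetilde y]$. Now $\widetilde x$ (resp.\ $\widetilde y$) is represented by the circle action rotating the first (resp.\ second) $S^2$-factor of $M_\mu$ about the centre $p_0=\iota_c(0)$ of the ball; realised inside $\Symp^{U(2)}(M_\mu,B_c)$, its restriction to $B_c$ is the loop $\theta\mapsto\mathrm{diag}(e^{i\theta},1)$ (resp.\ $\theta\mapsto\mathrm{diag}(1,e^{i\theta})$) in $U(2)=U(T_{p_0}M_\mu)$, and each of these loops represents the generator of $\pi_1(U(2))\cong\Z$. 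Hence $\partial_2(a)=\partial_2(b)=u_0$, which gives the claim. As a consistency check one can instead identify $\partial_2$ via the homotopy pullback square $\Embuc\simeq\IEmbuc\times_M UFr(M)$ of the second diagram of Section~\ref{se:rathomtypeb}, using that $c_1(TM_\mu)$ pairs to $2$ with each of the two $S^2$-factor classes.

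It then remains to run the long exact sequence degree by degree with $\partial_4=0$ and $\partial_2$ as above. In degree $2$ this gives $\pi_2(\Embuc)\otimes\Q=\ker\partial_2=\Q\langle\widetilde d_{a,b}\rangle$, where $\widetilde d_{a,b}$ is the (unique) lift of $d_{a,b}:=a-b$; in degree $1$ the surjectivity of $\partial_2$ forces $\pi_1(\Embuc)\otimes\Q=0$, so $u_0$ is killed; in degree $3$ one gets the short exact sequence $0\to\Q\langle v\rangle\to\pi_3(\Embuc)\otimes\Q\to\pi_3(\IEmbuc)\otimes\Q\to0$, producing the generators $v,\widetilde e,\widetilde f$ (together with $\widetilde g$ when $\ell=1$); in degree $4\ell-1$ (for $\ell\geq2$) one gets $\pi_{4\ell-1}(\Embuc)\otimes\Q=\Q\langle\widetilde g\rangle$; in degree $4\ell$ one gets $\pi_{4\ell}(\Embuc)\otimes\Q=\Q\langle\widetilde h\rangle$; and all remaining rational homotopy groups vanish. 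This is exactly the assertion of the proposition. The main obstacle is the computation of $\partial_2$, and within it the verification that $\widetilde x$ and $\widetilde y$ restrict to the \emph{same} generator of $\pi_1(U(2))$ and not to $u_0$ and $-u_0$; this requires keeping careful track of the Hermitian identification of $T_{p_0}M_\mu$ with $\C^2$ and of the homotopy equivalence $\Symp(\tMuc)\simeq\Symp^{U(2)}(M_\mu,B_c)$ underlying the relevant map of fibrations.
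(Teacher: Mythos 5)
Your proof is correct and follows essentially the same route as the paper: set up the long exact sequence of the fibration $U(2)\to\Embuc\to\IEmbuc$, observe that the only possibly nonzero connecting maps are $\partial_2$ and $\partial_4$, identify $\widetilde e,\widetilde f,\widetilde h,\widetilde g$ as lifts, and reduce the heart of the argument to computing $\partial_2(a)=\partial_2(b)=u_0$ by restricting the loops $\widetilde x,\widetilde y\in\pi_1\Symp(\tMuc)\simeq\pi_1\Symp^{U(2)}(M_\mu,B_c)$ to the ball. The paper's computation of $\partial_2$ is word-for-word the same. The only small difference is your treatment of $\partial_4$: you observe directly that $h$ lifts to $\Embuc$ because it is the image of $w\in\pi_{4\ell}\Symp(M_\mu)$ under the factorisation $\Symp(M_\mu)\to\Embuc\to\IEmbuc$, whereas the paper instead maps the fibration to $U(2)\to\SFr(M)\to M$ via the $1$-jet-at-origin and centre-of-ball maps and uses that $\pi_{4\ell}(M)\otimes\Q=0$; both arguments are equally valid and short. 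You are also slightly more explicit than the paper about the sign issue (that $\widetilde x,\widetilde y$ restrict to the \emph{same} generator of $\pi_1 U(2)$ rather than opposite ones), which is a genuine, if routine, point; it holds because both circle actions are Hamiltonian in the positive direction and hence have the same sign under $\det\colon U(2)\to S^1$.
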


\proof{} Consider the following commutative diagram of long exact sequences

\[
\xymatrix{
\ldots  \ar[r] &  \pi_k(U(2))\otimes \Q \ar[r]^-{\iota_*} \ar[d]^{\id} & \pi_k(\Emb_{\om}(c, \mu)) \otimes \Q  \ar[r]^-{\rho_*} \ar[d]^{j_*}  & \pi_k(\Im \Emb_{\om}(c, \mu)) \otimes \Q \ar[r]^-{\partial_*} \ar[d]^{l_*} & \ldots  \\
 \ldots  \ar[r]  &  \pi_k(U(2)) \otimes \Q \ar[r]  & \pi_k(SFr(M)) \otimes \Q \ar[r]   & \pi_k(M) \otimes \Q \ar[r] &   \ldots  
}
\]
Since $\pi_{4\ell}(M) \otimes \Q$ vanishes, $l_{*=4\ell}(h) = 0$, and therefore $\partial_{*}(h) =0$. Hence $\rho_{*=4\ell}$ is an isomorphism between  $\pi_{4\ell}(\Emb_{\om}(c, \mu)) \otimes \Q$ and $ \pi_{4\ell}(\Im \Emb_{\om}(c, \mu)) \otimes \Q$. Let's denote by $\widetilde h$ the lift of $h$. Since $\pi_{4\ell-1}(U(2))$ and $\pi_{4\ell-2}(U(2))$ vanish if $\ell \neq 1$, it follows that the map $\rho_{*=4\ell-1}$ is an isomorphism between $\pi_{4\ell-1}(\Emb_{\om}(c, \mu)) \otimes \Q$ and $ \pi_{4\ell-1}(\Im \Emb_{\om}(c, \mu)) \otimes \Q$. Let $\widetilde g$ be the lift of $g$. In that case, that is, if $\ell \neq 1$ and 
for $k=3$, the short sequence
$$
\pi_3(U(2)) \otimes \Q  \stackrel{\iota_*}{\to}   \pi_3(\Emb_{\om}(c, \mu)) \otimes \Q  \stackrel{\rho_*}{\to}  \pi_3(\Im \Emb_{\om}(c, \mu)) \otimes \Q 
$$
splits because $\pi_4(\Im \Emb_{\om}(c, \mu)) \otimes \Q $ and $\pi_2(U(2))$ vanish. Let's denote by $v$ the image of $v_0$ and by $\widetilde e, \widetilde f$ the lifts of $e,f$; all are well defined.

If $\ell=1$, for  $k=3$, the short sequence 
$$
\pi_3(U(2)) \otimes \Q  \stackrel{\iota_*}{\to}   \pi_3(\Emb_{\om}(c, \mu)) \otimes \Q  \stackrel{\rho_*}{\to}  \pi_3(\Im \Emb_{\om}(c, \mu)) \otimes \Q 
$$
still splits because, as we saw, $h$ is mapped to $0$, and $\pi_2(U(2))$ vanishes.
We still denote by  $v$ the image of $v_0$ and by $\widetilde e, \widetilde f, \widetilde g$ the lifts of $e,f,g$. In this case, 
 all are well-defined except $\widetilde g$ which is defined up to a multiple of the element $v$. Consider now the sequence
$$
 0 \to \pi_2(\Emb_{\om}(c, \mu)) \otimes \Q   \stackrel{\rho_*}{\to}  \pi_2(\Im \Emb_{\om}(c, \mu)) \otimes \Q \stackrel{\partial_*}{\to}  \pi_{1}(U(2)).
$$
The elements $a,b$ are by definition such that they lift to discs
$$
\phi_a, \phi_b: D^2 \to \Symp(M_{\mu}^i)
$$
with boundary equal to the elements $x,y \in \pi_1(\Symp({\tMuc})) \otimes \Q$ respectively. Therefore, their lifts to $\Emb_{\om}(c, \mu) \otimes \Q$ are  the $2$-discs
$$
\psi_a, \psi_b: D^2 \to \Emb_{\om}(c, \mu)
$$ 
defined by $\psi_{a,b}(z) = \phi_{a,b} |_{B_c}$. Hence their boundaries are the restriction of the loops  ${\widetilde x},{\widetilde y} \in \pi_1(\Symp(M_{\mu},B_c)) \otimes \Q$ to the standard ball $B_c \subset M_{\mu}$. But each of these loops preserve $B_c$ (not pointwise) and correspond to the generator of $\pi_1(U(2)) \otimes \Q$ through the identification $B^4(c) (\subset \R^4) \to B_c$.
This proves that each of $a$ and $b$ is mapped to $u_0$ by the boundary operator of the above sequence. Denote by $\widetilde d_{a,b}$ the lift to $ \pi_2(\Emb_{\om}(c, \mu)) \otimes \Q$ of the element $d_{a,b}=a-b$.

   Finally, the map $\partial_*: \pi_2(\Im \Emb_{\om}(c, \mu)) \otimes \Q \to  \pi_{1}(U(2)) \otimes \Q$ being onto, the space $\pi_1(\Emb_{\om}(c, \mu)) \otimes \Q$ must vanish. \qed

\MS
  Let's compute the minimal model of $\Emb_{\om}(c, \mu)$.  By the last proposition, a model of $\Emb_{\om}(c,\mu)$ is given by $(\Lambda(\widetilde d_{a,b}, \widetilde e, \widetilde f, \widetilde g, v, \widetilde h), d_0)$. By minimality, there is no linear term in the differential, so $d_0(\widetilde d_{a,b}) = 0$, while the constants (when $\ell >1$) in the expression 
$$
d_0 (\widetilde e) =  c_1  \widetilde d_{a,b}^{\; 2}, \quad  d_0 (\widetilde f) =  c_2 \widetilde d_{a,b}^{\; 2}  \quad d_0 v =  c_3 \widetilde d_{a,b}^{\; 2}$$
are given, by duality, by
$$
[\widetilde d_{a,b}, \widetilde d_{a,b}] = c_1 \widetilde e + c_2 \widetilde f + c_3 v.
$$
When $\ell=1$ we have to consider also $ d_0 (\widetilde g) =  c_4 \widetilde d_{a,b}^{\; 2}$.
Denoting by $\rho$ the projection $\Emb_{\om}(c, \mu) \to \Im \Emb(c,\mu)$, we have:
$$
\rho_{*}([\widetilde d_{a,b},  \widetilde d_{a,b}]) = [d_{a,b}, d_{a,b}] = [ a-b, a-b] = [a,a] + [b,b] = e + f.
$$
Therefore $c_1 = c_2 = 1$ and $c_4=0$ (if $\ell=1$), and we get $[\widetilde d_{a,b}, \widetilde d_{a,b}] = \widetilde e +  \widetilde f  + c_3 v$. Now any value of this constant leads to the same model, up to isomorphism. Indeed, since $d_0 \widetilde e = d_0 \widetilde f = \widetilde d_{a,b}^{\; 2}$ and $d_0 v = c_3 \widetilde d_{a,b}^{\; 2}$, this means that $\widetilde e$ kills $\widetilde d_{a,b}^{\; 2}$ and thus both $\widetilde f$ and $v$ can be considered as cycles (up to a reparametrization of the basis of the algebra). 

   Finally, if $\ell=1$, the differential of $\widetilde h$ is given by 
the coefficient affecting the term $\widetilde h$ in the Whitehead products $[\widetilde d_{a,b}, \widetilde e],
[\widetilde d_{a,b}, \widetilde f], [\widetilde d_{a,b}, \widetilde g], [\widetilde d_{a,b}, v]$, while if $\ell>1$ we just need to compute the Whitehead product $[\widetilde d_{a,b}, \widetilde g]$. Projecting on the base of the fibration, we see that all these coefficients must vanish, except for the coefficient $q \in \Q$ in $d_0 \widetilde h = q \widetilde d_{a,b}\widetilde g$. Indeed projecting $[\widetilde d_{a,b}, \widetilde g]$ on the base we have 
$$
\rho_{*}([\widetilde d_{a,b},  \widetilde g]) = [ d_{a,b}, g]= [a-b,g]=[a,g]-[b,g]=-qh,
$$
since  $[a,g]=0$ and the differential $d$  of the minimal model of $\Im\Emb$ satisfies $dh=qbg$ for some $q \neq 0$, as seen  in  Lemma \ref{le:dh=qbg}. This shows that the differential of $\widetilde h$ is given by 
$$ 
d_0\widetilde h=-q\widetilde d_{a,b} \widetilde g.
$$

  Denoting by $\widetilde f'$  and $v'$ the elements $\widetilde f - \widetilde e$ and $v -c_3 \widetilde e$ respectively, the sets $\{\widetilde e, \widetilde f', \widetilde g, v'\}$ and $\{\widetilde e, \widetilde f', v'\}$ form  a basis of the $3$-dimensional generators for the cases $\ell=1$ and $\ell>1$ respectively. These same methods apply  also to the computation of the minimal model of $\Emb^1_{\om}{(c, \mu)}$, that is, to the twisted case. So this proves the following:
%: thm:MinimalModelEmb
\begin{thm}\label{thm:MinimalModelEmb}
 If $0<\mu-\ell \leq c <1$, a minimal model of $\Emb^i_{\om}{(c, \mu)}$ is given by 
$$\Lambda(\Emb^i_{\om}{(c, \mu)})=(\Lambda(\widetilde d_{a,b}, \widetilde e, \widetilde f', v',\widetilde g, \widetilde h), d_0)$$
 with generators of degrees $2,3,3,3,4\ell+2i-1,4\ell+2i$ and with differential defined  by 
$$ d_0 \widetilde d_{a,b}  = d_0 \widetilde f' = d_0 \widetilde g = d_0 v'  = 0, \  d_0 \widetilde e =   \widetilde d_{a,b}^{\; 2}
 \mbox{ \  and \ } d_0\widetilde h=-q\widetilde d_{a,b} \widetilde g $$ where $q$ is a non zero rational number. Then the rational cohomology ring of $\Emb^i_{\om}{(c, \mu)}$ is given by 
$$ H^*(\Emb^i_{\om}{(c, \mu)}; \Q)=\Lambda(\widetilde d_{a,b},\widetilde f',\widetilde g, v', \widetilde g \widetilde h, \hdots,\widetilde g \widetilde h^n, \hdots, \widetilde d_{a,b}\widetilde h, \hdots, \widetilde d_{a,b}\widetilde h^n, \hdots)/\langle \widetilde d_{a,b}^{\;2},\widetilde d_{a,b}\widetilde g \rangle $$ where $n \in \N$  (see the computation of this cohomology ring in corollary \ref{cor:RationalCohomologyEmb}).
\end{thm}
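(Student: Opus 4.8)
The plan is to obtain the minimal model of $\Embuci$ by passing to the minimal model of the total space of the fibration
\[U(2)\longrightarrow\Embuci\stackrel{\rho}{\longrightarrow}\IEmbuci,\]
whose base model $(\Lambda(\IEmbuci),d_{U})$ is given by Theorem~\ref{thm:MinimalModelIEmb} and whose fibre $U(2)$ is rationally $S^{1}\times S^{3}$, with minimal model $\Lambda(u_{0},v_{0})$, $|u_{0}|=1$, $|v_{0}|=3$, $d_{V}=0$. By the proposition computing $\pi_{*}(\Embuci)\otimes\Q$, this group is concentrated in degrees $2,3,3,3,4\ell+2i-1,4\ell+2i$, with generators $\widetilde d_{a,b},\widetilde e,\widetilde f,v,\widetilde g,\widetilde h$; in particular $\pi_{1}\otimes\Q=0$, so the space is rationally $1$-connected and its minimal Sullivan model is the free graded-commutative algebra on these six generators. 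I would carry out the case $i=0$ and leave $i=1$, where only the degrees of $\widetilde g$ and $\widetilde h$ change (the fibre generator $\widetilde w$ and the base generators $g,h$ being shifted by $2$), to the reader.

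First I would identify the generators inside the model $\big(\Lambda(\IEmbuci)\otimes\Lambda(u_{0},v_{0}),d\big)$ of $\Embuci$. Because $\partial\colon\pi_{2}(\IEmbuci)\otimes\Q\to\pi_{1}(U(2))\otimes\Q$ is onto (both $a$ and $b$ map to $u_{0}$, by the proposition), the differential $d$ has a linear term on $u_{0}$, and minimalizing cancels $u_{0}$ against one degree-$2$ class; what survives in degree $2$ is the single generator $\widetilde d_{a,b}$, with $\rho_{*}\widetilde d_{a,b}=a-b$, while $v$ (the image of $v_{0}$) and the lifts $\widetilde e,\widetilde f,\widetilde g,\widetilde h$ of $e,f,g,h$ remain. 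Dualizing $\rho_{*}\widetilde d_{a,b}=a-b$, and using that there are no exact elements in degree $2$, the model map $\rho^{\sharp}\colon\Lambda(\IEmbuci)\to\Lambda(\Embuci)$ can be chosen so that $\rho^{\sharp}(a)=\widetilde d_{a,b}$ and $\rho^{\sharp}(b)=-\widetilde d_{a,b}$; and since $e,f,g,h$ lift, I would simply \emph{define} $\widetilde e:=\rho^{\sharp}(e)$, $\widetilde g:=\rho^{\sharp}(g)$, $\widetilde h:=\rho^{\sharp}(h)$ (legitimate, as these classes are indecomposable modulo decomposables).

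The differential is then forced. By minimality $d_{0}\widetilde d_{a,b}=0$, and applying $\rho^{\sharp}$ to the base differential of Theorem~\ref{thm:MinimalModelIEmb} gives
\[d_{0}\widetilde e=\rho^{\sharp}(a^{2})=\widetilde d_{a,b}^{\,2},\qquad d_{0}\rho^{\sharp}(f)=\rho^{\sharp}(b^{2})=\widetilde d_{a,b}^{\,2},\qquad d_{0}\widetilde g=\rho^{\sharp}(d_{U}g)=0,\]
\[d_{0}\widetilde h=\rho^{\sharp}(d_{U}h)=\rho^{\sharp}(q\,bg)=q\,(-\widetilde d_{a,b})\,\widetilde g=-q\,\widetilde d_{a,b}\,\widetilde g,\qquad q\neq0.\]
Since the only decomposable of degree $4$ is $\widetilde d_{a,b}^{\,2}$, one has $d_{0}v=c_{3}\widetilde d_{a,b}^{\,2}$ for some $c_{3}\in\Q$; replacing $\widetilde f$ and $v$ by $\widetilde f':=\rho^{\sharp}(f-e)$ and $v':=v-c_{3}\widetilde e$ turns them into cycles and yields exactly the model in the statement. (Equivalently one recovers the Whitehead-product formulation of the paper by projecting to the base: $[\widetilde d_{a,b},\widetilde d_{a,b}]$ maps to $[a-b,a-b]=e+f$, and $[\widetilde d_{a,b},\widetilde g]$ maps to $[a-b,g]=-[b,g]=-q\,h$, the last step using $[a,g]=0$ from Lemma~\ref{le:dh=qbg}.)

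Finally, the description of $H^{*}(\Embuci;\Q)$ is the cohomology computation of this explicit commutative differential graded algebra, which I would record separately as Corollary~\ref{cor:RationalCohomologyEmb}: the pair $(\widetilde e,\widetilde d_{a,b}^{\,2})$ is acyclic and kills $\widetilde d_{a,b}^{\,2}$, while $d_{0}\widetilde h=-q\,\widetilde d_{a,b}\widetilde g$ kills $\widetilde d_{a,b}\widetilde g$ but leaves the infinite families $\widetilde g\widetilde h^{\,n}$ and $\widetilde d_{a,b}\widetilde h^{\,n}$ of non-zero classes, whence the infinite-dimensionality. I expect the genuine difficulty to be exactly the one already met for $\IEmbuci$: showing that the coefficient $q$ in $d_{0}\widetilde h$ does not vanish. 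No degree count or Leray spectral sequence sees this; it is equivalent to $d_{U}h=q\,bg\neq0$ in the base, which rests on the Eilenberg--Moore computation of Lemma~\ref{le:dh=qbg}. A secondary point needing care is that a priori $d_{0}\widetilde h$ could pick up extra vertical decomposable terms of degree $4\ell+2i+1$; choosing the generators $\widetilde g,\widetilde h$ as pullbacks $\rho^{\sharp}(g),\rho^{\sharp}(h)$ shows that it does not.
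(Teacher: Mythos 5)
Your proposal is correct and follows essentially the same route as the paper. You both work from the fibration $U(2)\to\Embuci\to\IEmbuci$, use the proposition on $\pi_*(\Embuci)\otimes\Q$ to identify the six generators (with $u_0$ cancelled against one degree-$2$ class because $\partial:\pi_2(\IEmbuci)\otimes\Q\to\pi_1(U(2))\otimes\Q$ is onto), and determine the coefficients by projecting to the base, where Lemma~\ref{le:dh=qbg} supplies the nonvanishing of $q$; the only real difference is cosmetic packaging. The paper computes the Whitehead products $[\widetilde d_{a,b},\widetilde d_{a,b}]$ and $[\widetilde d_{a,b},\widetilde g]$ and dualizes via Sullivan's formula, whereas you push the base differentials forward along a chosen CDGA morphism $\rho^{\sharp}$ normalized by $\rho^{\sharp}(a)=\widetilde d_{a,b}$, $\rho^{\sharp}(b)=-\widetilde d_{a,b}$ (which is forced, since $\rho_*\widetilde d_{a,b}=a-b$ and there are no degree-$2$ decomposables). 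You explicitly note the equivalence of the two formulations, and your normalization $\widetilde g:=\rho^{\sharp}(g)$, $\widetilde h:=\rho^{\sharp}(h)$ neatly handles the possibility of extra decomposable terms in $d_0\widetilde h$ that the paper addresses by projecting Whitehead products to the base. The remaining change of basis $\widetilde f':=\widetilde f-\widetilde e$, $v':=v-c_3\widetilde e$ is identical.
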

\BS

%%%%%%%%%%%%%%%%%%%%%%%%%%%%%%%%%%%%%%%%%%%%%%%%%%%%%%%%%%%%%%%%%%%%%%%%%%%%%%%%
\section{Cohomology rings } \label{se:cohomology}
%%%%%%%%%%%%%%%%%%%%%%%%%%%%%%%%%%%%%%%%%%%%%%%%%%%%%%%%%%%%%%%%%%%%%%%%%%%%%%%%

It is easy  to describe the cohomology ring of $\Im \Emb^i_\omega(c,\mu)$ with rational coefficients.  A careful comparation between the Serre spectral sequence of the fibration 
%: eq:FibrationIEmbUntwisted
\begin{equation}\label{eq:FibrationIEmbUntwisted}
S^{4\ell+2i-1} \times \Omega S^{4\ell+2i+1} \ {\longrightarrow} \ G_\mu^i/{\tGuc^i} \ {\longrightarrow} \ M^i_\mu
\end{equation}
(recall that ${\tGuc^1} \simeq {\widetilde G}^0_{\mu+1-c,1-c}$) and Theorem \ref{thm:MinimalModelIEmb} gives the  cohomology ring of $\Im \Emb^i_\omega(c,\mu)$.
%: cor:RationalCohomologyRingIEmb

\begin{cor}\label{cor:RationalCohomologyRingIEmb}
If $0<\mu-\ell\leq c< 1 $ the cohomology ring of $\Im \Emb^i_\omega(c,\mu)$ with rational coefficients is given by 
$$ H^*(\Im \Emb^i_\omega(c,\mu); \Q)= \Lambda(a,b,g,gh, \hdots, gh^n, \hdots,bh, \hdots, bh^n, \hdots)/ \langle a^2,b^2,bg \rangle, $$ 
that is, 
$$ H^*(\Im \Emb^i_\omega(c,\mu); \Q)= H^*(S^2 \times S^2;\Q) \otimes \Lambda(g,gh, \hdots, gh^n, \hdots,bh, \hdots, bh^n, \hdots)/ \langle bg \rangle, $$
where $n \in \N$, $b$ is a generator of $H^2(S^2 \times S^2;\Q)$, and $g,h$ correspond to the generators of the cohomology ring  $H^* \left( S^{4\ell+2i-1} \times \Omega S^{4\ell+2i+1}; \Q \right) $ where  $|g|=4\ell+2i-1$ and $|h|=4\ell+2i$.
\end{cor}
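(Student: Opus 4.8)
The plan is to run the Serre spectral sequence of the fibration~\eqref{eq:FibrationIEmbUntwisted} and to feed into it the differential provided by Theorem~\ref{thm:MinimalModelIEmb}. First I would record the $E_2$--page. The base $M^i_\mu$ is simply connected, so the local system is trivial, and in both cases $H^*(M^i_\mu;\Q)\cong\Q[a,b]/\langle a^2,b^2\rangle$ with $|a|=|b|=2$ (for $i=1$ this uses the change of basis replacing the standard generators $u,v$ of $H^2(\CP^2\#\overline{\CP}\,\!^2)$ by $u+v,u-v$). Since $H^*(S^{4\ell+2i-1};\Q)=\Lambda(g)$ and $H^*(\Omega S^{4\ell+2i+1};\Q)=\Q[h]$ with $|g|=4\ell+2i-1$ and $|h|=4\ell+2i$, one gets $E_2=\bigl(\Q[a,b]/\langle a^2,b^2\rangle\bigr)\otimes\Lambda(g)\otimes\Q[h]$; identifying $a,b$ with the images of the $H^2$--generators of $M^i_\mu$ under the projection matches them with the corresponding generators of the minimal model of Theorem~\ref{thm:MinimalModelIEmb}.

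Next I would pin down the differentials. The classes $a,b$ survive as pullbacks from the base, and $g$ is a permanent cocycle: by Theorem~\ref{thm:MinimalModelIEmb} the group $H^{4\ell+2i-1}(\Im \Emb^i_\omega(c,\mu);\Q)$ is non-zero, while in that total degree the only non-trivial entry of $E_2$ is $E_2^{0,4\ell+2i-1}=\Q g$, forcing $d_r g=0$ for all $r$. The only possibly non-zero differential on $h$ lands in $E_2^{2,4\ell+2i-1}=\langle ag,bg\rangle$, and comparison with Theorem~\ref{thm:MinimalModelIEmb}, where $d_Uh=q\,bg$ with $q\neq0$ (so $bg$ is a coboundary while $ag$ represents a non-zero class), forces $d_2h=q'\,bg$ with $q'\neq0$: a non-zero $ag$--component in $d_2h$ would kill $ag$ in cohomology, and $d_2h=0$ would make $h$ survive, contradicting the Betti numbers read off from the minimal model. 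All higher differentials then vanish for degree reasons, since the non-zero columns of $E_2$ are concentrated in base degrees $0,2,4$ while the fibre generators sit in degrees $4\ell+2i-1$ and $4\ell+2i$. Thus $E_\infty=E_3=H^*(E_2,d_2)$, where $d_2$ annihilates $a,b,g$ and sends $h\mapsto q'bg$. Filtering $E_2$ by powers of $h$ (equivalently, using the Gysin--type long exact sequence of the Hirsch extension adjoining $h$) and using $bg\cdot b=b^2g=0$ and $bg\cdot g=0$ in $E_2$, one finds that $d_2$ pairs off exactly the monomials $h^p,ah^p$ ($p\geq1$) with $bgh^{p-1},abgh^{p-1}$; hence $E_\infty$ has $\Q$--basis $\{1,a,b,ab,g,ag\}\cup\{bh^p,abh^p,gh^p,agh^p:p\geq1\}$.

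Finally I would settle the ring structure and the multiplicative extensions. One has $a^2=b^2=0$ (each is a coboundary in the minimal model, $d_Ue$ and $d_Uf$), $g^2=0$ by degree, and $bg=\tfrac1q\,d_Uh=0$; moreover $ab\neq0$ since $E_\infty^{4,0}=H^4(M^i_\mu)$ receives no differential ($g$ being a permanent cocycle rules out the only candidate, the map $d_4\colon\Q g\to\Q ab$ that can occur when $\ell=1,\,i=0$) and is detected multiplicatively as the cup square from the base. Checking likewise that every product containing a repeated factor $b$ or $g$ vanishes, one sees that $a,b,g$ together with the classes $bh^p,gh^p$ ($p\geq1$) generate the ring subject precisely to $a^2=b^2=bg=0$, which identifies $H^*(\Im \Emb^i_\omega(c,\mu);\Q)$ with $\Lambda(a,b,g,gh,\dots,gh^n,\dots,bh,\dots,bh^n,\dots)/\langle a^2,b^2,bg\rangle$, equivalently $H^*(S^2\times S^2;\Q)\otimes\Lambda(g,gh^n,bh^n)/\langle bg\rangle$. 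The twisted case $i=1$ is word for word the same with $4\ell+2$ in place of $4\ell$. The only step requiring genuine input beyond bookkeeping is the determination that $d_2h$ is a non-zero multiple of $bg$ (rather than of $ag$, a mixture, or zero); the spectral sequence of~\eqref{eq:FibrationIEmbUntwisted} alone cannot settle this, which is why Theorem~\ref{thm:MinimalModelIEmb} (carrying the referee's correction acknowledged above) is invoked, and this is the main obstacle of the argument.
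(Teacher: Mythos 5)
Your proposal follows the same route as the paper: run the Serre spectral sequence of the fibration $S^{4\ell+2i-1}\times\Omega S^{4\ell+2i+1}\to\Im\Embuci\to M^i_\mu$, feed in the differential $d_Uh=q\,bg$ from Theorem~\ref{thm:MinimalModelIEmb}, compute $E_3$, and read off the ring. The determination of $d_2h$ and the permanence of $g$ are handled in essentially the same way in both arguments (the paper in fact phrases it via ``$\rk H^{4}=1$, $\rk H^{4\ell}=0$ for $\ell>1$'' from the Eilenberg--Moore computation, while you read off the same information directly from the Betti numbers of the minimal model -- this is a cosmetic difference). So the structure of the argument is the one the authors use.

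There is, however, a small but genuine gap in the sentence ``all higher differentials then vanish for degree reasons.'' Because $d_r$ has bidegree $(r,1-r)$ and the base has cohomology only in degrees $0,2,4$, only $r=2$ or $r=4$ can occur; $d_4\colon E_4^{0,q}\to E_4^{4,q-3}$ is excluded by degree when $N:=4\ell+2i\geq 6$, i.e.\ when $\ell\geq2$ or $i=1$, but it is \emph{not} excluded when $\ell=1,\ i=0$. In that case $d_4\colon E_4^{0,4k+3}\to E_4^{4,4k}$, i.e.\ $gh^k\mapsto abh^k$, is a possibly nonzero map for \emph{every} $k\geq 0$, because $abh^k$ survives to $E_4$ (nothing hits $E_2^{4,4k}$, and $h^{k+1}$ dies at $E_3$). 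You rule out only the $k=0$ instance (``$d_4\colon\Q g\to\Q ab$'') by the permanence of $g$; the permanence of $g$ says nothing about $gh^k$ for $k\geq1$, since $h^k$ does not survive to $E_4$ and so $gh^k$ is not a product of permanent cycles in $E_4$. To close the gap you need an extra input; the paper supplies it by noting from the minimal model that $\rk H^{4n+3}=1$ for all $n$, and observing that in total degree $4n+3$ the only $E_2$--entry that can reach $E_\infty$ is $E_2^{0,4n+3}=\Q\, gh^n$ (the entry $E_2^{4,4n-1}=\Q\, abgh^{n-1}$ dies under $d_2$), forcing $d_4(gh^n)=0$. Equivalently, one may argue from $\rk H^{4n+4}=1$ that $abh^n$ must survive. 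Either patch takes one line, but it should be made explicit; as written your argument for $E_\infty=E_3$ does not cover the case that actually requires care.
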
 
\proof{}
We give the proof for the untwisted case. The case $i=1$ is analogous to this one; we leave its proof to the reader. 
The rational cohomology ring of the fiber is given by 
$$H^*(S^{4\ell-1} \times \Omega S^{4\ell+1}; \Q)= \Lambda(g) \otimes \Q[h].$$ 
 
We showed in the proof of Lemma \ref{le:dh=qbg} that $H^4(\Im \Emb^0_\omega(c,\mu),\Q)$ is one dimensional and that $\rk H^{4\ell}(\Im \Emb^0_\omega(c,\mu),\Q)=0$ when $\ell>1$. This implies that in the $E_2$--term of the Serre spectral sequence of the fibration \eqref{eq:FibrationIEmbUntwisted} the differential $d_2h$ does not vanish. This is clear when $\ell >1$ for dimensional reasons. When $\ell =1$  if we had $d_2h=0$ then $h$ would survive to the $E_\infty$ page of the spectral sequence and, unless $d_4g=ab$, we would have two generators in the cohomology group $H^4(\Im \Emb^0_\omega(c,\mu); \Q)$, namely $h$ and $ab$. However, it follows from the minimal model computation that $\rk H^3(\Im \Emb^0_\omega(c,\mu); \Q)=1$ which implies that $g$ is a permanent cycle and therefore $d_rg=0$ for all $r \geq 2$. We can assume that  $d_2h=bg$. Then  the generators $bh^n$ and $gh^n$ where $n \in \N$ survive to the $E_\infty$ page  of the spectral sequence. For all these generators, except for $gh^n$ when $\ell=1$ and $ n\in \N$, this follows simply for dimensional reasons since $E_{p,q}=0$ for all $p\geq 5$ and $q \geq 0$. When $\ell=1$ one knows from  the computation of the minimal model of $\Im \Emb^0_\omega(c,\mu)$ that  $\rk H^{4n+3}(\Im \Emb^0_\omega(c,\mu); \Q)=1$, and it is easy to verify that $gh^n$, for each $n$,  is the single element in dimension $4n+3$ that can survive to the $E_\infty$--page of the spectral sequence. This completes the proof. \qed

\medskip

A comparation of Theorem \ref{thm:MinimalModelEmb} and the Serre spectral sequence of the fibration
%: eq:FibrationU(2)-Emb-IEmb
\begin{equation}\label{eq:FibrationU(2)-Emb-IEmb}
 U(2) \longrightarrow  \Emb^i_\omega(c,\mu) \longrightarrow \Im \Emb^i_\omega(c,\mu)
\end{equation}  
yields the cohomology ring of $\Emb^i_\omega(c,\mu)$ with rational coefficients.
%: cor:RationalCohomologyEmb
\begin{cor} \label{cor:RationalCohomologyEmb} If $0<\mu-\ell\leq c< 1 $ then 
$$H^*( \Emb^i_\omega(c,\mu); \Q)\cong \Lambda(b, f, v,g, gh, \hdots, gh^n, \hdots,bh, \hdots, bh^n, \hdots)/\langle b^2,bg \rangle$$ where $H^*(U(2);\Q) \cong \Lambda(u,v)$, $|f|=3$ and  $a,b,g,gh^n,bh^n$ with $n\in \N$ correspond to  the generators of the cohomology ring of $\Im \Emb^i_\omega(c,\mu)$. \end{cor}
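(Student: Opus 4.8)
The plan is to compute $H^{*}(\Embuci;\Q)$ from the Serre spectral sequence of the fibration \eqref{eq:FibrationU(2)-Emb-IEmb}, $U(2)\to\Embuci\to\IEmbuci$ with projection $\rho$, using the ring $H^{*}(\IEmbuci;\Q)$ of Corollary~\ref{cor:RationalCohomologyRingIEmb} on the base and the minimal model of Theorem~\ref{thm:MinimalModelEmb} to settle the multiplicative extension problem. Since $H^{*}(U(2);\Q)=\Lambda(u,v)$ with $\deg u=1$, $\deg v=3$, the $E_{2}$--page is $H^{*}(\IEmbuci;\Q)\otimes\Lambda(u,v)$, and the only possibly non-zero differentials are the transgressions of $u$ and of $v$.

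\emph{Differentials.} As $\pi_{1}(\Embuci)\otimes\Q=0$ (Section~\ref{se:rathomtypeb}), the class $u$ must die, so $d_{2}u\neq0$ in $H^{2}(\IEmbuci;\Q)=\Q\langle a,b\rangle$; and since the computation of $\pi_{*}(\Embuci)$ in Section~\ref{se:rathomtypeb} shows that the connecting homomorphism $\partial\colon\pi_{2}(\IEmbuci)\otimes\Q\to\pi_{1}(U(2))\otimes\Q$ sends both $a$ and $b$ to the generator $u_{0}$, dualising gives $d_{2}u=a+b$ (up to sign). Next, $v$ is a permanent cycle: $d_{2}v=0$ because $E_{2}^{*,2}=0$; $d_{3}v=0$ because $H^{3}(\IEmbuci;\Q)=0$ when $\ell\ge2$, while for $\ell=1$ one has $d_{2}(gu)=g(a+b)=ag\neq0$ (using $bg=0$), so $E_{3}^{3,1}=0$; and $d_{4}v=0$ because $d_{2}\colon E_{2}^{2,1}\to E_{2}^{4,0}$ is onto, since $d_{2}(au)=a(a+b)=ab=d_{2}(bu)$ and $H^{4}(\IEmbuci;\Q)=\Q\langle ab\rangle$, whence $E_{3}^{4,0}=0$. (Equivalently, $v$ survives because $v_{0}$ injects into $\pi_{*}(\Embuci)\otimes\Q$, as recorded in Section~\ref{se:rathomtypeb}.) Thus the spectral sequence collapses at $E_{3}=E_{\infty}$.

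\emph{The $E_{\infty}$--page.} It is the cohomology of the complex $\bigl(H^{*}(\IEmbuci;\Q)\otimes\Lambda(u,v),\,d_{2}\bigr)$ with $d_{2}u=a+b$ and $d_{2}v=0$; since $v$ is a cycle this factors as $\Lambda(v)\otimes\bigl(H^{*}(\IEmbuci;\Q)\otimes\Lambda(u),\,d_{2}\bigr)$. A direct computation in the ring of Corollary~\ref{cor:RationalCohomologyRingIEmb} gives $H^{*}(\IEmbuci;\Q)/(a+b)\cong\Lambda(b,g,gh^{n},bh^{n})/\langle b^{2},bg\rangle$ (setting $a=-b$) together with $\mathrm{Ann}_{H^{*}(\IEmbuci;\Q)}(a+b)=(a-b)\,H^{*}(\IEmbuci;\Q)$, so the $u$--column contributes, as a module over the previous ring, a free rank-one summand on the new generator $f:=(a-b)u$ in degree $3$ (with $f^{2}=(a-b)^{2}u^{2}=0$). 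Hence, as bigraded algebras, $E_{\infty}\cong\Lambda(b,f,v,g,gh^{n},bh^{n})/\langle b^{2},bg\rangle$, and under $\rho^{*}$ the classes $b,g,gh^{n},bh^{n}$ are the images of the generators of $H^{*}(\IEmbuci;\Q)$ of the same name (with $\rho^{*}a=-\rho^{*}b$ forced by $d_{2}u=a+b$), while $v$ restricts to the $U(2)$--generator $v$.

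\emph{Main obstacle: the extension problem.} The Serre spectral sequence yields only the associated graded ring; the genuine work is to see that $b^{2}=0$ and $bg=0$ hold already in $H^{*}(\Embuci;\Q)$, not merely in the associated graded. For this I would invoke Theorem~\ref{thm:MinimalModelEmb}, whose minimal model computes $H^{*}(\Embuci;\Q)$ as $\Lambda(\widetilde{d}_{a,b},\widetilde{f}',\widetilde{g},v',\widetilde{g}\widetilde{h}^{n},\widetilde{d}_{a,b}\widetilde{h}^{n})/\langle\widetilde{d}_{a,b}^{\,2},\widetilde{d}_{a,b}\widetilde{g}\rangle$; matching generators ($\widetilde{d}_{a,b}\leftrightarrow b$, $\widetilde{f}'\leftrightarrow f$, $v'\leftrightarrow v$, $\widetilde{g}\leftrightarrow g$, $\widetilde{g}\widetilde{h}^{n}\leftrightarrow gh^{n}$, $\widetilde{d}_{a,b}\widetilde{h}^{n}\leftrightarrow bh^{n}$) then shows there are no hidden products and identifies $E_{\infty}$ with the full cohomology ring, giving the statement. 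Two minor points remain: in the low case $\ell=1$ the degrees $\deg g=\deg v=\deg f=3$ collide and $\widetilde{g}$ is only defined up to a multiple of $v$, and one checks (exactly as in the proof of Corollary~\ref{cor:RationalCohomologyRingIEmb}) that this ambiguity does not affect the ring; and the twisted case $i=1$ is handled verbatim after substituting $\deg g=4\ell+1$, $\deg h=4\ell+2$ and using the symplectomorphism $\tGucl\simeq\widetilde{G}^{0}_{\mu+1-c,\,1-c}$, so one simply states the result and leaves the identical details to the reader.
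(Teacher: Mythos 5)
Your argument is essentially the paper's: both compute the Serre spectral sequence of $U(2)\to\Emb^i_\omega(c,\mu)\to\IEmb^i_\omega(c,\mu)$ and then invoke the minimal model of Theorem~\ref{thm:MinimalModelEmb} to settle the multiplicative extension problem, matching generators of the $E_\infty$ page with those of the minimal model. Your transgression $d_2u=a+b$ (forced by $\partial a=\partial b=u_0$) is actually slightly more careful than the paper's phrasing ``we can choose $d_2u=a$''--- that change of basis would not preserve the relations $a^2=b^2=0$ --- but the resulting $E_\infty$ page, with degree-$3$ cycle $f=(a-b)u$ and base quotient $H^*(\IEmb)/(a+b)$, is identical, and the rest of the argument proceeds in the same way.
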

\proof From the computation of the minimal model in Theorem \ref{thm:MinimalModelEmb}, it follows that there is no generator in degree 1 in the cohomology ring of $ \Emb^i_\omega(c,\mu)$ so, in the $E_2$--page of the Serre spectral sequence of the fibration \eqref{eq:FibrationU(2)-Emb-IEmb}, the differential satisfies $d_2u \neq 0$.
 Therefore $d_2u$ is a linear combination of $a$ and $b$. Notice that  the minimal model computation also shows that there is no element in degree 4 in the cohomology ring. Hence the element $ab$ in the $E_2$--page has to be in the image of $d_2$ or $d_4$. The computation of the minimal model implies that we need to have two generators of degree 3 in the cohomology ring for all cases except when $\ell=1$ and $i=0$ (in this latter case it has three generators). Hence $v$ is a permanent cycle and we can choose $d_2u=a$. Then one has $d_2ub=ab$ as desired and 
the element $ua$ survives to the $E_\infty$ page. The element $ua$ correponds to the generator $f$. The element $g$ survives to the $E_\infty$--page since it is the only candidate that could represent the generator in dimension $4\ell+2i-1$ that exists by the minimal model computation. It is not hard to see that the generators $gh^n$ also survive  to the  $E_\infty$ page and they correspond to the generators $\widetilde g \widetilde h^n$ in the minimal model. Finally we see that the generators $bh^n$ cannot be in the image of $d_r$ with $r \geq 2$, so they also survive to the $E_\infty$ page. Moreover they correspond to the elements $s_n=\widetilde h^{n-1}(\widetilde h \widetilde d_{a,b} + nq \widetilde e \widetilde g)$ in the minimal model, where $n \in \N$, which clearly satisfy $d_0 s_n=0$.  
\qed

\begin{remark}
Notice that this cohomology ring is equivalent to the one given in Theorem \ref{thm:MinimalModelEmb}. Indeed the difference between the two is that, here, we use the generators of the cohomology ring of $\Im \Emb^i_\omega(c,\mu)$ to describe the ring while, there, we used the generators of the minimal model.
\end{remark}

\subsection{The split case with $1< \mu \leq 2$}

 Recall from section \ref{se:ParticularCase} that if $\mu$ lies in this interval we have  the following fibration
\begin{equation}
\label{seq}
\xymatrix{ \Omega\Sigma^2 \SO(3)/\Omega S^3  \ar[r]^-{\bar{\imath}} & G_\mu^0 /\tGuco \ar[r]^-{\bar{\pi}} & {S^2 \times  S^2}}.
\end{equation}
 One can compute the cohomology ring of the space $\Im \Emb_\omega^0(c,\mu)$ with $\Z_p$  coefficients and $p$ prime, using  this fibration. Let $\Gamma_{\Z_p}[x]$ denote  the divided polynomial algebra on the generator $x$. This is, by definition, the $\Z_p$--algebra with basis $x_0=1,x_1,x_2,\hdots$ and multiplication given by 
\[
x_ix_j= \binom{i+j}{i} x_1^{i+j}
\]
As one can check, there is an isomorphism 
\[
\Gamma_{\Z_p}[x] \approx \Z_p[x_1,x_p,x_{p^2},\hdots]/\langle x_1^p,x_p^p,x_{p^2}^p,\hdots\rangle =\bigotimes_{i\geq 0}\Z_p[x_{p^i}]/\langle x_{p^i}^p\rangle
\]
\begin{cor} \label{co:crzp}If $0<\mu-1\leq c< 1 $ and $p \neq 2$ then 
$$ H^*(\Im \Emb_\omega^0(c,\mu); \Z_p)= \Lambda(a,b,g)/\langle a^2,b^2,bg \rangle \otimes g\Gamma_{\Z_p}[h] \otimes b\Gamma_{\Z_p}[h]$$
where $|a |= |b |= 2$, $|g|=3$, $|h|=4$ and $\tau\Gamma_{\Z_p}[h]$, with $\tau = g$ or $\tau = b$, stands for the infinitely generated algebra in which $\tau$ commutes with every element.
\end{cor}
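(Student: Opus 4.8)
The plan is to run the Serre spectral sequence of the fibration~\eqref{seq}, namely $\Omega\Sigma^{2}\SO(3)/\Omega S^{3}\to\Im \Emb_\omega^{0}(c,\mu)\to S^{2}\times S^{2}$, with $\Z_{p}$--coefficients, following the pattern of the rational computation in Corollary~\ref{cor:RationalCohomologyRingIEmb} and of the Eilenberg--Moore argument in the proof of Lemma~\ref{le:dh=qbg}; the one new ingredient is that the mod~$p$ cohomology of the fibre carries divided powers.

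First I would compute the mod~$p$ cohomology of the fibre $F^{0}:=\Omega\Sigma^{2}\SO(3)/\Omega S^{3}$. Since $\Omega S^{3}\hookrightarrow\Omega\Sigma^{2}\SO(3)$ is the loop of the double suspension $j\colon S^{3}=\Sigma^{2}S^{1}\to\Sigma^{2}\SO(3)$ of $S^{1}\hookrightarrow\SO(3)$, the space $F^{0}$ is the homotopy fibre of $j$, hence sits in a fibration $\Omega\Sigma^{2}\SO(3)\to F^{0}\to S^{3}$. Now for $p$ odd, $\tilde H_{*}(\Sigma\SO(3);\Z_{p})\cong\Z_{p}$ is concentrated in degree~$4$, so by James's theorem $H_{*}(\Omega\Sigma^{2}\SO(3);\Z_{p})$ is the tensor algebra on a single degree--$4$ class, i.e.\ $\Z_{p}[h]$, and dually $H^{*}(\Omega\Sigma^{2}\SO(3);\Z_{p})$ is the divided power algebra $\Gamma_{\Z_{p}}[h]$ with $|h|=4$. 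The Serre spectral sequence of $\Omega\Sigma^{2}\SO(3)\to F^{0}\to S^{3}$ then collapses for degree reasons (the base has cohomology in degrees $0,3$, the fibre in degrees $\equiv0\bmod4$), so $H^{*}(F^{0};\Z_{p})\cong\Lambda(g)\otimes\Gamma_{\Z_{p}}[h]$ with $|g|=3$, and $E_{2}=H^{*}(S^{2}\times S^{2};\Z_{p})\otimes\Lambda(g)\otimes\Gamma_{\Z_{p}}[h]$ for the spectral sequence of~\eqref{seq}.

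Next I would identify the differentials, just as in the rational case. The base classes $a,b$ are permanent cycles, and since \eqref{seq} admits a section (Theorem~\ref{thm:homotopy-type-mu<2}), $a$, $b$ and $ab$ survive to $E_{\infty}$. Because $H^{\mathrm{odd}}(S^{2}\times S^{2})=0$, the only possibly non-zero differential out of $g$ is $d_{4}g\in E_{4}^{4,0}=\Z_{p}\langle ab\rangle$, and it vanishes because $ab$ survives; so $g$ is a permanent cycle. The only possibly non-zero differential on $h=h_{1}$ is $d_{2}h_{1}\in E_{2}^{2,3}=\Z_{p}\langle ag,bg\rangle$. Its $ag$--coefficient vanishes since $[a,g]=0$ (Lemma~\ref{le:dh=qbg}; this remains valid mod~$p$ because $\pi_{4}(\Im \Emb_\omega^{0}(c,\mu))_{(p)}\cong\Z_{(p)}$ is torsion-free, so the rationally trivial class $[a,g]$ is $p$--locally trivial). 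That the $bg$--coefficient is a unit is where I would invoke the Eilenberg--Moore computation of Lemma~\ref{le:dh=qbg} run with $\Z_{p}$--coefficients: for $\ell=1$ the relevant cohomology rings are $H^{*}(BG_{\mu}^{0};\Z)\cong\Z[T,X,Y]/\langle T(T^{2}+X-Y)\rangle$ and $H^{*}(\tBGuco;\Z)\cong\Z[z,x,y]/\langle z(z-x+y)\rangle$, which are torsion-free with all structure constants occurring in $\{0,\pm1,\pm2\}$ (and $2$ is invertible mod~$p$), so the $\Tor$--computation goes through verbatim and yields $\rk_{\Z_{p}}H^{4}(\Im \Emb_\omega^{0}(c,\mu);\Z_{p})=1$; this forces $h_{1}$ to die. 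After rescaling $h$ we get $d_{2}h_{1}=bg$, whence $d_{2}h_{n}=h_{n-1}bg$ for all $n\ge1$ by the Leibniz rule, and there is no room for higher differentials, so $E_{\infty}=E_{3}$.

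Finally, reading off $E_{3}$: the differential $d_{2}$ pairs $h_{n}$ with $bgh_{n-1}$ and $ah_{n}$ with $abgh_{n-1}$, while every class divisible by the fibre class $g$, together with the classes $bh_{n}$ and $abh_{n}$, survives. So $E_{\infty}$ is additively spanned by $1,a,b,g,ab,ag$ and by $bh_{n},gh_{n},abh_{n},agh_{n}$ for $n\ge1$. In each total degree only a single group $E_{\infty}^{s,t}$ is non-zero, so there is no additive extension problem, and the multiplicative structure is forced: $ab$ and $ag$ are non-zero by the section, and $bg=0$ in $E_{\infty}$ forces $(gh_{n})(bh_{m})=(gh_{n})(gh_{m})=(bh_{n})(bh_{m})=0$. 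Repackaging these generators and relations gives exactly $\Lambda(a,b,g)/\langle a^{2},b^{2},bg\rangle\otimes g\Gamma_{\Z_{p}}[h]\otimes b\Gamma_{\Z_{p}}[h]$. The main obstacle is precisely the mod~$p$ transgression $d_{2}h_{1}$: one cannot simply reduce the rational answer modulo $p$, so one must show the relevant structure constant stays a unit, and that is what the integral Eilenberg--Moore refinement above accomplishes.
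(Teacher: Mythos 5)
Your proposal follows essentially the same route as the paper's proof: compute $H^{*}(F^{0};\Z_{p})\cong\Lambda(g)\otimes\Gamma_{\Z_{p}}[h]$, re-run the Eilenberg--Moore argument of Lemma~\ref{le:dh=qbg} modulo~$p$ to show $H^{4}(\Im\Emb_\omega^0(c,\mu);\Z_p)$ is one-dimensional, and then run the Serre spectral sequence of the fibration~\eqref{seq}. The only genuinely different ingredient is at the first step: you derive $H^{*}(F^{0};\Z_{p})$ from James's theorem and the fibration $\Omega\Sigma^{2}\SO(3)\to F^{0}\to S^{3}$, whereas the paper simply invokes the $p$-local equivalence $F^{0}\simeq_{p}S^{3}\times\Omega S^{5}$. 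Both give the same Serre $E_{2}$--page, and your analysis of the higher differentials and the $E_{\infty}$ ring structure is a more detailed version of what the paper sketches by pointing back to Corollary~\ref{cor:RationalCohomologyRingIEmb}.

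The one place where your justification is off is the claim that the $ag$--coefficient of $d_{2}h_{1}$ vanishes ``because $\pi_{4}(\Im\Emb_\omega^{0}(c,\mu))_{(p)}\cong\Z_{(p)}$ is torsion-free, so the rationally trivial class $[a,g]$ is $p$-locally trivial.'' The paper nowhere establishes that $\pi_{4}$ has no $p$-torsion, and the Sullivan duality linking Whitehead products to model differentials is strictly a rational tool, so this phrasing does not by itself control the Serre differential mod~$p$. The argument that actually works, and that is implicit in the paper's proof of Lemma~\ref{le:dh=qbg}, is geometric: $a$ factors through $g$ (the $2$--disc $D_{\widetilde{x}}$ sits inside the $3$--disc $D$ whose projection is $2g$), so $[a,g]=g_{*}[\iota,\iota_{3}]$ is in the image of $\pi_{4}(S^{3})\cong\Z/2$, which dies as soon as $2$ is inverted. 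This pinpoints exactly why the hypothesis $p\neq 2$ is needed and why the statement must change form at $p=2$ (Corollary~\ref{co:crz2}); with this corrected justification your proof matches the paper's.
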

\proof{}
First notice that the fiber $\Omega \Sigma^2 \SO(3) /\Omega  S^3$ is equivalent to the space $S^3 \times \Omega S^5$  away from the prime 2. Therefore we get  $$H^*( \Omega \Sigma^2 \SO(3) /\Omega  S^3; \Z_p)= \Lambda(g) \otimes \Gamma_{\Z_p}[h],$$ where $p\neq 2$, $|g| = 3$ and $|h| = 4$. The same argument as the one in the proof of the Lemma \ref{le:dh=qbg}, using the Eilenberg--Moore spectral sequence, shows that $H^4(\Im \Emb_\omega^0(c,\mu); \Z_p)$ is one dimensional if $p\neq 2$.  Since $\rk H^{4n+3}(\Im \Emb_\omega^0(c,\mu); \Q)=1$ where $n \in \N_0$ it follows that $ H^{4n+3}(\Im \Emb_\omega^0(c,\mu); \Z_p)$ is at least one dimensional. Then using again the Serre spectral sequence of the fibration \eqref{seq}
and an argument similar to the one used in  corollary  \ref{cor:RationalCohomologyRingIEmb} we obtain the desired result. \qed

Next, we will see that $\Im \Emb^0$ has $\Z_2$--torsion and therefore  the cohomology ring with these  coefficients is not as simple to describe as  the previous ones.
\begin{cor} \label{co:crz2}  When   $0<\mu-1\leq c< 1 $, the cohomology groups with $\Z_2$ coefficients of the space $\Im \Emb_\omega^0(c,\mu)$  are given by 
\begin{equation}\label{isoh} H^*(\Im \Emb_\omega^0(c,\mu); \Z_2) =  H^*(S^2\times S^2; \Z_2) \otimes  H^*(\Omega \Sigma^2 \SO(3)/\Omega S^3;\Z_2)
\end{equation}
(as vector spaces). Moreover, as an algebra  $$H_*(\Omega \Sigma^2 \SO(3)/\Omega S^3;\Z_2)=T(w_2,w_3,w_4) \otimes_{T(w_2)}\Z_2$$ where $T$ denotes the tensor algebra, that is, the free noncommutative algebra on the generators $w_i$ with degrees  $|w_i|=i$. Therefore 
the cohomology ring of $\Im \Emb_\omega^0(c,\mu)$ with $\Z_2$ coefficients is given by 
$$ H^*(\Im \Emb_\omega^0(c,\mu); \Z_2) \cong   H^*(S^2\times S^2; \Z_2) \otimes  A$$ where $A$ has an infinite number of generators.
\end{cor}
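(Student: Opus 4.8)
The plan is to run two spectral sequences: one to pin down the mod~$2$ homology of the fibre, and one for the total space. Throughout, write $F:=\Omega\Sigma^2\SO(3)/\Omega S^3$ for the fibre of the fibration \eqref{seq} of Theorem~\ref{thm:homotopy-type-mu<2}. Since $B\Omega S^3\simeq S^3$ and $B\Omega\Sigma^2\SO(3)\simeq\Sigma^2\SO(3)$, this homogeneous space is the homotopy fibre $\mathrm{hofib}(q)$ of the map $q\colon S^3=\Sigma^2 S^1\to\Sigma^2\SO(3)$ obtained by suspending twice the standard inclusion $S^1\hookrightarrow\SO(3)\simeq\R P^3$; accordingly $F$ sits in the fibration $\Omega\Sigma^2\SO(3)\to F\to S^3$ (the standard fibration $\Omega B\to\mathrm{hofib}(f)\to E$ for $f=q$, $E=S^3$, $B=\Sigma^2\SO(3)$).

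First I would compute $H_*(F;\Z_2)$. A Steenrod square computation on $\R P^3$ — the only nontrivial operation is $Sq^1\colon H^1\to H^2$, while $Sq^1(x^2)=0$ — shows $\R P^3/\R P^1\simeq S^2\vee S^3$, hence $\mathrm{cofib}(q)=\Sigma^2(S^2\vee S^3)=S^4\vee S^5$. Feeding the Bott--Samelson theorem into the Serre spectral sequence of $\Omega\Sigma^2\SO(3)\to F\to S^3$: the ring $H_*(\Omega\Sigma^2\SO(3);\Z_2)$ is the tensor algebra $T(w_2,w_3,w_4)$ on generators in degrees $2,3,4$ (the cells of $\Sigma\R P^3$); as the base is $S^3$, only $d^3$ can be nonzero; it is linear over the Pontryagin ring $T(w_2,w_3,w_4)$, and on the fundamental class of $S^3$ it is the transgression of the bottom cell of $\Sigma^2\R P^3$, namely $w_2$. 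Thus $d^3$ is multiplication by $w_2$, and since $T(w_2,w_3,w_4)$ has no zero divisors, $E^4=E^\infty$ is concentrated in filtration $0$, giving
\[
H_*(F;\Z_2)\cong T(w_2,w_3,w_4)\otimes_{T(w_2)}\Z_2
\]
as algebras (the Pontryagin structure survives in this homotopy-fibre spectral sequence). With coefficients away from $2$ the same argument yields $H_*(F)\cong H_*(S^3\times\Omega S^5)$, consistent with Corollary~\ref{co:crzp}.

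Next I would feed this into the Serre spectral sequence of $F\to\Im \Emb_\omega^0(c,\mu)\to S^2\times S^2$, whose $E_2$-page is $H^*(S^2\times S^2;\Z_2)\otimes H^*(F;\Z_2)$ (note $H^*(F;\Z_2)=0$ in degrees $1,2$). The section $\sigma$ of Theorem~\ref{thm:homotopy-type-mu<2} makes every class pulled back from the base a permanent cycle, so no differential can hit the bottom row — in particular $d_4\colon E_4^{0,3}\to E_4^{4,0}$ kills no class. The remaining potentially nonzero differentials — beginning with $d_2\colon E_2^{0,4}\to E_2^{2,3}$, which \emph{does not vanish rationally} (there $d_2h=bg$) — must be excluded by an independent low-degree computation: one runs the Eilenberg--Moore spectral sequence of $\Im \Emb_\omega^0(c,\mu)\to B\widetilde{G}_{\mu,c}^0\to BG_\mu^0$ with $\Z_2$-coefficients, exactly as in the proofs of Lemma~\ref{le:dh=qbg} and Corollary~\ref{co:crzp}, but now feeding in the mod~$2$ cohomology rings $H^*(BG_\mu^0;\Z_2)$, $H^*(B\widetilde{G}_{\mu,c}^0;\Z_2)$ with their $2$-torsion (the Stiefel--Whitney classes of the two $\SO(3)$-factors) and the restriction between them, all read off from the integral homotopy pushout squares of \S\ref{se:ParticularCase}. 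This shows $H^4(\Im \Emb_\omega^0(c,\mu);\Z_2)$ is two-dimensional — already exhibiting $\Z_2$-torsion in $\Im \Emb_\omega^0(c,\mu)$ — hence forces $d_2=0$ on $E_2^{0,4}$; analogous dimension bookkeeping (using the section for all bottom-row targets) kills the later differentials, so the spectral sequence collapses at $E_2$ and \eqref{isoh} holds as graded $\Z_2$-vector spaces. The ring splitting provided by $\sigma$ (together with the absence of multiplicative extensions, for degree reasons) then upgrades this to $H^*(\Im \Emb_\omega^0(c,\mu);\Z_2)\cong H^*(S^2\times S^2;\Z_2)\otimes A$ with $A:=H^*(F;\Z_2)$ evidently not finitely generated.

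The hard part will be this last step. Precisely \emph{because} the analogous rational spectral sequence does not collapse ($h$ dies against $d_2h=bg$), there is no formal argument, and one is forced to carry out the Eilenberg--Moore computation genuinely over $\Z_2$, controlling the $2$-torsion in $H^*(BG_\mu^0;\Z_2)$ and $H^*(B\widetilde{G}_{\mu,c}^0;\Z_2)$ and the restriction map between them — substantially more delicate than the characteristic-$\neq 2$ computations already in the paper. Heuristically, the rational obstruction $bg$ becomes divisible by $2$ integrally, reflecting the factor $2$ produced by $\pi_1(\SO(3))=\Z_2$ in the construction of the class $g$ in \S\ref{sc:mm1}, so it disappears mod $2$; making this rigorous is the whole point of the $\Z_2$ Eilenberg--Moore argument.
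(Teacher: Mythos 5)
Your computation of $H_*(F;\Z_2)$ for the fibre $F=\Omega\Sigma^2\SO(3)/\Omega S^3$ is correct and arrives at the paper's answer $T(w_2,w_3,w_4)\otimes_{T(w_2)}\Z_2$, though by a slightly different route: you use the Serre spectral sequence of $\Omega\Sigma^2\SO(3)\to F\to S^3$ with $d^3=$ (left multiplication by $w_2$), whereas the paper invokes the Bar spectral sequence of the principal fibration $\Omega S^3\to\Omega\Sigma^2\SO(3)\to F$. Both are legitimate. Your $Sq^1$ justification for $\R P^3/\R P^1\simeq S^2\vee S^3$ is a bit off (the real point is that the attaching map of the top cell is degree-zero, hence null by Hopf's theorem), but the conclusion $\mathrm{cofib}(q)=S^4\vee S^5$ is right.

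The gap is in the total-space step, and you are candid about it. You correctly note that the rational Serre spectral sequence of $F\to\Im\Emb^0_\omega(c,\mu)\to S^2\times S^2$ does \emph{not} collapse ($d_2h=bg$), so collapse mod $2$ cannot be formal, and the section only kills differentials landing in the bottom row, not, e.g., $d_2\colon E_2^{0,4}\to E_2^{2,3}$. Your remedy — an Eilenberg--Moore computation of $G_\mu^0/\tGuco\to B\tGuco\to BG_\mu^0$ over $\Z_2$, carrying along the $2$-torsion in $H^*(BG_\mu^0;\Z_2)$ and $H^*(B\tGuco;\Z_2)$ — is never carried out, and you flag it yourself as substantially harder than the characteristic-$\neq 2$ case, offering only a heuristic (``$bg$ becomes divisible by $2$ integrally''). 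As written, the proposal therefore does not establish the collapse on which everything depends.

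The paper sidesteps all of this. It observes that the inclusions $\Omega S^3\hookrightarrow\Omega\Sigma^2\SO(3)$, $\tGuco\hookrightarrow G_\mu^0$, and $S^1\times S^1\times S^1\hookrightarrow S^1\times\SO(3)\times\SO(3)$ are all injective on $\Z_2$-homology, so Leray--Hirsch applies to the three associated bundles, and that the fibrations $G_\mu^0\to S^1\times\SO(3)\times\SO(3)$ and $\tGuco\to S^1\times S^1\times S^1$ are homotopically trivial, giving two more (algebra) isomorphisms. Combining these five isomorphisms yields, purely by dimension count in each degree, the vector-space isomorphism $H^*(G_\mu^0/\tGuco;\Z_2)\cong H^*(S^2\times S^2;\Z_2)\otimes H^*(F;\Z_2)$. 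This already matches $E_2$ of the Serre spectral sequence, so there is no room for any nonzero differential, the spectral sequence collapses, and $\bar\imath^*$ is surjective — with no $\Z_2$-EMSS and no torsion bookkeeping required. You should replace your proposed EMSS step by this Leray--Hirsch dimension count; it is both simpler and actually completes the argument.
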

\proof{}
Since the inclusions $\Omega  S^3 \hookrightarrow \Omega \Sigma^2 \SO(3), \tGuco \hookrightarrow G_\mu^0$ and $S^1 \times S^1 \times S^1 \hookrightarrow S^1 \times \SO(3) \times \SO(3)$ induce injective maps in homology with $\Z_2$ coefficients, it follows from the Leray--Hirsch Theorem that we have the following isomorphisms as vector spaces
$$H^*( \Omega \Sigma^2 \SO(3);\Z_2) \cong H^*( \Omega \Sigma^2 \SO(3) /\Omega  S^3; \Z_2) \otimes H^*(\Omega  S^3; \Z_2), $$
$$ H^*(G_\mu^0 ; \Z_2) \cong H^*(G_\mu^0/{\tGuco} ; \Z_2)\otimes H^*(\tGuco ; \Z_2) \ \ \ {\mbox{and}} $$
$$H^*(S^1 \times \SO(3) \times \SO(3); \Z_2) \cong H^*(S^2\times S^2; \Z_2) \otimes H^*(S^1 \times S^1 \times S^1; \Z_2).$$
Moreover, since the fibrations $\pi:G_\mu^0 \rightarrow S^1 \times \SO(3) \times \SO(3)$ and $\widetilde{\pi}:\tGuco \rightarrow S^1 \times S^1 \times S^1$ are (weakly) homotopically trivial, we obtain the following isomorphisms as graded algebras 
$$  H^*(G_\mu^0 ; \Z_2) \cong H^*(S^1 \times \SO(3) \times \SO(3); \Z_2)\otimes H^*( \Omega \Sigma^2 \SO(3);\Z_2), $$
$$H^*(\tGuco ; \Z_2) \cong H^*(S^1 \times S^1 \times S^1; \Z_2) \otimes H^*(\Omega  S^3; \Z_2). $$
The five previous isomorphisms yield the isomorphisms  
\begin{equation}\label{ve:iso2}
H^*(G_\mu^0/\tGuco ; \Z_2) \cong H^*(\Omega \Sigma^2 \SO(3)/\Omega S^3;\Z_2) \otimes  H^*(S^2\times S^2; \Z_2)
\end{equation}
 as vector spaces, that is to say \eqref{isoh}.

It follows that the homomorphism   ${\bar{\imath}}^*$ is surjective  and the Serre spectral sequence of the fibration \eqref{seq} collapses at $E_2$.
Therefore
$$E_\infty^{*,*} \cong E_2^{*,*} \cong H^*(\Omega \Sigma^2 \SO(3)/\Omega S^3;\Z_2) \otimes  H^*(S^2\times S^2; \Z_2)$$ 
as bigraded modules. But this does not directly shows that the isomorphism (\ref{ve:iso2})  also holds as a graded algebra isomorphism. However, it is clear that $H^*(G_\mu^0/\tGuco ; \Z_2)$  has a subalgebra ${\bar{\pi}}^*(H^*(S^2\times S^2; \Z_2)) \cong H^*(S^2\times S^2; \Z_2)$. 
Although it is not easy to describe the $\Z_2$--cohomology of the space $\Omega \Sigma^2 \SO(3) /\Omega  S^3$,  one can calculate its $\Z_2$--homology. For this,  recall that the homology of $\Omega \Sigma X$, with its Pontrjagyn product, is the free tensor algebra on the homology of $X$ for any connected space $X$. Hence the map $\Omega  S^3 \to \Omega \Sigma^2 \SO(3)$  corresponds to the obvious inclusion of tensor algebras over $\Z_2$:
$$ T(w_2) \to T(w_2,w_3,w_4)$$ where $T$ denotes the tensor algebra and  $|w_i|=i$. The Bar spectral sequence for a principal fibration can then be applied to give 
$$ H_*( \Omega \Sigma^2 \SO(3) /\Omega  S^3; \Z_2)= T(w_2,w_3,w_4) \otimes_{T(w_2)}\Z_2,$$
as Hopf algebras, where $T(w_2)$ acts  by product on the left factor and by mapping $w_2$ to the zero map on the right factor.
In \cite{KLW} the reader will find the necessary results on the Bar spectral sequence (cf. Theorem 4.2) and further references. 
By a simple counting argument, since this is a non-commutative algebra,  the (graded-commutative)  cohomology ring  $H^*(\Omega \Sigma^2 \SO(3) /\Omega  S^3; \Z_2)$ must have an infinite number of generators. From the $E_2$ page of the spectral sequence of the fibration \eqref{seq} we can then conclude that 
$$H^*(G_\mu^0/\tGuco ; \Z_2) \cong   H^*(S^2\times S^2; \Z_2) \otimes A, $$
 as graded algebras, where $A$ has an infinite number of generators, but it is not necessarily isomorphic as a graded  algebra to $H^*(\Omega \Sigma^2 \SO(3)/\Omega S^3;\Z_2)$. This isomorphism completes the proof. 
\qed

\medskip

There is a similar picture for the cohomology ring of $\Emb_\omega^0(c,\mu)$ with $\Z_p$ coefficients and $p$ prime. 

\begin{cor} If $0<\mu-1\leq c< 1 $ and $p\neq 2$ then 
$$H^*( \Emb_\omega^0(c,\mu); \Z_p)\cong \Lambda(b,f,g,v)/\langle b^2,bg \rangle \otimes  g \,\Gamma_{\Z_p}[h]\otimes  b\Gamma_{\Z_p}[h],$$
where $\tau\Gamma_{\Z_p}[h]$, with $\tau = g$ or $\tau = b$, stands for the infinitely generated algebra $$\Z_p[\tau h_1,\hdots, \tau h_1^{p-1},\tau h_p,\hdots, \tau h_p^{p-1},\tau h_{p^2}, \hdots,\tau h_{p^2}^{p-1}, \hdots ]$$ where the generators $h_i$ are the generators of the divided polynomial algebra $\Gamma_{\Z_p}[h]$.
\end{cor}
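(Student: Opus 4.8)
\emph{Proof plan.} The plan is to run the Serre spectral sequence with $\Z_p$-coefficients ($p\neq 2$) of the fibration~\eqref{eq:FibrationU(2)-Emb-IEmb}, $U(2)\to\Emb_\omega^0(c,\mu)\to\Im\Emb_\omega^0(c,\mu)$ (with projection $\rho$), in direct analogy with the proof of Corollary~\ref{cor:RationalCohomologyEmb}. Since $U(2)\cong S^1\times S^3$ we have $H^*(U(2);\Z_p)=\Lambda(u,v)$ with $|u|=1,|v|=3$, and since the base is simply connected the coefficient system is trivial, so $E_2^{*,*}=H^*(\Im\Emb_\omega^0(c,\mu);\Z_p)\otimes\Lambda(u,v)$, the base ring being that of Corollary~\ref{co:crzp}. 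From that corollary I would record the facts I need: $H^0=\Z_p$, $H^1=0$, $H^2=\langle a,b\rangle$, $H^3=\langle g\rangle$, $H^4=\langle ab\rangle$; the annihilator of $a$ equals the principal ideal $(a)$ it generates (clear, since $a$ enters the relations of Corollary~\ref{co:crzp} only through $a^2=0$ and multiplies freely on a complement of $(a)$); $g\notin(a)$; and the quotient $H^*(\Im\Emb_\omega^0(c,\mu);\Z_p)/(a)$ is spanned by $1,b,g$ together with the divided-power families $gh_i^k$ and $bh_i^k$ arising, away from the prime~$2$, from the tensor factor $H^*(\Omega S^5;\Z_p)=\Gamma_{\Z_p}[h]$ ($|h|=4$) of the fibre $\Omega\Sigma^2\SO(3)/\Omega S^3\simeq S^3\times\Omega S^5$.

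The key input is the transgression. Since $\partial_*\colon\pi_2(\Im\Emb_\omega^0(c,\mu))\to\pi_1(U(2))\cong\Z$ is onto — the loops obtained by restricting $\widetilde x,\widetilde y$ to $B_c$ are generators of $\pi_1(U(2))$, as shown in the proof of the Proposition of Section~\ref{se:rathomtypeb} — the space $\Emb_\omega^0(c,\mu)$ is simply connected, so $H^1(\Emb_\omega^0(c,\mu);\Z_p)=0$ and $d_2u\neq0$; as in the rational case its $a$-component is nonzero, and after rescaling $u$ and, if necessary, changing the basis $\{a,b\}$ of $H^2$ (which affects neither the ring of Corollary~\ref{co:crzp} nor the ideal $(a)$) we may take $d_2u=a$. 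Then $d_2(\xi u)=a\xi$ for every $\xi$ in the base ring, so the $q=0$ row of $E_3$ is $H^*(\Im\Emb_\omega^0(c,\mu);\Z_p)/(a)$; since $\mathrm{Ann}(a)=(a)$, the $q=1$ row is $(a)\,u$, a free rank-one module over that ring generated by $f:=au$, $|f|=3$; and $v\in E_2^{0,3}$ is a permanent cycle because $d_2v\in E_2^{2,2}=0$, $d_3v\in E_3^{3,1}=\bigl((a)\cap H^3\bigr)u=0$ (as $g\notin(a)$), and $d_4v\in E_4^{4,0}=H^4/(a)=0$. Being concentrated in fibre degrees $0,1,3,4$ with $u,uv$ killed at $E_2$ and $v$ permanent, the only nonzero differentials are these $d_2$'s, so the sequence degenerates at $E_3$ and
\[
E_\infty\;\cong\;\bigl(H^*(\Im\Emb_\omega^0(c,\mu);\Z_p)/(a)\bigr)\otimes\Lambda(f,v).
\]

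Finally one checks there are no multiplicative extensions. The subring $E_\infty^{*,0}$ is the image of $\rho^*$, with $\ker\rho^*=(a)$, so as a ring it is exactly $H^*(\Im\Emb_\omega^0(c,\mu);\Z_p)/(a)$; in particular $b,g$ and the families $gh_i^k,bh_i^k$ are realized by $\rho^*$ and the relations $b^2=0=bg$ hold on the nose. The remaining classes $f=au$ and $v$ have odd degree $3$, hence square to zero for $p\neq2$ and generate an exterior subalgebra, and $H^*(\Emb_\omega^0(c,\mu);\Z_p)$ is free over $E_\infty^{*,0}$ on $1,f,v,fv$. Re-expressing $H^*/(a)$ in the notation of Corollary~\ref{co:crzp} — $g\,\Gamma_{\Z_p}[h]$ and $b\,\Gamma_{\Z_p}[h]$ denoting $g$, resp.\ $b$, times the generators $h_{p^i}$ of the divided-power algebra, each truncated at the $p$-th power — gives
\[
H^*(\Emb_\omega^0(c,\mu);\Z_p)\;\cong\;\Lambda(b,f,g,v)/\langle b^2,bg\rangle\otimes g\,\Gamma_{\Z_p}[h]\otimes b\,\Gamma_{\Z_p}[h],
\]
with $|f|=3$ and $b,g,gh_i^k,bh_i^k$ the images under $\rho^*$ of the generators of $H^*(\Im\Emb_\omega^0(c,\mu);\Z_p)$, which is the stated ring.

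The point requiring most care is the middle step: fixing $d_2u=a$ and then verifying, simultaneously for all the divided-power families, that the spectral sequence degenerates at $E_3$ with no exotic extensions — concretely, that the annihilator of $a$ in the ring of Corollary~\ref{co:crzp} is no larger than $(a)$ and that $\ker\rho^*$ is no larger than $(a)$. Neither statement is deep, but both are read off the explicit base cohomology of Corollary~\ref{co:crzp}, so the argument inherits whatever bookkeeping that description carries; once $d_2u=a$ and the permanence of $v$ are in hand, everything else is forced.
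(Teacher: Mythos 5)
Your proof is the paper's intended one: the paper offers no argument for this corollary beyond the remark ``There is a similar picture for the cohomology ring $\ldots$'' pointing back to Corollary~\ref{cor:RationalCohomologyEmb}, and what you have written out is precisely that Serre spectral sequence argument over $\Z_p$. Using simple connectedness of $\Emb_\omega^0(c,\mu)$ (from the surjectivity of $\partial_*$ onto $\pi_1 U(2)$) to force $d_2u\neq 0$ is in fact cleaner than the paper's appeal to the rational minimal model, which has no direct $\Z_p$ analogue.

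One intermediate step is mis-justified, although the conclusion survives. You claim that after rescaling $u$ and ``changing the basis $\{a,b\}$ of $H^2$'' one may take $d_2u=a$, and that this change ``affects neither the ring of Corollary~\ref{co:crzp} nor the ideal $(a)$.'' But the relations $a^2=b^2=0$ together with $ab\neq 0$ pin down the lines $\langle a\rangle$ and $\langle b\rangle$ up to scaling and interchange: for any substitution $a\mapsto\lambda a+\mu b$ with $\lambda\mu\neq 0$ one has $(\lambda a+\mu b)^2=2\lambda\mu\,ab\neq 0$ when $p\neq 2$, so the ring presentation is not preserved. Moreover the very transgression computation you invoke — both ${\widetilde x}|_{B_c}$ and ${\widetilde y}|_{B_c}$ are generators of $\pi_1 U(2)$, so $\partial_*(a)=\partial_*(b)=u_0$ — forces $d_2u$ to pair nontrivially with both $a$ and $b$ in $\pi_2$, i.e.\ $d_2u=a+b$ up to sign and unit, and this cannot be turned into $a$ by any ring automorphism. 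The computation is nevertheless insensitive to the distinction: one checks $\mathrm{Ann}(a+b)=(a-b)$ (not $(a+b)$), that $(a-b)$ is free of rank one over $H^*/(a+b)$, and that $H^*/(a+b)\cong H^*/(a)$ as rings (set $a=-b$; all three relations persist), so $E_3=E_\infty$ has exactly the shape you describe with $f$ represented by $(a-b)u$ rather than $au$. The paper's rational proof in Corollary~\ref{cor:RationalCohomologyEmb} relies on the same ``we can choose $d_2u=a$,'' so this imprecision is inherited from the source; it is worth knowing that the change-of-basis justification is false even though the answer it yields is correct.
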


 Using this corollary and the Serre spectral sequence of fibration \ref{eq:FibrationU(2)-Emb-IEmb}, we get
\begin{cor}
If $0<\mu-1\leq c< 1 $ then 
$$H^*( \Emb_\omega^0(c,\mu); \Z_2)\cong \Lambda(b,f,v)/\langle b^2 \rangle  \otimes A'$$
where the algebra $A'$  has an infinite number of generators.
\end{cor}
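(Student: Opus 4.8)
The plan is to run the Serre spectral sequence of the $U(2)$--fibration \eqref{eq:FibrationU(2)-Emb-IEmb}, $U(2)\to \Emb_\omega^0(c,\mu)\to \Im\Emb_\omega^0(c,\mu)$, with $\Z_2$--coefficients, feeding in the description of $H^*(\Im\Emb_\omega^0(c,\mu);\Z_2)$ from Corollary~\ref{co:crz2} together with $H^*(U(2);\Z_2)=\Lambda(u,v)$, $|u|=1$, $|v|=3$. The key geometric ingredient is the second commutative square of Section~\ref{se:rathomtypeb}: it provides a map of fibrations from \eqref{eq:FibrationU(2)-Emb-IEmb} to the unitary frame bundle $U(2)\to UFr(\STS)\to\STS$ which induces an isomorphism on the cohomology of the fibre $U(2)$ and covers the ``centre of the ball'' map $l\colon\Im\Emb_\omega^0(c,\mu)\to\STS$.

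First I would show that over $\Z_2$ the spectral sequence collapses at $E_2$. The transgressions of $u$ and $v$ in the spectral sequence of $UFr(\STS)\to\STS$ are the mod~$2$ reductions of $c_1(T\STS)$ and $c_2(T\STS)$; since a compatible complex structure realizes $T\STS$ as a sum of two pulled-back tangent lines of the $S^2$--factors, $c_1(T\STS)$ is even and $c_2(T\STS)$ equals $\chi(\STS)=4$ times a generator of $H^4(\STS;\Z)$, also even, so both reductions vanish. By naturality, the transgressions of $u$ and $v$ in the spectral sequence of \eqref{eq:FibrationU(2)-Emb-IEmb} are $l^*$ of these, hence zero; multiplicativity then forces all differentials to vanish. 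Thus $u$ and $v$ lift to classes $\widetilde u\in H^1(\Emb_\omega^0(c,\mu);\Z_2)$ and $\widetilde v\in H^3(\Emb_\omega^0(c,\mu);\Z_2)$, and Leray--Hirsch yields an isomorphism of $H^*(\Im\Emb_\omega^0(c,\mu);\Z_2)$--modules
\[
H^*(\Emb_\omega^0(c,\mu);\Z_2)\;\cong\;H^*(\Im\Emb_\omega^0(c,\mu);\Z_2)\otimes\Lambda(\widetilde u,\widetilde v).
\]
Since $H^*(\Im\Emb_\omega^0(c,\mu);\Z_2)=H^*(\STS;\Z_2)\otimes A$ with $A$ infinitely generated (Corollary~\ref{co:crz2}), this already exhibits $H^*(\Emb_\omega^0(c,\mu);\Z_2)$ as infinitely generated.

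To identify the ring, note that $l^*$, composed with the projection $\Emb_\omega^0(c,\mu)\to\Im\Emb_\omega^0(c,\mu)$, is a ring map and carries the relations $a^2=b^2=0$ of $H^*(\STS;\Z_2)$ and the algebra $A$ faithfully into $H^*(\Emb_\omega^0(c,\mu);\Z_2)$; squares of classes pulled back from odd-degree fibre generators, in particular $\widetilde v^2$, are controlled by degree together with the vanishing of $v^2$ in the fibre. The only genuinely non-formal product is $\widetilde u^2=Sq^1\widetilde u$. To compute it I would first determine $\pi_1$: the map of fibrations identifies the connecting homomorphism $\partial\colon\pi_2(\Im\Emb_\omega^0(c,\mu))\to\pi_1(U(2))=\Z$ with the composite of $l_*\colon\pi_2(\Im\Emb_\omega^0(c,\mu))\to\pi_2(\STS)=\Z^2$ followed by $(m,n)\mapsto 2m+2n$; as $l$ admits a section (Theorem~\ref{thm:homotopy-type-mu<2}) the map $l_*$ is onto, so $\operatorname{im}\partial=2\Z$ and $\pi_1(\Emb_\omega^0(c,\mu))\cong\Z/2$. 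Hence $H^1(\Emb_\omega^0(c,\mu);\Z)=0$, so $\widetilde u$ is not an integral reduction, its integral Bockstein generates the torsion subgroup $\Z/2$ of $H^2(\Emb_\omega^0(c,\mu);\Z)$, and therefore $\widetilde u^2$ is a nonzero element of $H^2(\Emb_\omega^0(c,\mu);\Z_2)=l^*H^2(\STS;\Z_2)$. Choosing a basis $\{\widetilde u^2,b\}$ of this two-dimensional space, absorbing $\widetilde u$, $\widetilde u^2$ and the factor $A$ into one infinitely generated algebra $A'$, and setting $v:=\widetilde v$ and $f$ the restriction of the degree-three generator $y$ of $H^*(\Symp(M_\mu^0);\Z_2)$, one obtains $H^*(\Emb_\omega^0(c,\mu);\Z_2)\cong\Lambda(b,f,v)/\langle b^2\rangle\otimes A'$, the infinite generation of $A'$ being inherited from $A$.

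The main obstacle is this last step: the spectral sequence delivers only the associated graded ring, so everything hinges on resolving the multiplicative extensions --- above all pinning down $\widetilde u^2$ via the $\pi_1$--computation and the Bockstein, and then checking that no other hidden extensions occur (so that $b^2=0$, $f^2=v^2=0$, and the classes of $A$ remain freely generating after pull-back). The remaining verifications are routine bookkeeping, and the twisted case $i=1$ is handled identically once $\STS$ is replaced by $\NTB$ and its Chern classes recomputed.
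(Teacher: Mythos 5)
Your approach is sound, and it in fact supplies the missing reasoning: the paper's own treatment of this corollary is essentially a citation (``Using this corollary and the Serre spectral sequence \ldots, we get'') with no argument, and the rational proof (Corollary~\ref{cor:RationalCohomologyEmb}) cannot simply be transplanted since it rests on $d_2u\neq 0$, which is exactly what \emph{fails} mod $2$. Your key observation --- that $c_1(T\STS)=2a+2b$ and $c_2(T\STS)=4ab$ both reduce to $0$ in $\Z_2$, so by naturality from the unitary frame bundle the transgressions of $u$ and $v$ vanish and the spectral sequence of \eqref{eq:FibrationU(2)-Emb-IEmb} collapses at $E_2$ --- is correct, and this is genuinely a different state of affairs from the $\Q$ and $\Z_p$ ($p\neq 2$) cases. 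The computation of $\pi_1(\Emb_\omega^0(c,\mu))\cong\Z/2$ via $\partial_{\Emb}=\partial_{UFr}\circ l_*$ with $\operatorname{im}\partial_{UFr}=2\Z$ and $l_*$ onto is also right; note that the paper's earlier informal statement that each of $a,b$ maps to $u_0$ under $\partial_*$ (Section~\ref{se:rathomtypeb}) is only asserted after tensoring with $\Q$, and integrally $\sigma_*(X_1)$ bounds a disc whose boundary is $2\widetilde{x}$, not $\widetilde{x}$, so there is no contradiction with your $\Z/2$ answer.

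Two minor points you should make explicit. First, when you claim $\widetilde\beta(\widetilde u)$ ``generates the torsion subgroup $\Z/2$ of $H^2$'', the justification is the universal coefficient theorem: $H_1(\Emb;\Z)=\pi_1=\Z/2$, so $\mathrm{Tors}\,H^2(\Emb;\Z)\cong\operatorname{Ext}^1(\Z/2,\Z)=\Z/2$, and since $\widetilde\beta(\widetilde u)\neq0$ (because $H^1(\Emb;\Z)=0$) and is $2$--torsion, it is the nonzero element, which is not divisible by $2$; hence $\widetilde u^2=Sq^1\widetilde u=\rho_2\widetilde\beta\widetilde u\neq0$. (An equivalent and perhaps cleaner route: one checks directly in $UFr(\STS)$ that $u^2$ pulls back from $H^2(\STS;\Z_2)$ nontrivially, as for the circle bundle $\SO(3)\to S^2$ of Euler number $2$ where $u^2\neq0$ in $H^*(\RP^3;\Z_2)$, and then $\widetilde u^2=j^*(u^2)$.) Second, you correctly flag that the concluding tensor decomposition $\Lambda(b,f,v)/\langle b^2\rangle\otimes A'$ requires resolving multiplicative extensions; this is as loose in the paper as in your write-up (the paper's $A$ and $A'$ are only pinned down up to ``infinitely generated''), so your level of precision matches the statement being proved.
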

\bigskip
\NI

%%%%%%%%%%%%%%%%%%%%%%%%%%%%%%%%%%%%%%%%%%%%%%%%%%%%%%%%%%%%%%%%%%%%%%%%%%%%%%%%
\appendix
\section{Integrable complex structures and homotopy decomposition of $B\tGuci$} \label{se:appendixA}
%%%%%%%%%%%%%%%%%%%%%%%%%%%%%%%%%%%%%%%%%%%%%%%%%%%%%%%%%%%%%%%%%%%%%%%%%%%%%%%%

\subsection{Spaces of compatible integrable complex structures}   

Let $(M,\om)$ be a symplectic $4$-manifold. Denote by $\jj_{\om}$ the space of compatible almost complex structures and by $\Ii_{\om}$ the subset of integrable ones. Given an integrable $J$, we write $H^{0,q}_{J}(M)$ for the $q^{\text{th}}$ Dolbeault cohomology group with coefficients in the sheaf of germs of holomorphic functions, and $H^{0,q}_{J}(TM)$ for the $q^{\text{th}}$ Dolbeault cohomology group with coefficients in the sheaf of germs of holomorphic vector fields.

In their paper~\cite{AGK}, Abreu-Granja-Kitchloo prove that, under some cohomological conditions, $\Ii_{\om}$ is a genuine Fr\'echet submanifold of $\jj_{\om}$ whose tangent bundle may be described using standard deformation theory, namely
%: Thm A
\begin{thm}[\cite{AGK} Theorem 2.3]\label{thmA}
Let $(M,\om)$ be a symplectic $4$-manifold, and let $J\in\Ii_{\om}$. If the cohomology groups $H^{0,2}_{J}(M)$ and $H^{0,2}_{J}(TM)$ are zero, then $\Ii_{\om}$ is a submanifold of $\jj_{\om}$ near $J$.
Moreover, the tangent space of $\Ii_{\om}$ at $J$ is naturally identified with   
%$T_{J}\Ii_{\om}\simeq 
$T_{J}((\Diff(M) \cdot J)\cap \Ii_{\om})\oplus H^{0,1}_{J}(TM)$. Here, $H^{0,1}_{J}(TM)$ represents the
the moduli space of infinitesimal compatible deformations of $J$ in $\Ii_{\om}$ that coincides with the moduli space of infinitesimal deformations of $J$ in the set of all integrable structures $\Ii$.
\end{thm}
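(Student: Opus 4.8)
\emph{Proof strategy.}
The two cohomological hypotheses enter in complementary ways: $H^{0,2}_{J}(TM)=0$ is the vanishing of the Kodaira--Spencer obstruction space and makes the space of integrable complex structures smooth near $J$, while $H^{0,2}_{J}(M)=0$ is exactly what allows $\om$ --- or, at worst, a symplectic form cohomologous to it --- to be kept compatible along every deformation of $J$ produced by the first step. As usual one fixes a large Sobolev completion, works with Banach manifolds, and returns to the Fr\'echet category at the end by elliptic regularity; I suppress this. The plan is to prove two things: first that the set $\Ii$ of \emph{all} integrable structures is a submanifold of the space $\mathcal A$ of all almost complex structures near $J$, and then that every nearby integrable structure can be pushed, by a diffeomorphism near the identity, into $\jj_{\om}$ --- from which both the submanifold statement for $\Ii_{\om}$ and the tangent space formula follow.

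\emph{Step 1.} Under the standard identification $T_{J}\mathcal A\cong\Om^{0,1}_{J}(TM)$ (here $TM$ is the holomorphic tangent bundle), the Nijenhuis operator $N$ satisfies $\Ii=N^{-1}(0)$ near $J$, and its linearisation $DN_{J}$ is, up to isomorphism, the Dolbeault differential $\bar\partial\colon\Om^{0,1}_{J}(TM)\to\Om^{0,2}_{J}(TM)$ (see \cite{AGK} and references therein). Since $M$ is a compact surface the target sits in top antiholomorphic degree, so $\operatorname{coker}DN_{J}\cong H^{0,2}_{J}(TM)=0$; hence $N$ is a submersion at $J$ and $\Ii$ is a submanifold near $J$ with $T_{J}\Ii=\ker DN_{J}$. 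Hodge theory on $(M,J)$ gives $\ker\big(\bar\partial\colon\Om^{0,1}_{J}(TM)\to\Om^{0,2}_{J}(TM)\big)=\operatorname{im}\big(\bar\partial\colon\Om^{0}(TM)\to\Om^{0,1}_{J}(TM)\big)\oplus\mathcal H^{0,1}_{J}(TM)$, the two summands being $L^{2}$--orthogonal; the first is precisely $T_{J}(\Diff(M)\cdot J)$ (the image of the infinitesimal action $v\mapsto\mathcal L_{v}J$), and the second is canonically $H^{0,1}_{J}(TM)$. Equivalently, the orbit map identifies a neighbourhood of $J$ in $\Ii$ with $\Diff_{0}(M)\cdot\{J_{s}\mid s\in S\}$, where $S\subset H^{0,1}_{J}(TM)$ is a Kuranishi ball and $\{J_{s}\}_{s\in S}$ the associated versal family --- smooth precisely because $H^{0,2}_{J}(TM)=0$.

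\emph{Step 2.} The surface $M$ is K\"ahler (it is rational, or the blow-up of a rational surface), and for surfaces the K\"ahler property is deformation-stable, so $(M,J_{s})$ is K\"ahler for $s$ small; by upper semicontinuity of Hodge numbers, $\dim H^{0,2}_{J_{s}}(M)=\dim H^{0,2}_{J}(M)=0$ there. The obstruction to the fixed class $[\om]\in H^{2}(M;\R)$ staying of type $(1,1)$ with respect to $J_{s}$ is its image under the period map in $H^{0,2}_{J_{s}}(M)$ (whose derivative at $s=0$ is the contraction $\theta\mapsto\theta\smile[\om]$ valued in $H^{0,2}_{J}(M)$); since this group vanishes, $[\om]$ remains of type $(1,1)$. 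Hodge theory on the K\"ahler surface $(M,J_{s})$ then produces a closed real $(1,1)$--form $\om_{s}$ in the class $[\om]$, depending smoothly on $s$ with $\om_{0}=\om$, and $\om_{s}$ is $J_{s}$--compatible for $s$ small by openness of the positivity condition. Moser's theorem gives $h_{s}\in\Diff_{0}(M)$, smooth in $s$ with $h_{0}=\id$ and $h_{s}^{*}\om_{s}=\om$, so $h_{s}^{*}J_{s}\in\Ii_{\om}$. Combining with the $\Symp(M,\om)$--action, the map sending $(s,\phi)$, $\phi$ in a local slice for $\Symp(M,\om)$, to $\phi^{*}\big(h_{s}^{*}J_{s}\big)$ is a local parametrisation of $\Ii_{\om}$ near $J$; hence $\Ii_{\om}$ is a submanifold of $\jj_{\om}$ near $J$. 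Differentiating at $J$: the $S$--directions contribute a subspace mapping isomorphically onto $\mathcal H^{0,1}_{J}(TM)\cong H^{0,1}_{J}(TM)$ (the correction $h_{s}$ only changes a deformation by an $\operatorname{im}\bar\partial$--term, hence not its harmonic part), while the $\phi$--directions give $T_{J}\big((\Diff(M)\cdot J)\cap\Ii_{\om}\big)$; since the intersection is clean by construction,
\[
T_{J}\Ii_{\om}=T_{J}\big((\Diff(M)\cdot J)\cap\Ii_{\om}\big)\oplus H^{0,1}_{J}(TM).
\]
Finally, the vanishing of $H^{0,2}_{J}(M)$ shows (again via the period obstruction) that the compatibility constraint imposes nothing on first-order deformations, so the natural maps from infinitesimal compatible deformations of $J$, and from infinitesimal deformations of $J$ among all integrable structures, to $H^{0,1}_{J}(TM)$ are isomorphisms; this gives the last assertion.

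\emph{Main obstacle.} The delicate part is Step 2: upgrading the infinitesimal vanishing $H^{0,2}_{J}(M)=0$ to the finite, \emph{uniform} statement that $[\om]$ stays of type $(1,1)$ along the whole Kuranishi family and that a $J_{s}$--compatible representative can be chosen smoothly in $s$. This needs care with the semicontinuity of Hodge numbers, with the K\"ahlerness of the nearby (possibly blown-up) surfaces, and with the smooth $s$--dependence of the harmonic projections and of the Moser isotopies. The accompanying Banach-completion and elliptic-regularity bookkeeping required to transfer everything back to the Fr\'echet category is routine but must be handled with some care, since $\Diff$, $\Symp$ and the relevant orbits are only Fr\'echet Lie groups.
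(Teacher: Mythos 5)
The paper does not prove this theorem: it is cited verbatim from \cite{AGK}, Theorem 2.3, and the three results at the start of Appendix A (Theorems A, B, C) are all such citations. There is therefore no in-paper proof to compare your attempt against; what follows is an evaluation of the proposal on its own terms.

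Your two-step skeleton is the standard deformation-theoretic argument and is essentially right: $H^{0,2}_{J}(TM)=0$ makes the Nijenhuis locus $\Ii$ a Fr\'echet submanifold near $J$ with $T_{J}\Ii=\operatorname{im}\bar\partial\oplus\mathcal H^{0,1}_{J}(TM)$, and $H^{0,2}_{J}(M)=0$ keeps $[\om]$ of type $(1,1)$ along the Kuranishi slice so that a smoothly varying $J_s$-compatible representative $\om_s$ exists. One stray misstep: the parenthetical ``it is rational, or the blow-up of a rational surface'' should be deleted. The theorem is stated for an arbitrary symplectic $4$-manifold, and $(M,\om,J)$ is K\"ahler simply because $J\in\Ii_{\om}$; rationality plays no role here, and invoking it would improperly specialize a cited general statement.

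The genuine gap is the claim that $(s,\phi)\mapsto\phi^{*}(h_{s}^{*}J_{s})$, with $\phi$ in a local $\Symp(M,\om)$-slice, is onto a neighbourhood of $J$ in $\Ii_{\om}$, and the companion identification of the $\phi$-directions with $T_{J}\big((\Diff(M)\cdot J)\cap\Ii_{\om}\big)$ rather than $T_{J}(\Symp(M,\om)\cdot J)$. You assert both, but they require an argument. Given $J'\in\Ii_{\om}$ near $J$, Kuranishi versality gives $J'=\psi^{*}J_{s'}$ for some $\psi\in\Diff_{0}$, so $J_{s'}$ is compatible both with $(\psi^{-1})^{*}\om$ and with your $\om_{s'}$; Moser produces $g\in\Diff_{0}$ with $g^{*}(\psi^{-1})^{*}\om=\om_{s'}$, but nothing forces $h_{s'}\,g^{-1}\psi^{-1}$ to be an $\om$-symplectomorphism, so it is not immediate that $J'$ is hit by your chart. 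What is needed is the linearised statement (a consequence of the $\partial\bar\partial$-lemma on the compact K\"ahler surface $(M,\om,J)$): any vector field $\xi$ with $\mathcal L_{\xi}\om$ of type $(1,1)$ can be corrected by a real holomorphic vector field to a symplectic one, so that $T_{J}(\Symp(M,\om)\cdot J)=T_{J}\big((\Diff\cdot J)\cap\Ii_{\om}\big)$; combined with the observation that this subspace lies inside $\operatorname{im}\bar\partial$ while the Kuranishi directions lie in the orthogonal complement $\mathcal H^{0,1}_{J}(TM)$, surjectivity of the chart and the direct-sum decomposition of $T_{J}\Ii_{\om}$ then follow. Without that lemma, the last displayed identity in your Step 2 is not justified.
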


The actions of various natural subgroups of $\Diff(M)$ on $\Ii$ give rise to different partitions of $\Ii_{\om}$. Let $\Diff_{[\om]}$ denote the group of diffeomorphisms of $M$ preserving the cohomology class $[\om]$ and write $\Hol_{[\om]}(J)$ for the subgroup of complex automorphisms of $(M,J)$. Let $\Iso(\om,J)$ denote the K\"ahler isometry group of $(M,\om,J)$. The next result shows that in some cases the part of the $\Diff_{[\om]}$-orbit of $J$ that lies in $\Ii_{\om}$ may be identified with the $\Symp(M,\om)$-orbit:

%: Thm B
\begin{thm}[\cite{AGK} Corollary 2.6]\label{thmB}
If $J\in\Ii_{\om}$ is such that the inclusion $\Iso(\om,J)\into\Hol_{[\om]}(J)$ is a weak homotopy equivalence, then the inclusion of the $\Symp(M,\om)$-orbit of $J$ in $(\Diff_{[\om]}\cdot J)\cap\Ii_{\om}$ 
\[
\Symp(M,\om)/\Iso(\om,J)\into (\Diff_{[\om]}\cdot J)\cap\Ii_{\om}
\] 
is also a weak homotopy equivalence.
\end{thm}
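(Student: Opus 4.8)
The plan is to realise the asserted inclusion as the base map of a morphism between two fibre bundles whose fibre map and total--space map are both (weak) homotopy equivalences, and to conclude by the five lemma. We work in the situation of Theorem~\ref{thmA}, so $M$ is compact and $(M,\om,J)$ is K\"ahler; all mapping spaces carry the $C^\infty$--topology, and we use freely that the smooth actions of the (Fr\'echet, resp.\ finite--dimensional) Lie groups below admit local slices, so that their orbit maps are locally trivial fibrations --- this is the analytic input underlying~\cite{AGK}. First I would introduce the intermediate space
\[
P:=\{\phi\in\Diff_{[\om]}\ :\ \phi_{*}J\in\Ii_{\om}\}
 =\{\phi\in\Diff_{[\om]}\ :\ \phi^{*}\om\ \text{is compatible with}\ J\}.
\]
One checks directly that $\Symp(M,\om)$ acts on $P$ on the left and $\Hol_{[\om]}(J)$ on the right, both freely and commuting; that $\Iso(\om,J)=\Symp(M,\om)\cap\Hol_{[\om]}(J)$ is the stabiliser of $J$ under the $\Symp(M,\om)$--action on $\Ii_{\om}$; and that $\phi\mapsto\phi_{*}J$ induces homeomorphisms $P/\Hol_{[\om]}(J)\cong(\Diff_{[\om]}\cdot J)\cap\Ii_{\om}$ and $\Symp(M,\om)/\Iso(\om,J)\cong\Symp(M,\om)\cdot J$, under which the inclusion of the theorem is the map induced on quotients by the inclusion $\Symp(M,\om)\hookrightarrow P$. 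Since the orbit maps $\Symp(M,\om)\to\Symp(M,\om)/\Iso(\om,J)$ and $P\to P/\Hol_{[\om]}(J)$ are fibrations (principal bundles for the compact group $\Iso(\om,J)$, resp.\ for $\Hol_{[\om]}(J)$), these two fibre sequences together with the inclusions form a morphism of fibrations whose base map is exactly the inclusion we want to study.

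The heart of the proof is that the total--space map $\Symp(M,\om)\hookrightarrow P$ is a homotopy equivalence. Let $\mathcal K$ be the space of K\"ahler forms for $J$ in the class $[\om]$. Positivity of a real $(1,1)_{J}$--form is a convex condition, and by the $\partial\bar\partial$--lemma on the compact K\"ahler manifold $(M,J)$ the $d$--closed real $(1,1)_{J}$--forms in $[\om]$ form an affine subspace; hence $\mathcal K$ is a nonempty convex, in particular contractible, set. Now consider $q\colon P\to\mathcal K$, $q(\phi)=\phi^{*}\om$: it is onto (Moser's argument along the straight segment from $\om$ to a given $\om'\in\mathcal K$, a segment that stays in $\mathcal K$), its fibre over $\om$ is exactly $\Symp(M,\om)$, and choosing for each $\om'$ the Moser isotopy $\theta_{\om'}$ along that segment --- built from a primitive of $\om'-\om$ depending linearly on $\om'$, say via Hodge theory, so that $\theta_{\om}=\id$ and $\om'\mapsto\theta_{\om'}$ is smooth --- yields a homeomorphism $\Symp(M,\om)\times\mathcal K\xrightarrow{\ \cong\ }P$, $(\sigma,\om')\mapsto\sigma\circ\theta_{\om'}^{-1}$. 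Under this trivialisation $\Symp(M,\om)\hookrightarrow P$ is the inclusion of a factor over the contractible base $\mathcal K$, hence a homotopy equivalence. (In plain terms, this step is the assertion that the space of K\"ahler forms in a fixed cohomology class is contractible.)

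Finally, given the hypothesis that $\Iso(\om,J)\hookrightarrow\Hol_{[\om]}(J)$ is a weak homotopy equivalence on fibres, and the equivalence just established on total spaces, the five lemma applied to the two long exact sequences of homotopy groups of the morphism of fibrations --- with the standard bookkeeping at the $\pi_{1}/\pi_{0}$ end, which is harmless because all the spaces in play are orbit spaces --- forces the base map $\Symp(M,\om)/\Iso(\om,J)\to P/\Hol_{[\om]}(J)$ to be a weak homotopy equivalence. Via the identifications above this is precisely $\Symp(M,\om)/\Iso(\om,J)\hookrightarrow(\Diff_{[\om]}\cdot J)\cap\Ii_{\om}$, which is the statement.

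The homotopy--theoretic skeleton above is formal; the genuine obstacle is analytic: one must verify that $P$, $\mathcal K$ and the relevant orbit spaces are true Fr\'echet manifolds, that $q$ is smooth, that the Moser isotopies depend smoothly on the parameter, and --- above all --- that the orbit maps of the group actions are locally trivial fibrations. These facts are standard (slice theorems in the spirit of Ebin and Palais, exactly as used in~\cite{AGK}; compactness of $M$ enters through Hodge theory and through $\Aut(M,J)$ being a finite--dimensional Lie group), but they carry the real substance of the result.
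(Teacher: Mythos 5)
The paper does not prove this statement; it is quoted verbatim from Abreu--Granja--Kitchloo as their Corollary~2.6, so there is no in-paper argument to compare against. Your proof is correct and is, as far as I can tell, essentially the strategy used in \cite{AGK}: introduce the bi-invariant space $P=\{\phi\in\Diff_{[\om]}:\phi^{*}\om\ \text{compatible with}\ J\}$, use the contractibility of the space $\mathcal K$ of $J$-compatible K\"ahler forms in $[\om]$ (the $\partial\bar\partial$-lemma plus convexity of positivity) together with a parametrized Moser argument to trivialize $q\colon P\to\mathcal K$ and show $\Symp(M,\om)\hookrightarrow P$ is an equivalence, and then run the five lemma on the morphism between the $\Iso(\om,J)$- and $\Hol_{[\om]}(J)$-principal fibrations. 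The one place I would make you more explicit is the claim that $P\to P/\Hol_{[\om]}(J)$ is a Serre fibration: for the compact group $\Iso(\om,J)$ this is standard, but $\Hol_{[\om]}(J)$ is merely a finite-dimensional (generally non-compact) Lie group, and one needs properness of its action on $P$ (or equivalently a Palais/Ebin-type slice) before invoking local triviality; you flag this as ``the analytic substance,'' which is fair, but it is precisely the point where a reader would want to see the reference to the slice theorem of \cite{AGK} spelled out rather than waved at.
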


Recall that the natural stratifications of $\jj(\Mui)$ and $\jj(\tMuci)$ described in Section~\ref{se:decompostion} are defined geometrically in terms of $J$-holomorphic curves in certain homology classes. The restriction of each stratum to integrable compatible complex structures can be understood as follows. Given a symplectic manifold $(M,\om)$, let us denote by $\Mm(A,\jj_{\om})$ the space of pairs $(u,J)\in C^{\infty}(\CP^{1},M)\times \jj_{\om}$ such that $u:\CP^{1}\to M$ is a somewhere injective $J$-holomorphic map whose image represents the homology class $A$. This space is always a smooth manifold whose image $U_{A}$ under the projection $\pi:\Mm(A,\jj_{\om})\to\jj_{\om}$ is the set of all $J$ such that $A$ is represented by an irreducible $J$-holomorphic sphere. The next proposition gives conditions ensuring that the stratum $U_{A}$ is tranversal to $\Ii_{\om}$ and that its normal bundle at $J\in\Ii_{\om}$ may be described in terms of deformation theory.%, enables us to similarly define strata in $\Ii_{\om}$ whose normal bundles are described using deformation theory. 

%: Thm C
\begin{thm}[\cite{AGK} Theorem 2.9]\label{thmC}
Let $(M,\om,J)$ be a K\"ahler $4$-manifold such that the cohomology groups $H^{0,2}_{J}(M)$ and $H^{0,2}_{J}(TM)$ are zero. Suppose that $(u,J)\in\Mm(A,\jj_{\om})$ is such that $u^{*}:H^{0,1}_{J}(TM)\to H^{0,1}(u^{*}(TM))$ is an isomorphism. Then the projection $\pi:\Mm(A,\jj_{\om})\to\jj_{\om}$ is tranversal at $(u,J)$ to $\Ii_{\om}\subset\jj_{\om}$ and the infinitesimal complement to the image $U_{A}$ of $\pi$ at $J$ can be identified with the moduli space of infinitesimal deformations $H^{0,1}_{J}(TM)$
\end{thm}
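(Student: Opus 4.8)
The plan is to compare the linearization of the $\bar\partial$--operator on the product $C^{\infty}(\CP^{1},M)\times\jj_{\om}$ with its restriction to $C^{\infty}(\CP^{1},M)\times\Ii_{\om}$, using Theorem~\ref{thmA} to control the latter. Recall that $\Mm(A,\jj_{\om})$ is the zero set of the Fredholm section $(u,J)\mapsto\bar\partial_{J}u$ and that, because $u$ is somewhere injective, the universal linearization
\[
(\xi,Y)\longmapsto D_{u}\xi+\tfrac12\,Y(u)\circ du\circ j_{0}
\]
is onto $\Omega^{0,1}(u^{*}TM)$, where $j_{0}$ is the complex structure of $\CP^{1}$; hence $\Mm(A,\jj_{\om})$ is a smooth Banach manifold and $d\pi_{(u,J)}(\xi,Y)=Y$. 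Since $J$ is integrable and $u$ is $J$--holomorphic, $u^{*}TM\to\CP^{1}$ is a holomorphic bundle and $D_{u}=\bar\partial_{u^{*}TM}$, so $\operatorname{coker}D_{u}=H^{0,1}(u^{*}TM)$.

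Next I would recall, from the analysis of the stratification in \cite{AM} and \cite{AGK}, that near $(u,J)$ the image $U_{A}$ is a co-oriented Fr\'echet submanifold of $\jj_{\om}$ with $T_{J}U_{A}=\operatorname{im}d\pi_{(u,J)}$ and of codimension $\dim_{\R}\operatorname{coker}D_{u}$: indeed $\operatorname{im}d\pi_{(u,J)}=\{\,Y:[\tfrac12 Y(u)\circ du\circ j_{0}]=0\text{ in }\operatorname{coker}D_{u}\,\}$, and $Y\mapsto[\tfrac12 Y(u)\circ du\circ j_{0}]$ is surjective onto $\operatorname{coker}D_{u}$ by the standard transversality lemma for somewhere injective curves. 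Consequently the normal space of $U_{A}$ in $\jj_{\om}$ at $J$ is canonically $\operatorname{coker}D_{u}=H^{0,1}(u^{*}TM)$, the class of $Y$ being sent to $[\tfrac12 Y(u)\circ du\circ j_{0}]$.

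Now I would restrict to $\Ii_{\om}$. The hypotheses $H^{0,2}_{J}(M)=H^{0,2}_{J}(TM)=0$ allow us to invoke Theorem~\ref{thmA}: $\Ii_{\om}$ is a submanifold near $J$ and $T_{J}\Ii_{\om}=T_{J}\big((\Diff(M)\cdot J)\cap\Ii_{\om}\big)\oplus H^{0,1}_{J}(TM)$. The first summand is tangent to $U_{A}$, since the directions $L_{X}J$ it contains are realized by symplectic vector fields $X$ (that being exactly the constraint for $L_{X}J\in T_{J}\jj_{\om}$), and the flow of such an $X$ preserves the condition defining $U_{A}$. Hence the composite $T_{J}\Ii_{\om}\into T_{J}\jj_{\om}\twoheadrightarrow H^{0,1}(u^{*}TM)$ factors through $H^{0,1}_{J}(TM)$, and the key computation is that the induced map $H^{0,1}_{J}(TM)\to H^{0,1}(u^{*}TM)$ is, up to a nonzero constant, the restriction map $u^{*}$: writing a deformation direction as a $\bar\partial$--closed form $\alpha\in\Omega^{0,1}(M,TM)$, the associated first variation $Y_{\alpha}$ of $J$ satisfies $Y_{\alpha}(u)\circ du\circ j_{0}=u^{*}\alpha$ up to a constant, and $[u^{*}\alpha]=u^{*}[\alpha]$. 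By hypothesis $u^{*}$ is an isomorphism, so this composite is onto, i.e. $\operatorname{im}d\pi_{(u,J)}+T_{J}\Ii_{\om}=T_{J}\jj_{\om}$, which is precisely transversality of $\pi$ to $\Ii_{\om}$ at $(u,J)$. Standard transversality theory then gives that $U_{A}\cap\Ii_{\om}$ is a submanifold of $\Ii_{\om}$ of the same codimension, and that its normal space in $\Ii_{\om}$ at $J$ maps isomorphically onto the normal space of $U_{A}$ in $\jj_{\om}$, namely $H^{0,1}(u^{*}TM)$; composing with the inverse of $u^{*}$ identifies it with $H^{0,1}_{J}(TM)$, as claimed.

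The main obstacle is the middle step: verifying that the comparison map between the two normal spaces really is the Dolbeault restriction $u^{*}$. This requires presenting the Kodaira--Spencer description of $T_{J}\Ii_{\om}$ coming from Theorem~\ref{thmA} in the same local chart used to realize $U_{A}$ as a zero set, and checking that pulling back an infinitesimal deformation of $(M,J)$ along $u$ agrees, under these identifications, with the obstruction class $[\tfrac12 Y(u)\circ du\circ j_{0}]$ in the curve moduli problem. Once this compatibility diagram is established, the rest is a formal consequence of the implicit function theorem together with the vanishing of $H^{0,2}_{J}(M)$ and $H^{0,2}_{J}(TM)$, which simultaneously guarantee that $\Ii_{\om}$ is smooth near $J$ and that $u$ carries no obstructions beyond $H^{0,1}(u^{*}TM)$.
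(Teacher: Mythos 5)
The paper does not prove this statement: it is quoted verbatim as Theorem~2.9 of the reference~\cite{AGK} and used as a black box in Appendix~A, so there is no in-text proof to compare your argument against.

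That said, your proposal is a sensible reconstruction of the argument one finds in~\cite{AGK}, and the overall architecture is sound: realize $\Mm(A,\jj_{\om})$ as a Banach submanifold via the universal linearization, observe $D_{u}=\bar\partial_{u^{*}TM}$ because $J$ is integrable, identify the conormal of $U_{A}$ at $J$ with $\operatorname{coker}D_{u}=H^{0,1}(u^{*}TM)$, use Theorem~\ref{thmA} to decompose $T_{J}\Ii_{\om}$, observe that the $\Diff$-orbit directions carried by symplectic vector fields are tangent to $U_{A}$, and reduce transversality to surjectivity of the induced map $H^{0,1}_{J}(TM)\to H^{0,1}(u^{*}TM)$. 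The conclusion on the normal space then does follow formally from the implicit function theorem.

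The one place you cannot simply wave your hands is exactly the one you flag as ``the main obstacle'': the identification of the comparison map $H^{0,1}_{J}(TM)\to H^{0,1}(u^{*}TM)$ with the Dolbeault restriction $u^{*}$. This is not a formality. In the moduli chart, the normal direction is detected by the obstruction class $[\tfrac12\,Y(u)\circ du\circ j_{0}]$, while on the complex-geometric side the tangent directions to $\Ii_{\om}$ are Kodaira--Spencer classes $[\alpha]\in H^{0,1}_{J}(TM)$ extracted from the $\oplus H^{0,1}_{J}(TM)$ summand in Theorem~\ref{thmA}; one has to check that the endomorphism $Y_{\alpha}$ anti-commuting with $J$ that encodes $\alpha$ really produces, when plugged into the curve linearization, the $(0,1)$-form $u^{*}\alpha$ and not merely something cohomologous after a gauge correction by an exact term. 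That verification requires choosing the chart and the $(0,1)$-form identification compatibly (for instance, using $J$-holomorphicity of $u$ to rewrite $du\circ j_{0}=J\circ du$) and tracking the constants; without it, surjectivity of the composite is asserted rather than established. So: correct strategy and correct identification of where the content lies, but the proof is incomplete until that compatibility diagram is actually verified.
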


\subsection{The case of rational ruled surfaces} 
It is classical that given any $k \ge 0$ and a compatible integrable complex (Hirzebruch) structure $J_{k}$ in the $k^{\text{th}}$ stratum $\jj_{\mu,k}^{i}$, both $H^{0,2}_{J_{k}}(\Mui)$ and $H^{0,2}_{J_{k}}(T\Mui)$ are zero. Moreover,  for any $J\in\Ii^{i}_{\mu,k} := \jj_{\mu,k}^{i} \cap \Ii_{\mu}^i$ and a $J$-holomorphic map $u:\CP^{1}\to(\Mui,J)\simeq\F_{k}$ representing a section of negative self-intersection, the induced map $u^{*}:H^{0,1}_{J}(TM)\to H^{0,1}(u^{*}(TM))$ is an isomorphism. By theorems~\ref{thmA} and~\ref{thmC}, it follows that the restriction of the stratification of $\jjui$ to the integrable complex structures $\Ii_{\mu}^{i}$ defines a stratification of $\Ii_{\mu}^{i}$, and that the strata in both stratifications have normal slices isomorphic to moduli spaces of infinitesimal deformations. Now, it is also well known that $\Hol(J_{k})$ retracts on $\Iso(\om,J_{k})=K_{k}$. Consequently, by theorem~\ref{thmB}, the symplectic orbit $\Gui\cdot J_{k}$ is weakly homotopy equivalent to the intersection $(\Diff_{[\om]}\cdot J)\cap\Ii_{\om}$. Finally, because any two integrable complex structures belonging to the same stratum are in the same Teichm\"uller class (that is, there exists a $\phi\in\Diff_{0}$ sending one to the other), each stratum is weakly homotopy equivalent to a symplectic orbit:
\[
\Ii_{\mu,k}^{i} = (\Diff_{0}\cdot J_{k})\cap\Ii_{\mu}\simeq \Gui/K_{k}
\]  
Therefore, the actions of $\Gui$ on the spaces $\jjui$ and $\Ii_{\mu}^{i}$ are homotopically equivalent. In particular, the total space $\Ii_{\mu}^{i}$ is weakly contractible and one may compute the homotopy type of $\Gui$ using the homotopy pushout diagram
\begin{equation}
\label{EquivalentDiagram}
\xymatrix{
\Gui\times_{K_{\ell}} S^{4\ell+2i-3}  \ar[r] \ar[d] & \Ii_{\mu,\ell}^{i} \simeq G_{\mu}^{i}/ K_{\ell} \ar[d] \\
\Ii^{i}_{\mu}- \Ii_{\mu,\ell}^{i} \ar[r] &  \Ii_\mu^{i}
}
\end{equation}
% $\Ii_{\mu}^{i}$. 
where as usual $\ell < \mu \le \ell +1$. As explained in~\cite{AGK}, the action of $K_{k}$ on the normal slice $H^{0,1}_{J_{k}}(T\Mui)$ may be determined either directly (i.e. by looking at the action of $K_{k}$ on \v{C}ech cocycles associated to an open cover) or by applying the Atiyah-Bott fixed points formula to compute the character of the virtual representation of $K_{k}$ on the equivariant elliptic complex
\[
\Omega^{0,0}_{J_{k}}(T\Mui)\to\Omega^{0,1}_{J_{k}}(T\Mui)\to\Omega^{0,2}_{J_{k}}(T\Mui).
\]
This gives
\begin{thm}[\cite{AGK} Theorem 4.1] If $k$ is even, the representation of $K_{k}$ on $H^{0,1}_{J_{k}}(T\Mui)$ is given by 
$\det\otimes\sym^{k-2}(\C^{2})$, where $\det$ represents the standard representation of $S^{1}$, and $\sym^{k-2}(\C^{2})$ denotes the $k-2$ symmetric power of the canonical representation of $\SO(3)=\SU(2)/\pm 1$ on $\C^{2}$. Similarly, if $k$ is odd, we get the representation 
$\det^{-k} \otimes \sym^{k-2}(\C^{2})$, where $\det$ denotes the determinant representation of $\U(2)$ and $\sym^{k-2}(\C^{2})$ denotes the $k-2$ symmetric power of the canonical representation of $\U(2)$ on $\C^{2}$. 
\end{thm}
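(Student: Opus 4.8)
The plan is to identify $(\Mui,J_k)$ with the Hirzebruch surface $\F_k=\mathbb{P}(\Oo(-k)\oplus\C)$ over $\CP^1$ and to compute the Dolbeault group $H^{0,1}_{J_k}(T\Mui)\cong H^1(\F_k,\Theta)$, where $\Theta$ is the holomorphic tangent sheaf, while keeping track of the action of $K_k=\Iso(\om,J_k)$ throughout. Two routes are available: a direct \v{C}ech computation on a $K_k$-equivariant affine cover of $\F_k$, or the holomorphic Atiyah--Bott--Lefschetz fixed point formula applied to the equivariant elliptic complex $\Omega^{0,0}_{J_k}(T\Mui)\to\Omega^{0,1}_{J_k}(T\Mui)\to\Omega^{0,2}_{J_k}(T\Mui)$. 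I would use the first route to extract the underlying $K_k$-module and the second to pin down the equivariant normalization.

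For the first route, the key tool is the $K_k$-equivariant relative tangent sequence
\[
0\longrightarrow \Theta_{\F_k/\CP^1}\longrightarrow \Theta_{\F_k}\longrightarrow \pi^{*}\Theta_{\CP^1}\longrightarrow 0 .
\]
Since the fibers of $\pi\colon\F_k\to\CP^1$ are copies of $\CP^1$ on which $\Theta_{\F_k/\CP^1}$ restricts to $\Oo(2)$ and $\pi^{*}\Theta_{\CP^1}$ restricts to $\Oo$, both first direct images $R^1\pi_{*}$ vanish, Leray degenerates, and the higher cohomology of each sheaf equals that of its equivariant direct image on $\CP^1$. Now $\pi_{*}\pi^{*}\Theta_{\CP^1}=\Theta_{\CP^1}$ has vanishing $H^1$, while by the relative Euler sequence $\pi_{*}\Theta_{\F_k/\CP^1}\cong\sym^2 E^{\vee}\otimes\det E=\Oo(-k)\oplus\Oo\oplus\Oo(k)$ with $E=\Oo\oplus\Oo(-k)$, whose only higher cohomology is $H^1(\CP^1,\Oo(-k))$. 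The connecting map $H^0(\Theta_{\CP^1})\to H^1(\Theta_{\F_k/\CP^1})$ then vanishes, because the resulting cokernel has dimension $h^1(\F_k,\Theta)=k-1=h^1(\CP^1,\Oo(-k))$ (a classical fact about Hirzebruch surfaces). Hence $H^1(\F_k,\Theta)\cong H^1(\CP^1,\Oo(-k))$ as $K_k$-modules, and in particular this vanishes for $k\leq 1$, consistent with reading $\sym^{k-2}$ as $0$ there.

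It remains to identify $H^1(\CP^1,\Oo(-k))$, together with its twist coming from the $\det E$ factor, as a $K_k$-module. By Serre duality $H^1(\CP^1,\Oo(-k))\cong H^0(\CP^1,\Oo(k-2))^{\vee}\cong\sym^{k-2}(\C^2)$ for the base rotation group (self-duality for $\SU(2)$), and this descends to $\SO(3)=\SU(2)/\pm 1$ precisely when $k$ is even, matching the structure of $K_k$. The remaining scalar factor records how the central $S^1$ (fiber rotation, resp. the center of $\U(2)$) acts: tracing the weight of the fiber coordinate through $\sym^2 E^{\vee}\otimes\det E$ produces the prefactor $\det$ when $k$ is even and $\det^{-k}$ when $k$ is odd. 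I would confirm this last point with the Atiyah--Bott--Lefschetz formula: $\F_k$ has four torus-fixed points (the zero and infinity sections over the two poles of the base) with explicitly computable tangent weights, and since the characters of $H^0(\Theta)$ (the Lie algebra of $\Aut^0(\F_k)$) and $H^2(\Theta)=0$ are already known, the alternating sum of the local contributions determines the character of $H^1(\Theta)$ unambiguously; comparing with the character of $\det\otimes\sym^{k-2}(\C^2)$, resp. $\det^{-k}\otimes\sym^{k-2}(\C^2)$, completes the argument.

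The main obstacle is this equivariant bookkeeping rather than the cohomology computation itself. One must fix once and for all which $S^1\subset K_k$ is called $\det$ (and with which sign and weight normalization), and carry out the identifications $\pi_{*}\Theta_{\F_k/\CP^1}\cong\sym^2 E^{\vee}\otimes\det E$ and Serre duality $K_k$-equivariantly with mutually compatible conventions. The fixed-point formula is the cleanest way to remove sign and weight ambiguities, since it returns the character of $H^1(\Theta)$ directly.
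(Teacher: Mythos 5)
Your proposal is mathematically sound, but note that the paper itself does not prove this statement: it cites it to Abreu--Granja--Kitchloo (Theorem 4.1) and merely remarks that the representation may be determined either by a direct \v{C}ech computation on an equivariant cover, or by applying the holomorphic Atiyah--Bott fixed point formula to the equivariant elliptic complex. (The paper \emph{does} carry out the Atiyah--Bott computation, but only for the analogous statement on the blow-up $\tMuci$, in Appendix B.) Your route is therefore genuinely different from the one suggested by the paper and from the one used in the proof of the companion result.

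What you gain with the relative tangent sequence
\[
0\to\Theta_{\F_k/\CP^1}\to\Theta_{\F_k}\to\pi^{*}\Theta_{\CP^1}\to 0
\]
plus Leray and Serre duality is that the $\sym^{k-2}(\C^2)$ factor pops out conceptually: $H^1(\F_k,\Theta)\cong H^1(\CP^1,\Oo(-k))$ as $K_k$-modules, and Serre duality plus self-duality of $\SU(2)$-representations immediately identifies this with $\sym^{k-2}(\C^2)$, with the parity condition for descent to $\SO(3)$ visible as the action of the center $\pm1$. The fixed-point approach of AGK gives the full character $\widetilde{\kappa}(k)$ in one stroke but requires matching it by inspection against the character of $\det^{\pm}\otimes\sym^{k-2}(\C^2)$; your approach separates the ``shape'' of the module from the scalar twist, which is cleaner. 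Two small cautions. First, the identification $\pi_{*}\Theta_{\F_k/\CP^1}\cong\sym^{2}E^{\vee}\otimes\det E$ is convention-sensitive (tautological sub- versus quotient-bundle, which $E$ you projectivize); the safe statement is simply that $\pi_*\Theta_{\F_k/\CP^1}\cong\Oo(-k)\oplus\Oo\oplus\Oo(k)$, which is unambiguous and all you use. Second, your vanishing of the connecting homomorphism $H^0(\Theta_{\CP^1})\to H^1(\CP^1,\Oo(-k))$ is justified by dimension count against the known $h^1(\F_k,\Theta)=k-1$, which is fine but slightly indirect; a self-contained alternative is to observe that the connecting map is $K_k$-equivariant and that the source and target are, as $K_k$-modules, $\sym^{2}(\C^2)$ with trivial central twist versus $\sym^{k-2}(\C^2)$ with nontrivial central twist, so the only equivariant map is zero. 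Either way, the proof goes through, and the Atiyah--Bott check you propose at the end is exactly what pins down the $\det$ versus $\det^{-k}$ normalization and is the same tool the paper uses for the blown-up analogue.
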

In particular, the representation of $K_{k}$ on $H^{0,1}_{J_{k}}(T\Mui)$ is independent of $\mu>k$. Consequently, an easy induction argument over the number of strata shows that 
\[
EG_{\mu}^{i}\times_{G_{\mu}^{i}} (\Ii_{\mu}^{i} - \Ii_{\mu,\ell}^{i}) \simeq BG_{\mu-1}^{i}.
\]

\subsection{Blow-ups of rational ruled surfaces}
 
We now show that similar results hold in the case of the symplectic blow-ups $\tMuci$, for all values of $\mu$ and $c\in(0,1)$. In particular, we no longer assume that $c \ge \ccrit$.

\begin{lemma}\label{le:Vanishing}
For any $J\in\Ii_{\mu,c}^{i}$, the cohomology groups $H^{0,2}_{J}(\tMuci)$ and $H^{0,2}_{J}(T\tMuci)$ are zero. 
\end{lemma}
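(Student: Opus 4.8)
The plan is to show, for every compatible integrable complex structure $J$, that the complex surface $(\tMuci,J)$ is the one-point blow-up of a Hirzebruch surface, and then to deduce both vanishings from the classical vanishing statements on Hirzebruch surfaces together with the elementary behaviour of Dolbeault cohomology under a point blow-up. Since the groups $H^{0,2}_J(X)$ and $H^{0,2}_J(TX)$ are biholomorphic invariants of a complex surface $X$, and since under the symplectic duality $\tMucl\cong\widetilde M^{0}_{\mu+1-c,\,1-c}$ compatible integrable structures correspond to compatible integrable structures, it suffices to treat the split case $i=0$; I shall write $\tMuco$ throughout.

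\emph{Step 1: contract $E$ and identify the blow-down.} Fix $J\in\Ii^{0}_{\mu,c}$. As recalled in Section~\ref{se:decompostion}, for \emph{every} $J\in\tjjuco$ the exceptional class $E=E^{0}$ is represented by an embedded $J$-holomorphic sphere $\Sigma$; since $\Sigma^{2}=-1$, Castelnuovo's contractibility criterion gives a holomorphic map $\pi\colon(\tMuco,J)\to(N,J')$ onto a smooth compact complex surface, exhibiting $\tMuco$ as the blow-up $\mathrm{Bl}_{p}N$ of $N$ at a point $p$. A blow-down of a Kähler surface is Kähler, so $N$ is Kähler; and $N$ is diffeomorphic to $S^{2}\times S^{2}$, since the diffeomorphism type of the contraction depends only on the class $E$ and on the ambient diffeomorphism type. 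By the classification of complex surfaces a Kähler surface diffeomorphic to $S^{2}\times S^{2}$ is biholomorphic to some Hirzebruch surface $\F_{2k}$; alternatively, one can argue directly that $F^{0}-E^{0}$ is also always represented by an embedded $J$-sphere $\Sigma'$ with $\Sigma'\cdot\Sigma=1$, so that $C:=\pi(\Sigma')$ is a smooth rational curve with $C^{2}=0$, and Riemann--Roch (using $K_{N}\cdot C=-2$, $\chi(\Oo_{N})=1$, and $h^{0}(K_{N}-C)=0$) gives $h^{0}(\Oo_{N}(C))\ge 2$; as $C$ is nef with $C^{2}=0$, the pencil $|C|$ realises $N$ as a geometrically ruled surface over $\CP^{1}$, i.e. $N\cong\F_{k}$ for some $k$. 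In particular $(\tMuco,J)$ is a rational surface.

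\emph{Step 2: the two vanishings.} For Hirzebruch surfaces it is classical that $H^{1}(\Oo_{\F_{k}})=H^{2}(\Oo_{\F_{k}})=0$ and $H^{2}(\F_{k},T\F_{k})=0$ (the latter via the relative Euler sequence of $\F_{k}\to\CP^{1}$, or via Künneth on $\F_{0}$, the blow-down $\F_{1}\to\CP^{2}$, and semicontinuity of $h^{2}(T-)$ along the deformations $\F_{k}\rightsquigarrow\F_{k-2}$). Now push forward along $\pi$. Since $R^{0}\pi_{*}\Oo_{\tMuco}=\Oo_{N}$ and $R^{q}\pi_{*}\Oo_{\tMuco}=0$ for $q>0$, the Leray spectral sequence gives $H^{0,2}_{J}(\tMuco)=H^{2}(\Oo_{\tMuco})\cong H^{2}(\Oo_{N})=0$. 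Likewise $\pi_{*}T\tMuco$ is the subsheaf of $TN$ of vector fields vanishing at $p$, namely $\Ii_{p}\otimes TN$, while $R^{q}\pi_{*}T\tMuco$ for $q>0$ is a skyscraper at $p$; Leray then gives $H^{2}(T\tMuco)\cong H^{2}(N,\Ii_{p}\otimes TN)$, and the ideal-sheaf sequence $0\to\Ii_{p}\otimes TN\to TN\to TN|_{p}\to 0$ with $H^{1}(TN|_{p})=0$ identifies this with $H^{2}(N,TN)=0$. Hence $H^{0,2}_{J}(T\tMuco)=0$.

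\emph{Main obstacle.} The only delicate point is Step~1 — ruling out the possibility that $(\tMuco,J)$ carries an exotic, non-rational complex structure (for instance a surface of general type with $p_{g}=q=0$). This is settled by the classification of surfaces, or self-containedly by the pencil argument; but the pencil argument relies essentially on the input, cited from Section~\ref{se:decompostion}, that the exceptional classes $E$ and $F-E$ are represented by embedded $J$-holomorphic spheres for \emph{every} compatible $J\in\tjjuco$ — in particular for $J\in\Ii^{0}_{\mu,c}$ — and not merely for generic $J$. The remaining ingredients (Castelnuovo contractibility, the Leray and ideal-sheaf manipulations, and the classical vanishings on Hirzebruch surfaces) are routine.
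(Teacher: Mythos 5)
Your proof is correct and begins from the same geometric observation as the paper's — that the class $E$ is represented by an embedded $J$-holomorphic $(-1)$-curve for \emph{every} compatible $J$, so that $(\tMuco,J)$ is a one-point blow-down in the complex category — but from there the two proofs diverge. The paper deduces the first vanishing from the birational invariance of the Hodge numbers $h^{p,0}$ (so $h^{2,0}(\tMuco)=h^{2,0}(N)=0$, which needs only $b_2^{+}(N)=1$, not the precise identification of $N$), and it proves the second vanishing \emph{directly} on $\tMuco$: by Serre duality $H^{2}(T\tMuco)^{\vee}\simeq H^{0}(\mathcal{K}\otimes\Omega^{1})$, and the restriction of $\mathcal{K}\otimes\Omega^{1}$ to the dense family of embedded rational fibers of self-intersection zero is $\Oo(-4)\oplus\Oo(-2)$, which has no sections. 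You instead pin down the blow-down $N$ as a genuine Hirzebruch surface (via the $J$-holomorphic sphere in class $F-E$, the induced pencil $|C|$ on $N$, and Riemann--Roch), and then push both vanishings forward through the Leray spectral sequence of the blow-down, using the identification $\pi_{*}T\tMuco\simeq\Ii_{p}\otimes TN$ and the classical vanishing $H^{2}(N,TN)=0$. Your Step~1 is more work than the paper spends, but it supplies a justification the paper only asserts (``is obtained by blowing up a Hirzebruch structure''); conversely, the paper's second-vanishing argument is more economical because it never needs to know what the minimal model is, only that $\tMuco$ carries a ruling by rational curves of self-intersection zero. Both routes are sound.
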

\begin{proof}
(See also~\cite{Ko} \S 5.2(a)(iv) p.220.) Without loss of generality we can suppose that $i=0$. Because the exceptional class $E$ is symplectically indecomposable, it is represented by an embedded $J$-holomorphic exceptional sphere that can be blowed down (in the complex category). This shows that any compatible complex structure on $\tMuco$ is obtained by blowing-up a Hirzebruch structure on $\Muo$. Now, for any K\"ahler manifold $(X,J)$ of complex dimension $n$, the Hodge numbers $h^{p,q}:=\rk H^{p,q}_{J}(X)$ satisfy $h^{n-p,n-q}=h^{p,q}=h^{q,p}$. Since the Hodge numbers $h^{p,0}$ are birational invariants, it follows that
\[
\rk H_{J}^{0,2}(\tMuco)=h^{2,0}(\tMuco)=h^{2,0}(\Muo)=0.
\]
As for $H^{0,2}_{J}(T\tMuco):=\check{H}^{2}(T\tMuco)$, Serre duality implies that $\check{H}^{2}(T\tMuco)^{\vee}\simeq \check{H}^{0}(\mathcal{K}\otimes\Omega^{1})$. Now, $\tMuco$ contains a real two-dimensional family of embedded rational curves of zero self-intersection (the fibers) which cover a dense open set, and the restriction of the rank $2$ bundle $\mathcal{K}\otimes\Omega^{1}$ to any of those curves is isomorphic to $\Oo(-4)\oplus\Oo(-2)$. Hence, $\mathcal{K}\otimes\Omega^{1}$ cannot have a nontrivial holomorphic section. % and therefore $\check{H}^{0}(\mathcal{K}\otimes\Omega^{1})=0$.
\end{proof}
\begin{lemma}\label{le:Teichmuller}
Each stratum $\Ii_{\mu,c,k}^{i}$ is covered by the Teichm\"uller orbit, that is, for any pair $\tJ_{0},\tJ_{1}\in\Ii_{\mu,c,k}^{i}$, there exists a $\phi\in\Diff_{0}(M)$ such that $\tJ_{1}=\phi_{*}\tJ_{0}$.
\end{lemma}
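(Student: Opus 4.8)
The plan is to reduce the statement, by blowing down the exceptional curve in class $E$, to the analogous (and already available) fact for rational ruled surfaces, and then to absorb the position of the blow-up point using the automorphism group of a Hirzebruch surface. Fix the stratum index $k$, write $M:=\tMuco$ (the twisted case is identical, or follows from the duality $\tMucl\cong\widetilde{M}^0_{\mu-c,1-c}$), and let $\tJ_0,\tJ_1\in\Ii_{\mu,c,k}^0$. Since $E$ is symplectically indecomposable, each $\tJ_a$ carries a \emph{unique} embedded $\tJ_a$-holomorphic sphere $C_a$ in class $E$; this is in particular an embedded $\tom$-symplectic $(-1)$-sphere, and the space of such spheres is connected (McDuff's results on embedded exceptional spheres, equivalently the path-connectedness of the space of unparametrized balls recalled in the introduction). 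Hence there is $\phi_1\in\Diff_0(M)$ with $\phi_1(C_0)=C_1$, and after replacing $\tJ_0$ by $(\phi_1)_*\tJ_0$ we may assume $C_0=C_1=:C$. Choosing a smooth tubular neighbourhood of $C$ modelled on the disc bundle of $\mathcal{O}(-1)\to\CP^1$ and contracting it yields a smooth $4$-manifold $Y$ diffeomorphic to $S^2\times S^2$, a point $q\in Y$, and, for $a=0,1$, the $\tJ_a$-holomorphic blow-down $\beta_a\colon(M,\tJ_a)\to(Y,J_a)$, a biholomorphism away from $C\mapsto q$. By Proposition~\ref{StructureStratificationBlowUp} and the discussion preceding it, both $(Y,J_0)$ and $(Y,J_1)$ are of the same Hirzebruch type $\F$ determined by $k$, and $q$ lies on a fixed canonical section $\Sigma_a\subset(Y,J_a)$ (the zero section or the section at infinity, according to the parity of $k$).

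The next step rigidifies the ruled surface. The structures $J_0$ and $J_1$ are integrable, K\"ahler-compatible, and of the same Hirzebruch type, hence belong to the same stratum of $\Ii_{\mu'}^0$ for $\mu'$ large; by the Teichm\"uller connectedness for rational ruled surfaces recalled in Section~\ref{se:decompostion} (the identity $\Ii_{\mu,k}^{i}=(\Diff_{0}\cdot J_{k})\cap\Ii_{\mu}$), there is $\psi\in\Diff_0(Y)$ with $\psi_*J_0=J_1$, i.e.\ a biholomorphism $\psi\colon(Y,J_0)\to(Y,J_1)$. Being a biholomorphism, $\psi$ maps $\Sigma_0$ onto the canonical section of $(Y,J_1)$, which also contains $q$; thus $\psi(q)$ and $q$ both lie on that section. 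The identity component $\Aut^0(Y,J_1)\cong\Aut^0(\F)$ acts transitively on this section and lies inside $\Diff_0(Y)$, so we may pick $\chi\in\Aut^0(Y,J_1)$ with $\chi(\psi(q))=q$; then $g:=\chi\circ\psi\in\Diff_0(Y)$ is a biholomorphism $(Y,J_0)\to(Y,J_1)$ fixing $q$. Functoriality of the blow-up lifts $g$ to a biholomorphism $\tilde g\colon(M,\tJ_0)\to(M,\tJ_1)$, so $\tilde g_*\tJ_0=\tJ_1$, and $\phi:=\tilde g\circ\phi_1$ is the required map — \emph{provided} $\tilde g\in\Diff_0(M)$.

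The main obstacle is exactly this last point: keeping everything inside the identity component through the blow-down and blow-up. That is why, in the first step, one must match the two exceptional spheres by an element of $\Diff_0(M)$, which uses their \emph{connectedness} (not merely their uniqueness). For the lift, I would argue that $Y$ is simply connected, so the evaluation fibration $\Diff(Y,q)\to\Diff(Y)\to Y$ yields an injection $\pi_0\Diff(Y,q)\hookrightarrow\pi_0\Diff(Y)$; hence $g\in\Diff_0(Y)$ with $g(q)=q$ is isotopic to $\id_Y$ through diffeomorphisms fixing $q$, and such an isotopy — acting on a neighbourhood of $q$ and on $\mathbb{P}(T_qY)$ — lifts to a smooth isotopy of $M=\mathrm{Bl}_qY$ from $\tilde g$ to the identity. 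As a by-product this shows that each integrable stratum $\Ii_{\mu,c,k}^{i}$ is connected, so that, exactly as in the ruled case, $\Ii_{\mu,c,k}^{i}=(\Diff_0\cdot\tJ_k)\cap\Ii_{\mu,c}^{i}\simeq\tGuco/\widetilde{T}(k)$. A more deformation-theoretic alternative would combine Lemma~\ref{le:Vanishing} with Theorems~\ref{thmA} and~\ref{thmC} to see that each Teichm\"uller orbit is open in the stratum, and then invoke connectedness of the stratum; but proving that connectedness independently seems to require essentially the blow-down argument above, so I would present the latter.
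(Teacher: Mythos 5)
Your proposal is correct and follows essentially the same route as the paper's proof: blow down the unique $\tJ_a$-holomorphic exceptional sphere in class $E$ (after matching the two spheres by a symplectic isotopy), identify the resulting structures with the Hirzebruch surface $\F_k$ and use Teichm\"uller connectedness in the ruled case, and then absorb the position of the marked point $q$ using transitivity of $\Aut^0(\F_k)$ on the relevant section. The only difference is one of thoroughness: the paper's proof is terse and does not spell out why the resulting lift lands in $\Diff_0(M)$, whereas you make this explicit via the fibration $\Diff(Y,q)\to\Diff(Y)\to Y$ and the simple-connectedness of $Y$; this is a useful clarification but not a different argument.
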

\begin{proof}
Given a pair $\tJ_{0},\tJ_{1}\in\Ii_{\mu,c,k}^{i}$, the class $E$ is represented by $\tJ_{i}$-holomorphic exceptional curves $\Sigma_{i}$ that are symplectically isotopic. So, we may assume that $\Sigma_{0}=\Sigma_{1}$. By blowing down $(M,\Sigma,\tJ_{i})$, we get two complex structures on the same underlying marked $4$-manifold $(M,p,J_{i})$. The unmarked complex surfaces are both isotopic to the Hirzebruch surface $\F_{k}$. Note that any such isotopy sends $p$ to the zero section $s_{0}$ of $\F_{k}$. The statement follows from the fact that the identity component of the complex automorphism group of $\F_{k}$ (which is isomorphic to the semi-direct product $\PSL(2;\C)\ltimes(\C^{*}\times H^{0}(\CP^{1};\Oo(k))$ ) acts transitively on $s_{0}$, see~\cite{DG}.
\end{proof}

\begin{lemma}
The inclusion $\Iso(\om,\tJ_{k})\into\Hol_{[\om]}(\tJ_{k})$ is a weak homotopy equivalence.
\end{lemma}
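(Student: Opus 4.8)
The plan is to reduce $\Hol_{[\om]}(\tJ_{k})$ to a group of automorphisms of a Hirzebruch surface and then invoke the classical fact that a connected linear algebraic group over $\C$ deformation retracts onto a maximal compact subgroup. First I would use (the proof of) Lemma~\ref{le:Teichmuller}, by which $(\tMuci,\tJ_{k})$ is the blow-up of a Hirzebruch structure $\F_{k}$ on $\Mui$ at the centre $p$ of the exceptional divisor $E$: since $E$ is an irreducible curve of negative self-intersection it is the unique effective divisor in its homology class, so any automorphism of $(\tMuci,\tJ_{k})$ acting trivially on $H^{2}$ --- in particular any element of the identity component --- preserves $E$ and descends to a biholomorphism of $\F_{k}$ fixing $p$, while conversely every automorphism of $\F_{k}$ fixing $p$ lifts to $\tMuci$. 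A connectedness argument then identifies $\Hol^{0}_{[\om]}(\tJ_{k})=\Aut_{0}(\tMuci,\tJ_{k})$ with the identity component $G$ of the stabiliser $\mathrm{Stab}_{p}\big(\Aut(\F_{k})\big)$.

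Next I would analyse $G$. Using the explicit description $\Aut_{0}(\F_{k})\cong\PSL(2,\C)\ltimes\big(\C^{*}\times H^{0}(\CP^{1};\Oo(k))\big)$ (see~\cite{DG}) together with the fact that every element of $G$ fixes the image of $p$ in $\CP^{1}$, one sees that $G$ is a closed subgroup of an extension of a Borel subgroup of $\PSL(2,\C)$ by an abelian group, hence a connected solvable linear algebraic group; such a group is isomorphic as a complex variety to $(\C^{*})^{r}\times\C^{s}$, its maximal compact subgroup is the real torus $(S^{1})^{r}$, and the inclusion $(S^{1})^{r}\hookrightarrow G$ is a deformation retract. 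Since the $2$-torus $\widetilde{T}(k)$ of Proposition~\ref{StructureStratificationBlowUp}(3) consists of K\"ahler isometries of $(\om,\tJ_{k})$ fixing $p$, one has $\widetilde{T}(k)\subseteq\Iso_{0}(\om,\tJ_{k})\subseteq G$, which forces $r\geq 2$; and $r\leq\operatorname{rank}\,\Aut_{0}(\F_{k})=2$, so $r=2$. As $\Iso_{0}(\om,\tJ_{k})$ is a compact connected --- hence toral --- subgroup of $G$ containing the maximal torus $\widetilde{T}(k)$, it equals $\widetilde{T}(k)$, which is therefore a maximal compact subgroup of $G$; hence $\Iso_{0}(\om,\tJ_{k})\hookrightarrow\Hol^{0}_{[\om]}(\tJ_{k})$ is a homotopy equivalence.

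It then remains to match path components. For injectivity of $\pi_{0}\Iso\to\pi_{0}\Hol_{[\om]}$ one notes that $\Iso(\om,\tJ_{k})\cap\Aut_{0}(\tMuci,\tJ_{k})$ is a compact subgroup of $G$ containing the maximal compact subgroup $\widetilde{T}(k)$, hence equals it. For surjectivity, given $\phi\in\Hol_{[\om]}(\tJ_{k})$ the subgroups $\widetilde{T}(k)$ and $\phi\widetilde{T}(k)\phi^{-1}$ are maximal compact in $G$, so after multiplying $\phi$ by an element of $G$ I may assume $\phi$ normalises $\widetilde{T}(k)$; then, using the blow-down above and the list of $(-1)$-curves of $\tMuci$ --- which $\phi$ must permute compatibly with the $\om$-areas $[\om]\cdot E$, $[\om]\cdot(F-E)$, $\dots$ --- I would check that $\phi$ may be taken, within its component, to be a K\"ahler isometry: generically $\phi$ fixes $E$ and descends to an isometry of $\F_{k}$ fixing $p$ and normalising $\widetilde{T}(k)$, while the finitely many pairs $(\mu,c)$ for which another exceptional class has the same $\om$-area as $E$ are handled by realising the extra automorphism as an isometry of the correspondingly more symmetric toric K\"ahler metric. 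Finally the five lemma, applied to the long exact homotopy sequences of $\Iso_{0}\to\Iso\to\pi_{0}\Iso$ and $\Hol^{0}_{[\om]}\to\Hol_{[\om]}\to\pi_{0}\Hol_{[\om]}$, gives that $\Iso(\om,\tJ_{k})\hookrightarrow\Hol_{[\om]}(\tJ_{k})$ is a weak homotopy equivalence.

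The hard part will be this last $\pi_{0}$ analysis: once the blow-down reduction is in place the comparison of identity components is essentially formal, but controlling \emph{all} path components of $\Hol_{[\om]}(\tJ_{k})$ and realising each by a K\"ahler isometry requires the case analysis on the configuration of $(-1)$-curves of $\tMuci$ indicated above.
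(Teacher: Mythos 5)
Your argument is correct and takes the same route as the paper, which simply says "This follows from the corresponding statement for the Hirzebruch surfaces"; you have carried out in detail the blow-down reduction and the analysis of the resulting solvable algebraic group that the paper's one-sentence proof leaves implicit. The only minor caveat is that the formula $\Aut_{0}(\F_{k})\cong\PSL(2,\C)\ltimes(\C^{*}\times H^{0}(\CP^{1};\Oo(k)))$ quoted from \cite{DG} fails for $k=0$ (where $\Aut_{0}(\F_{0})\cong\PSL(2,\C)\times\PSL(2,\C)$), but the stabilizer of a point is still a product of Borel subgroups and hence solvable, so the conclusion of your second paragraph is unaffected.
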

\begin{proof}
This follows from the corresponding statement for the Hirzebruch surfaces.
\end{proof}

\begin{lemma}\label{le:Transversality}
Let $\widetilde{\F}_{m=2k}\simeq \widetilde{\STS}$ denote the blow-up of the Hirzebruch surface $\F_{m}$ at some point $p$. Let $C$ be the unique embedded rational curve representing either $B-kF$ or $B-kF-E$ (depending on whether $p$ lies outside the zero section of $\F_m$ or not), and denote by $u:C\to\widetilde{\F}_{m}$ its inclusion. Then the induced map $u^{*}:H^{0,1}(T\widetilde{\F}_{m})\to H^{0,1}(u^{*}(T\widetilde{\F}_{m}))$ is an isomorphism.
\end{lemma}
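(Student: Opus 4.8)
The plan is to deduce everything from the short exact sequence of sheaves on $X:=\widetilde{\F}_{m}$ attached to the smooth rational curve $C$,
\[
0\longrightarrow TX(-C)\longrightarrow TX\longrightarrow u_{*}u^{*}TX\longrightarrow 0 ,
\]
whose long exact cohomology sequence contains $H^{1}(X,TX)\xrightarrow{\,u^{*}\,}H^{1}(C,u^{*}TX)\to H^{2}(X,TX(-C))$. Thus it suffices to show that (i) $H^{2}(X,TX(-C))=0$, which already forces $u^{*}$ to be surjective on $H^{1}$, and (ii) $\dim H^{1}(X,TX)=\dim H^{1}(C,u^{*}TX)$; a surjection between finite-dimensional vector spaces of equal dimension is an isomorphism. (One could instead try to prove $H^{1}(X,TX(-C))=0$ directly, by pulling the analogous vanishing on $\F_{m}$ back along the blow-down; but that route still needs the same dimension bookkeeping as (ii), so the argument below looks the most economical.)

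For step (i) I would invoke Serre duality on the surface $X$, which gives $H^{2}(X,TX(-C))\cong H^{0}(X,\Omega^{1}_{X}\otimes\Oo_{X}(K_{X}+C))^{\vee}$, and then restrict to a general fibre of the conic bundle $q\colon X\to\CP^{1}$ obtained by composing the blow-down $\pi\colon X\to\F_{m}$ with the ruling of $\F_{m}$. A general fibre $f$ is a smooth rational curve, disjoint from the blown-up point $p$, with $f^{2}=0$, so $\Omega^{1}_{X}|_{f}\cong\Oo_{f}\oplus\Oo_{f}(-2)$; and since $K_{X}\cdot f=-2$ by adjunction while $C\cdot f=1$ (as $C$ is the proper transform of the $(-m)$-section $\sigma_{-}$ of the ruling of $\F_{m}$), we get $(\Omega^{1}_{X}\otimes\Oo_{X}(K_{X}+C))|_{f}\cong\Oo_{f}(-1)\oplus\Oo_{f}(-3)$, which has no global sections. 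Since the general fibres cover a dense open subset and the sheaf is locally free, every global section vanishes, so $H^{0}(X,\Omega^{1}_{X}(K_{X}+C))=0$. The identical fibrewise argument also gives $H^{2}(X,TX)=0$ (this is in any case contained in Lemma~\ref{le:Vanishing}).

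For step (ii) I would compute the two dimensions separately. On the curve side, $u^{*}TX$ is an extension $0\to T_{C}\to u^{*}TX\to N_{C/X}\to0$ with $T_{C}\cong\Oo_{C}(2)$ and $N_{C/X}\cong\Oo_{C}(C^{2})$; since $C^{2}\le-2$ this splits, so $u^{*}TX\cong\Oo_{C}(2)\oplus\Oo_{C}(C^{2})$ and $h^{1}(C,u^{*}TX)=-C^{2}-1$, which equals $m-1$ when $C\sim B-kF$ (so $C^{2}=-m$, the case $p$ lies off $\sigma_{-}$) and $m$ when $C\sim B-kF-E$ (so $C^{2}=-m-1$, the case $p$ lies on $\sigma_{-}$). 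On the surface side, $H^{2}(X,TX)=0$ and Riemann--Roch gives $\chi(X,TX)=K_{X}^{2}-c_{2}(X)+2=7-5+2=4$, whence $h^{1}(X,TX)=h^{0}(X,TX)-4=\dim\Aut^{0}(X)-4$. Finally, $\Aut^{0}(X)$ is precisely the stabilizer of $p$ in $\Aut^{0}(\F_{m})$ (an automorphism of $X$ must preserve the exceptional curve $E$, hence descend to $\F_{m}$ fixing $p$), and $\Aut^{0}(\F_{m})$ — of dimension $m+5$ for $m\ge2$ — acts on $\F_{m}$ with exactly two orbits, the $(-m)$-section $\sigma_{-}$ (dimension $1$) and its complement (dimension $2$); hence $\dim\Aut^{0}(X)$ is $m+3$ when $p$ lies off $\sigma_{-}$ and $m+4$ when $p$ lies on it, so $h^{1}(X,TX)$ equals $m-1$, resp.\ $m$, matching $h^{1}(C,u^{*}TX)$ in each case. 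Combined with the surjectivity from step (i), this proves the lemma.

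The step I expect to require the most care is this last piece of numerology: checking that $\Aut^{0}(\F_{m})$ really acts transitively on the complement of $\sigma_{-}$ — which I would obtain by identifying that complement with the total space of $\Oo_{\CP^{1}}(m)$ and noting that the bundle automorphisms together with the $\mathrm{PGL}_{2}$-action on the base act transitively there because $\Oo(m)$ is globally generated — together with the bookkeeping identifying $\Aut^{0}(X)$ with a stabilizer and pinning down $\chi(X,TX)$. Everything else reduces to standard exact sequences, Serre duality on the rational surface $X$, and the elementary cohomology of line bundles on $\CP^{1}$.
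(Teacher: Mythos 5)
Your proof is correct, and it is actually more complete than the paper's. Step~(i) is precisely the paper's argument: the paper writes down the same short exact sequence $0\to TX(-C)\to TX\to u_{*}u^{*}TX\to 0$, passes to the cohomology sequence $H^{1}(X,TX)\to H^{1}(C,u^{*}TX)\to H^{2}(X,TX(-C))$, and uses Serre duality together with the restriction of $\Omega^{1}_{X}(K_{X}+C)$ to a general fibre (which is $\Oo(-1)\oplus\Oo(-3)$, so has no sections) to conclude $H^{2}(X,TX(-C))=0$. But the paper stops there: this only yields \emph{surjectivity} of $u^{*}$, whereas the lemma asserts an isomorphism. Your step~(ii) supplies the missing injectivity by matching dimensions, and the bookkeeping checks out. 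On the curve side, $u^{*}TX\cong\Oo_{C}(2)\oplus\Oo_{C}(C^{2})$ (the extension splits since $C^{2}\le-2$), so $h^{1}(C,u^{*}TX)=-C^{2}-1$, which is $m-1$ when $C^{2}=-m$ and $m$ when $C^{2}=-m-1$. On the surface side, $h^{2}(X,TX)=0$ and Riemann--Roch with $\chi(\Oo_{X})=1$ (Noether gives $K_{X}^{2}+c_{2}=12$, which is exactly what makes your shorthand $\chi(TX)=K_{X}^{2}-c_{2}+2$ correct here) yields $\chi(X,TX)=4$; since $\Aut^{0}(X)$ acts trivially on $H_{2}$ and hence preserves the rigid $(-1)$-curve $E$, it equals the connected stabilizer of $p$ in $\Aut^{0}(\F_{m})$, and the two-orbit description (the $(-m)$-section $\sigma_{-}$ of dimension $1$, and its complement of dimension $2$, with transitivity on the latter coming from global generation of $\Oo_{\CP^{1}}(m)$) gives $\dim\Aut^{0}(X)=m+3$ or $m+4$ depending on whether $p\notin\sigma_{-}$ or $p\in\sigma_{-}$. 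Thus $h^{1}(X,TX)=\dim\Aut^{0}(X)-4$ equals $m-1$ or $m$ respectively, matching $h^{1}(C,u^{*}TX)$ in each case. In short, step~(i) reproduces the paper's proof, and step~(ii) is the dimension count the paper omits but the statement of the lemma requires.
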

\begin{proof}
Consider the exact sequence of sheaves associated to the inclusion $u:C\to\widetilde{\F}_{m}$
\[
0\to \Oo(-C)\to \Oo_{\widetilde{\F}_{m}}\to \Oo_{C}\to 0
\]
Tensoring with $T\widetilde{\F}_{m}$ we get the short exact sequence
\[
0\to \Oo(-C)\otimes T\widetilde{\F}_{m}\to T\widetilde{\F}_{m}\to \Oo_{C}\otimes T\widetilde{\F}_{m}\to 0
\]
whose associated cohomology sequence is
\[
\cdots\to H^{1}(\widetilde{\F}_{m};T\widetilde{\F}_{m})\stackrel{u^{*}}{\to} H^{1}(\mathbb{P}^{1};u^{*}T\widetilde{\F}_{m})\to H^{2}(\widetilde{\F}_{m};\Oo(-C)\otimes T\widetilde{\F}_{m})\to\cdots
\]
The sheaf $\Oo(-C)\otimes T\widetilde{\F}_{m}$ being locally free, Serre duality implies that $H^{2}(\widetilde{\F}_{m};\Oo(-C)\otimes T\widetilde{\F}_{m})\simeq H^{0}(\widetilde{\F}_{m};\Oo(C)\otimes T\widetilde{\F}_{m}^{\vee}\otimes K_{T\widetilde{\F}_{m}})$. But,  since $C\cdot F = 1$, the restriction of $\Oo(C)\otimes T\widetilde{\F}_{m}^{\vee}\otimes K_{T\widetilde{\F}_{m}}$ to any fiber $F$ is isomorphic to $\Oo(1)\otimes(\Oo(-4)\oplus\Oo(-2))$. It follows that $\Oo(C)\otimes T\widetilde{\F}_{m}^{\vee}\otimes K_{T\widetilde{\F}_{m}}$ has no nontrivial sections and, by duality, that $H^{2}(\widetilde{\F}_{m};\Oo(-C)\otimes T\widetilde{\F}_{m})=0$.
\end{proof}
\begin{cor}
The action of $\Symp(\tMuci)$ on $\tjjuci$ is homotopy equivalent to its restriction to $\widetilde{\Ii}_{\mu,c}^{i}$.
\end{cor}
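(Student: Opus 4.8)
The plan is to reproduce, essentially verbatim, the argument given above for the rational ruled surfaces $\Mui$ in the subsection ``The case of rational ruled surfaces'', now feeding in the blow-up analogues of the input lemmas that were just established in this appendix. First I would combine Lemma~\ref{le:Vanishing} with Theorem~\ref{thmA}: since $H^{0,2}_{J}(\tMuci)=H^{0,2}_{J}(T\tMuci)=0$ for every $J\in\tIiuci$, the subset $\tIiuci$ is a Fr\'echet submanifold of $\tjjuci$, and at each such $J$ there is a natural splitting $T_{J}\tIiuci\cong T_{J}\big((\Diff(M)\cdot J)\cap\tIiuci\big)\oplus H^{0,1}_{J}(T\tMuci)$ in which the second, finite-dimensional summand is the moduli space of infinitesimal compatible deformations.

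Next I would show that the geometric stratification of $\tjjuci$ described in Proposition~\ref{StructureStratificationBlowUp} restricts to a stratification of $\tIiuci$ with the expected normal data. The $k$-th stratum $\widetilde{\jj}_{\mu,c,k}^{i}$ is the locus of those $J$ for which the class $D_k$ has an embedded $J$-holomorphic representative $u$; Lemma~\ref{le:Transversality} says precisely that $u^{*}\colon H^{0,1}_{J}(T\tMuci)\to H^{0,1}(u^{*}T\tMuci)$ is an isomorphism for $J$ integrable, so that Theorem~\ref{thmC} applies and shows that $\pi\colon\Mm(D_k,\tjjuci)\to\tjjuci$ is transversal to $\tIiuci$ at $(u,J)$, with the normal slice of $\widetilde{\jj}_{\mu,c,k}^{i}$ along $\tIiuci$ identified with $H^{0,1}_{J}(T\tMuci)$. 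Thus each $\widetilde{\Ii}_{\mu,c,k}^{i}:=\widetilde{\jj}_{\mu,c,k}^{i}\cap\tIiuci$ is a submanifold of $\tIiuci$, and the two stratifications share the same equivariant finite-dimensional normal slices, namely the $\widetilde{T}(k)$-representation on $H^{0,1}_{\tJ_k}(T\tMuci)$.

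Then I would identify each stratum with a symplectic orbit. The weak homotopy equivalence $\Iso(\om,\tJ_k)\into\Hol_{[\om]}(\tJ_k)$ (the lemma immediately preceding this corollary), inserted into Theorem~\ref{thmB}, shows that the inclusion of the symplectic orbit $\Symp(\tMuci)/\Iso(\om,\tJ_k)\into(\Diff_{[\om]}\cdot\tJ_k)\cap\tIiuci$ is a weak homotopy equivalence; and Lemma~\ref{le:Teichmuller} identifies the right-hand side with $(\Diff_{0}\cdot\tJ_k)\cap\tIiuci=\widetilde{\Ii}_{\mu,c,k}^{i}$. Since by Proposition~\ref{StructureStratificationBlowUp} the isometry group $\Iso(\om,\tJ_k)$ is the $2$-torus $\widetilde{T}(k)$, we obtain $\tGuci$-equivariant weak homotopy equivalences $\widetilde{\Ii}_{\mu,c,k}^{i}\simeq\tGuci/\widetilde{T}(k)\simeq\widetilde{\jj}_{\mu,c,k}^{i}$; in particular $\tIiuci$ is weakly contractible and each of its strata is a symplectic orbit, exactly as in the un-blown-up case.

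Finally, having matched the orbit data and the equivariant normal-slice data of the two stratified $\tGuci$-spaces $\tjjuci$ and $\tIiuci$ stratum by stratum, I would invoke the $A_{\infty}$-slice theorem of~\cite{AGK}, Appendix~D --- the same device that makes the homotopy-pushout arguments of Section~\ref{se:decompostion} rigorous --- to conclude that the inclusion $\tIiuci\into\tjjuci$ is an equivariant homotopy equivalence for the $\tGuci$-actions, which is the assertion of the corollary. The only step that requires genuine care is this last one, namely upgrading the stratumwise comparison to a global equivariant statement; but since the normal representations $H^{0,1}_{\tJ_k}(T\tMuci)$ are independent of $\mu$ as long as the $k$-th stratum is non-empty, and the induction over the number of strata is formally identical to the one already carried out for $\Mui$ and in~\cite{AGK}, no essentially new difficulty arises beyond the bookkeeping.
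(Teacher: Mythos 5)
Your proof is correct and follows exactly the route the paper intends: feeding the four blow-up lemmas (vanishing of $H^{0,2}$, Teichm\"uller transitivity, $\Iso\simeq\Hol$, and the transversality isomorphism) into Theorems~\ref{thmA}, \ref{thmB}, \ref{thmC}, and then invoking the $A_\infty$-slice argument of~\cite{AGK}, mirroring the reasoning already carried out for $\Mui$ in Appendix~A.2. The paper itself leaves this corollary without an explicit proof precisely because it is this verbatim transcription, so your reconstruction supplies what the authors took for granted.
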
 
\begin{cor}\label{cor:Contractibility}
The space $\widetilde{\Ii}_{\mu,c}^{i}$ of compatible integrable complex structures of $\tMuci$ is contractible.
\end{cor}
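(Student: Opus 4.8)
The plan is to transfer contractibility from the ambient space $\tjjuci$ of all $\om$-compatible almost complex structures on $\tMuci$ to the subspace $\tIiuci$ of integrable ones. Recall that $\tjjuci$ is contractible, being the space of sections of a bundle over $\tMuci$ whose fibre is the Siegel upper half space $\mathrm{Sp}(4,\R)/\U(2)$. By the Corollary just established, the action of $\tGuci=\Symp(\tMuci)$ on $\tjjuci$ is homotopy equivalent, in the $A_\infty$-sense of Appendix~D of \cite{AGK}, to its restriction to $\tIiuci$; in particular the inclusion $\tIiuci\into\tjjuci$ is a weak homotopy equivalence of underlying spaces. Hence $\tIiuci$ is weakly contractible, and since a Fr\'echet manifold has the homotopy type of a CW complex, it is contractible.

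To exhibit this comparison explicitly one would argue by induction over the finitely many strata, following the pattern used in \cite{AGK} for the non-blown-up surfaces. Lemma~\ref{le:Vanishing} supplies the vanishing $H^{0,2}_{J}(\tMuci)=H^{0,2}_{J}(T\tMuci)=0$ for every $J\in\tIiuci$, so Theorems~\ref{thmA} and~\ref{thmC} apply: $\tIiuci$ is a Fr\'echet submanifold of $\tjjuci$ meeting each stratum $\tjj^{i}_{\mu,c,k}$ transversally, and, by Lemma~\ref{le:Transversality}, the normal slice to $\tIi^{i}_{\mu,c,k}$ at $\tJ_{k}$ is identified $\widetilde{T}(k)$-equivariantly with the deformation space $H^{0,1}_{\tJ_{k}}(T\tMuci)$, which is the same representation that models the normal slice of $\tjj^{i}_{\mu,c,k}$ inside $\tjjuci$. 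Meanwhile Lemma~\ref{le:Teichmuller} says that each stratum $\tIi^{i}_{\mu,c,k}$ is a single $\Diff_{0}$-orbit, and feeding the weak equivalence $\Iso(\om,\tJ_{k})\into\Hol_{[\om]}(\tJ_{k})$ into Theorem~\ref{thmB} shows that $\tIi^{i}_{\mu,c,k}\simeq\tGuci/\widetilde{T}(k)\simeq\tjj^{i}_{\mu,c,k}$, compatibly with the $\tGuci$-actions. Thus the inclusion $\tIiuci\into\tjjuci$ is a stratum-wise weak equivalence with matching equivariant normal data, and the slice theorem of \cite{AGK} Appendix~D propagates this equivalence across the attaching of each successive stratum, from the dense one up to the last; since $\tjjuci$ is contractible, so is $\tIiuci$.

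The step that demands real work, and hence the main obstacle for which the preceding lemmas were designed, is checking that the local, $\widetilde{T}(k)$-equivariant structure of $\tIiuci$ near each stratum genuinely agrees with that of $\tjjuci$; this is exactly where Lemmas~\ref{le:Vanishing} and~\ref{le:Transversality} feed into Theorem~\ref{thmC}, making the abstract $A_\infty$-slice machinery of \cite{AGK} applicable verbatim. A secondary subtlety is that, in contrast with Section~\ref{se:decompostion}, no restriction $c\ge\ccrit$ is imposed here, so the induction must run over all strata $0\le k\le m$ with $m=2\ell$ or $m=2\ell-1$; since Lemmas~\ref{le:Vanishing}--\ref{le:Transversality} were established for every $\mu\ge1$ and every $c\in(0,1)$, nothing extra is needed. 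Finally, the twisted case $i=1$ reduces to $i=0$ via the symplectic identification $\tMucl\simeq\widetilde{M}^{0}_{\mu+1-c,1-c}$.
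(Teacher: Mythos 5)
Your argument is correct and follows the same route the paper takes: you feed Lemma~\ref{le:Vanishing} and Lemma~\ref{le:Transversality} into Theorems~\ref{thmA} and~\ref{thmC} to get the submanifold/transversality statements, use Lemma~\ref{le:Teichmuller} and the $\Iso\into\Hol$ equivalence to apply Theorem~\ref{thmB} stratum-by-stratum, and then invoke the $A_\infty$-slice machinery of~\cite{AGK} Appendix~D to conclude that $\tIiuci\into\tjjuci$ is a weak equivalence, hence that $\tIiuci$ is (weakly) contractible. Your remark upgrading weak contractibility to genuine contractibility via the CW-homotopy type of Fr\'echet manifolds is a sound addition that the paper leaves implicit.
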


%%%%%%%%%%%%%%%%%%%%%%%%%%%%%%%%%%%%%%%%%%%%%%%%%%%%%%%%%%%%%%%%%%%%%%%%%%%%%%%%
\section{Algebraic computations} \label{se:appendixB}

\subsection{Conventions} 
In order to carry the computation of the cohomology ring of $B\tGuci$, we follow the conventions used in~\cite{AGK} and~\cite{AM}:
\begin{enumerate}
\item Let $T^{4}\subset\U(4)$ act in the standard way on $\C^{4}$. Given an integer $n\geq 0$, the action of the subtorus $T^{2}_{n}:=(ns+t,t,s,s)$ is Hamiltonian with moment map
\[(z_{1},\ldots,z_{4})\mapsto(n|z_{1}|^{2}+|z_{3}|^{2}+|z_{4}|^{2},|z_{1}|^{2}+|z_{2}|^{2})\]
We identify $M_{\mu}^{0}=(S^{2}\times S^{2},\mu\sigma\oplus\sigma)$ with each of the toric Hirzebruch surface $\F_{2k}^{\mu}$, $0\leq k\leq \ell$ (where as usual $\ell < \mu \le \ell+1$), defined as the symplectic quotient $\C^{4}/\!/T^{2}_{2k}$ at the regular value $(\mu+k,1)$ endowed with the residual action of the torus $T(2k):=(0,u,v,0)\subset T^{4}$. The image $\Delta(2k)$ of the moment map $\phi_{2k}$ is the convex hull of
\[
\{(0,0),(1,0),(1,\mu+k),(0,\mu-k)\}
\]
Similarly, we identify $\Mul=(\NTB,\om_{\mu})$ with the toric Hirzebruch surface $\F_{2k-1}^{\mu}$, $1\leq k\leq \ell$, defined as the symplectic quotient $\C^{4}/\!/T^{2}_{2k-1}$ at the value $(\mu+k,1)$. The moment polygon $\Delta(2k-1):=\phi_{2k-1}(\F_{2k-1})$ of the residual action of the $2$-torus $(0,u,v,0)$ is the convex hull of 
\[
\{(0,0),(1,0),(1,\mu+k),(0,\mu-k+1)\}
\]
Note that the group $\Symp_{h}(M_{\mu})$ of symplectomorphisms acting trivially on homology being connected, any two identifications of $\F_{n}^{\mu}$ with $M_{\mu}^{i}$ are isotopic and lead to isotopic identifications of $\Symp_{h}(\F_{n}^{\mu})$ with $\Symp(M_{\mu}^{i})$. 

\item The K\"ahler isometry group of $\F_{n}^{\mu}$ is $N(T^{2}_{n})/T^{2}_{n}$ where $N(T^{2}_{n})$ is the normalizer of $T^{2}_{n}$ in $\U(4)$. There is a natural isomorphism\footnote{In the untwisted case, we assume $\mu>1$ so that the permutation of the two $S^{2}$ factors is not an isometry.} $N(T_{0})/T^{2}_{0}\simeq \SO(3)\times \SO(3):=K(0)$, while for $k\geq 1$, we have $N(T^{2}_{2k})/T_{2k}\simeq S^{1}\times \SO(3):=K(2k)$ and $N(T^{2}_{2k-1})/T^{2}_{2k-1}\simeq \U(2):=K(2k-1)$. The restrictions of these isomorphisms to the maximal tori are given in coordinates by
\begin{align*}
(u,v)&\mapsto (-u,v)\in T(0):=S^{1}\times S^{1}\subset K(0)\\
(u,v)&\mapsto (u,ku+v)\in T(2k):=S^{1}\times S^{1}\subset K(2k)\\
(u,v)&\mapsto (u+v,ku+(k-1)v)\in T(2k-1):=S^{1}\times S^{1}\subset K(2k-1)
\end{align*}
These identifications imply that the moment polygon associated to the maximal torus $T(n)=S^{1}\times S^{1}\subset K(n)$ is the image of $\Delta(n)$ under the transformation $C_{n}\in\GL(2,\Z)$ given by
\[
\begin{matrix}
 C_{0}=\begin{pmatrix}-1 & 0 \\ 0 & 1   \end{pmatrix}
&~C_{2k}=\begin{pmatrix} 1 & 0 \\ -k & 1   \end{pmatrix}
&~C_{2k-1}=\begin{pmatrix} 1-k & 1 \\ k & -1 \end{pmatrix}
\end{matrix}
\]

\item We identify the symplectic blow-up $\tMuco$ with the equivariant blow-up of the Hirzebruch surfaces $\F_{n}^{\mu}$ for appropriate parameter $\mu$ and capacity $c\in(0,1)$.
\begin{enumerate}
\item We define the even torus action $\tT(2k)$ as the equivariant blow-up of the toric action of $T(2k)$ on $\F_{2k}^{\mu}$ at the fixed point $(0,0)$ with capacity $c$. %Its moment polygon $\tDelta(2k)$ is the convex hull of
%\[
%\{(c,0),(1,0),(1,\mu+k),(0,\mu-k),(0,c)\}
%\]
\item The odd torus action $\tT(2k-1)$, $k\geq 1$, is obtained by blowing up the toric action of $T(2k-1)$ on $\F_{2k-1}^{\mu-c}$ at the fixed point $(0,0)$ with capacity $1-c$.% Its moment polygon $\tDelta(2k-1)$ is the convex hull of
%\[
%\{(1-c,0),(1,0),(1,\mu+k),(0,\mu-k+1),(0,1-c)\}
%\]
\end{enumerate}
Our choices imply that under the blow-down map, $\tT(n)$ is sent to the maximal torus of $K(n)$, for all $n\geq 0$. Again, because $\Symp(\tMuco)$ is connected (see~\cite{LP,Pi}), all choices involved in these identifications give the same maps up to homotopy. Note also that when $c<\ccrit := \mu - \ell$, $\tMuco$ admits exactly $2\ell+1$ inequivalent toric structures $\tT(0),\ldots, \tT(2\ell)$, while when $c \ge \ccrit$, it admits only $2\ell$ of those, namely $\tT(0),\ldots, \tT(2\ell-1)$. Note that the free variable $n$ indexing these objects corresponds to the free variable $j$ indexing the strata in \S~2.4.
\item The cohomology ring of $B\tT(n)$ is isomorphic to $\Q[x_{n},y_{n}]$ where $|x_{n}|=|y_{n}|=2$. We identify the generators $x_{n}$, $y_{n}$ with the cohomology classes induced by the circle actions whose moment maps are, respectively, the first and the second component of the moment map associated to $\tT(n)$. Geometrically, $y_{n}$ is induced by the lift to $\tMuco$ of a rotation of the base of $M^{0}_{\mu}$, while $x_{n}$ is induced by a rotation of the fibers. Note that since we work only with topological groups up to rational equivalences, we will also denote by $\{x_{n}, y_{n}\}$ the generators in $\pi_{1}\tT(n)$ and in $\pi_{2}B\tT(n)$.
\end{enumerate}

\subsection{The isotropy representation of $\tT(n)$}

\begin{prop}
The character of the representation of $\tT(n)$, $n\geq 1$, on $H^{0,1}(T\tMuci)$ is given by
\[
\widetilde{\kappa}(n)=
\begin{cases}
x(y^{k-1}+\cdots+y^{-(k-1)}) &\text{if $n=2k-1$,}\\
x(y^{k-1}+\cdots+y^{-k}) &\text{if $n=2k$.}
\end{cases}
\]
Consequently, the equivariant Euler class of the representation is
\[
\pm\widetilde{e}_{n}=
\begin{cases}
(x+(k-1)y)(x+(k-2)y)\cdots(x+(1-k)y)&\text{if $n=2k-1$,}\\
(x+(k-1)y)(x+(k-2)y)\cdots(x-ky)&\text{if $n=2k$.}
\end{cases}
\]
In particular, $\widetilde{e}_{n}\in H^{*}(B\tT(n);A)$ is nonzero for any coefficient ring $A$.
\end{prop}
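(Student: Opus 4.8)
The plan is to compute $\widetilde\kappa(n)$ by blowing down to the underlying Hirzebruch surface, where the relevant isotropy representation is already known from the work of Abreu--Granja--Kitchloo quoted above, and then to read off the Euler class and its nonvanishing as immediate formal consequences. Recall from Proposition~\ref{StructureStratificationBlowUp} and the conventions fixed above that $(\tMuci,\widetilde J_{n})$ is the $\tT(n)$-equivariant blow-up of a Hirzebruch surface $(\Mui,J_{n})\cong\F_{n}$ at a $T(n)$-fixed point $p$ lying on the negative section $s_{\infty}$ of $\F_{n}$, and that under the blow-down map $\pi\colon\tMuci\to\Mui$ the torus $\tT(n)$ is identified with the maximal torus $T(n)\subset K(n)$.

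First I would record two standard facts: $\pi_{*}T_{\tMuci}=\mathcal{I}_{p}\cdot T_{\Mui}$, the sheaf of holomorphic vector fields vanishing at $p$ (a vector field not vanishing at $p$ pulls back with a pole along the exceptional sphere), and $R^{1}\pi_{*}T_{\tMuci}=0$ (because the only nontrivial fibre of $\pi$ is a $\CP^{1}$ along which $T_{\tMuci}$ and its infinitesimal neighbourhoods have vanishing $H^{1}$). Since $p$ is a $\tT(n)$-fixed point, the Leray spectral sequence together with the skyscraper sequence $0\to\mathcal{I}_{p}T_{\Mui}\to T_{\Mui}\to T_{p}\Mui\to 0$ yields a $\tT(n)$-equivariant exact sequence
\[
0\to H^{0}(\tMuci,T_{\tMuci})\to H^{0}(\Mui,T_{\Mui})\xrightarrow{\ \rho\ } T_{p}\Mui\to H^{0,1}(T\tMuci)\to H^{0,1}(T\Mui)\to 0,
\]
where the final surjectivity uses only that $H^{1}$ of a skyscraper vanishes. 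Hence, in the representation ring of $\tT(n)$,
\[
\widetilde\kappa(n)=\kappa(n)+\bigl([T_{p}\Mui]-[\operatorname{im}\rho]\bigr),
\]
where $\kappa(n)$ denotes the restriction to $T(n)$ of the $K(n)$-representation on $H^{0,1}(T\Mui)$. Now $\operatorname{im}\rho$ is the tangent space at $p$ to the orbit of $p$ under the identity component of $\Hol_{[\om]}(J_{n})$; since the automorphism group of $\F_{n}$ ($n\ge 1$) acts transitively on its unique negative section, that orbit is exactly $s_{\infty}$, and therefore $[T_{p}\Mui]-[\operatorname{im}\rho]$ is the single weight $w_{p}$ of the normal bundle $N_{s_{\infty}/\F_{n}}$ at $p$ --- geometrically, the unique deformation of $\widetilde J_{n}$ that moves the blown-up point off the negative section.

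What then remains is equivariant bookkeeping with the toric data. Restricting $\det\otimes\sym^{n-2}(\C^{2})$ (for $n$ even) or $\det^{-n}\otimes\sym^{n-2}(\C^{2})$ (for $n$ odd) to the maximal torus and translating into the generators $x=x_{n}$, $y=y_{n}$ by means of the coordinate changes $C_{n}\in\GL(2,\Z)$ recorded above puts $\kappa(n)$ in the uniform shape $x(y^{k-1}+\cdots+y^{-(k-1)})$, and a direct reading of the primitive edge vector at the vertex of $C_{n}(\Delta(n))$ corresponding to $p$, in the direction transverse to $s_{\infty}$, identifies $w_{p}$ as the single missing monomial; this assembles to the stated $\widetilde\kappa(n)$. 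Once $\widetilde\kappa(n)$ is in hand, the representation is a direct sum of one-dimensional pieces with weights $x+jy$, so its equivariant Euler class is the product of those weights, which is precisely the displayed formula for $\pm\widetilde e_{n}$. Finally, $H^{*}(B\tT(n);A)\cong A[x,y]$ with $|x|=|y|=2$ for every coefficient ring $A$ (the K\"unneth theorem applies with no Tor terms, since $H^{*}(BS^{1};\Z)$ is free), and $\widetilde e_{n}$ is the image in $A[x,y]$ of a product of integral linear forms each monic of degree one in $x$; such a product is monic in $x$ with unit leading coefficient, hence nonzero in $A[x,y]$ for any $A$. The main obstacle is the bookkeeping in this last step: one must keep the identifications $\tT(n)\cong T(n)$, the matrices $C_{n}$, and the moment-map and orientation sign conventions consistent so that both $\kappa(n)$ and the extra weight $w_{p}$ land correctly in the $x_{n},y_{n}$ coordinates; the conceptual content --- blow-down plus the skyscraper sequence, and the formal deduction of the Euler class and its nonvanishing --- is routine.
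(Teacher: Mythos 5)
The paper computes $\widetilde{\kappa}(n)$ by applying the Atiyah--Bott fixed-point formula directly to the toric variety $\tMuci$: it lists the isotropy weights at the five $\tT(n)$-fixed points and reads off the character of $H^{0,0}-H^{0,1}$ from the Lefschetz number of the complex $\Omega^{0,\bullet}(T\tMuci)$. Your route is genuinely different: you blow down, invoke $\pi_*T_{\tMuci}=\mathcal{I}_p\cdot T_{\Mui}$ and $R^1\pi_*T_{\tMuci}=0$, and then the equivariant skyscraper sequence gives $\widetilde{\kappa}(n)=\kappa(n)+\bigl([T_p\Mui]-[\operatorname{im}\rho]\bigr)$, with $\operatorname{im}\rho$ identified as $T_p s_\infty$ because $\Hol^0(J_n)$ preserves the unique negative section and acts transitively on it. This reduction to the known Hirzebruch-surface representation $\kappa(n)$ plus one normal weight $w_p$ is correct and attractive, and the closing deductions (Euler class as product of weights; nonvanishing over any ring $A$ because the product of linear forms is monic in $x$ in $A[x,y]$) are sound.

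However, the ``bookkeeping'' you defer hides a genuine off-by-one that must be fixed before the argument assembles to the stated formula. You claim that after restriction to the maximal torus and the change of basis $C_n$, $\kappa(n)$ takes the uniform shape $x(y^{k-1}+\cdots+y^{-(k-1)})$, which has $2k-1$ monomials. This is right when $n=2k$, since $\dim_{\C} H^{0,1}(T\F_{2k})=n-1=2k-1$. But for $n=2k-1$ odd, $\sym^{n-2}(\C^2)=\sym^{2k-3}(\C^2)$ has dimension $2k-2$, so $\kappa(2k-1)$ has only $2k-2$ terms; already at $n=1$ your formula would give $\kappa(1)=x$, whereas $\kappa(1)=0$ (the first stratum of the twisted bundle is open). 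Adding the single weight $w_p$ to your over-counted $\kappa(2k-1)$ would then yield $2k$ monomials, which cannot equal the target $\widetilde{\kappa}(2k-1)=x(y^{k-1}+\cdots+y^{-(k-1)})$ of length $2k-1$. The correct odd-case statement is that $\kappa(2k-1)$ is a $(2k-2)$-term string of consecutive powers of $y$ times $x$, with $w_p$ supplying the missing extremal monomial --- so the conceptual scaffold is right, but the asserted uniform shape of $\kappa(n)$ is precisely what still needs to be verified and, in the odd case, corrected.
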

\begin{proof}
Following~\cite{AGK}, we compute the character of the virtual representation of the group $\widetilde{H}(n)$ of holomorphic automorphisms of $\tMuci$ on the equivariant elliptic complex
\[
0\to\Omega^{0,0}(T\tMuci)\to\Omega^{0,1}(T\tMuci)\to\Omega^{0,2}(T\tMuci)\to 0
\] 
using the Atiyah-Bott fixed point formula applied to the maximal torus $\tT(n)\subset \widetilde{H}(n)$. Note that since $H^{0,2}(T\tMuci)=0$, the index computes the character of $H^{0,0}(T\tMuci)-H^{0,1}(T\tMuci)$. Note also that the action of $\widetilde{H}(n)$ on $H^{0,0}(T\tMuci)\simeq \mathrm{Lie}(\widetilde{H}(n))$ is isomorphic to the adjoint representation.

In the even case $n=2k\geq 2$, the isotropy weights $(w_{1},w_{2})$ of the toric action at the $5$ fixed points $\{p_{i}\}$ are
\[
\begin{array}{c|c|c|c|c|c}
        & p_{1} & p_{2}      & p_{3}  & p_{4}     & p_{5} \\
\hline
w_{1}   & y^{k}/x & 1/y      & y^{k}x & y         & y^{k+1}/x \\
\hline
w_{2}   & y       & 1/y^{k}x & 1/y    & x/y^{k+1} & x/y^{k}
\end{array}
\]
Writing $\sigma_{i}$ for the elementary symmetric polynomials in two variables, the index of the virtual representation is given by
\begin{align}
\widetilde{I}(2k)=&\sum_{p_{i}}\frac{\sigma_{1}(w_{1},w_{2})\sigma_{2}(w_{1},w_{2})}{(1-w_{1})(1-w_{2})}\\
=& \frac{1}{xy^{k}}\big( y^{2k}+\cdots+y+1 +xy^{k-1}(2y+1)\big) - x\big(y^{k-1}+\cdots+y^{-k}\big).
\end{align}
Since the number of negative terms, $2k$, is equal to the (complex) dimension of $H^{0,1}(T\tMuci)$, it follows that the character of the isotropy representation is given by the negative part of the above formula, that is,
\[\widetilde{\kappa}(2k)=x(y^{k-1}+\cdots+y^{-k}),\] 
Similarly, in the odd case $n=2k-1\geq 1$, the isotropy weights $(w_{1},w_{2})$ of the toric action at the $5$ fixed points $\{p_{i}\}$ are
\[
\begin{array}{c|c|c|c|c|c}
& p_{1} & p_{2} & p_{3} & p_{4} & p_{5} \\
\hline
w_{1} & x^{k}/y^{k+1} & x^{k-1}/y^{k} & y/x           & x^{k}/y^{k-1} & x/y \\
\hline
w_{2} & y^{k}/x^{k-1} & x/y           & y^{k-1}/x^{k} & y/x           & y^{k+1}/x^{k} \\
\end{array}
\]
and the index computation shows that 
the character of the representation $H^{0,1}(T\tMuci)$ is 
\[\widetilde{\kappa}(2k-1)=y\left(\left(\frac{x}{y}\right)^{k-1}+\cdots+\left(\frac{x}{y}\right)^{1-k}\right).\] 
In both cases, the computation of the equivariant Euler class follows by naturality.
\end{proof}

From the Gysin exact sequence of the fibration $\big( S^{2n-1}\big)_{h\tT(n)}\to B\tT(n)$, we immediately obtain 

\begin{cor}
The rational cohomology of $\big( S^{2n-1}\big)_{h\tT(n)} $ is isomorphic to $\Q[x_{n},y_{n}]/\langle \widetilde{e}_{n}\rangle$.
\end{cor}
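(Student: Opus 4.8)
The plan is to realize the Borel construction as a sphere bundle over $B\tT(n)$ and read off its cohomology from the Gysin sequence. First I would observe that $S^{2n-1}$ is the unit sphere of the complex $\tT(n)$-representation $H^{0,1}(T\tMuci)$, which has complex dimension $n$: one reads this directly off the character $\widetilde{\kappa}(n)$ computed in the previous proposition, which is a sum of exactly $n$ monomials in both the even and the odd case. Hence the homotopy orbit space $\big(S^{2n-1}\big)_{h\tT(n)}=E\tT(n)\times_{\tT(n)}S^{2n-1}$ is precisely the unit sphere bundle of the associated rank-$n$ complex vector bundle $E\tT(n)\times_{\tT(n)}H^{0,1}(T\tMuci)$ over $B\tT(n)$, giving a fibration
\[
S^{2n-1}\longrightarrow\big(S^{2n-1}\big)_{h\tT(n)}\stackrel{\pi}{\longrightarrow} B\tT(n)
\]
whose Euler class is, by definition, the equivariant Euler class $\widetilde{e}_{n}\in H^{2n}(B\tT(n);\Q)$; recall from the Conventions that $H^{*}(B\tT(n);\Q)=\Q[x_{n},y_{n}]$ with $|x_n|=|y_n|=2$.

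Next I would write down the associated Gysin long exact sequence (rational coefficients throughout),
\[
\cdots\to H^{k-2n}(B\tT(n))\stackrel{\cup\,\widetilde{e}_{n}}{\longrightarrow}H^{k}(B\tT(n))\stackrel{\pi^{*}}{\longrightarrow} H^{k}\big(\big(S^{2n-1}\big)_{h\tT(n)}\big)\longrightarrow H^{k-2n+1}(B\tT(n))\stackrel{\cup\,\widetilde{e}_{n}}{\longrightarrow}H^{k+1}(B\tT(n))\to\cdots
\]
The key point is that multiplication by $\widetilde{e}_{n}$ is injective in every degree: indeed $H^{*}(B\tT(n);\Q)=\Q[x_{n},y_{n}]$ is an integral domain and $\widetilde{e}_{n}\neq 0$ — this nonvanishing being exactly the content of the preceding proposition — so $\widetilde{e}_{n}$ is a non-zero-divisor. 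Consequently each map $H^{k}\big(\big(S^{2n-1}\big)_{h\tT(n)}\big)\to H^{k-2n+1}(B\tT(n))$ lands in the kernel of an injective map and is therefore zero, so the Gysin sequence breaks into short exact sequences
\[
0\to H^{k-2n}(B\tT(n))\stackrel{\cup\,\widetilde{e}_{n}}{\longrightarrow}H^{k}(B\tT(n))\stackrel{\pi^{*}}{\longrightarrow} H^{k}\big(\big(S^{2n-1}\big)_{h\tT(n)}\big)\to 0.
\]
Since $\pi^{*}$ is a ring homomorphism and, by the displayed sequence, surjective with kernel the ideal generated by $\widetilde{e}_{n}$, assembling over all $k$ yields a ring isomorphism $H^{*}\big(\big(S^{2n-1}\big)_{h\tT(n)};\Q\big)\cong\Q[x_{n},y_{n}]/\langle\widetilde{e}_{n}\rangle$, as claimed.

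There is essentially no obstacle here: the statement is a formal consequence of the Gysin sequence once one knows $\widetilde{e}_{n}\neq 0$, which was just established by the Atiyah--Bott fixed-point computation. The only step demanding a word of care is the passage from ``$\widetilde{e}_{n}\neq 0$'' to ``$\cup\,\widetilde{e}_{n}$ injective'', which relies on $\Q[x_n,y_n]$ having no zero divisors; this is precisely what collapses the long exact sequence into short exact pieces and hence makes the cohomology a genuine quotient ring rather than merely an extension of $\Q[x_n,y_n]$-modules.
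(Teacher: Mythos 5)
Your proof is correct and is precisely the argument the paper has in mind: the paper states only that the result follows ``from the Gysin exact sequence of the fibration $\big(S^{2n-1}\big)_{h\tT(n)}\to B\tT(n)$,'' and your write-up simply spells out the details (identifying $S^{2n-1}$ as the unit sphere of the rank-$n$ representation, noting $\widetilde{e}_{n}$ is a non-zero-divisor in the integral domain $\Q[x_n,y_n]$, and hence obtaining short exact sequences). There is no divergence in approach.
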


\subsection{The cohomology module $H^{*}(B\tGuco;\Z)$} 
The homotopy decomposition of $B\tGuco$ in Theorem ~\ref{thm:HomotopyPushoutBlowUp} yields a pullback diagram

\begin{equation}\label{DiagramCohomology}
\xymatrix{
H^{*}\left(\big(S^{2m-1}\big)_{h\tT(m)};A\right)  & \ar[l]^-{\pi^{*}} H^{*}(B\tT(m);A)   \\
H^{*}(B\tG_{\mu',c}^{0};A) \ar[u]^{j_{m}^{*}}   & \ar[l]^{} H^{*}(B\tG_{\mu,c}^{0};A) \ar[u]^{i_{m}^{*}}   \\
 }
\end{equation}
where $m$ is the index of the last stratum of $\tjjuco$, and where
\[
\mu'=
\begin{cases}
\ell +c & \text{if $c<\ccrit$}\\
\ell   & \text{if $c \ge \ccrit$}
\end{cases}
\]
(here $\ell$ is the lower integral part of $\mu$) so that $\tG_{\mu',c}^{0}$ is the group associated with a stratification having one less stratum. Because the map $\pi^{*}$ is surjective, the associated Mayer-Vietoris sequence splits into the short exact sequence
\[
0\to H^{*}(B\tG_{\mu,c}^{0};A)\to H^{*}(B\tG_{\mu',c}^{0};A)\oplus H^{*}(B\tT(m);A)\to H^{*}\left(\left(S^{2m-1}\right)_{h\tT(m)};A\right)\to 0
\]
which reduces to the short exact sequence
\[
0\to \langle \widetilde{e}_{m}\rangle = \Sigma^{\deg(\widetilde{e}_{m})} H^{*}(B\tT(m);A)\to H^{*}(B\tG_{\mu,c}^{0};A)\to H^{*}(B\tG_{\mu',c}^{0};A)\to 0
\]
where $\Sigma^{n}$ stands for the $n$-fold suspension of graded abelian groups. Both sequences split over any field coefficients and, because all their terms are finitely generated, it follows that they also split over $\Z$. For $\ell=0$ we have $ H^{*}(B\tG_{0,c}^{0};\Z)\simeq H^{*}(B\tT(0);\Z)$ and, by induction, we get

\begin{thm}\label{CohomologyModuleBlowUp} As a module,
\[
H^{*}(B\tG_{\mu,c}^{0};\Z)\simeq \bigoplus_{i=0}^{m}\Sigma^{2i} H^{*}(B \tilde{T}^{2};\Z)
\text{~~where~} 
m:=
\begin{cases}
2\ell-1 & \text{if $c \ge \ccrit$}\\
2\ell   & \text{if $c<\ccrit$}
\end{cases}
\]
In particular, $ H^{*}(B\tG_{\mu,c}^{0};\Z)$ is torsion free.
\end{thm}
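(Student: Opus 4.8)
The plan is to induct on the number of strata, turning the homotopy pushout of Theorem~\ref{thm:HomotopyPushoutBlowUp} into a split short exact sequence of graded $\Z$-modules at each stage. The first ingredient is purely algebraic. By the preceding computation of the isotropy representation of $\tT(m)$, the equivariant Euler class $\widetilde{e}_{m}\in H^{2m}(B\tT(m);\Z)=\Z[x_{m},y_{m}]$ is a nonzero product of linear forms, hence a non-zero-divisor in the integral domain $\Z[x_{m},y_{m}]$. Plugging this into the Gysin sequence of the sphere bundle $(S^{2m-1})_{h\tT(m)}\to B\tT(m)$ (the same argument that gives the rational statement recorded above, now over $\Z$), cup product with $\widetilde{e}_{m}$ is injective, so $\pi^{*}\colon H^{*}(B\tT(m);\Z)\to H^{*}\big((S^{2m-1})_{h\tT(m)};\Z\big)$ is surjective with kernel the principal ideal $\langle\widetilde{e}_{m}\rangle$; moreover, since $\deg\widetilde{e}_{m}=2m$ and $\widetilde{e}_{m}$ is a non-zero-divisor, multiplication by it identifies $\langle\widetilde{e}_{m}\rangle\cong\Sigma^{2m}\Z[x_{m},y_{m}]\cong\Sigma^{2m}H^{*}(B\tT^{2};\Z)$ as graded abelian groups.

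Next I would extract the defining short exact sequence. Applying $H^{*}(-;\Z)$ to the homotopy pushout of Theorem~\ref{thm:HomotopyPushoutBlowUp} gives the pullback square~\eqref{DiagramCohomology}; since $\pi^{*}$ is onto, the Mayer--Vietoris connecting homomorphisms vanish and the sequence degenerates to
\[
0\to H^{*}(B\tG_{\mu,c}^{0};\Z)\to H^{*}(B\tG_{\mu',c}^{0};\Z)\oplus H^{*}(B\tT(m);\Z)\to H^{*}\big((S^{2m-1})_{h\tT(m)};\Z\big)\to 0 .
\]
Reading off the kernel and image of the projection of this fibre-product ring onto its first factor, this is equivalent to
\[
0\to \langle\widetilde{e}_{m}\rangle\to H^{*}(B\tG_{\mu,c}^{0};\Z)\to H^{*}(B\tG_{\mu',c}^{0};\Z)\to 0 ,
\]
where $\langle\widetilde{e}_{m}\rangle\cong\Sigma^{2m}H^{*}(B\tT^{2};\Z)$ by the previous step.

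The induction is then on $m$. The base case is the single-stratum situation ($m=0$): there $\tjjuco$ is homotopy equivalent to the orbit $\tGuco/\tT(0)$ by Proposition~\ref{StructureStratificationBlowUp}, so $B\tG_{\mu,c}^{0}\simeq B\tT(0)=B\tT^{2}$ and the formula holds. For the inductive step, Theorem~\ref{thm:HomotopyPushoutBlowUp} says that $\tG_{\mu',c}^{0}$ has exactly one stratum fewer (last stratum of index $m-1$), so the inductive hypothesis gives $H^{*}(B\tG_{\mu',c}^{0};\Z)\cong\bigoplus_{i=0}^{m-1}\Sigma^{2i}H^{*}(B\tT^{2};\Z)$; in particular this group is free over $\Z$ in each degree. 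Therefore the short exact sequence above splits, and
\[
H^{*}(B\tG_{\mu,c}^{0};\Z)\cong\bigoplus_{i=0}^{m}\Sigma^{2i}H^{*}(B\tT^{2};\Z),
\qquad
m=\begin{cases}2\ell-1 & c\ge\ccrit\\ 2\ell & c<\ccrit.\end{cases}
\]
Since each summand $H^{*}(B\tT^{2};\Z)=\Z[x,y]$ is torsion free, so is the finite direct sum, which proves the final assertion.

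The only point that is not purely formal — the ``main obstacle'' — is the integral splitting: over a field each term is free and the sequences split automatically, while over $\Z$ one must genuinely use that the quotient $H^{*}(B\tG_{\mu',c}^{0};\Z)$ is free in each degree (carried along in the inductive hypothesis) and that $\widetilde{e}_{m}$ is a bona fide non-zero-divisor in $\Z[x_{m},y_{m}]$, not merely nonzero after tensoring with $\Q$. Everything else — the Gysin and Mayer--Vietoris sequences and the bookkeeping of the degree shifts $2i$ — is routine.
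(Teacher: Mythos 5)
Your proof follows the paper's own strategy exactly: use the Gysin sequence to show $\pi^{*}$ is surjective with kernel $\langle\widetilde e_{m}\rangle$, deduce a short exact sequence from the Mayer--Vietoris sequence of the pushout, and induct on the number of strata starting from $B\tT(0)$. The only real divergence is in how the short exact sequence over $\Z$ is shown to split: the paper asserts that ``both sequences split over any field coefficients and, because all their terms are finitely generated, it follows that they also split over $\Z$,'' which as stated requires an additional universal-coefficients argument (the mod-$p$ Mayer--Vietoris sequence is not simply the $\Z$-sequence tensored with $\Z/p$). You instead carry torsion-freeness of $H^{*}(B\tG_{\mu',c}^{0};\Z)$ along in the inductive hypothesis and conclude the splitting from the freeness of the quotient; this is a cleaner and more self-contained way to obtain the same conclusion, and it also makes transparent why $\widetilde e_{m}$ must be a non-zero-divisor in $\Z[x_{m},y_{m}]$ rather than merely nonzero rationally.
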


\subsection{The rational cohomology ring $H^{*}(B\tGuco;\Q)$}

We know from Theorem~\ref{InflationBlowUp} that the map $\tpsi_{\mu,c}:B\tGuco\to\BFDiff_{*}$ induces a surjection in rational cohomology, and we know from Section~\ref{subsection:untwisted} that there is a rational homotopy equivalence $\BFDiff_{*}\simeq B(S^{1}\times S^{1}\times S^{1})$. It follows that $H^{*}(B\tGuco;\Q)\simeq \Q[x,y,z]/\widetilde{R}_{\mu,c}$ where $\widetilde{R}_{\mu,c}$ is the kernel of $\tpsi_{\mu,c}^{*}$, and where the three generators $x$, $y$, $z$ are of degree $2$. Now, the homotopy decomposition of $B\tGuco$ yields, at the rational cohomology level, an extended pullback diagram
\begin{equation}\label{ExtendedDiagramCohomology}
\xymatrix{
\Q[x_{m},y_{m}]/\langle \widetilde{e}_{m} \rangle   &\ar[l]^-{\pi^*} \Q[x_{m},y_{m}]   &\\
\Q[x,y,z]/\widetilde{R}_{\mu',c} \ar[u]^{j_{m}^{*}}  & \ar[l]^{} \ar[u]^{i_{m}^{*}} \Q[x,y,z]/\widetilde{R}_{\mu,c}     & \\
  &  & \ar[ul]_-{\tpsi_{\mu,c}^*} \ar[ull]^{\tpsi_{\mu',c}^*}  \ar[uul]_{\tpsi_{m}^*} \Q[x,y,z] }
\end{equation}

So, in order to compute the ideal $\widetilde{R}_{\mu,c}$, one has to understand the maps $\tpsis_{n}:\Q[x,y,z]\to\Q[x_{n},y_{n}]$ for all $n\geq 0$. % the relations between the generators of $\pi_{2} BT(i)\simeq \Q[x_{i},y_{i}]$, $0\leq i\leq m$, in $\pi_{2}B\tGuco$, 
For that, it is enough to consider the relations in $\pi_{1}\FDiff_{*}$ between the generators of $\pi_{1}\tT(n)$. We first observe that when $\mu>1$, the maps $\tT(n)\to\tGuco\to\FDiff_{*}$ induce injective maps of fundamental groups. %$\pi_{1}\tT(n)\to\pi_{1}\tGuco\stackrel{\simeq}{\to}\pi_{1}\FDiff_{*}$. 
Then, from the classification of Hamiltonian $T^{2}$-actions and Hamiltonian $S^{1}$-actions on $4$-manifolds, it is easy to see that

\begin{lemma}\label{RelationsInTori}
In $\pi_{1}(\tGuco)\simeq\pi_{1}\FDiff_{*}$, for all admissible values $k,k' \geq 1$, we have the identifications
\begin{align*}
y_{2k}                  &= kx_{0}+y_{0}\\
k'x_{2k}-y_{2k}          &= kx_{2k'}-y_{2k'}\\
kx_{2k}+y_{2k}          &= (k+1)x_{2k-1}+ky_{2k-1}\\
(k-1)x_{2k'-1}+ky_{2k'-1} &= (l-1)x_{2k-1}+k'y_{2k-1}\\
x_{1}                   &= y_{0}-x_{0}
\end{align*} 
\end{lemma}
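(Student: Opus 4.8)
**Proof plan for Lemma B.9 (relations in $\pi_1(\tGuco)\simeq\pi_1\FDiff_*$).**

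The plan is to read off all five identities directly from the moment polytopes, using the conventions fixed in \S B.1. Recall that for each admissible index $n$ the torus $\tT(n)$ is the equivariant blow-up of a maximal torus $T(n)\subset K(n)$ acting on a Hirzebruch surface, and that the blow-down sends $\tT(n)$ isomorphically onto that maximal torus. Hence the relations between the $x_n,y_n$ in $\pi_1(\tGuco)$ are exactly the relations between the corresponding circle subgroups of $\FDiff_*$, and the latter only depend on the $S^1$- or $T^2$-actions on the un-blown-up $M^i_\mu$, fixing the blow-up point $p$. So the first step is to translate everything back to the Hirzebruch surfaces $\F_n^\mu$.

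The second step is the bookkeeping. By construction $x_n$ is induced by the fiber rotation of the ruling and $y_n$ by the rotation of the base. Using the matrices $C_n\in\GL(2,\Z)$ from \S B.1(2), which record how the lattice generators $(u,v)$ of $T(n)$ map to the standard generators $(x_n,y_n)$, one computes the change-of-basis between $\pi_1 T(n)$ and $\pi_1 T(n')$ whenever the two tori are compared inside the common ambient group $\pi_1\FDiff_*\simeq \Z^3$. Concretely, I would fix once and for all an isomorphism $\pi_1\FDiff_*\cong\Z^3$ (say generated by the fiber rotation, the base rotation, and the exceptional-divisor rotation of the blow-up), express each $x_n$ and $y_n$ in that basis by composing $C_n^{-1}$ with the blow-up data, and then the five claimed equalities become five identities among integer vectors, checked by inspection. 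The relation $y_{2k}=kx_0+y_0$ and $x_1=y_0-x_0$ already appear (un-blown-up) in the Abreu--McDuff computation for $\Symp(M_\mu^i)$, so these two are essentially quotations of \cite{AM}; the three mixed even/odd relations are the new content and are obtained the same way from $C_{2k}$, $C_{2k-1}$ and the blow-up normalization $(0,0)\mapsto$ the blown-up fixed vertex.

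The third step is to justify the reduction ``relations in $\tGuco$ $=$ relations in $\FDiff_*$'': this is the sentence preceding the lemma, namely that for $\mu>1$ the composite $\tT(n)\to\tGuco\to\FDiff_*$ is $\pi_1$-injective. That injectivity follows from Theorem~\ref{InflationBlowUp}(2) (the stabilization map is $\pi_1$-iso for $\mu>1$) together with the fact that, in the limit, $\FDiff_*$ is an honest finite-dimensional-up-to-homotopy Lie-type group whose maximal compact tori are the $T(n)$'s, so there are no hidden relations beyond those visible on polytopes; alternatively one invokes the classification of Hamiltonian $S^1$- and $T^2$-actions on rational ruled $4$-manifolds (Karshon, and its blow-up version) to see that the only identifications among the listed circle actions are the combinatorial ones. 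I expect the genuine obstacle to be purely organizational rather than conceptual: keeping the two sign/normalization conventions ($C_0$ has a $-1$, the odd matrices $C_{2k-1}$ mix both coordinates) consistent across all indices, and making sure the blow-up at $(0,0)$ with capacity $c$ (resp. $1-c$ in the odd case) is inserted in the lattice picture in the same way every time, so that the five equalities come out with exactly the coefficients stated. Once the conventions are pinned down, each identity is a one-line check.
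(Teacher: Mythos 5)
Your plan is essentially the paper's own proof. The paper likewise reduces the lemma to two ingredients — the observation that for $\mu>1$ the maps $\tT(n)\to\tGuco\to\FDiff_*$ are $\pi_1$-injective, and the classification of Hamiltonian $S^1$- and $T^2$-actions on rational ruled $4$-manifolds, after which the five identities are read off from the lattice/moment-polytope data encoded in the matrices $C_n$ — and does not spell out the lattice bookkeeping any more explicitly than you do, so your proposal matches it in both substance and level of detail.
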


\begin{prop}
Let $\tpsis_{n}:H^{*}(\BFDiff_{*};\Q)\to H^{*}(B\tT(n);\Q)\simeq\Q[x_{n},y_{n}]$ be the map induced in cohomology by the inclusion $B\tilde{T}(n)\to\BFDiff_{*}$. Given $w\in H^{*}(\BFDiff_{*};\Q)$ corresponding to any element of the fundamental group, define $(a_{n},b_{n})$ by setting $\tpsis_{n}(w)=a_{n}x_{n}+b_{n}y_{n}$. Then, for $k\geq 1$, we have 
\[
\begin{array}{ll}
a_{2k}=ka_{2}+(k-1)a_{0}~ & ~b_{2k}=ka_{0}+b_{0}\\ 
a_{2k-1}=-ka_{0}+b_{0}~   & ~b_{2k-1}=ka_{2}+(2k+1)a_{0}-b_{0}
\end{array}
\]
which  shows that the coefficients $\{a_{0},b_{0},a_{2}\}$ determine $w$ and $\tpsis_{n}(w)$ for all $n\geq0$.
\end{prop}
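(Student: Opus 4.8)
The plan is to reduce the statement to pure linear algebra by dualizing the relations of Lemma~\ref{RelationsInTori}. First I would set up the relevant duality. By Section~\ref{subsection:untwisted}, $\BFDiff_{*}$ is rationally equivalent to $K(\Q,2)^{3}$, hence simply connected, so the Hurewicz map identifies $H^{2}(\BFDiff_{*};\Q)$ canonically with $\mathrm{Hom}(\pi_{2}\BFDiff_{*}\otimes\Q,\Q)=\mathrm{Hom}(\pi_{1}\FDiff_{*}\otimes\Q,\Q)$; likewise $H^{2}(B\tT(n);\Q)=\Q x_{n}\oplus\Q y_{n}$ is the dual of $\pi_{1}\tT(n)\otimes\Q$, with $\{x_{n},y_{n}\}$ the basis dual to the homotopy generators of the same name (this is the abuse of notation fixed in the conventions). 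Since $\mu>1$, each $\pi_{1}\tT(n)\to\pi_{1}\FDiff_{*}$ is injective and is realized on $\pi_{1}$ by $\tpsi_{n}\colon B\tT(n)\to\BFDiff_{*}$ via the identifications of Lemma~\ref{RelationsInTori}. By naturality of the Hurewicz pairing, if $w$ corresponds to the linear functional $\langle w,-\rangle$ on $\pi_{1}\FDiff_{*}\otimes\Q$ and $\tpsis_{n}(w)=a_{n}x_{n}+b_{n}y_{n}$, then $a_{n}=\langle w,x_{n}\rangle$ and $b_{n}=\langle w,y_{n}\rangle$, where $x_{n},y_{n}$ now denote their images in $\pi_{1}\FDiff_{*}\otimes\Q$.

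Next I would simply evaluate $\langle w,-\rangle$ on the five families of identities in Lemma~\ref{RelationsInTori}, using a handful of well-chosen specializations. The relation $y_{2k}=kx_{0}+y_{0}$ gives immediately $b_{2k}=ka_{0}+b_{0}$, and in particular $b_{2}=a_{0}+b_{0}$; the relation $x_{1}=y_{0}-x_{0}$ gives $a_{1}=b_{0}-a_{0}$. Setting $k'=1$ in $k'x_{2k}-y_{2k}=kx_{2k'}-y_{2k'}$ yields $a_{2k}-b_{2k}=ka_{2}-b_{2}$, whence $a_{2k}=ka_{2}+(k-1)a_{0}$. Putting $k=1$ in $kx_{2k}+y_{2k}=(k+1)x_{2k-1}+ky_{2k-1}$ gives $a_{2}+b_{2}=2a_{1}+b_{1}$, so $b_{1}=a_{2}+3a_{0}-b_{0}$; and setting $k'=1$ in the fourth relation gives $y_{2k-1}=(k-1)x_{1}+ky_{1}$, hence $b_{2k-1}=(k-1)a_{1}+kb_{1}=ka_{2}+(2k+1)a_{0}-b_{0}$. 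Finally, feeding the already-computed values of $a_{2k},b_{2k},b_{2k-1}$ into the general-$k$ identity $ka_{2k}+b_{2k}=(k+1)a_{2k-1}+kb_{2k-1}$ and solving for the remaining unknown produces $a_{2k-1}=-ka_{0}+b_{0}$, completing the four formulas.

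To finish, I would observe that the formulas just derived express every $x_{n},y_{n}$ as a $\Q$-combination of $x_{0},y_{0},x_{2}$ in $\pi_{1}\FDiff_{*}\otimes\Q$; since that space has rank $3$ by Section~\ref{subsection:untwisted}, the triple $\{x_{0},y_{0},x_{2}\}$ is a basis, so $w$ itself is determined by $\{a_{0},b_{0},a_{2}\}=\{\langle w,x_{0}\rangle,\langle w,y_{0}\rangle,\langle w,x_{2}\rangle\}$, and hence so is $\tpsis_{n}(w)$ for every $n\geq 0$. I do not expect a genuine obstacle here: no $J$-holomorphic or deformation-theoretic input is needed beyond what is already in Lemma~\ref{RelationsInTori} and Section~\ref{subsection:untwisted}. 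The only point requiring care is the bookkeeping — correctly matching cohomology generators with their homotopy duals under $\tpsi_{n}$, and choosing the specializations $k'=1$ that decouple the two-parameter relations — after which the computation is entirely mechanical.
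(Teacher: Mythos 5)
Your proposal is correct and takes essentially the same approach as the paper: both translate the $\pi_1$-relations of Lemma~\ref{RelationsInTori} into linear constraints on the coefficients $(a_n,b_n)$ via the duality between $H^2$ of the classifying spaces and $\pi_1$ of the tori, and then solve them in terms of $(a_0,b_0,a_2)$. You specialize at $k'=1$ to obtain closed formulas directly, whereas the paper sets $k'=k+1$ and solves a recursion, but that is merely a difference in bookkeeping, not a different argument.
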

\begin{proof}
Let $(s,t):S^{1}\to T^{2}$ stands for the inclusion $\theta\mapsto (s\theta,t\theta)$. Each relation in lemma~\ref{RelationsInTori} gives rise to a relation in cohomology by looking at the induced commutative square. For instance, the first two relations yield
%\printlabel{}
\begin{equation*}
\begin{matrix}
\xymatrix{
  \Q[s] & \ar[l]^-{(0,1)^{*}} \Q[x_{2k},y_{2k}]   \\
\Q[x_{0},y_{0}] \ar[u]^{(k,1)^{*}}   & \ar[l]^{\tpsis_{0}} \Q[x,y,z] \ar[u]^{\tpsis_{2k}}   \\
 }
&
\mkern 15mu
&
\xymatrix{
  \Q[s] & \ar[l]^-{(k,-1)^{*}} \Q[x_{2k'},y_{2k'}]   \\
\Q[x_{2k},y_{2k}] \ar[u]^{(k',-1)^{*}}   & \ar[l]^{\tpsis_{2k}} \Q[x,y,z] \ar[u]^{\tpsis_{2k'}}   \\
 }
\end{matrix}
\end{equation*}
from which we immediately get $b_{2k}=ka_{0}+b_{0}$ for $k\geq 0$. Setting $k'=k+1$ in the second relation, we obtain the recursive formula $a_{2k}=ka_{2}+(k-1)a_{0}$, for $k\geq 1$. In the same way, one gets a recursive formula for the coefficients $(a_{2k-1},b_{2k-1})$, $n\geq 1$, by setting $k'=k+1$ in the forth relation. Then, one obtains explicit formulae for all coefficients, in terms of $\{a_{0}, b_{0}, a_{2}\}$ only, by using the remaining two relations. 
\end{proof}

We can define an explicit isomorphism $H^{*}(\BFDiff_{*};\Q)\simeq\Q[x,y,z]$ by choosing $x,y,z$ as the elements corresponding to the parameters $\{a_{0}=0,b_{0}=1,a_{2}=0\}$, $\{a_{0}=-1,b_{0}=0,a_{2}=1\}$,  and $\{a_{0}=0,b_{0}=0,a_{2}=1\}$ respectively. 

\begin{cor} Let $\{x,y,z\}$ be the generators of $H^{*}(\BFDiff_{*};\Q)$ defined above. Then the maps $\tpsis_{n}$ are given by the formulae
\[\begin{matrix}
\tpsis_{0}(x)=y_{0} & \tpsis_{0}(y)=-x_{0} & \tpsis_{0}(z)=0
\end{matrix}
\]
and, for $k\geq1$,
\[
\begin{array}{lll}
\tpsis_{2k}(x)=y_{2k} &\mkern 15mu & \tpsis_{2k-1}(x)=x_{2k-1}-y_{2k-1} \\
\tpsis_{2k}(y)=x_{2k}-ky_{2k}         &\mkern 15mu & \tpsis_{2k-1}(y)=kx_{2k-1}-(k+1)y_{2k-1} \\
\tpsis_{2k}(z)=kx_{2k}        &\mkern 15mu & \tpsis_{2k-1}(z)=ky_{2k-1}  
\end{array}
\]
Their kernels are the ideals
\begin{align*}
k_{2k}   &=\langle z-k^{2}x-ky \rangle \\
k_{2k-1} &=\langle z-k^{2}x+ky \rangle
\end{align*}

\end{cor}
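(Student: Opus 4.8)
The plan is to derive everything from the recursion formulae of the preceding Proposition together with the specific choice of generators $x,y,z$; no genuinely new input is needed.

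First I would obtain the displayed table for $\tpsis_{n}$ by plain substitution. By construction, the parameters $(a_{0},b_{0})$ of an element $w$ are exactly those determined by $\tpsis_{0}(w)=a_{0}x_{0}+b_{0}y_{0}$, and the Proposition gives the four recursion formulae
\begin{gather*}
a_{2k}=ka_{2}+(k-1)a_{0},\qquad b_{2k}=ka_{0}+b_{0},\\
a_{2k-1}=-ka_{0}+b_{0},\qquad b_{2k-1}=ka_{2}+(2k+1)a_{0}-b_{0}.
\end{gather*}
Feeding in $(a_{0},b_{0},a_{2})=(0,1,0)$ for $x$, $(-1,0,1)$ for $y$, and $(0,0,1)$ for $z$ and simplifying produces the three columns of the table; each entry is a one-line arithmetic check (for instance, for $y$ one gets $b_{2k-1}=k\cdot 1+(2k+1)(-1)-0=-(k+1)$, so $\tpsis_{2k-1}(y)=kx_{2k-1}-(k+1)y_{2k-1}$). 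Since $\Q[x,y,z]$ is generated as a ring by the degree-$2$ classes $x,y,z$, these values determine the ring homomorphism $\tpsis_{n}$ completely, so nothing further is required for the first assertion.

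Next I would compute the kernels. The explicit formulae show at once that each $\tpsis_{n}\colon\Q[x,y,z]\to\Q[x_{n},y_{n}]$ is surjective: for $n=2k$ one has $\tpsis_{2k}(x)=y_{2k}$ and $\tpsis_{2k}(y)=x_{2k}-ky_{2k}$, so both $x_{2k}$ and $y_{2k}$ are hit; for $n=2k-1$ one has $\tpsis_{2k-1}(z)=ky_{2k-1}$ (and $k\geq 1$) together with $\tpsis_{2k-1}(x)=x_{2k-1}-y_{2k-1}$, again giving surjectivity. To identify $\ker\tpsis_{n}$ I would solve $\tpsis_{n}(\alpha x+\beta y+\gamma z)=0$ in degree $2$, a $2\times 3$ linear system; normalising $\gamma=1$ yields $(\alpha,\beta)=(-k^{2},-k)$ for $n=2k$ and $(\alpha,\beta)=(-k^{2},k)$ for $n=2k-1$, i.e. the linear forms $z-k^{2}x-ky$ and $z-k^{2}x+ky$ lie in the kernel. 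Finally, since a polynomial ring on three degree-$2$ generators modulo a nonzero linear form is again a polynomial ring on two degree-$2$ generators, the induced surjection $\Q[x,y,z]/\langle z-k^{2}x\mp ky\rangle\to\Q[x_{n},y_{n}]$ is a degreewise surjection between graded vector spaces with equal (finite) dimensions in every degree, hence an isomorphism; therefore $k_{2k}=\langle z-k^{2}x-ky\rangle$ and $k_{2k-1}=\langle z-k^{2}x+ky\rangle$, as claimed.

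I do not expect a real obstacle: the corollary is a bookkeeping consequence of the Proposition. The only point deserving a word of care — and the natural place to slip — is the step from ``the kernel contains this linear form'' to ``the kernel equals the ideal it generates,'' which rests precisely on the surjectivity of $\tpsis_{n}$ read off from the explicit images together with the elementary Poincaré-series comparison above. Once that is in place the statement follows.
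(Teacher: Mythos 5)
Your proof is correct and is exactly the intended one: the table of values for $\tpsis_n$ on $x,y,z$ is obtained by substituting the chosen parameter triples into the recursion formulae of the preceding Proposition, and the identification of the kernel combines the easy inclusion of the linear form with the surjectivity of $\tpsis_n$ and a Poincar\'e-series count to promote the containment to an equality. The paper gives no separate proof for this corollary precisely because it is this bookkeeping, and your writeup supplies the small details (in particular the Hilbert-series argument for the kernel) that the paper leaves implicit.
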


\begin{thm}\label{thm:RationalCohomologyAlgebra}
Given $\mu\geq 1$ and $c\in(0,1)$, the rational cohomology ring of $\tBGuco$ is isomorphic to
\[
\frac{\Q[x,y,z]}{\tR_{\mu,c}}
\]
where $x,y,z$ have degree $2$, and where the ideal $\tR_{\mu,c}$ is given by
\[\tR_{\mu,c}=
\begin{cases}
\langle z(z-x+y)(z-x-y)\cdots(z-\ell^{2}x+\ell y)\rangle & \text{in the case $c\geq\ccrit$,}\\
\langle z(z-x+y)(z-x-y)\cdots(z-\ell^{2}x+\ell y)(z-\ell^{2}x-\ell y)\rangle & \text{in the case $c<\ccrit$.}
\end{cases}
\]
\end{thm}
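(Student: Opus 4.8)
The plan is to identify the ideal $\tR_{\mu,c}$ as the kernel of the map $\tpsi_{\mu,c}^{*}\colon H^{*}(\BFDiff_{*};\Q)=\Q[x,y,z]\to H^{*}(\tBGuco;\Q)$ — legitimate since, as recalled just before the statement, this map is surjective — and then to pin that kernel down by squeezing it between two ideals of equal size. Write $f_{0}:=z$, $f_{2k-1}:=z-k^{2}x+ky$ and $f_{2k}:=z-k^{2}x-ky$ for $k\ge 1$, so that the formulae for $\tpsis_{0}$ together with the preceding corollary give $\ker\tpsis_{n}=\langle f_{n}\rangle$; let $m$ be the index of the last stratum of $\tjjuco$ (so $m=2\ell-1$ if $c\ge\ccrit$ and $m=2\ell$ if $c<\ccrit$) and put $f:=f_{0}f_{1}\cdots f_{m}$. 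A direct inspection (simply reindexing the product) shows that $\langle f\rangle$ is exactly the ideal displayed in the statement, so the whole theorem reduces to the claim $\tR_{\mu,c}=\langle f\rangle$.

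First I would establish the inclusion $\tR_{\mu,c}\subseteq\langle f\rangle$. For each stratum index $0\le n\le m$, Proposition~\ref{StructureStratificationBlowUp}(3) realizes the $2$-torus $\tT(n)$ as the stabilizer of $\tJ_{n}$ inside $\tGuco$, and — by the construction of the homotopy decomposition of $\tBGuco$ (Section~\ref{se:decompostion} and Appendix~\ref{se:appendixB}) and its compatibility with the projection to $\BFDiff_{*}$ — the composite $B\tT(n)\xrightarrow{i_{n}}\tBGuco\xrightarrow{\tpsi_{\mu,c}}\BFDiff_{*}$ is homotopic to the inclusion defining $\tpsis_{n}$. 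Hence $i_{n}^{*}\circ\tpsi_{\mu,c}^{*}=\tpsis_{n}$, so $\tR_{\mu,c}\subseteq\ker\tpsis_{n}=\langle f_{n}\rangle$ for every $n\le m$, whence $\tR_{\mu,c}\subseteq\bigcap_{n=0}^{m}\langle f_{n}\rangle$. The forms $f_{0},\dots,f_{m}$ are pairwise non-proportional — their $(x,y)$-parts $(0,0),(-1,1),(-1,-1),(-4,2),(-4,-2),\dots$ are all distinct — hence pairwise coprime in the UFD $\Q[x,y,z]$, so $\bigcap_{n=0}^{m}\langle f_{n}\rangle=\langle f\rangle$ and the inclusion follows.

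Then I would upgrade this to an equality by a Hilbert-series count. Theorem~\ref{CohomologyModuleBlowUp} gives $H^{*}(\tBGuco;\Q)\cong\bigoplus_{i=0}^{m}\Sigma^{2i}\Q[x,y]$ as graded vector spaces, so its Hilbert series is $\big(\sum_{i=0}^{m}t^{2i}\big)/(1-t^{2})^{2}$; on the other hand $f$ is a nonzero homogeneous element of the domain $\Q[x,y,z]$ of topological degree $2(m+1)$, hence a non-zero-divisor, so $\Q[x,y,z]/\langle f\rangle$ has Hilbert series $(1-t^{2(m+1)})/(1-t^{2})^{3}=\big(\sum_{i=0}^{m}t^{2i}\big)/(1-t^{2})^{2}$ — the same. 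Since $\Q[x,y,z]/\tR_{\mu,c}\cong H^{*}(\tBGuco;\Q)$ and $\tR_{\mu,c}\subseteq\langle f\rangle$, the graded quotient $\langle f\rangle/\tR_{\mu,c}$ vanishes in every degree, i.e.\ $\tR_{\mu,c}=\langle f\rangle$.

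The step I expect to be the main obstacle is the first one — not the algebra, but making fully rigorous that \emph{all} the tori $\tT(0),\dots,\tT(m)$ sit inside $\tGuco$ compatibly with the fixed generators $x,y,z$ of $H^{*}(\BFDiff_{*};\Q)$, so that each $\tpsis_{n}$ genuinely factors through $\tpsi_{\mu,c}^{*}$. This is precisely where the bookkeeping of Appendix~\ref{se:appendixB} (the explicit lattice transformations and the identifications of the residual torus actions on the equivariant blow-ups) does its work; once that is in hand the rest is formal. A variant avoiding the module computation replaces the Hilbert-series argument by an induction on $m$ using the pullback square~\eqref{ExtendedDiagramCohomology}: compatibility with $\BFDiff_{*}$ turns that square into a genuine fibre product of graded rings, yielding $\tR_{\mu,c}=\tR_{\mu',c}\cap\langle f_{m}\rangle$, after which coprimality of $f_{m}$ with $f_{0},\dots,f_{m-1}$ closes the induction, the base case being a single stratum where $H^{*}=\Q[x_{0},y_{0}]=\Q[x,y,z]/\langle z\rangle$.
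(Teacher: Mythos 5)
Your proposal is correct, and in fact you have sketched \emph{two} valid arguments, one of which coincides with the paper's. The variant you mention at the end --- induction on the number of strata using the pullback square~\eqref{ExtendedDiagramCohomology}, deducing $\tR_{\mu,c}\subseteq\tR_{\mu',c}\cap\langle f_m\rangle$, invoking coprimality to rewrite the intersection as a product, and then pinning down the degree of the first relation from Theorem~\ref{CohomologyModuleBlowUp} --- is word-for-word the strategy the paper uses, with base case $\mu=1$ where $\tG^0_{1,c}$ retracts onto $\tT(0)$ so that $H^*\cong\Q[x_0,y_0]=\Q[x,y,z]/\langle z\rangle$.

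Your main argument is a modest but clean repackaging of the same inputs. Where the paper peels off one stratum at a time, you observe that \emph{every} torus $\tT(n)$, $0\le n\le m$, stabilizes some $\tJ_n$ inside $\tGuco$ (Proposition~\ref{StructureStratificationBlowUp}(3)) and is therefore compatible with the map to $\BFDiff_*$, giving $i_n^*\circ\tpsi_{\mu,c}^*=\tpsis_n$ for all $n$ at once, hence $\tR_{\mu,c}\subseteq\bigcap_{n\le m}\langle f_n\rangle=\langle f\rangle$ by pairwise non-association of distinct linear forms in the UFD $\Q[x,y,z]$. You then close the gap in one shot by comparing Hilbert series, again via Theorem~\ref{CohomologyModuleBlowUp}: both $\Q[x,y,z]/\tR_{\mu,c}$ and $\Q[x,y,z]/\langle f\rangle$ have series $\bigl(\sum_{i=0}^m t^{2i}\bigr)/(1-t^2)^2$, so the surjection between them is an isomorphism. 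The dimension count is exactly what the paper's phrase ``the first relation must occur in degree $2m$'' encapsulates; you make it global rather than recursive. The trade-off is that your version requires justifying the compatibility of all $m+1$ tori with $\tpsi_{\mu,c}$ simultaneously, whereas the paper only ever needs the last one per inductive step --- as you correctly flag, that bookkeeping is where the real content of Appendix~\ref{se:appendixB} lives, and once granted the rest is indeed formal.
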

\begin{proof}
The proof is by induction on the number $m$ of strata in $\tjjuco$. Fix some $c\in(0,1)$. Then the case $m=1$ corresponds to $\mu=1$ (hence $\ell=0$, $\ccrit=1$). But it is proved in~\cite{LP} that $\tG_{1,c}^{0}$ retracts onto $\tT(0)$, so the result holds in that case. Now assume the statement is true for some $\mu'>1$ for which there are $m-1$ strata and consider a $\mu>\mu'$ for which $\tjjuco$ contains $m$ strata. The diagram~\eqref{ExtendedDiagramCohomology} implies that $\tR_{\mu,c}\subset\tR_{\mu',c}\cap k_{m}$. Since $\tR_{\mu',c}$ and $k_{m}$ are coprime, it follows that $\tR_{\mu',c}\cap k_{m}=\tR_{\mu',c}\cdot k_{m}$. By Theorem~\ref{CohomologyModuleBlowUp}, the first relation in $H^{*}(B\tGuco;\Q)$ must occur in degree $2m$ so that $\tR_{\mu,c}=\tR_{\mu',c}\cdot k_{m}$. The statement follows.
\end{proof}

\subsection{The map $H^{*}(\BGu^i;\Q)\to H^{*}(\tBGuc^i;\Q)$}
The map $\BGu^{0}\to\BFDiff$ defined in Theorem~\ref{Inflation} induces a surjective map in rational cohomology. The rational equivalence $\BFDiff\simeq_{\Q}B(S^{1}\times \SO(3)\times \SO(3))$ allows one to choose generators of $H^{*}(\BFDiff;\Q)$: let  $T$ be the generator of degree $2$ corresponding to the $S^{1}$ factor, and denote by $X$ and $Y$ the two generators of degree $4$ corresponding to the two $\SO(3)$ factors. Then, the rational cohomology of $\BGu^{0}$ is given by
\[
H^*(BG_\mu^0;\Q)=\Q[X,Y,T]/\langle T\prod_{i=1}^\ell(T^2+i^4X-i^2Y) \rangle, \ \ \  \text{where $|T|=2$ and $|X|=|Y|=4$.}
\]

\begin{thm}\label{thm:MapCohomologyAlgebras}
The map $i^{*}:H^{*}(\BGu^0;\Q)\to H^{*}(\tBGuc^0;\Q)$ is given by
\[
\begin{array}{l}
X\mapsto x^{2}\\
Y\mapsto y^{2}+2xz\\
T\mapsto z.
\end{array}
\]
\end{thm}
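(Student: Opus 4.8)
The claim is that the inclusion-induced map $i^*\colon H^*(BG_\mu^0;\Q)\to H^*(B\tGuc^0;\Q)$ sends $T\mapsto z$, $X\mapsto x^2$, and $Y\mapsto y^2+2xz$. My strategy is to exploit naturality with respect to the maps into $\BFDiff$ and $\BFDiff_*$, together with the explicit descriptions of $\tpsi_n^*$ and the analogous maps $\psi_n^*$ on the torus level, and finally a consistency check against the known relations (Theorems~\ref{thm:RationalCohomologyAlgebra} and the presentation of $H^*(BG_\mu^0;\Q)$).

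\textbf{Step 1: Reduce to a single toric stratum.} Recall that $\tBGuc^0$ is built as an iterated homotopy pushout whose pieces are $B\tT(n)$, and similarly $BG_\mu^0$ is built from the $BK(n)$. The composite $B\tT(n)\to B\tGuc^0\xrightarrow{i} BG_\mu^0$ is homotopic to the inclusion of $B\tT(n)$ as (the classifying space of) the maximal torus $\tT(n)$ of $K(n)$, followed by $BK(n)\to BG_\mu^0$. Since $H^*(BG_\mu^0;\Q)\to\prod_n H^*(BK(n);\Q)$ is injective (the relation in $BG_\mu^0$ is a product of the kernels $k_n$, so an element killed by every $K(n)$ is zero — this follows from Theorem~\ref{CohomologyModuleBlowUp} and its analogue for $G_\mu^0$), it suffices to identify $i^*$ after restricting to each $B\tT(n)$, i.e.\ to compute the composite $H^*(BG_\mu^0;\Q)\to H^*(B\tT(n);\Q)=\Q[x_n,y_n]$.

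\textbf{Step 2: Compute the restriction to each $B\tT(n)$.} Here I use the commuting square relating the inclusions $\tT(n)\hookrightarrow\tGuc^0\hookrightarrow\FDiff_*$ with $T(n)\hookrightarrow G_\mu^0\hookrightarrow\FDiff$, together with the blow-down identifications of the tori. The map $BG_\mu^0\to\BFDiff$ pulls back $T,X,Y$ to the classes $\psi_n^*$ evaluated at $B T(n)$; these were worked out in the Abreu--Granja--Kitchloo framework (and are the ``untwisted'' counterpart of the Corollary just above, with $T\mapsto$ a degree-$2$ class, $X,Y\mapsto$ degree-$4$ classes, and on the torus $T(0)=S^1\times S^1$ one has $X\mapsto(\text{something in }u^2), Y\mapsto(\ldots)$). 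Meanwhile the maps $\tpsi_n^*\colon\Q[x,y,z]\to\Q[x_n,y_n]$ are given explicitly in the Corollary preceding the theorem: $\tpsi_{2k}^*(x)=y_{2k}$, $\tpsi_{2k}^*(y)=x_{2k}-ky_{2k}$, $\tpsi_{2k}^*(z)=kx_{2k}$, with odd analogues. The blow-down map sends $\tT(n)$ to $T(n)\subset K(n)$, and under the coordinate changes $C_n$ (recorded in the Conventions) one can match the generators $x_n,y_n$ of $B\tT(n)$ with the moment-map coordinates on $B T(n)$. Tracing $T,X,Y$ down both sides of the square then yields, for every $n$, that $i^*(T)$, $i^*(X)$, $i^*(Y)$ restrict on $B\tT(n)$ to $\tpsi_n^*(z)$, $\tpsi_n^*(x^2)$, $\tpsi_n^*(y^2+2xz)$ respectively — the key arithmetic being that, e.g.\ on $B\tT(2k)$, $y_{2k}^2 + 2(\text{something})$ matches the pullback of the degree-$4$ generator $Y$, which on $BK(2k)=B(S^1\times SO(3))$ is the Pontryagin-type class of the $SO(3)$ factor.

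\textbf{Step 3: Assemble and check.} Having verified the three formulas after restriction to every $B\tT(n)$, injectivity of $H^*(BG_\mu^0;\Q)\hookrightarrow\bigoplus_n H^*(B\tT(n);\Q)$ (valid rationally since all spaces are formal with even cohomology and the relations are products of the coprime linear ideals $k_n$) gives the formulas on the nose. As a final consistency check I would verify that the image ideal $\langle i^*(T)\,\textstyle\prod_{i=1}^\ell(i^*(T)^2+i^4 i^*(X)-i^2 i^*(Y))\rangle = \langle z\prod_{i=1}^\ell((z-i^2x)^2 - i^2y^2)\rangle$ lands inside $\tR_{\mu,c}=\langle z(z-x+y)\cdots(z-\ell^2 x+\ell y)\rangle$ (for $c\ge\ccrit$) and is in fact the ideal generated by the first $2\ell+1$ of those $2\ell+2$ linear factors — this is exactly the content of the remark following the Corollary in the twisted-case discussion, and it confirms that $i^*$ is well-defined with the stated formula and identifies $H^*(BG_\mu^0;\Q)$ with the stated subring $\Q[z,x^2,y^2+2xz]/\langle z\prod((z-i^2x)^2-i^2y^2)\rangle$ of $H^*(B\tGuc^0;\Q)$.

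\textbf{Main obstacle.} The delicate point is Step~2: correctly matching the generators $x_n,y_n$ on the blow-up torus $B\tT(n)$ with the coordinates on $BT(n)\subset BK(n)$ through the blow-down map, and then correctly identifying how the degree-$4$ generators $X,Y$ of $H^*(BK(n);\Q)$ — which are Pontryagin/Euler classes of the $SO(3)$ (or $U(2)$) factors, hence \emph{quadratic} in the torus coordinates — pull back. The quadratic nature of $X\mapsto x^2$ and the mixed term in $Y\mapsto y^2+2xz$ come precisely from this: the even generators of $H^*(BSO(3))$ restrict to squares on the maximal torus $S^1$, and the correction term $2xz$ records the nontrivial coordinate change $C_{2k}=\left(\begin{smallmatrix}1&0\\-k&1\end{smallmatrix}\right)$ relating the two torus parametrizations. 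Getting these linear-algebra bookkeeping steps exactly right — signs, the role of $\det$ versus $\det^{-k}$ in the odd strata, and the normalization of $z$ — is where all the real work sits; everything else is naturality and injectivity.
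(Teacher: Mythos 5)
Your proposal takes essentially the same route as the paper: the paper also traces $T,X,Y$ through the commuting square relating $B\widetilde{T}(2k)\to B\FDiff_*$ and $BK(2k)\to B\FDiff$, uses the Abreu--Granja--Kitchloo formulas for $\psi_{2k}^*$ together with the explicit $\tpsi_{2k}^*$ and the observation that $H^*(BK(2k))\to H^*(B\widetilde{T}(2k))$ sends $A_{2k}\mapsto x_{2k}$, $X_{2k}\mapsto y_{2k}^2$, and then pins the formula down by intersecting the kernels $\ker\tpsi_{2k}^*=\langle z-k^2x-ky\rangle$ over all $k\geq 1$. The only stylistic differences are that the paper first computes the universal map $\psi^*\colon H^*(\BFDiff;\Q)\to H^*(\BFDiff_*;\Q)$ and deduces $i^*$ by the surjective reductions to $BG_\mu^0$ and $B\widetilde G_{\mu,c}^0$, whereas you phrase the same step as injectivity of restriction to the toric pieces; and the paper only needs the even strata $K(2k)$, $k\geq 1$, so your mention of odd/$\det^{-k}$ bookkeeping is superfluous for the untwisted statement.
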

\begin{proof}
Let denote by $A_{2k}$ and $X_{2k}$, $k\geq 1$, the generators of degree $2$ and $4$ in the rational cohomology of $BK(2k)\simeq B(S^{1}\times\SO(3))$. When $k\geq 1$, the torus $\tT(2k)$ maps to the maximal torus $S^{1}\times S^{1}\subset K(2k)\simeq S^{1}\times \SO(3)$. At the cohomology level, it follows that we have
\[
\begin{matrix}
A_{2k} &\to & x_{2k} \\
X_{2k} &\to & y_{2k}^{2}
\end{matrix}
\]
where $A_{2k}$ and $X_{2k}$ are the generators of $H^{2}(B(S^{1}\times\SO(3)))$ and $H^{4}(B(S^{1}\times\SO(3)))$. Let consider the diagram
\[
\xymatrix{
H^{*}(B\tT(2k)) &   H^{*}(B\FDiff_{*}) \ar[l]_-{\tpsis_{2k}} \\
H^{*}(BK(2k)) \ar[u]  & H^{*}(B\FDiff) \ar[l]_-{\psi^{*}_{2k}} \ar[u]^{\psi^{*}}
}
\]
It was shown in~\cite{AGK} that the map $\psi^{*}_{2k}$ verifies
\[
\begin{array}{lll}
\psi^{*}_{2k}(T) & = & kA_{2k}\\
\psi^{*}_{2k}(X) & = & X_{2k}\\
\psi^{*}_{2k}(Y) & = & A_{2k}^{2}+k^{2}X_{2k}.
\end{array}
\]
Consequently, for the diagram to commute we must have
\[
\begin{array}{lll}
\psi^{*}(T) & = & z\\
\psi^{*}(X) & = & x^{2}\\
\psi^{*}(Y) & = & y^{2}+2xz
\end{array}
\]
modulo elements in $\ker(\tpsis_{2k})=(z-k^{2}x-ky)$. But since this must hold for all $k\geq 1$, and since $\cap_{k} \ker(\tpsis_{2k})=\emptyset$, we see that $\psi^{*}$ is indeed given by the formulae above.
\end{proof}

In the twisted case the rational equivalence $H^{*}(\BFDiff;\Q) \simeq_\Q B(S^1 \times \SU(2) \times \SU(2))$ still gives generators $T,X$ and $Y$ such that $|T|=2$ and $|X|=|Y|=4$ corresponding now to $S^1$ and to the $\SU(2)$ factors. The rational cohomology ring of $BG_\mu^1$ was computed in \cite{AGK} and it is given by 

\[
H^*(BG_\mu^1;\Q)=\Q[X,Y,T]/\langle \prod_{i=0}^\ell((2i+1)^2(\frac{i(i+1)}{2}(X+Y)-Y)-\frac{i^2(i+1)^2}{2}T^2) \rangle.
\]
Since $\tBGuc^1 \simeq \tBG_{\mu+1-c,1-c}^0$ one gets immediately, from Theorem \ref{thm:RationalCohomologyAlgebra}, the cohomology ring in the twisted case, when $\ell < \mu \leq \ell +1$ and  $c \geq \ccrit$:

\[
H^*(\tBGuc^1;\Q)=\Q[x,y,z]/\langle z\prod_{i=1}^\ell(z-i^2x+iy)(z-i^2x-iy) \rangle.
\]

\begin{thm}\label{thm:MapCohomologyAlgebrasTwisted}
The map $i^{*}:H^{*}(\BGu^1;\Q)\to H^{*}(\tBGuc^1;\Q)$ is given by
\[
\begin{array}{l}
X\mapsto y(y-x) + \frac{z}{2}(7y+7z-3x)\\
Y\mapsto \frac{z}{2}(y-x+z)\\
T\mapsto 4z+2y-x
\end{array}
\]
\end{thm}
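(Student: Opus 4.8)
The plan is to adapt the argument of Theorem~\ref{thm:MapCohomologyAlgebras} to the twisted bundle, probing $i^{*}$ with the odd toric subgroups $\tT(2k-1)\subset\tGucl$. Recall that $H^{*}(\BGu^{1};\Q)=\Q[T,X,Y]/\langle\,\cdot\,\rangle$ ($|T|=2$, $|X|=|Y|=4$) is a quotient of $H^{*}(\BFDiff;\Q)\cong\Q[T,X,Y]$ via the surjection of Theorem~\ref{Inflation}, and that $H^{*}(\tBGucl;\Q)=\Q[x,y,z]/\langle\,\cdot\,\rangle$ is a quotient of $H^{*}(\BFDiff_{*};\Q)\cong\Q[x,y,z]$ via the surjection of Theorem~\ref{InflationBlowUp} --- here using $\tBGucl\simeq\tBG^{0}_{\mu+1-c,1-c}$, so that $x,y,z$ are the generators fixed in Appendix~\ref{se:appendixB}. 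Since the defining relation of $H^{*}(\BGu^{1};\Q)$ sits in degree $4\ell+4$, the images $i^{*}(T),i^{*}(X),i^{*}(Y)$ can be represented by honest polynomials in $x,y,z$ of degrees $2,4,4$, uniquely so; thus it suffices to compute the composite $\Q[T,X,Y]\twoheadrightarrow H^{*}(\BGu^{1};\Q)\xrightarrow{i^{*}}H^{*}(\tBGucl;\Q)$ on $T,X,Y$ and then to check that it descends to the quotient.

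\textbf{Probing by the odd tori.} For each admissible $k\geq 1$ the torus $\tT(2k-1)$ is, up to homotopy, a maximal torus of $K(2k-1)\cong\U(2)$, and the composite $B\tT(2k-1)\to\tBGucl\to\BGu^{1}$ factors through $BK(2k-1)$ --- by the same geometric reason as in the even case, namely that the blow-up torus $\tT(2k-1)$ blows down into the $\U(2)$-isometry group of the odd Hirzebruch surface $\F_{2k-1}$. Hence, on rational cohomology, the composite $\Q[T,X,Y]\twoheadrightarrow H^{*}(\BGu^{1};\Q)\xrightarrow{i^{*}}H^{*}(\tBGucl;\Q)\xrightarrow{\bar\imath_{k}^{*}}\Q[x_{2k-1},y_{2k-1}]$ equals $r_{k}\circ\psi^{*}_{2k-1}$, where $\psi^{*}_{2k-1}\colon H^{*}(\BFDiff;\Q)\to H^{*}(BK(2k-1);\Q)$ is the map computed in~\cite{AGK} (the odd analogue of the formula $\psi^{*}_{2k}(T)=kA_{2k}$, $\psi^{*}_{2k}(X)=X_{2k}$, $\psi^{*}_{2k}(Y)=A_{2k}^{2}+k^{2}X_{2k}$ used in the proof of Theorem~\ref{thm:MapCohomologyAlgebras}), $r_{k}$ is the restriction from $H^{*}(B\U(2);\Q)$ to $H^{*}(B\tT(2k-1);\Q)$ read off from the identification of $\tT(2k-1)$ with the maximal torus of $\U(2)$ via the matrix $C_{2k-1}$ of Appendix~\ref{se:appendixB}, and $\bar\imath_{k}^{*}$ is induced by $B\tT(2k-1)\to\tBGucl$, given on generators by $\bar\imath_{k}^{*}(x)=x_{2k-1}-y_{2k-1}$, $\bar\imath_{k}^{*}(y)=kx_{2k-1}-(k+1)y_{2k-1}$, $\bar\imath_{k}^{*}(z)=ky_{2k-1}$ (the Corollary preceding Theorem~\ref{thm:RationalCohomologyAlgebra}), with kernel $\langle z-k^{2}x+ky\rangle$ in the relevant degrees.

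\textbf{Solving, and the descent check.} Each value of $k$ thus determines $i^{*}(T),i^{*}(X),i^{*}(Y)$ modulo $\langle z-k^{2}x+ky\rangle$, by equating the known polynomial $r_{k}(\psi^{*}_{2k-1}(-))$ with $\bar\imath_{k}^{*}$ applied to the unknown polynomial. Because the maps $\psi^{*}_{2k-1}$, $r_{k}$, $\bar\imath_{k}^{*}$ all concern the fixed groups $\FDiff$, $\FDiff_{*}$, $K(2k-1)$, $\tT(2k-1)$ and are independent of $\mu$, these congruences hold simultaneously for all $k\geq 1$; since $\bigcap_{k\geq1}\langle z-k^{2}x+ky\rangle=0$ (the linear forms being pairwise distinct), the polynomials are pinned down, yielding
\[
X\mapsto y(y-x)+\tfrac{z}{2}(7y+7z-3x),\qquad Y\mapsto\tfrac{z}{2}(y-x+z),\qquad T\mapsto 4z+2y-x.
\]
It remains to check that this assignment descends to a ring map $H^{*}(\BGu^{1};\Q)\to H^{*}(\tBGucl;\Q)$: substituting into the relation \eqref{CohomologyAlgebraTwisted} produces $\left(-\tfrac12\right)^{\ell+1}z\,(z-(\ell+1)^{2}x+(\ell+1)y)\prod_{i=1}^{\ell}(z-i^{2}x+iy)(z-i^{2}x-iy)$, which is a multiple of the defining relation of $H^{*}(\tBGucl;\Q)$; hence $i^{*}$ is well defined and given by the stated formulae.

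\textbf{Main obstacle.} The genuine work is the convention bookkeeping in the middle step: importing from~\cite{AGK} the correct $\psi^{*}_{2k-1}$ together with the normalization of $T,X,Y$ for $H^{*}(\BFDiff;\Q)$ in the twisted case (where the two degree-$4$ classes come from the two $\SU(2)$-factors), and computing $r_{k}$ from the $\GL(2,\Z)$-data $C_{2k-1}$ and the identification of $\tT(2k-1)$ with a maximal torus of $\U(2)$ on the blow-up. One should also note that the theorem does \emph{not} follow by naive transport through $\tBGucl\simeq\tBG^{0}_{\mu+1-c,1-c}$ from Theorem~\ref{thm:MapCohomologyAlgebras}, since under that diffeomorphism the inclusion $\tGucl\hookrightarrow\Gul$ corresponds to the stabilizer of an $(F^{0}-E^{0})$-curve in $\tM^{0}_{\mu+1-c,1-c}$, not to $\tG^{0}_{\mu+1-c,1-c}\hookrightarrow\Symp(M^{0}_{\mu+1-c})$; and that for a fixed $\mu$ only finitely many odd tori $\tT(2k-1)$, $1\le k\le\ell$, are available, so the uniqueness step genuinely relies on the $\mu$-independence of all the maps in sight.
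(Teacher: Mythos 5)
Your proposal is correct and takes essentially the same route as the paper's proof, which says only ``the proof goes exactly as in the corresponding theorem in the split case'' and then records the two new pieces of data you identify: the restriction $A_{2k-1}\mapsto x_{2k-1}+y_{2k-1}$, $X_{2k-1}\mapsto x_{2k-1}y_{2k-1}$ from $H^{*}(B\U(2))$ to $H^{*}(B\tT(2k-1))$, and the formulae for $\psi^{*}_{2k-1}$ from~\cite{AGK}. Your additional remarks --- that the duality $\tBGucl\simeq\tBG^{0}_{\mu+1-c,1-c}$ does \emph{not} transport the inclusion map, and that the simultaneity over all $k$ rests on the $\mu$-independence of the constituent maps (equivalently, on working at the level of $\BFDiff$ and $\BFDiff_{*}$) --- are sound clarifications of points the paper leaves implicit.
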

\begin{proof}
The proof goes exactly as in the corresponding theorem in the split case. We just need to note that now at the cohomology level we have 
\[
\begin{array}{l}
A_{2k-1}  \to x_{2k-1}+y_{2k-1} \\
X_{2k-1}  \to x_{2k-1}y_{2k-1}
\end{array}
\]
where $A_{2k-1}$ and $X_{2k-1}$ are the generators of $H^{*}(BU(2))$, 
and to recall from \cite{AGK} that the map $\psi^*_{2k-1}$ verifies 
\[
\begin{array}{lll}
\psi^{*}_{2k-1}(T) & = & (2k-1)A_{2k-1}\\
\psi^{*}_{2k-1}(X) & = & k(k-1)A_{2k-1}^2+\frac{2+k-k^2}{2}X_{2k-1}\\
\psi^{*}_{2k-1}(Y) & = & \frac{k(k-1)}{2}X_{2k-1}.
\end{array}
\]
\end{proof}

%%%%%%%%%%%%%%%%%%%%%%%%%%%%%%%%%%%%%%%%%%%%%%%%%%%%%%%%%%%%%%%%%%%%%%%%%%%%%%%%
% References
%%%%%%%%%%%%%%%%%%%%%%%%%%%%%%%%%%%%%%%%%%%%%%%%%%%%%%%%%%%%%%%%%%%%%%%%%%%%%%%%

\end{document}